\newcommand{\setword}[2]{%
	\phantomsection
	#1\def\@currentlabel{\unexpanded{#1}}\label{#2}%
}
\renewcommand*\env@matrix[1][*\c@MaxMatrixCols c]{%
	\hskip -\arraycolsep
	\let\@ifnextchar\new@ifnextchar
	\array{#1}}
\long\def\ignore#1{}
\let\oldi\ignore
\newtheorem{THM}{\textbf{Theorem}}[section]
\newtheorem{THMs}{\textbf{Theorem}}[section]
\newtheorem{DEF}[THM]{\textbf{Definition}}[section]
\newtheorem{LEM}[THM]{\textbf{Lemma}}
\newtheorem{CON}[THM]{\textbf{Conjecture}}
\newtheorem{PROP}[THM]{\textbf{Proposition}}
\newtheorem{COR}[THM]{\textbf{Corollary}}
\newtheorem{CORs}{\textbf{Corollary}}[section]
\newtheorem{PRO}[THM]{\textbf{Problem}}
\newcommand{\pf}{\textbf{Proof}.\quad}
\newtheorem{FAC}{\textbf{Fact}}
\newtheorem{REM}{\textbf{Remark}}
\newtheorem{OPR}{\textbf{Operation}}
\newtheorem{CLA}{\textbf{Claim}}[section]
\newtheorem{THM}{Theorem}[section]
\newtheorem{DEF}[THM]{Definition}
\newtheorem{LEM}[THM]{Lemma}
\newtheorem{CON}[THM]{Conjecture}
\newtheorem{COR}[THM]{Corollary}
\newtheorem{CLA}{Claim}[section]
\newcommand{\pf}{\textbf{Proof}.\quad}
\newtheorem*{THM2}{\textbf{Theorem 2.6}}
\newtheorem*{THM3}{\textbf{Theorem 2.7}}
\newcommand{\CC}{\mathcal{C}}
\newcommand{\Of}{\mathcal{O}}
\newcommand{\pbar}{\overline{\varphi}}
\begin{document}
\title{Proof of the Core Conjecture of Hilton and Zhao}

\author{%
 Yan Cao
 \qquad Guantao Chen\thanks{This work was supported in part by NSF grant DMS-1855716.}\\
  Department of Mathematics and Statistics, \\
  Georgia State University, Atlanta, GA 30302\\
   \texttt{ycao17@gsu.edu}
  \qquad
   \texttt{gchen@gsu.edu}%
\and
 Guangming Jing\\
  Department of Mathematics,\\
  Augusta University, Augusta, GA 30912\\
  \texttt{gjing@augusta.edu}%
 \and 
 Songling Shan\thanks{This work was partially supported by the NSF-AWM Mentoring Travel Grant 1642548 and by the New Faculty Initiative Grant of Illinois State University.}\\
 Department of Mathematics, \\
 Illinois State Univeristy, Normal, IL 61790 \\
 \texttt{sshan12@ilstu.edu}
} 

\date{March 31, 2020}
\maketitle

 \begin{abstract}
 Let $G$ be a simple graph with maximum degree $\Delta$.  We call $G$ 
  \emph{overfull} if $|E(G)|>\Delta \lfloor |V(G)|/2\rfloor$. The \emph{core} of $G$, denoted 
 $G_{\Delta}$, is the subgraph of $G$ induced by its vertices of degree $\Delta$. 
 A classic result of Vizing shows that $\chi'(G)$, the chromatic index of $G$, is either $\Delta$
 or $\Delta+1$.   It is NP-complete to determine  the chromatic index  for a general graph.   However, if $G$ is overfull then  $\chi'(G)=\Delta+1$. 
 Hilton and Zhao in 1996 conjectured  that if $G$ is a simple connected graph with $\Delta\ge 3$ and  $\Delta(G_\Delta)\le 2$, then $\chi'(G)=\Delta+1$ if and only if $G$
 is overfull or $G=P^*$, where $P^*$ is obtained from the Petersen graph by deleting a vertex. 
 This conjecture, if true, implies an easy approach for calculating $\chi'(G)$  for graphs $G$
 satisfying the conditions. The progress on the conjecture has been slow: it was only confirmed for 
 $\Delta=3,4$, respectively, in 2003 and 2017.  
 In this paper, we confirm this conjecture for all $\Delta\ge 4$.   

 \smallskip
 \noindent
 \textbf{Keywords:} Overfull graph,   Multifan, Kierstead path, Pseudo-multifan.

 \end{abstract}

\vspace{2mm}
\newpage 
\tableofcontents
\newpage

\section{Introduction}

In this paper, we use graph to mean a simple graph, and 
use multigraph for graphs that may contain parallel edges 
but no loops.  Let $G$ be a graph with maximum degree $\Delta(G)=\Delta$.  We denote by $V(G)$ and $E(G)$ the vertex set and edge set of $G$, respectively. 
The \emph{core} of  $G$,  denoted $G_\Delta$, 
is the subgraph of $G$ induced by its vertices of degree $\Delta$. 
An {\it edge $k$-coloring\/} of $G$ is a mapping $\varphi$ from $E(G)$ to the set of integers
$[1,k]:=\{1,\cdots, k\}$, called {\it colors\/}, such that  no two adjacent edges receive the same color with respect to $\varphi$.  
The {\it chromatic index\/} of $G$, denoted $\chi'(G)$, is defined to be the smallest integer $k$ so that $G$ has an edge $k$-coloring.  
We denote by $\CC^k(G)$ the set of all edge $k$-colorings of $G$. 
In 1965, Vizing~\cite{Vizing-2-classes} showed that a graph of maximum degree
$\Delta$ has  chromatic index either $\Delta$ or $\Delta+1$.
If $\chi'(G)=\Delta$, then $G$ is said to be of {\it Class 1\/}; otherwise, it is said to be
of {\it Class 2\/}.  
Holyer~\cite{Holyer} showed that it is NP-complete to determine whether an arbitrary graph is of Class 1.  

For  a  multigraph  $G$ with $|V(G)|\ge 3$, define its density
\begin{equation}\label{density}
\omega(G)=\max\limits_{X\subseteq V(G), |X|\ge 3 } \frac{|E(G[X])|}{\lfloor|X|/2\rfloor}, 
\end{equation}
or zero by convention if $|V(G)|\le 2$. 
If $\omega(G)>\Delta(G)$, then  $\omega(G)$ is achieved by some $X\subseteq V(G)$  with an odd cardinality. 
Note that  $\omega(G)$ is a lower bound on $\chi'(G)$, 
since every matching of $G$ contains at most $\lfloor|X|/2\rfloor$ edges 
with both endpoints in $X$ for every $X\subseteq V(G)$.    We 
call $G$ {\it overfull} if  $|E(G)|>\Delta \lfloor |V(G)|/2\rfloor$. 
Thus, if $G$ is overfull, $\omega(G)\ge \frac{|E(G)|}{\lfloor |V(G)|/2\rfloor}>\Delta$.
Consequently,  $|V(G)|$ is odd and $\chi'(G)=\Delta+1$. 


%

Although it is NP-complete to compute the chromatic index of a graph $G$, 
as shown by Seymour~\cite{MR532981} using
Edmonds' matching polytope theorem~\cite{MR0183532},  $\max\{\Delta(G),\omega(G)\}$, which equals  $\chi'_f(G)$, 
 the {\it fractional chromatic index}  of $G$,  can be computed in  polynomial time.
This, naturally, leads to the question of  characterizing  graphs $G$ such that  $\chi'(G)=\lceil \chi'_f(G)\rceil$.  
The following conjectures  indicate that  there might be  a large class of graphs and multigraphs $G$
satisfying $\chi'(G)=\lceil \chi'_f(G)\rceil$.  

\begin{CON}[Goldberg-Seymour Conjecture~\cite{MR0354429},~\cite{MR532981}]\label{G-S}
Every  multigraph $G$ with $\chi'(G)\ge \Delta(G)+2$ satisfies    $\chi'(G)= \lceil\omega(G)\rceil$. 
\end{CON}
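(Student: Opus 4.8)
The plan is to argue by contradiction, using the theory of critical multigraphs and Tashkinov trees. Since $\omega(G)\le\chi'(G)$ always holds (as already noted), it suffices to establish the stronger inequality $\chi'(G)\le\max\{\Delta(G)+1,\lceil\omega(G)\rceil\}$ for every multigraph $G$. Suppose this fails, and among all counterexamples choose $G$ minimizing $|V(G)|+|E(G)|$; set $k=\chi'(G)-1$. Then $k\ge\max\{\Delta(G)+1,\lceil\omega(G)\rceil\}$, so in particular $k\ge\Delta(G)+1$ and $k\ge\omega(G)$, and by minimality $G$ is \emph{$k$-critical}: $G$ admits no edge $k$-coloring, but $G-e$ does for every $e\in E(G)$. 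The objective is to contradict $k\ge\omega(G)$ by producing a set $X\subseteq V(G)$ of odd order with $|E(G[X])|>k\lfloor|X|/2\rfloor$.

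First I would fix an uncolored edge $e=xy$ and an edge $k$-coloring $\varphi$ of $G-e$, and grow a \emph{Tashkinov tree} $T$: a sequence $x=v_0,v_1=y,e_1=e,v_2,e_2,v_3,\dots$ in which each new edge $e_i$ joins a fresh vertex $v_{i+1}$ to one of $v_0,\dots,v_i$ and is colored by a color missing at some already-reached vertex (well-defined because $k\ge\Delta(G)+1$ forces every vertex to miss a color). Tashkinov's theorem supplies the basic structural fact: a \emph{maximum} such tree has an \emph{elementary} vertex set, meaning no color of $[1,k]$ is missing at two distinct vertices of $V(T)$. This, together with the standard Kempe-chain and rotation machinery for edge colorings, is the starting point.

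The heart of the argument — and where I expect the main obstacle — is to refine the coloring $\varphi$ and the tree $T$ so that $T$ becomes \emph{closed} (no color missing inside $V(T)$ occurs on an edge leaving $V(T)$), and then to show that a maximum closed Tashkinov tree taken with respect to a suitably optimal coloring cannot omit any vertex, forcing $V(T)=V(G)$ (recall a $k$-critical multigraph is connected). The difficulty is that for a proper closed elementary piece one must either extend $T$, contradicting maximality, or reroute colors across the boundary $\partial(V(T))$ to reach a better configuration, and organizing this case analysis on the color classes that meet the boundary is exactly the technical bulk of the proof. This is the step that resisted the earlier partial results of Tashkinov and Scheide; carrying it out requires auxiliary notions such as Tashkinov series and hierarchies of closed trees, plus delicate recoloring near the boundary.

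Once $V(T)=V(G)$ is in hand, with $|V(G)|$ necessarily odd (which the same analysis provides), the endgame is a one-line count. Elementarity gives $\sum_{v\in V(G)}\bigl(k-d_{G-e}(v)\bigr)\le k$, and since $\sum_{v\in V(G)}d_{G-e}(v)=2|E(G)|-2$ this rearranges to $2|E(G)|\ge k(|V(G)|-1)+2$; with $|V(G)|$ odd this yields $|E(G)|\ge k\lfloor|V(G)|/2\rfloor+1$, so $X=V(G)$ witnesses $\omega(G)>k$, contradicting $k\ge\omega(G)$. This contradiction proves the conjecture.
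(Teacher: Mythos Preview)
The paper does not prove this statement. Conjecture~\ref{G-S} is stated only as background, and the paper simply records that it ``was confirmed recently by Chen, Jing, and Zang~\cite{1901.10316}.'' There is therefore no proof in the paper to compare your proposal against.

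As to the proposal itself: your outline follows the Tashkinov-tree framework that underlies the Chen--Jing--Zang argument, and the final counting step is correct. But what you have written is a plan, not a proof. The paragraph you label ``the heart of the argument'' --- upgrading a maximum Tashkinov tree to a strongly closed configuration and controlling the boundary colors --- is precisely the content of the cited paper, which runs to well over a hundred pages of new machinery (extended Tashkinov trees, good hierarchies, and an elaborate recoloring calculus). Gesturing at ``Tashkinov series and hierarchies of closed trees, plus delicate recoloring near the boundary'' does not discharge this step. Also, your stated goal of forcing $V(T)=V(G)$ is not quite the right target: what one actually proves is that a suitable elementary and strongly closed set $X\subseteq V(G)$ (not necessarily all of $V(G)$) exists, and that set already witnesses $\omega(G)>k$ via the same count you wrote down with $X$ in place of $V(G)$.
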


\begin{CON}[Seymour's Exact Conjecture~\cite{MR629483}]
Every   planar multigraph  $G$ satisfies $\chi'(G)=\lceil \chi'_f(G)\rceil$.  
\end{CON}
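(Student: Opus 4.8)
The plan is to reduce the statement, via the identity $\chi'_f(G)=\max\{\Delta(G),\omega(G)\}$ recorded above, to a single clean case and then attack that case by discharging on a minimal counterexample. Write $\Delta=\Delta(G)$, and recall $\chi'(G)\ge\omega(G)$ always and $\lceil\chi'_f(G)\rceil=\max\{\Delta,\lceil\omega(G)\rceil\}$. If $\omega(G)>\Delta$, then $\lceil\omega(G)\rceil\ge\Delta+1$; either $\chi'(G)\ge\Delta+2$, in which case the Goldberg--Seymour Conjecture (Conjecture~\ref{G-S}), which I assume, gives $\chi'(G)=\lceil\omega(G)\rceil$ with no use of planarity, or $\chi'(G)=\Delta+1$, in which case $\Delta<\omega(G)\le\chi'(G)=\Delta+1$ forces $\lceil\omega(G)\rceil=\Delta+1=\chi'(G)$; either way the formula holds. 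If instead $\omega(G)\le\Delta$, then Conjecture~\ref{G-S} rules out $\chi'(G)\ge\Delta+2$ (that would force $\chi'(G)=\lceil\omega(G)\rceil\le\Delta$), so $\chi'(G)\in\{\Delta,\Delta+1\}$ and the target value is $\Delta$. Thus, modulo Goldberg--Seymour, the conjecture is equivalent to the assertion that \emph{every planar multigraph $G$ with $\omega(G)\le\Delta(G)$ is of Class~$1$}, and this is the statement I would prove.

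Suppose not, and let $G$ be a planar multigraph with $\omega(G)\le\Delta$ that is of Class~$2$ and edge-minimal with these two properties; then $G$ is $\Delta$-critical. The first step is to control parallel edges. The density hypothesis already forbids small dense odd subgraphs: a pair of vertices joined by $t$ parallel edges has $t\le\omega(G)\le\Delta$, and a ``thick triangle'' with pairwise multiplicity $t$ would give $\omega(G)\ge 3t$, forcing $t\le\Delta/3$; combined with Euler's formula (the underlying simple graph has at most $3|V(G)|-6$ edges) and the usual cut-vertex and $2$-cut reductions for $\Delta$-critical graphs, this forces $G$ to be essentially simple outside a bounded number of low-multiplicity spots. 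The second step is to extract strong local structure: apply Vizing's Adjacency Lemma together with the refinements obtained from the fan/Kierstead-path recoloring technology developed in this paper to show that every small-degree vertex of $G$ is surrounded by large-degree vertices in a rigid pattern, and that such patterns are incompatible with bounded multiplicity.

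The third step is a discharging argument on a fixed planar embedding of $G$. Assign initial charge $\deg(v)-6$ to each vertex $v$ and $2\deg(f)-6$ to each face $f$, so the total charge is $-12$ by Euler's formula; then design discharging rules, driven by the adjacency structure from Step~$2$ and by the bound $\omega(G)\le\Delta$ (which caps how many edges accumulate on any odd vertex set), so that after redistribution every vertex and face has nonnegative charge, contradicting the total being $-12$. For $\Delta$ large this should parallel the known discharging proofs that planar graphs of large maximum degree are of Class~$1$; the only extra work is making the rules robust against the bounded positive multiplicities surviving Step~$1$, which I expect to be manageable.

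The genuine obstacle — and the reason this statement remains open — is the band of small maximum degrees, say $3\le\Delta\le 6$. When $\Delta=3$ one must show a planar cubic multigraph that is not $3$-edge-colorable already has $\omega>3$; for the bridgeless case this is the Four Color Theorem, and in general one reduces to the bridgeless case by arguing that each bridge isolates an odd-order side which becomes a dense odd set, so this subcase is in reach. For $\Delta\in\{4,5,6\}$ genuine Class~$2$ planar graphs exist, and the claim becomes a planar, small-degree instance of the Overfull Subgraph Conjecture of Chetwynd and Hilton: every such $\Delta$-critical planar graph must contain an odd set $X$ with $|E(G[X])|>\Delta\lfloor|X|/2\rfloor$. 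Here I do not see how to avoid either a substantial new structure theorem for $\Delta$-critical planar graphs of small degree or a computer-assisted reducible-configuration analysis. My line of attack would be to push the multifan, Kierstead-path and pseudo-multifan machinery of this paper far enough to produce a finite list of unavoidable configurations in a $\Delta$-critical planar graph of small maximum degree, each of which is either reducible (contradicting minimality) or visibly forces the required dense odd subgraph; proving that such a list is genuinely exhaustive is the step I cannot currently complete.
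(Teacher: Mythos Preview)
This statement is an \emph{open conjecture}, not a theorem: the paper lists it alongside Hilton's Overfull Conjecture and explicitly says that both ``are wide open.'' There is therefore no proof in the paper to compare against, and you are right not to claim one.

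Your reduction via the Goldberg--Seymour Conjecture (now a theorem of Chen, Jing and Zang, as cited in the paper) is correct: modulo that result, Seymour's Exact Conjecture is equivalent to the statement that every planar multigraph $G$ with $\omega(G)\le\Delta(G)$ satisfies $\chi'(G)=\Delta(G)$. Your identification of the small-$\Delta$ band as the genuine obstruction is also accurate, but you understate its severity. The paper notes that the conjecture restricted to $3$-regular planar multigraphs is \emph{equivalent} to the Four-Color Theorem, and that already the $4$-regular case \emph{implies} the Four-Color Theorem. So your proposed Step~3 for $\Delta\in\{4,5,6\}$ --- a discharging-plus-unavoidable-configurations argument built on the multifan/Kierstead-path machinery of this paper --- would in particular yield a new proof of the Four-Color Theorem. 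That is not a gap in your logic so much as a calibration of difficulty: the ``step I cannot currently complete'' is, as far as anyone knows, at least as hard as the Four-Color Theorem itself, and the recoloring tools developed here are not aimed at (and give no visible leverage on) that problem.

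One small correction: your bound ``a pair of vertices joined by $t$ parallel edges has $t\le\omega(G)$'' is fine once you pass to a three-vertex set containing that pair (the definition of $\omega$ requires $|X|\ge 3$), but note that $t\le\Delta$ is already automatic from the degree condition, so this line does not actually exploit the density hypothesis.
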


\begin{CON}[Hilton's Overfull Conjecture~\cite{MR848854}, \cite{MR975994}]
Every graph $G$ with $\Delta(G)>\frac{1}{3}|V(G)|$ satisfies $\chi'(G)=\lceil \chi'_f(G)\rceil$.  
\end{CON}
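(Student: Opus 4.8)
Write $n=|V(G)|$ and $\Delta=\Delta(G)$, and recall from the excerpt that $\chi'_f(G)=\max\{\Delta,\omega(G)\}$ with $\omega(G)$ a lower bound for $\chi'(G)$. If $\omega(G)>\Delta$ then $\lceil\chi'_f(G)\rceil=\lceil\omega(G)\rceil\ge\Delta+1$, while Vizing's theorem gives $\chi'(G)\le\Delta+1\le\lceil\omega(G)\rceil$ and the lower bound gives $\chi'(G)\ge\lceil\omega(G)\rceil$; so $\chi'(G)=\lceil\chi'_f(G)\rceil$ already, with no hypothesis on $\Delta$. Thus the conjecture is equivalent to the single implication: \emph{if $\Delta>\tfrac13 n$ and $\omega(G)\le\Delta$, then $G$ is Class~$1$.} Call $G$ with $\omega(G)\le\Delta$ \emph{non-overfull}; for such $G$ put $\mu(v)=\Delta-d_G(v)\ge0$, so that applying $\omega(G)\le\Delta$ to $X=V(G)$ gives $\sum_v\mu(v)\ge\Delta$ when $n$ is odd. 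The plan is a three-step reduction: (A) regularize, (B) settle the regular case, (C) reassemble and mop up the small/dense residue.

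\emph{Step A (regularization).} Embed $G$ as a subgraph of a $\Delta$-regular graph $R$: if $n$ is odd adjoin an apex vertex $w$ so that $N:=|V(R)|$ is even, then add a set $F$ of edges until every vertex has degree $\Delta$, subject to the single constraint that $R$ remain non-overfull. Any $\Delta$-edge-colouring of $R$ restricts to one of $G$, so it suffices to colour $R$. The number of edges of $F$ is $\tfrac12\sum_v\mu(v)$ up to apex corrections, while the ``budget'' available inside an odd set $X$ is $\Delta\lfloor|X|/2\rfloor-|E(G[X])|$; the role of $\Delta>\tfrac13n$ is to guarantee that the regularization can be done inside the non-overfull world, i.e. that $F$ can be routed so that no such budget is overspent. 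Making this precise — especially when the deficient vertices cluster — is the delicate bookkeeping of Step A, but I expect it to go through by adding the edges of $F$ greedily, always respecting every currently tightest set, with a potential-function argument for termination. (When $\Delta\ge n-c$ no such embedding exists; that range is handled in Step C.)

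\emph{Step B (the crux).} Show that a $\Delta$-regular non-overfull graph $R$ on $N$ vertices ($N$ even) with $\Delta>\tfrac13 N$ is $1$-factorizable. When $N$ is large relative to $1/(3\Delta/N-1)$ I would run the Hamilton-decomposition / absorbing-method machinery behind the resolution of the $1$-Factorization Conjecture for large graphs: repeatedly extract perfect matchings keeping the remainder regular and well connected, using non-overfullness (in place of a $\Delta\ge N/2$ bound) to certify that the residual ``absorbing'' graph stays edge-colourable. The genuinely open regime is $\tfrac13 N<\Delta<\tfrac12 N$ with $N$ bounded, and this is \textbf{the main obstacle}: asymptotic tools say nothing there. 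For it I would deploy exactly the edge-colouring toolkit this paper develops. Fix $e=xy\in E(R)$ and take a Vizing $\Delta$-colouring of $R-e$; if $e$ is uncolourable, grow a multifan at $x$, extend it to a Kierstead path, then to a pseudo-multifan; the standard elementariness/Tashkinov arguments force the underlying vertex set to be large, while non-overfullness caps the size of any ``overfull-like'' sub-configuration, and the resulting contradiction yields a Kempe interchange that frees a colour at $x$. Colouring $R$ edge by edge produces the $1$-factorization.

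\emph{Step C (reassembly).} Combine (A) and (B), and handle separately the dense case $\Delta\ge n-c$ for a small absolute constant $c$, via the Chetwynd--Hilton analysis of the bounded-size graph $G[\{v:\mu(v)>0\}]$ together with its deficiencies (here $\omega(G)\le\Delta$ is used directly), with an induction on $n$ so that the finitely many remaining small non-overfull graphs with $\Delta>\tfrac13n$ are checked against the density bound. I should be candid that the weak link is the interface between Steps A and B: the recolouring argument of Step B wants the added edges $F$ to be precisely the ones that can always be recoloured last, and arranging for one global invariant — a refinement of $\omega$ — to be monotone \emph{simultaneously} under the edge additions of Step A and under the fan/Kempe recolourings of Step B is the piece I do not yet see how to complete, and is where a genuinely new idea is almost surely required.
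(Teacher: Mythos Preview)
The statement you are attempting to prove is Hilton's Overfull Conjecture, which the paper does \emph{not} prove; indeed, the paper explicitly says ``Both Seymour's Exact Conjecture and Hilton's Overfull Conjecture are wide open.'' There is therefore no proof in the paper to compare your proposal against. Your opening reduction (using $\omega(G)\le\chi'(G)\le\Delta+1$ for simple graphs to reduce to the implication ``$\omega(G)\le\Delta$ and $\Delta>n/3$ $\Rightarrow$ Class~1'') is correct and standard, and you are admirably candid about where the argument is incomplete.

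That said, the proposal does not constitute a proof, and the gaps are not merely technical. Step~A --- embedding a non-overfull $G$ into a $\Delta$-regular non-overfull $R$ --- is itself not known in the generality you need; a greedy or potential-function argument of the sort you sketch has no reason to succeed, because adding a single edge can simultaneously tighten many odd sets, and it is not clear the budgets can all be respected when $\Delta$ is only a bit above $n/3$. Step~B is the crux, and here you are effectively assuming the Overfull Conjecture for $\Delta$-regular graphs with $\Delta>N/3$: this is not a reduction to a known result but a restatement of a major open case. The large-$N$ machinery you cite (the Csaba--K\"uhn--Lo--Osthus--Treglown resolution of the 1-Factorization Conjecture) requires $\Delta\ge N/2$, and extending it down to $N/3$ is precisely what is not known. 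The multifan/Kierstead/pseudo-multifan toolkit developed in the present paper is tailored to graphs whose core $G_\Delta$ has maximum degree at most $2$, a drastic structural restriction that is absent in the general setting of Step~B; there is no evident way to port those arguments over. In short, your outline is a reasonable map of the territory, but each of its main steps is an open problem of comparable difficulty to the conjecture itself.
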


Goldberg-Seymour Conjecture was confirmed recently by Chen, Jing, and Zang~\cite{1901.10316}. 
Seymour's Exact Conjecture is equivalent to the Four-Color-Theorem when it is restricted to 3-regular planar multigraphs, and it  implies the Four-Color-Theorem when it is restricted to 4-regular planar multigraphs. 
Both Seymour's Exact Conjecture and Hilton's Overfull Conjecture are wide open. 

Classifying a graph as Class 1 or Class 2 is a very difficult problem in general even when restricted 
to the class of graphs with maximum degree three, see~\cite{Holyer}.   
Therefore, this problem is usually studied on particular classes of graphs.  One possibility is 
to consider graphs whose core has a simple structure (see~\cite[Sect.~4.2]{StiebSTF-Book}). 
Vizing~\cite{Vizing-2-classes} proved that if $G_\Delta$ has at most two vertices then $G$ 
is Class 1. Fournier~\cite{MR0349458} generalized Vizing's result by showing that 
if $G_\Delta$ contains no cycles then $G$ is Class 1. 
Thus a necessary condition for a graph to be Class 2 is to have a core
that contains cycles. Hilton and Zhao~\cite{MR1395947} considered the problem of classifying graphs
whose core is the disjoint union of cycles. Only a few such graphs are known to be Class 2.
These include the overfull graphs and the graph $P^*$, which is obtained from the Petersen
graph by removing one vertex.  In 1996, Hilton and Zhao~\cite{MR1395947} proposed the following conjecture, which  again relates $\chi'(G)$ to $\omega(G)$. 

\begin{CON}[Core Conjecture]\label{Core Conjecture}
	Let $G$ be a connected simple graph with $\Delta\ge 3$ and $\Delta(G_\Delta)\le 2$. 
	Then $G$ is Class 2 if and only if $G$ is overfull or $G=P^*$. 
\end{CON}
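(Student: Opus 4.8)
The plan is to prove the nontrivial (``only if'') direction in contrapositive form; the ``if'' direction needs nothing new, since an overfull graph has $\chi'=\Delta+1$ by the remarks above and $P^*$ is a classical Class~2 graph. Because $\Delta(P^*)=3$, for $\Delta\ge 4$ the conjecture collapses to the single assertion that a connected graph $G$ with $\Delta(G)\ge 4$ and $\Delta(G_\Delta)\le 2$ which is Class~2 must be overfull. Suppose this fails and take a counterexample $G$ with $|E(G)|$ minimum. A standard reduction --- using that a graph with $|G_\Delta|\le 2$ is Class~1 (Vizing), so that deleting an edge keeps the maximum degree at $\Delta$, and passing to a Class~2 component if an edge deletion disconnects $G$ --- lets us assume $G$ is $\Delta$-edge-critical: $\chi'(G)=\Delta+1$ but $\chi'(G-e)=\Delta$ for every $e\in E(G)$. (The disconnection case needs a little care, but causes no real trouble.)

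Next I would identify the core. By Fournier's theorem, $G_\Delta\ne\emptyset$ contains a cycle. I claim $G_\Delta$ is $2$-regular: for $v\in V(G_\Delta)$, since $d_{G_\Delta}(v)\le 2<\Delta=d_G(v)$, $v$ has a neighbour $w$ with $d_G(w)<\Delta$, and Vizing's Adjacency Lemma applied to $vw$ in the critical graph $G$ gives $v$ at least $\Delta-d_G(w)+1\ge 2$ neighbours of degree $\Delta$, so $d_{G_\Delta}(v)=2$. Hence $G_\Delta$ is a disjoint union of cycles; fix one of them, $C=v_1v_2\cdots v_kv_1$.

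The heart of the proof is an analysis of the $\Delta$-edge-colourings of $G-e$ for edges $e$ meeting $C$, conducted with multifans, Kierstead paths, and a pseudo-multifan gadget tailored to the core. Fix $\varphi\in\CC^\Delta(G-v_1v_2)$. Since $\varphi$ does not extend to $G$, the unique colour missing at $v_1$ and the one missing at $v_2$ are distinct, and the Kempe chain between $v_1$ and $v_2$ in these two colours is an unbreakable path. Beginning from this configuration, one grows at $v_1$ a maximal \emph{elementary} set $S\ni v_1,v_2$ --- a vertex set on which the missing-colour sets are pairwise disjoint --- by stepping along edges whose colour is missing at the current end vertex, enlarging multifans and Kierstead paths, and rotating colours along Kempe chains that close up; the rigidity of colourings of a critical graph, sharpened by $\Delta(G_\Delta)\le 2$ (which tightly constrains how the growth can stall once it reaches $C$), is what forces $S$ to keep expanding. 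Counting colours on $S$ gives the elementary-set bound
\[
\sum_{v\in S}\bigl(\Delta-d_G(v)\bigr)\;\le\;\Delta-2 ,
\]
so $G[S]$ is as dense as possible. The remaining --- and hardest --- task is to upgrade this into overfullness of $G$: one must show $S$ cannot stop growing before it absorbs all of $V(G)$ (so that the bound becomes the overfull inequality for $G$) and that $|V(G)|$ is odd. This forces a long case analysis over the possible terminations of a Kierstead path or multifan, over the interaction of $C$ with the other core cycles, and over the way the non-core part attaches to the core; when $\Delta=3$ exactly one configuration survives, namely $P^*$, and for $\Delta\ge 4$ every branch ends either with an extension of $\varphi$ to $G$ --- contradicting $\chi'(G)=\Delta+1$ --- or with $G$ overfull, contradicting the choice of $G$.

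I expect the last step to be the main obstacle. The growth of $S$ stalls in a large number of superficially distinct local pictures, and each must be shown either to admit a colour extension or to permit further growth; the pseudo-multifan is introduced precisely to repackage the degenerate stalls that occur where $S$ meets the core cycle, so that the growth argument can be iterated past them. A second, purely technical, difficulty is preserving the elementary property of $S$ through the Kempe rotations, since a single rotation can spoil missing-colour relations established earlier; keeping this under control is what dictates the careful order of operations built into the multifan/Kierstead-path formalism.
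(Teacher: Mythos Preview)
Your outline identifies the right toolkit (multifans, Kierstead paths, pseudo-multifans, Kempe changes) and the right target (showing $V(G)$ is elementary, hence $G$ overfull), but what you present is a plan rather than a proof: you explicitly flag the crucial step---showing the elementary set $S$ grows to all of $V(G)$---as ``the main obstacle'' and do not carry it out. That is precisely where the entire difficulty lies, and the paper does \emph{not} attack it the way you propose.

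Two concrete divergences are worth noting. First, you begin by deleting a core edge $v_1v_2$ with both endpoints of degree $\Delta$; the paper instead deletes $rs_1$ with $r\in V_\Delta$ and $s_1\in V_{\Delta-1}$, so that $|\overline\varphi(s_1)|=2$. This extra missing colour is what drives the two-sequence structure of the typical multifan and the lollipop lemmas; starting from two $\Delta$-vertices you get only a single missing colour at each end, and the paper's machinery does not obviously transfer. Second, and more importantly, the paper never tries to grow a single elementary set out to $V(G)$ by pushing past every possible stall. Instead it proves three structural rigidity theorems about an HZ-graph: (i) adjacent $\Delta$-vertices share their entire $(\Delta-1)$-neighbourhood; (ii) adjacent $(\Delta-1)$-vertices share their entire $\Delta$-neighbourhood; (iii) two $\Delta$-vertices whose $(\Delta-1)$-neighbourhoods overlap but differ must overlap in exactly $\Delta-3$ vertices. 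These are established with the multifan/Kierstead/pseudo-multifan technology, and then a short global counting argument (Theorem~\ref{mainthm2}) forces either the complete-bipartite picture $G\in\mathcal O_\Delta$ or a numerical contradiction. Your ``grow $S$ directly'' scheme gives no mechanism for handling the case where $G_\Delta$ has several cycles with genuinely different $(\Delta-1)$-neighbourhoods; the paper's Theorem~\ref{Thm:nonadj_Delta_vertex} and Corollary~\ref{Thm:adj_small_vertex2} are exactly what is needed there, and they are absent from your plan.
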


This conjecture has been one of the most fundamental unsolved problems in graph edge colorings. 
It attempts to classify graphs with $\Delta(G_\Delta)\le 2$.  This attempt   is 
an initial but  significant  move from the  result by Fournier~\cite{MR0349458}  that 
if $G_\Delta$ contains no cycles then $G$ is Class 1.  Secondly, if the Core Conjecture is true, 
it leads to an easy approach to determine the 
chromatic index for graphs $G$ with $\Delta(G_\Delta)\le 2$, by just counting the number of 
edges in $G$ if $G\ne P^*$.  

We call a connected Class 2 graph $G$ with $\Delta(G_\Delta)\leq 2$ a \emph{Hilton-Zhao graph (HZ-graph)}.  Clearly, $P^*$ is an HZ-graph  with $\chi'(P^*)=4$ and $\Delta(P^*)=3$. Hence the Core Conjecture is equivalent to the claim that every HZ-graph  $G\not=P^*$ with $\Delta(G)\geq 3$ is overfull. Not much progress has been made since the conjecture was proposed by Hilton and Zhao in 1996. A first breakthrough was achieved in 2003, when Cariolaro and Cariolaro \cite{CariolaroC2003} settled the
base case $\Delta=3$. They proved that $P^*$ is the only HZ-graph with maximum degree $\Delta=3$, an alternative proof was given later by Kr\'al', Sereny, and Stiebitz (see \cite[pp. 67--63]{StiebSTF-Book}). The next case $\Delta=4$ was recently solved by Cranston and Rabern \cite{CranstonR2018hilton}, they proved that the only HZ-graph with maximum degree $\Delta=4$ is $ K_5^-$ ($K_5$ with an edge deleted). 
The conjecture is wide open for $\Delta \ge 5$.  In this paper, we confirm 
the Core Conjecture for all HZ-graphs $G$ with $\Delta\ge 4$ 
as below.
\begin{THM}\label{Thm:main}
	Let $G$ be a connected  graph with $\Delta(G)=\Delta$. If $\Delta\ge 4$ and $\Delta(G_\Delta)\le 2$, then  $G$ is Class 2 if and only if $G$ is overfull. 
\end{THM}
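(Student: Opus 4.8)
The ``if'' direction requires no new work: if $G$ is overfull then $|V(G)|$ is odd and $\omega(G)\ge |E(G)|/\lfloor |V(G)|/2\rfloor>\Delta$, so $\chi'(G)=\Delta+1$ by Vizing's theorem and $G$ is Class~2, exactly as observed in the introduction. All the content is in the converse: every connected Class~2 graph with $\Delta(G_\Delta)\le 2$ and $\Delta\ge 4$ is overfull. Since $P^*$ has maximum degree $3$, this is precisely the statement that every HZ-graph with $\Delta\ge 4$ is overfull.

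\textbf{Reduction and coarse structure.} A standard argument with a minimum counterexample lets us assume $G$ is $\Delta$-critical: a critical subgraph $H$ of an HZ-graph is connected, has $\chi'(H)=\Delta(H)+1=\Delta+1$ and $H_\Delta\subseteq G_\Delta$, hence is again an HZ-graph with the same $\Delta$. For $\Delta$-critical $G$, Vizing's Adjacency Lemma (VAL) imposes a rigid skeleton. First, every core vertex $w$ has at least two neighbours in the core: otherwise $w$ has a neighbour $v$ with $d(v)\le\Delta-1$, and VAL applied at $w$ gives $w$ at least $\Delta-d(v)+1\ge 2$ core neighbours. With the hypothesis $\Delta(G_\Delta)\le 2$ this forces $G_\Delta$ to be a disjoint union of cycles, which is non-empty since $\Delta(G)=\Delta$. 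Second, any $v\notin G_\Delta$ has (by the same two cases, using $\delta(G)\ge 2$) at least two core neighbours $w$; applying VAL at such a $w$ to the edge $wv$ gives $\Delta-d(v)+1\le 2$, so $d(v)\ge\Delta-1$ and hence $d(v)=\Delta-1$. Thus \emph{every vertex of $G$ has degree $\Delta$ or $\Delta-1$}; writing $D$ for the set of degree-$(\Delta-1)$ vertices, the identity $\Delta|V(G)|-|D|=2|E(G)|$ shows that $G$ is overfull if and only if $|V(G)|$ is odd \emph{and} $|D|\le\Delta-1$. So it remains to establish these two facts.

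\textbf{The colouring argument.} Suppose for contradiction that $G$ is not overfull, so either $|V(G)|$ is even or $|V(G)|$ is odd with $|D|\ge\Delta$. Fix an edge $e=xy$, take a $\Delta$-edge-colouring $\flow$ of $G-e$ (available since $G$ is critical), and try to recolour $\flow$ into a $\Delta$-colouring of $G$, contradicting Class~2. The engine is the theory of multifans and Kierstead paths based at $x$ with respect to $(\flow,e)$, which records how the colours missing at the various vertices can be permuted along such a path; the point of the structural reductions above is that the presence of $\Delta$ or more low vertices, or the parity obstruction when $|V(G)|$ is even, produces enough missing colours along a path reaching a core cycle to push the recolouring through. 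Making this precise forces one past Kierstead paths of length two, which is where the \emph{pseudo-multifan} apparatus enters: one analyses Kierstead paths of length up to four or five emanating from a core cycle, classifies the admissible colour patterns on them, and shows case by case that each pattern either admits the recolouring or compels $G$ to be overfull.

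I expect the decisive obstacle to be exactly this last case analysis --- controlling the interaction among several core cycles and the degree-$(\Delta-1)$ vertices attached to them, and closing off the delicate near-extremal configurations (even order, or $|D|=\Delta$ with the low vertices distributed just so) where neither a colouring move nor an elementary counting bound is immediately available --- rather than the setup, which is essentially forced once one has the degree dichotomy of Step~1. The base case $\Delta=4$, where the only surviving graph should be $K_5^-$ (which is overfull), can either be folded into the same analysis or quoted from Cranston and Rabern.
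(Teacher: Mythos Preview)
Your setup and coarse structure are correct and essentially reproduce Lemma~\ref{biregular}: HZ-graphs are already $\Delta$-critical (so the reduction step is unnecessary, though harmless), $G_\Delta$ is $2$-regular, $\delta(G)=\Delta-1$, and overfull is equivalent to $|V(G)|$ odd together with $|D|\le\Delta-1$. The gap is that your ``colouring argument'' is a plan without a target: you name the right tools (multifans, Kierstead paths, pseudo-multifans) but do not identify what structural fact they must establish, and your final paragraph explicitly concedes that the decisive case analysis is left open. Saying that one will ``classify the admissible colour patterns on Kierstead paths of length up to four or five'' and that each pattern ``either admits the recolouring or compels $G$ to be overfull'' is not yet a strategy; it is a description of the search space.

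The paper's route is more pointed and does not aim directly at a recolouring contradiction. It first proves three neighbourhood theorems via the \emph{lollipop} construction (a typical multifan with a single two-edge path attached, not long Kierstead paths): adjacent $\Delta$-vertices have identical $(\Delta-1)$-neighbourhoods (Theorem~\ref{Thm:vizing-fan}); adjacent $(\Delta-1)$-vertices have identical $\Delta$-neighbourhoods (Theorem~\ref{Thm:adj_small_vertex}); and for $\Delta\ge 7$, non-adjacent $\Delta$-vertices with distinct but overlapping $(\Delta-1)$-neighbourhoods overlap in exactly $\Delta-3$ vertices (Theorem~\ref{Thm:nonadj_Delta_vertex}). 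With these in hand, a short combinatorial argument (Theorem~\ref{mainthm2}) finishes: either all core vertices share a common $(\Delta-1)$-neighbourhood, in which case the elementary colouring guaranteed by Theorem~\ref{Thm:vizing-fan} forces $|V_{\Delta-1}|=\Delta-2$ and $n$ odd directly; or a double count of edges between a fixed $N_{\Delta-1}(u)$ and the core vertices meeting it gives a contradiction. The missing idea in your proposal is precisely this intermediate layer of neighbourhood-equality theorems --- Theorem~\ref{Thm:vizing-fan} in particular is the heart of the paper and absorbs essentially all of the Kempe-change work --- and without it the multifan analysis has no organising principle.
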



Since overfull graphs are Class 2, it suffices to only show the ``only if'' part of the statement above. The proof of Theorem~\ref{Thm:main} develops certain edge  coloring techniques in dealing with the occurrence of  ``lollipop'' structures
in a $\Delta$-critical graphs,  where a lollipop structure can be seen as a combination of a multifan and a Kierstead path. 
The establishment of the coloring techniques  indicates that in general, in a $\Delta$-critical 
graph $G$, if $u$ is a vertex that is adjacent to many small degree vertices, then vertices that are of distance at most three 
to $u$ in $G$ are not adjacent to too many small degree vertices outside $N_G(u)$. 
In a sense, it says that   a $\Delta$-critical graph $G$ cannot have too many small degree vertices, or equivalently  $G$ is close to a $\Delta$-regular graph.
This brings hope  to  find a subgraph $H$ in $G$ so that the density of  $H$ is close to $\omega(G)$, which  
thereby shedding some light on  attacking density related conjectures such as Seymour's Exact Conjecture and Hilton's Overfull Conjecture. 
The confirmation of Seymour's Exact Conjecture applying edge coloring techniques 
will provide a computer-free proof for the Four-Color-Theorem. 

It  is  also worth mentioning that our proofs imply a polynomial-time algorithm
to edge color any graph $G$ with  $\Delta(G)\ge 4$ and $\Delta(G_\Delta)\le 2$
by using exactly $\chi'(G)$ colors. 




\section{Main Theorems} 

In this section, we prove Theorem~\ref{Thm:main} by assuming the truth of  Theorem~\ref{Thm:vizing-fan} to Theorem~\ref{Thm:nonadj_Delta_vertex}. 
 We start with some  concepts and auxiliary results. 

Let $G$ be a graph. 
For a vertex $v\in V(G)$, $N_G(v)$ is the set of neighbors of $v$ in $G$, and 
$d_G(v)=|N_G(v)|$ is the degree of  $v$ in $G$. 
The closed neighborhood of $v$ in $G$, denoted  $N_G[v]$,  is defined by $N_G(v)\cup \{v\}$. 
We simply write $N(v), N[v]$,  and 
$d(v)$ if $G$ is clear. 
For $e\in E(G)$, $G-e$
denotes the graph obtained from $G$ by deleting the edge $e$. 
We write $u\not\sim v$ if $u$ is nonadjacent to $v$ in $G$. 
The symbol $\Delta$  is reserved for $\Delta(G)$, the maximum degree of $G$
throughout  this paper.  A \emph{$k$-vertex}  in $G$  is a vertex of degree exactly $k$
in $G$, and a \emph{$k$-neighbor}  of a vertex $v$ is a neighbor of $v$ that is a $k$-vertex in $G$.
Let  $i\ge 1$ be an integer and $v\in V(G)$. Define 
$$ V_i=\{w\in V(G)\,:\, d_G(w)=i\} , \quad \quad N_{i}(v)=N_G(v)\cap V_i, \quad \,\mbox{and} \quad N_i[v]=N_i(v)\cup \{v\}. $$
For  $X\subseteq V(G)$, we define  $\mathit {N_{G}(X)=\bigcup_{x\in X}N_G(x)}$ 
 and $ \mathit {N_i(X)=N_G(X)\cap V_i}$.
 For $H\subseteq G$, we simple write $N_G(H)$ for $N_G(V(H))$. 

Similar to vertex coloring, it is essential to color the ``core'' part of a graph and then extend the coloring to the whole graph without increasing the total number of colors. This leads to the concept of {\it edge-chromatic criticality\/}. An edge $e\in E(G)$ is a \emph{critical edge} of $G$ if $\chi'(G-e)<\chi'(G)$. 
A graph $G$ is called {\it edge $\Delta$-critical} or simply {\it $\Delta$-critical\/}  if $G$ is connected,  $\chi(G)=\Delta+1$,  and every edge of $G$ is critical.    
Critical graphs are useful since they provide more information about the  structure around a vertex than general Class 2 graphs. For 
example,  Vizing's Adjacency Lemma (VAL) from 1965~\cite{Vizing-2-classes} is a useful tool that reveals certain structure at a vertex
by assuming the criticality of an edge.

\begin{LEM}[Vizing's Adjacency Lemma (VAL)]Let $G$ be a Class 2 graph with maximum degree $\Delta$. If $e=xy$ is a critical edge of $G$, then $x$ has at least $\Delta-d_G(y)+1$ $\Delta$-neighbors in $V(G)\setminus \{y\}$.
	\label{thm:val}
\end{LEM}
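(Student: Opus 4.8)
The plan is the classical one: fix a $\Delta$-edge-coloring of $G-xy$, and exhibit $\Delta-d_G(y)+1$ edges from $x$ to distinct vertices of degree $\Delta$, none of them $y$. Since $G$ is Class 2, $\chi'(G)=\Delta+1$, and since $xy$ is critical, $\chi'(G-xy)\le\Delta$; so fix $\varphi\in\CC^\Delta(G-xy)$, and for $v\in V(G)$ let $\overline{\varphi}(v)\subseteq[1,\Delta]$ be the set of colors appearing on no edge at $v$, so that $|\overline{\varphi}(x)|=\Delta-d_G(x)+1\ge 1$ and $|\overline{\varphi}(y)|=\Delta-d_G(y)+1\ge 1$. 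The observation driving everything is that $\overline{\varphi}(x)\cap\overline{\varphi}(y)=\emptyset$: a color missing at both $x$ and $y$ could be assigned to $xy$, producing a $\Delta$-edge-coloring of $G$ and contradicting $\chi'(G)=\Delta+1$.

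Hence, for each $\alpha\in\overline{\varphi}(y)$ we have $\alpha\notin\overline{\varphi}(x)$, so $\alpha$ lies on a unique edge $xz_\alpha$; here $z_\alpha\ne y$ (the edge $xy$ is absent from $G-xy$) and $\alpha\mapsto z_\alpha$ is injective since distinct colors occupy distinct edges at $x$. So $\{z_\alpha:\alpha\in\overline{\varphi}(y)\}$ is a set of $\Delta-d_G(y)+1$ distinct neighbors of $x$, all different from $y$, and the lemma reduces to showing that each $z_\alpha$ has degree $\Delta$. Suppose not, i.e.\ $d_G(z_\alpha)<\Delta$ for some $\alpha$; write $z=z_\alpha$, so $\overline{\varphi}(z)\ne\emptyset$. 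First I would check $\overline{\varphi}(x)\cap\overline{\varphi}(z)=\emptyset$: a color missing at both $x$ and $z$ could be moved onto $xz$, which frees $\alpha$ at $x$, and then $xy$ could be colored $\alpha$ --- a contradiction. Now pick $\beta\in\overline{\varphi}(x)$ and $\gamma\in\overline{\varphi}(z)$; these are three distinct colors ($\beta,\gamma\ne\alpha$ since $\alpha$ appears at $z$, and $\beta\ne\gamma$ by the disjointness just shown), with $\beta$ present at $z$ and $\gamma$ present at $x$.

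The remaining work --- and the only genuine obstacle --- is an alternating-path (Kempe chain) recoloring argument to finish the contradiction. The maximal $\{\alpha,\beta\}$-alternating path through $x$ begins $x\to z\to\cdots$; if it avoided $y$, swapping its colors would free $\alpha$ at $x$ and let us color $xy$, so it must be a path terminating at $y$, which places $z$ on it. Bringing in the color $\gamma$ and performing one further Kempe swap then yields a coloring in which $x$ and $y$ miss a common color, giving the contradiction. (Equivalently, one may grow the Vizing fan at $x$ from the pair $(y,z)$ and use the pairwise disjointness of the missing-color sets along a fan together with its maximality.) The difficulty here is purely in the bookkeeping: a Kempe swap may recolor the very edge $xz$ on which the argument hinges, reintroduce at $x$ or $y$ the color one is trying to liberate, or disturb the alternating structure needed for the next swap --- so one must track precisely which of $x$, $y$, $z$ and the auxiliary neighbors lie on each chain before each swap is performed. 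Everything else follows directly from the definitions and Vizing's theorem.
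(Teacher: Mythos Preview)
The paper does not prove this lemma; VAL is stated as a classical result of Vizing and cited without argument. So there is no paper proof to compare against, but your outline is the textbook route and is correct through the reduction: with $\varphi\in\CC^\Delta(G-xy)$ fixed, the colors in $\pbar(y)$ pick out $\Delta-d_G(y)+1$ distinct neighbours $z_\alpha\ne y$ of $x$, and it would suffice to show each has degree~$\Delta$.

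The genuine gap is in your last paragraph. Once you know $x$ and $y$ are $(\alpha,\beta)$-linked and $x$ and $z$ are $(\beta,\gamma)$-linked, no single further Kempe change on the three colours $\alpha,\beta,\gamma$ forces a common missing colour at $x$ and $y$: each of the natural candidates (swapping the $(\beta,\gamma)$-, $(\alpha,\gamma)$-, or $(\alpha,\beta)$-chain) simply permutes the roles of the three colours among $x,y,z$ and hands you back an isomorphic configuration. The ``bookkeeping'' you flag is therefore not incidental --- the three-colour, three-vertex picture is genuinely stuck, and the argument does not close there. What actually finishes the proof is exactly the parenthetical alternative you name: grow the full Vizing fan (in the paper's language, a maximal multifan) at $x$ starting from $(y,z)$, use that its vertex set is $\varphi$-elementary (this is the paper's Lemma~\ref{thm:vizing-fan1}\eqref{thm:vizing-fan1a}), and combine elementarity with maximality to count at least $\Delta-d_G(y)+1$ vertices of degree $\Delta$ among the fan vertices. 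So the fan is not an ``equivalent'' piece of bookkeeping for the three-colour swap; it is where the proof actually lives, and your ``one further Kempe swap'' should be replaced by it.
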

Let $G$ be a graph and 
$\varphi\in \CC^k(G-e)$ for some edge $e\in E(G)$ and some integer $k\ge 0$. 
For any $v\in V(G)$, the set of colors \emph{present} at $v$ is 
$\varphi(v)=\{\varphi(f)\,:\, \text{$f$ is incident to $v$}\}$, and the set of colors \emph{missing} at $v$ is $\pbar(v)=[1,k]\setminus\varphi(v)$.  
For a vertex set $X\subseteq V(G)$,  define 
$$
\pbar(X)=\bigcup _{v\in X} \pbar(v).
$$
The set $X$ is called \emph{elementary} with respect to $\varphi$  or simply \emph{$\varphi$-elementary} if $\pbar(u)\cap \pbar(v)=\emptyset$
for every two distinct vertices $u,v\in X$.   Sometimes, we just say that $X$ 
is elementary if the  edge coloring is understood.  

A graph $G$ with $\chi'(G)=\lceil\omega(G)\rceil$
is called an  \emph{elementary graph}. 
Note that for $e\in E(G)$ and  $\varphi \in \CC^\Delta (G-e)$,  $V(G)$ is $\varphi$-elementary implies that $G$ is elementary 
by taking $X=V(G)$ in Definition~\eqref{density}.  Overfull graphs are certainly elementary. 
All known HZ-graphs except $P^*$ are elementary. Hilton and Zhao in~\cite{MR1172373} proved that every HZ-graph also satisfies the following properties. 
\begin{LEM}\label{biregular}
If $G$ is  an HZ-graph with maximum degree $\Delta$, then  the following statements hold.
	\begin{enumerate}[(a)]
		\item $G$ is $\Delta$-critical and $G_\Delta$ is 2-regular. 
		\item $\delta(G)=\Delta-1$, or $\Delta=2$ and $G$ is an odd cycle. 
		\item Every vertex of $G$ has at least two neighbors in $G_\Delta$. 
	\end{enumerate}
\end{LEM}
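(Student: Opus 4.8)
The plan is to show that $G$ is forced to be $\Delta$-critical and then to read off (b) and (c) from Vizing's Adjacency Lemma. First I dispose of small $\Delta$: a connected graph with $\Delta\le 1$ is $K_1$ or $K_2$, both Class~1, so no such HZ-graph exists, and if $\Delta=2$ then $G$ is a path or a cycle, so being Class~2 forces $G$ to be an odd cycle, in which case $G_\Delta=G$ is 2-regular, $\delta(G)=2=\Delta$, every vertex has two neighbors in $G_\Delta$, and $G$ is $2$-critical. So from now on assume $\Delta\ge 3$.

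Next let $H$ be an edge-minimal subgraph of $G$ with $\chi'(H)>\Delta$; such $H$ exists because $\chi'(G)=\Delta+1$. By a standard argument $H$ is connected, $\Delta$-critical, and $\Delta(H)=\Delta$ (if $\Delta(H)\le\Delta-1$ then $\chi'(H)\le\Delta(H)+1\le\Delta$). Since $H\subseteq G$ we have $V(H_\Delta)\subseteq V(G_\Delta)$ and $H_\Delta\subseteq G_\Delta$, so $\Delta(H_\Delta)\le 2$. Using only the criticality of $H$, the bound $\Delta(H_\Delta)\le 2$, and Lemma~\ref{thm:val} (VAL), I would establish: (i) $\delta(H)\ge 2$, since the edge at a degree-$1$ vertex could be color-extended into a $\Delta$-coloring of $H$ minus that edge; (ii) every vertex $v$ of $H$ has at least two neighbors of degree $\Delta$---if $v$ has a neighbor $u$ with $d_H(u)\le\Delta-1$ then VAL applied to the critical edge $vu$ already yields two such neighbors, and otherwise every one of the $d_H(v)\ge 2$ neighbors of $v$ has degree $\Delta$; (iii) $H_\Delta$ is 2-regular, because by (ii) each of its vertices has at least two and by hypothesis at most two neighbors inside $H_\Delta$; and (iv) $\delta(H)=\Delta-1$, because $\delta(H)=\Delta$ would make $H$ $\Delta$-regular with $\Delta(H_\Delta)=\Delta\ge 3$, while if $d_H(v)\le\Delta-2$ for some $v$ then, taking a $\Delta$-neighbor $w$ of $v$ (one exists by (ii)), VAL applied to the critical edge $wv$ forces $w$ to have at least $\Delta-d_H(v)+1\ge 3$ neighbors of degree $\Delta$, contradicting (iii).

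The crux is then to prove $H=G$. Suppose not. If $V(H)\subsetneq V(G)$, connectivity of $G$ provides an edge of $G$ joining $V(H)$ to its complement; otherwise $V(H)=V(G)$ and $E(H)\subsetneq E(G)$; either way there is an edge $f=ab\in E(G)\setminus E(H)$ with $a\in V(H)$. Since $f$ is a $G$-edge at $a$ missing from $H$, $d_H(a)\le d_G(a)-1\le\Delta-1$, hence $d_H(a)=\Delta-1$ by (iv) and $d_G(a)=\Delta$. By (ii), $a$ has a neighbor $p$ in $H$ with $d_H(p)=\Delta$, and then $d_G(p)=\Delta$ as well. By (iii), $p$ has exactly two neighbors $r,s$ of degree $\Delta$ in $H$, and $r,s\ne a$ since $d_H(a)=\Delta-1$. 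But in $G$ the vertices $r$ and $s$ still have degree $\Delta$ and remain adjacent to $p$, while $a$ is now a third neighbor of $p$ of degree $\Delta$; thus $d_{G_\Delta}(p)\ge 3$, contradicting $\Delta(G_\Delta)\le 2$. Therefore $G=H$, which is $\Delta$-critical with $G_\Delta$ 2-regular, $\delta(G)=\Delta-1$, and every vertex having at least two neighbors in $G_\Delta$; this gives (a), (b), (c).

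I expect the main obstacle to be this last step, specifically the point that one should extract the contradiction at a $\Delta$-neighbor $p$ of $a$ rather than at $a$ itself: a direct argument at $a$ fails exactly when the far endpoint $b$ of the missing edge has degree less than $\Delta$, whereas invoking the 2-regularity of $H_\Delta$ at $p$ circumvents this. A secondary point needing care is verifying that the VAL bookkeeping in (ii)--(iv) uses nothing beyond $\Delta(H_\Delta)\le 2$ and $\Delta\ge 3$, since that is precisely where the hypothesis on the core is consumed.
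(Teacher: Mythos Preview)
Your argument is correct. The paper does not give its own proof of this lemma; it simply attributes the result to Hilton and Zhao~\cite{MR1172373} and quotes the statement. So there is no in-paper proof to compare against, but your self-contained derivation via VAL is exactly the standard route: pass to a $\Delta$-critical subgraph $H$, use VAL together with $\Delta(H_\Delta)\le 2$ to force $H_\Delta$ to be $2$-regular and $\delta(H)=\Delta-1$, and then argue $H=G$. The step you flagged as the likely obstacle---pushing the contradiction to a $\Delta$-neighbor $p$ of $a$ rather than to $a$ itself---is indeed the right move, and your execution of it is clean: the three distinct $G$-neighbors $a,r,s$ of $p$ all have $G$-degree $\Delta$, violating $\Delta(G_\Delta)\le 2$. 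One tiny cosmetic point: in (iv) you use $\delta(H)\ge\Delta-1$ to pin down $d_H(a)=\Delta-1$, which is what (iv) gives; you might state this as ``$\Delta-1\le d_H(a)\le d_G(a)-1\le\Delta-1$'' to make the squeeze explicit, but the logic is already there.
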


Stiebitz et al.  in~\cite[Sect.~4.2]{StiebSTF-Book} defined a 
class of elementary graphs, and showed that  if an HZ-graph is elementary, then it belongs to the class. 
\begin{DEF}\label{class-O}
	Let $\Of_\Delta$  be a class of graphs constructed as follows: Let  $\Delta\ge4$,  $n_1$ and $n_2$ be integers with $3\le n_1\le \Delta-1$, $n_2=\Delta-2$, and  $n_1+n_2$ being odd. 
	Graphs in $\Of_\Delta$ are obtained from a complete bipartite graph $K_{n_1,n_2}$ 
	by inserting on the set of $n_1$ independent vertices a 2-regular simple graph with $n_1$ vertices
	and on the set of $n_2$ independent vertices a $(\Delta-1-n_1)$-regular simple  graph with $n_2$ vertices. 
\end{DEF}

By counting edges, it is  straightforward  to verify that   graphs in $\Of_\Delta$
are overfull, and so are elementary. 
Stiebitz et al.  in~\cite[Sect.~4.2]{StiebSTF-Book} proved that for an HZ-graph $G$ with maximum degree $\Delta$, if  it is  elementary,    then either $\Delta=2$ and $G$ is an odd cycle, or $\Delta\ge 4$ and $G\in \Of_\Delta$. As 
a consequence of this result and the observation that overfull graphs and $P^*$
are Class 2, Conjecture~\ref{Core Conjecture} is equivalent to the following conjecture. 

\begin{CON}\label{core-conjecture-2}
If $G$ is an HZ-graph with maximum degree $\Delta$,  then 
either $G\in \Of_\Delta$, or 
$\Delta=2$ and $G$ is an odd cycle, or $\Delta=3$ and $G=P^*$. 
\end{CON}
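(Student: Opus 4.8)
We prove Conjecture~\ref{core-conjecture-2}; by the discussion above this is equivalent to the Core Conjecture, and its case $\Delta\ge 4$ yields Theorem~\ref{Thm:main}. The cases $\Delta\le 3$ are already known: by Lemma~\ref{biregular}(b) an HZ-graph with $\Delta=2$ is an odd cycle, and by the theorem of Cariolaro and Cariolaro~\cite{CariolaroC2003} the unique HZ-graph with $\Delta=3$ is $P^*$. So the plan is to prove the one remaining assertion: \emph{every HZ-graph $G$ with $\Delta\ge 4$ is elementary}; the classification of Stiebitz et al.~\cite[Sect.~4.2]{StiebSTF-Book} quoted above then immediately gives $G\in\Of_\Delta$, and since graphs in $\Of_\Delta$ are overfull this also yields the ``only if'' direction of Theorem~\ref{Thm:main}. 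So fix an HZ-graph $G$ with $\Delta\ge 4$. By Lemma~\ref{biregular}, $G$ is $\Delta$-critical, $G_\Delta$ is $2$-regular, $\delta(G)=\Delta-1$, and every vertex has at least two neighbors in $G_\Delta$; hence $V(G)=V_\Delta\cup V_{\Delta-1}$, every $\Delta$-vertex has exactly two $\Delta$-neighbors and $\Delta-2$ neighbors in $V_{\Delta-1}$, and every $(\Delta-1)$-vertex has at most $\Delta-3$ neighbors in $V_{\Delta-1}$. By the remark preceding Lemma~\ref{biregular}, it suffices to produce a critical edge $e$ and a coloring $\varphi\in\CC^\Delta(G-e)$ for which $V(G)$ is $\varphi$-elementary.

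To produce such a pair, I would root the analysis at a $\Delta$-vertex $u$ — a vertex type that, by Lemma~\ref{biregular}, is adjacent to $\Delta-2$ vertices of $V_{\Delta-1}$, the maximum over all vertices of $G$ — choose $x\in N_{\Delta-1}(u)$ (nonempty since $\Delta-2\ge 2$), set $e=ux$, and take $\varphi\in\CC^\Delta(G-e)$, which exists by $\Delta$-criticality; then $u$ misses exactly one color and $x$ misses exactly two. Relative to $(e,\varphi)$ one grows at $u$ a \emph{maximal multifan}; where it can no longer be extended as a multifan one continues it into a \emph{Kierstead path}, and then into a \emph{lollipop} (a multifan with a Kierstead-path stick attached) and, when appropriate, a \emph{pseudo-multifan} (a relaxed version of a multifan). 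The coloring results to be proved in the body of the paper (Theorems~\ref{Thm:vizing-fan}--\ref{Thm:nonadj_Delta_vertex}) are tailored to show that all of these structures are $\varphi$-elementary, and, more quantitatively, that because $u$ is adjacent to the $\Delta-2$ small-degree vertices of $N_{\Delta-1}(u)$, no vertex within distance three of $u$ can be adjacent to too many small-degree vertices lying outside $N_G(u)$. Combined with the rigid local structure of $G$ (minimum degree $\Delta-1$, $G_\Delta$ $2$-regular, at least two core-neighbors at every vertex), these bounds should force every vertex of $G$ to be absorbed into one of the elementary structures rooted near $u$; consequently $\pbar(v)\cap\pbar(w)=\emptyset$ for all distinct $v,w\in V(G)$, i.e.\ $V(G)$ is $\varphi$-elementary, and $G$ is elementary.

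The main obstacle is precisely the coloring machinery of Theorems~\ref{Thm:vizing-fan}--\ref{Thm:nonadj_Delta_vertex}. Plain multifans, and Kierstead paths up to a short bounded length, are classically elementary, but a longer Kierstead path need not be; the novelty — and the difficulty — is to show that once a multifan is ``saturated'' at $u$ and a Kierstead-path stick is attached, the resulting lollipop is still elementary, and that this survives the Kempe-chain recolorings used to enlarge the structure. Tracking how the missing-color sets move under interleaved recolorings along the stick and around the fan, and ruling out the many ways a lollipop or pseudo-multifan could fail to be elementary, is where essentially all of the technical weight of the argument lies; everything above it (the reduction to elementariness via Lemma~\ref{biregular} and the Stiebitz et al. classification, the choice of $u$ and $e$, and the concluding deductions) is comparatively routine. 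I would therefore state Theorems~\ref{Thm:vizing-fan}--\ref{Thm:nonadj_Delta_vertex} next, derive Theorem~\ref{Thm:main} and Conjecture~\ref{core-conjecture-2} from them along the lines above, and devote the rest of the paper to their proofs.
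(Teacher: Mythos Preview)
Your reduction to $\Delta\ge 4$ and to the assertion ``$G$ is elementary'' is correct and matches the paper, and you are right that Theorems~\ref{Thm:vizing-fan}--\ref{Thm:nonadj_Delta_vertex} are the engine. But the step you describe for passing from those theorems to ``$V(G)$ is $\varphi$-elementary'' does not work as stated. Growing a multifan/pseudo-multifan at a single $\Delta$-vertex $r$ gives, via Theorem~\ref{Thm:vizing-fan}\eqref{ele}, only that $N_{\Delta-1}[r]$ is $\varphi$-elementary; it covers $V(G)$ (in the sense that every vertex with a missing color lies in this set) precisely when $N_{\Delta-1}(r)=V_{\Delta-1}$, i.e.\ when all $\Delta$-vertices share the same $(\Delta-1)$-neighborhood. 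When that fails --- and nothing you wrote rules it out --- no single lollipop or pseudo-multifan rooted ``near $u$'' can absorb the $(\Delta-1)$-vertices outside $N_{\Delta-1}(u)$, and there is no mechanism for stitching together elementary structures rooted at different $\Delta$-vertices into a single $\varphi$-elementary set.

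The paper handles exactly this dichotomy. If $N_{\Delta-1}(u)=N_{\Delta-1}(v)$ for all $u,v\in V_\Delta$, then $N_{\Delta-1}(r)=V_{\Delta-1}$ and Theorem~\ref{Thm:vizing-fan}\eqref{ele} gives $V(G)$ $\varphi$-elementary directly; a parity check on $|V(G)|$ then forces $G\in\Of_\Delta$. If instead some pair $u,v\in V_\Delta$ has $N_{\Delta-1}(u)\ne N_{\Delta-1}(v)$, the paper does \emph{not} try to exhibit an elementary coloring at all: it uses Theorem~\ref{Thm:vizing-fan}\eqref{common} and Theorem~\ref{Thm:adj_small_vertex} to force $\Delta\ge 7$, invokes Corollary~\ref{Thm:adj_small_vertex2} to make $V_{\Delta-1}$ independent, partitions $V_\Delta$ by its $(\Delta-1)$-neighborhoods (these differ pairwise in at most one vertex by Theorem~\ref{Thm:nonadj_Delta_vertex}), and then double-counts edges between $N_{\Delta-1}(u)$ and the relevant blocks of $V_\Delta$ to obtain a numerical contradiction. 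So Theorems~\ref{Thm:vizing-fan}--\ref{Thm:nonadj_Delta_vertex} are structural statements about equality of neighborhoods, not elementariness of ever-larger gadgets, and the derivation of Conjecture~\ref{core-conjecture-2} from them is a global counting argument, not a local absorption one. Your sketch is missing that second case and the counting; without it the argument is incomplete.
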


We will in fact prove this equivalent form of the Core Conjecture for $\Delta\ge 4$  based on  the following results. 



\begin{THM}\label{Thm:vizing-fan}
	If $G$ is an HZ-graph with maximum degree $\Delta\ge 4$, then the following two statements hold.
	\begin{enumerate}[(i)]
		\item For every two adjacent vertices  $u,v\in V_{\Delta}$, $N_{\Delta-1}(u)=N_{\Delta-1}(v)$.  \label{common}
		\item For any $r\in V_\Delta$, there exist a vertex $s\in N_{\Delta-1}(r)$
		and  a coloring 
		$\varphi\in \CC^\Delta(G-rs)$ such that $N_{\Delta-1}[r]$ 
		is $\varphi$-elementary.  
		\label{ele}
	\end{enumerate}
\end{THM}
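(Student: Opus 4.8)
The aim is to establish the two structural properties of HZ-graphs with $\Delta\ge 4$. The natural engine here is Vizing's Adjacency Lemma (Lemma~\ref{thm:val}) combined with the multifan and Kierstead-path machinery (together forming what the paper calls the ``lollipop''), applied at a vertex $r\in V_\Delta$ whose missing-color behaviour we want to control. Throughout we use Lemma~\ref{biregular}: $G$ is $\Delta$-critical, $G_\Delta$ is $2$-regular, $\delta(G)=\Delta-1$, and every vertex has at least two neighbors in $G_\Delta$; so the degree spectrum is just $\{\Delta-1,\Delta\}$.

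**Step 1: Part~\eqref{common}.** Let $u,v\in V_\Delta$ be adjacent. Since $G_\Delta$ is $2$-regular, $u$ has exactly two $\Delta$-neighbors, so it has exactly $\Delta-2$ neighbors in $V_{\Delta-1}$, and likewise for $v$. The edge $uv$ is critical, so fix $\varphi\in\CC^\Delta(G-uv)$; then $N_G[u]\cup N_G[v]$ carries a multifan structure at $u$ (or at $v$), and the usual elementary-set argument forces $\overline\varphi(u)$, $\overline\varphi(v)$, and the missing sets of their common neighbors to be pairwise disjoint. A color count: $u$ and $v$ together miss $2$ colors (each misses exactly one since $d_G(u)=d_G(v)=\Delta$ in $G-uv$ each has one edge removed — careful: in $G-uv$ each of $u,v$ has degree $\Delta-1$, so misses exactly one color), and each $(\Delta-1)$-neighbor misses two colors. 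If some $w\in N_{\Delta-1}(u)\setminus N_{\Delta-1}(v)$, one shows by a Kierstead-path / path-recoloring argument that the colors missing at $w$ must be ``absorbed'' in a way that contradicts elementariness of the combined fan, since $v$ would have too few available colors to complete. Concretely I would assume for contradiction that $N_{\Delta-1}(u)\ne N_{\Delta-1}(v)$, pick $w$ in the symmetric difference, extend the multifan at $u$ through $w$ into a Kierstead path toward $v$, and derive that $G-uv$ is actually $\Delta$-edge-colorable in a way extending to $uv$ — contradicting criticality. The key leverage is that $|N_{\Delta-1}(u)|=|N_{\Delta-1}(v)|=\Delta-2$ is large, so any mismatch makes the fans overlap badly.

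**Step 2: Part~\eqref{ele}.** Fix $r\in V_\Delta$. By part~\eqref{common} applied along $G_\Delta$, the set $N_{\Delta-1}(r)$ is ``shared'' by $r$ and both of its $\Delta$-neighbors; this rigidity is what lets us hope $N_{\Delta-1}[r]$ is elementary for a suitable coloring. Pick any $s\in N_{\Delta-1}(r)$ (which is nonempty since $r$ has $\Delta-2\ge 2$ such neighbors) and any $\varphi\in\CC^\Delta(G-rs)$, which exists because $rs$ is critical. Now $r$ together with $N_G(r)$ forms a multifan at $r$ with respect to $\varphi$ with $s$ as the ``uncolored'' neighbor; the standard multifan lemma gives that $N_G[r]$ is $\varphi$-elementary, hence in particular $N_{\Delta-1}[r]\subseteq N_G[r]$ is $\varphi$-elementary. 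So part~\eqref{ele} should follow fairly directly from the multifan elementariness once we know a multifan on all of $N_G(r)$ exists; the content is checking that the fan can be built to span every neighbor of $r$, which uses that each neighbor of $r$ misses a color that (after Vizing rotations) lands in $\overline\varphi(r)$ — this is where $\Delta(G_\Delta)\le 2$ and VAL feed in to guarantee enough $\Delta$-neighbors to pivot on.

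**Main obstacle.** The hard part is part~\eqref{common}: proving the two $(\Delta-1)$-neighborhoods coincide is not a one-line fan argument, because the multifan at $u$ does not automatically see $v$'s private $(\Delta-1)$-neighbors. One must pass to a genuinely longer structure — a Kierstead path or lollipop rooted at $u$ passing through $v$ — and run a careful case analysis on which colors are missing where, repeatedly invoking criticality of edges other than $uv$ (so that $G$ minus those edges is colorable) to force recolorings that eventually either complete $\varphi$ on $uv$ or exhibit the needed equality. Managing the color bookkeeping so the fan stays elementary at every stage, and handling the boundary cases where $n_1$ is near $\Delta-1$ (few $\Delta$-neighbors to work with), is the crux; this is presumably where the paper's new ``pseudo-multifan'' and lollipop lemmas (Theorems~\ref{Thm:vizing-fan}--\ref{Thm:nonadj_Delta_vertex}) do the heavy lifting, and I would structure the argument to quote those as black boxes and reduce \eqref{common} and \eqref{ele} to them.
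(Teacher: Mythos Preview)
Your proposal has two concrete gaps.

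\textbf{Step 2 rests on a false claim.} You write that ``the standard multifan lemma gives that $N_G[r]$ is $\varphi$-elementary''. This is not what Lemma~\ref{thm:vizing-fan1} says: it only guarantees that $V(F)$ is $\varphi$-elementary, where $F$ is a multifan actually constructed via condition~(F1). There is no reason a multifan at $r$ should contain every $(\Delta-1)$-neighbor of $r$; a vertex $s_j\in N_{\Delta-1}(r)$ enters the fan only if $\varphi(rs_j)$ is missing at some earlier fan vertex, and this chain of implications can simply terminate before reaching all of $N_{\Delta-1}(r)$. The paper introduces the pseudo-multifan (Section~3.4) precisely because a maximum multifan may be strictly smaller than $N_{\Delta-1}[r]$, and proving elementariness of the leftover vertices is a genuine obstacle handled by Claim~4.4 together with Lemma~\ref{pseudo-fan-ele}. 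Your sentence ``the content is checking that the fan can be built to span every neighbor of $r$'' correctly locates the difficulty, but the proposed mechanism (``each neighbor of $r$ misses a color that after Vizing rotations lands in $\overline\varphi(r)$'') is exactly what fails in general.

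\textbf{Step 1 is set up at the wrong edge, and the closing paragraph is circular.} The paper never works with $\varphi\in\CC^\Delta(G-uv)$ for adjacent $u,v\in V_\Delta$; with both endpoints of degree $\Delta$, each misses only one color in $G-uv$, and there is very little fan structure to exploit. Instead the paper deletes an edge $rs_1$ with $s_1\in V_{\Delta-1}$, so that $s_1$ misses two colors, and studies a \emph{lollipop} $(F,ru,u,ux,x)$ with $x\in N_{\Delta-1}(u)\setminus N_{\Delta-1}(r)$. The argument then forces, via Lemmas~\ref{Lem:2-non-adj1} and~\ref{Lem:2-non-adj2}, that $u$ has at least two, then at least three, $(\Delta-1)$-neighbors outside $N_{\Delta-1}(r)$, and the third one produces the contradiction (Claims~4.6--4.8). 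Your Kierstead-path-through-$v$ sketch does not provide a substitute for this escalation. Finally, your suggestion to ``quote Theorems~\ref{Thm:vizing-fan}--\ref{Thm:nonadj_Delta_vertex} as black boxes'' is circular: Theorem~\ref{Thm:vizing-fan} is the statement you are asked to prove, and Theorems~\ref{Thm:adj_small_vertex} and~\ref{Thm:nonadj_Delta_vertex} are proved in the paper \emph{using} Theorem~\ref{Thm:vizing-fan}, not the other way around. The black boxes you actually need are the lollipop lemmas of Section~3.5.
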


For an HZ-graph $G$ with $\Delta(G)\ge 4$,  each component of $G_\Delta$ is a cycle by Lemma~\ref{biregular}. So Theorem~\ref{Thm:vizing-fan}~(\ref{common}) implies that $N_{\Delta-1}(x)=N_{\Delta-1}(y)$ for every two vertices $x,y\in V_\Delta$ that are  from a same cycle of $G_\Delta$.

%

\begin{THM}
	\label{Thm:adj_small_vertex}
If $G$ is an HZ-graph with maximum degree $\Delta\ge 4$,  then for every two adjacent vertices $x, y\in V_{\Delta-1}$, $N_\Delta(x)=N_\Delta(y)$.
\end{THM}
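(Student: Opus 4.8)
The plan is to reduce the statement to a claim about the cycle components of $G_\Delta$ using Theorem~\ref{Thm:vizing-fan}, and then to obtain a contradiction from an edge coloring of $G-xy$.

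First I would reduce. By the symmetry between $x$ and $y$ it suffices to prove $N_\Delta(x)\subseteq N_\Delta(y)$. By Lemma~\ref{biregular}, $G_\Delta$ is $2$-regular, so each of its components is a cycle, and in fact a chordless cycle of $G$ (a vertex of $V_\Delta$ has exactly its two cycle-neighbours inside $V_\Delta$). If $z\in N_\Delta(x)$ lies on the component $C$ of $G_\Delta$, then $x\in N_{\Delta-1}(z)$, and by Theorem~\ref{Thm:vizing-fan}~(\ref{common}) together with the remark following it, $N_{\Delta-1}(w)=N_{\Delta-1}(z)$ for every $w\in V(C)$; hence $V(C)\subseteq N_\Delta(x)$. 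Thus $N_\Delta(x)$ and $N_\Delta(y)$ are each unions of components of $G_\Delta$, and it is enough to show that if $C$ is a component of $G_\Delta$ with $V(C)\subseteq N_\Delta(x)$ then $V(C)\subseteq N_\Delta(y)$. Suppose not, and fix such a $C$ with $V(C)\cap N_\Delta(y)=\emptyset$.

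Next I would record the local structure around $C$. Set $S:=N_{\Delta-1}(v)$ for $v\in V(C)$, which is well defined by the previous paragraph; by Lemma~\ref{biregular} and Theorem~\ref{Thm:vizing-fan} we have $|S|=\Delta-2$, $x\in S$, and $y\notin S$ (since $y\in V_{\Delta-1}$ is nonadjacent to every vertex of $C$). For each $v\in V(C)$ the two $\Delta$-neighbours of $v$ lie on $C$ and its remaining $\Delta-2$ neighbours are exactly $S$, so $N_G(v)=N_C(v)\cup S$, and in particular $N_G(V(C))\subseteq V(C)\cup S$. Because $x$ is adjacent to every vertex of $V(C)$ and also to $y\notin V(C)$, we get $|V(C)|\le d_G(x)-1=\Delta-2$; combined with $|V(C)|\ge 3$ this forces $\Delta\ge 5$. (When $\Delta=4$ the theorem is vacuous: an edge inside $V_{\Delta-1}=V_3$ would, by the same argument, require a $3$-cycle of $G_4$ among the at most two remaining neighbours of a degree-$3$ vertex.)

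The core of the argument is a coloring step. Since $G$ is $\Delta$-critical, $xy$ is critical, so fix $\varphi\in\CC^\Delta(G-xy)$. Then $\{x,y\}$ is $\varphi$-elementary with $|\pbar(x)|=|\pbar(y)|=2$; write $\pbar(x)=\{\alpha_1,\alpha_2\}$ and $\pbar(y)=\{\beta_1,\beta_2\}$, four distinct colours, while every $v\in V(C)$ has $\pbar(v)=\emptyset$ since $v\notin\{x,y\}$ has $\Delta$ edges in $G-xy$. Because $xy$ cannot be coloured, for every $i,j$ the $(\alpha_i,\beta_j)$-Kempe chain through $x$ is an $x$--$y$ path, and since each $v\in V(C)$ sees all $\Delta$ colours, $v$ is an internal vertex of exactly one $(\alpha_i,\beta_j)$-chain for each pair. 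The plan is to exploit the complete join between $x$ and $V(C)$ and the cycle $C$ itself: one grows the multifan at $x$ from the uncoloured edge $xy$ and extends it by a Kierstead path into $V(C)$, forming a multifan/Kierstead-path hybrid — a ``lollipop'' — that wraps around the chordless cycle $C$. Elementarity of the lollipop, the fact that $x$ and $y$ together miss only four colours, and the tight interaction of the $(\alpha_i,\beta_j)$-chains with $V(C)$ should then force either a colour missing simultaneously at $x$ and $y$, or a Kempe swap along $C$ that creates such a common missing colour; either outcome lets $xy$ be coloured, contradicting criticality.

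The hard part is this last step. A maximal multifan at the degree-$(\Delta-1)$ vertex $x$ need not reach all of $V(C)$, so one cannot simply invoke elementarity of a multifan; one must instead work with the lollipop structures threaded through the induced cycle $C$ and verify that the colour rotations involved are legal and preserve the configuration. Keeping track of how the two colours missing at $y$ sit relative to the colours on the edges from $x$ into $V(C)$, and relative to $G[S]$, is where the case analysis concentrates, and I expect it to require the lollipop lemmas developed for the proof of Theorem~\ref{Thm:main}, possibly together with Theorem~\ref{Thm:nonadj_Delta_vertex} applied to nonadjacent vertices of $C$ and other $\Delta$-vertices.
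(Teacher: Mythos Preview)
Your reduction in the first two paragraphs is correct, and the counting argument that disposes of $\Delta=4$ is clean. The gap is in the coloring step for $\Delta\ge 5$.

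You propose to work with $\varphi\in\CC^\Delta(G-xy)$ and to grow a multifan or lollipop at $x$ reaching into $V(C)$. But the machinery you invoke is built for the opposite configuration. In the paper, a lollipop $(F,ru,u,ux,x)$ has its multifan $F$ centered at a $\Delta$-vertex $r$, with all other fan vertices in $V_{\Delta-1}$; the tail goes to a $\Delta$-neighbour $u$ of $r$ and then out to a $(\Delta-1)$-vertex. Your plan centers the fan at $x\in V_{\Delta-1}$ and aims it at $V(C)\subseteq V_\Delta$. This stalls at once: in $G-xy$ every $v\in V(C)$ has $\pbar(v)=\emptyset$, so condition (F1) cannot be used to extend the multifan past a $\Delta$-vertex, and the Kierstead-path lemma you would need (Lemma~\ref{Lemma:kierstead path1}) requires the two intermediate/end vertices to have degree $<\Delta$, which rules out threading through $C$. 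None of Lemmas~\ref{Lemma:extended multifan}--\ref{Lem:2-non-adj2} applies to your setup, and Theorem~\ref{Thm:nonadj_Delta_vertex} (which already needs $\Delta\ge 7$) is vacuous for two vertices on the same cycle of $G_\Delta$, since those share $(\Delta-1)$-neighbourhoods by Theorem~\ref{Thm:vizing-fan}.

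The paper takes the complementary route: pick $r\in N_\Delta(x)\setminus N_\Delta(y)$ and work with $\varphi\in\CC^\Delta(G-rs_1)$ for the $s_1\in N_{\Delta-1}(r)$ supplied by Theorem~\ref{Thm:vizing-fan2b}\,(\ref{ele2}), so that $N_{\Delta-1}[r]$ is the vertex set of a typical $2$-inducing multifan or pseudo-multifan and all the linkage lemmas (Lemmas~\ref{thm:vizing-fan2} and~\ref{pseudo-fan-ele}) are available. If $x$ lies outside the multifan part, two swaps at $y$ force $\pbar(y)=\pbar(x)$ and then $P_x(\pbar(x),\varphi(xy))=xy$ contradicts Lemma~\ref{pseudo-fan-ele}. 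If $x$ lies in the multifan, a shift arranges $x=s_1$; one $(\pbar(y),1)$-swap at $y$ and a $(1,\Delta)$-swap at $y$ then give $P_{s_1}(\Delta,\varphi(s_1y))=s_1y$, contradicting Lemma~\ref{thm:vizing-fan2} or Lemma~\ref{pseudo-fan-ele}. The whole argument is under a page and uses no lollipop once Theorem~\ref{Thm:vizing-fan2b} is in hand; the point is to center everything at a $\Delta$-vertex, not at $x$.
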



\begin{THM}
	\label{Thm:nonadj_Delta_vertex}
	Let $G$ be an HZ-graph with maximum degree $\Delta\ge 7$ and 
	$u, v\in V_{\Delta}$ be two non-adjacent vertices.
	If $N_{\Delta-1}(u)\ne N_{\Delta-1}(v)$ and $N_{\Delta-1}(u)\cap  N_{\Delta-1}(v)\ne \emptyset$,
then $|N_{\Delta-1}(u)\cap  N_{\Delta-1}(v)|=\Delta-3$, i.e. $|N_{\Delta-1}(u)\setminus N_{\Delta-1}(v)|=|N_{\Delta-1}(v)\setminus  N_{\Delta-1}(u)|=1$.
\end{THM}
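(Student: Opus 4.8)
The plan is to reduce the theorem to the single inequality $|N_{\Delta-1}(u)\cap N_{\Delta-1}(v)|\ge\Delta-3$, the reverse inequality being immediate: $N_{\Delta-1}(u)\ne N_{\Delta-1}(v)$ while $|N_{\Delta-1}(u)|=|N_{\Delta-1}(v)|=\Delta-2$, since by Lemma~\ref{biregular} the graph $G$ is $\Delta$-critical with $V(G)=V_\Delta\cup V_{\Delta-1}$ and $G_\Delta$ is $2$-regular, so that $u$ and $v$ each have exactly two neighbors in $V_\Delta$. First I would record the structural picture. By Theorem~\ref{Thm:vizing-fan}(\ref{common}) together with the remark following it, $N_{\Delta-1}$ is constant along each cycle of $G_\Delta$; let $C_1,C_2$ be the cycles of $G_\Delta$ through $u,v$, and set $A=N_{\Delta-1}(u)$, $B=N_{\Delta-1}(v)$. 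Then $A\ne B$ forces $C_1\ne C_2$; every vertex of $A$ is adjacent to every vertex of $C_1$ and every vertex of $B$ to every vertex of $C_2$; and, fixing $s\in A\cap B$, the vertex $s$ is adjacent to all of the disjoint set $C_1\cup C_2$, so $|C_1|+|C_2|\le d_G(s)=\Delta-1$. (In particular the hypothesis $\Delta\ge7$ is automatic here, since $|C_1|,|C_2|\ge3$.) Writing $t=|A\cap B|$, it remains to rule out $t\le\Delta-4$, that is, $|A\setminus B|\ge2$ and $|B\setminus A|\ge2$.

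Assume $t\le\Delta-4$. Applying Theorem~\ref{Thm:vizing-fan}(\ref{ele}) at $r=u$ yields $s_0\in A$ and $\varphi\in\CC^\Delta(G-us_0)$ for which $A\cup\{u\}=N_{\Delta-1}[u]$ is $\varphi$-elementary. In any $\Delta$-coloring of $G-us_0$ the total deficiency of $A\cup\{u\}$ equals $\Delta$ for structural reasons ($u$ misses one color, $s_0$ misses two, and each of the other $\Delta-3$ vertices of $A$ misses one); hence, $A\cup\{u\}$ being $\varphi$-elementary, we get $\pbar(A\cup\{u\})=[1,\Delta]$, and, the deficiencies being independent of the coloring, this persists under any recoloring that keeps $A\cup\{u\}$ elementary. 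Each vertex of $(C_1\cup C_2)\setminus\{u\}$ lies in $V_\Delta$ and is incident only to colored edges, hence misses no color, so $Y:=A\cup\{u\}\cup C_1\cup C_2$ is $\varphi$-elementary with $\pbar(Y)=[1,\Delta]$. The crux is now to grow $A\cup\{u\}$ into a ``lollipop'' at $u$ --- a combination of a multifan and a Kierstead path, whose vertex set is elementary --- passing through the common neighbor $s$, then to a vertex $y\in C_2$ along the edge $sy$, and then on into $B\setminus A$. Since the vertex set of the lollipop is elementary and the $\Delta-1$ vertices of its fan part at $u$ together miss, disjointly, all $\Delta$ colors, any further vertex the lollipop reaches must miss no color and so lie in $V_\Delta$; but starting from $y$, every neighbor not already in the structure lies in $B\setminus A\subseteq V_{\Delta-1}$, and as $|B\setminus A|\ge2$ the path can be routed to some $b^\ast\in B\setminus A$ avoiding both the current vertex set and the at most one color-obstruction the coloring imposes. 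Then $b^\ast$ is a $(\Delta-1)$-vertex forced to miss no color --- a contradiction --- so $t=\Delta-3$.

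The real difficulty is this final construction: proving that the lollipop can be carried across the bridge vertex $s$, through $C_2$, and on into $B\setminus A$, and that its vertex set stays elementary under the successive recolorings. This is exactly what the multifan / Kierstead-path / pseudo-multifan apparatus of the paper is for, and the case analysis it entails is the substance of the argument: whether $s=s_0$ (so that both colors missing at $s_0$ must be tracked along the path), whether the Kierstead path terminates against a $\Delta$-vertex or can be extended, which colors sit on the edges $sy$ and $yb^\ast$, and the Kempe-chain swaps implementing each recoloring. The inequality $|B\setminus A|\ge2$ --- equivalently $t\le\Delta-4$ --- is precisely the slack consumed in evading the single obstruction, which is why $\Delta-3$, and nothing larger, is the extremal value of $|N_{\Delta-1}(u)\cap N_{\Delta-1}(v)|$.
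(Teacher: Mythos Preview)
Your setup is fine and essentially equivalent to the paper's: your path $u\!-\!s\!-\!y\!-\!b^\ast$ (center of fan, common $(\Delta{-}1)$-neighbor, $\Delta$-vertex on the other cycle, $(\Delta{-}1)$-vertex outside) is exactly the paper's Kierstead path $r\!-\!s_1\!-\!u\!-\!x$ with the roles of the two $\Delta$-vertices exchanged. The gap is in the contradiction mechanism.

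You assert that the vertex set of the ``lollipop'' --- meaning the full fan $A\cup\{u\}$ together with $y$ and $b^\ast$ --- is $\varphi$-elementary, and deduce that $b^\ast$ must miss no color. But nothing in the paper's toolkit yields elementariness of this large set. Lemma~\ref{Lemma:kierstead path1} applies only to a four-vertex Kierstead path $\{u,s_0,y,b^\ast\}$, and even then only when the degree hypothesis is met; it says nothing about the remaining $\Delta-3$ vertices of the fan $A$. From four-vertex elementariness you only get $\pbar(b^\ast)\cap(\pbar(u)\cup\pbar(s_0))=\emptyset$, which forbids at most three colors and is no contradiction. The paper's genuine ``lollipop'' (Section~3.5) is a different object --- there the $\Delta$-vertex is \emph{adjacent} to the fan center, which is impossible in the present non-adjacent setting --- and even there the paper never claims elementariness of the whole lollipop vertex set.

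The paper's actual proof of Theorem~\ref{Thm:nonadj_Delta_vertex} does not work by extending an elementary set. It takes two vertices $x,y\in N_{\Delta-1}(u)\setminus N_{\Delta-1}(r)$ (this is where $|A\cap B|\le\Delta-4$ enters), normalizes their missing colors via Kempe swaps, and then repeatedly invokes the \emph{linkage} part of Lemma~\ref{Lemma:kierstead path1}(b) together with Lemmas~\ref{thm:vizing-fan1}--\ref{thm:vizing-fan2} to force incompatible $(\alpha,\beta)$-linkage constraints among $r,s_1,x,y$ and the fan vertices. The contradiction arises from these linkage incompatibilities, case by case, not from a single elementariness count. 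Your last paragraph correctly senses that the case analysis is the substance, but your proposed shortcut --- ``the extended set is elementary, done'' --- does not go through; the honest route is the several-page Kempe-chain case split the paper carries out in Section~6. (A minor side point: your claim that every neighbor of $y$ outside the structure lies in $B\setminus A$ is also off --- $y$ has two $\Delta$-neighbors on $C_2$.)
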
	


\begin{COR}
	\label{Thm:adj_small_vertex2}
If $G$ is an HZ-graph with maximum degree $\Delta\ge 7$  and there exist $u, v\in V_{\Delta}$ such that  $N_{\Delta-1}(u)\ne N_{\Delta-1}(v)$, 
		then $V_{\Delta-1}$ is an independent set in $G$.
\end{COR}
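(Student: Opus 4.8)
The plan is to prove the contrapositive: assuming $G$ is an HZ-graph with $\Delta\ge 7$ and $V_{\Delta-1}$ is \emph{not} independent, say $xy\in E(G)$ with $x,y\in V_{\Delta-1}$, I will derive two incompatible bounds on $|V_\Delta|$, which contradicts the hypothesis that $N_{\Delta-1}(u)\ne N_{\Delta-1}(v)$ for some $u,v\in V_\Delta$. Throughout I use Lemma~\ref{biregular}: $G$ is connected and $\Delta$-critical, $G_\Delta$ is $2$-regular, and (since $\Delta\ge 7$) $\delta(G)=\Delta-1$, so every vertex has degree $\Delta$ or $\Delta-1$ and has at least two $\Delta$-neighbors. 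By Theorem~\ref{Thm:vizing-fan}~(\ref{common}), all vertices on a common cycle $C$ of $G_\Delta$ have the same set $N_{\Delta-1}(C)$ of $(\Delta-1)$-neighbors, and a $\Delta$-vertex $w$ on $C$ satisfies $N_G(w)=N_{\Delta-1}(C)\cup\{w',w''\}$, where $w',w''$ are its two neighbors on $C$. Two consequences are used repeatedly: (P1) a $(\Delta-1)$-vertex adjacent to one vertex of a cycle $C$ lies in $N_{\Delta-1}(C)$, hence is adjacent to every vertex of $C$; in particular $N_\Delta(z)$ is a union of cycles of $G_\Delta$ for every $z\in V(G)$. (P2) By Theorem~\ref{Thm:adj_small_vertex}, adjacent $(\Delta-1)$-vertices have the same $\Delta$-neighborhood.

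First I would establish a connectivity fact $(\star)$: for every nonempty proper subfamily $\mathcal{F}$ of the cycles of $G_\Delta$, some cycle in $\mathcal{F}$ shares a $(\Delta-1)$-neighbor with a cycle outside $\mathcal{F}$. Indeed, if not, put $U=\big(\bigcup_{C\in\mathcal{F}}C\big)\cup\big(\bigcup_{C\in\mathcal{F}}N_{\Delta-1}(C)\big)$. Using the description of $N_G(w)$ above for $\Delta$-vertices of $U$, using (P1) for a $(\Delta-1)$-vertex of $U$ adjacent to a $\Delta$-vertex, and using (P2) for an edge of $U$ inside $V_{\Delta-1}$, one checks that $N_G(U)\subseteq U$. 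Then $U$ is a union of components of $G$; it is nonempty, and it is proper since it omits the $\Delta$-vertices on cycles outside $\mathcal{F}$, contradicting connectedness.

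Next, from the edge $xy$ I would show $|V_\Delta|\le\Delta-2$. Let $S=N_\Delta(x)=N_\Delta(y)$ (equality by Theorem~\ref{Thm:adj_small_vertex}); by (P1), $S$ is a union of cycles, and $|S|\le d_G(x)-1=\Delta-2$ since $y\in N_G(x)\setminus V_\Delta$. Let $\mathcal{D}$ be the family of cycles contained in $S$; by (P1) this is exactly $\{C:\ x,y\in N_{\Delta-1}(C)\}$, and $\mathcal{D}\ne\emptyset$ because $S$ contains a cycle. Suppose $\mathcal{D}$ is not all cycles. By $(\star)$ there are $C\in\mathcal{D}$ and $C'\notin\mathcal{D}$ sharing a $(\Delta-1)$-neighbor; then $N_{\Delta-1}(C)\ne N_{\Delta-1}(C')$ (otherwise $C'\in\mathcal{D}$), so applying Theorem~\ref{Thm:nonadj_Delta_vertex} to a $\Delta$-vertex of $C$ and a $\Delta$-vertex of $C'$ (nonadjacent since $C\ne C'$ and $G_\Delta$ is $2$-regular, with $(\Delta-1)$-neighborhoods that are distinct but meet) gives $|N_{\Delta-1}(C)\setminus N_{\Delta-1}(C')|=1$. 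Since $x\ne y$ both lie in $N_{\Delta-1}(C)$, at least one of them, say $y$, lies in $N_{\Delta-1}(C')$, so $C'\subseteq N_\Delta(y)=S$; but then $x\in N_{\Delta-1}(C')$ as well, so $C'\in\mathcal{D}$, a contradiction. Hence $\mathcal{D}$ is all cycles, so $V_\Delta=S$ and $|V_\Delta|\le\Delta-2$.

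Finally I would derive $|V_\Delta|\ge\Delta+2$ from the hypothesis. Since $N_{\Delta-1}(u)\ne N_{\Delta-1}(v)$ for some $u,v\in V_\Delta$, partitioning the cycles by their common $(\Delta-1)$-neighborhood gives at least two parts, and applying $(\star)$ to one part yields cycles $C_1,C_2$ with $N_{\Delta-1}(C_1)\ne N_{\Delta-1}(C_2)$ that share a $(\Delta-1)$-neighbor. By Theorem~\ref{Thm:nonadj_Delta_vertex}, $N_{\Delta-1}(C_1)\setminus N_{\Delta-1}(C_2)=\{a\}$ for a single vertex $a$; then $a$ is a $(\Delta-1)$-vertex adjacent to all of $C_1$ and, by (P1), to no vertex of $C_2$, so $C_1\subseteq N_\Delta(a)\subseteq V_\Delta\setminus C_2$. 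If $a$ had a $(\Delta-1)$-neighbor $z$, then by (P2) $N_\Delta(z)=N_\Delta(a)\supseteq C_1$, so $z\in N_{\Delta-1}(C_1)\setminus\{a\}\subseteq N_{\Delta-1}(C_2)$, forcing $z$ adjacent to every vertex of $C_2$; but $N_\Delta(z)=N_\Delta(a)$ is disjoint from $C_2\ne\emptyset$, a contradiction. Hence $N_G(a)=N_\Delta(a)\subseteq V_\Delta\setminus C_2$, so $\Delta-1=d_G(a)\le|V_\Delta|-|C_2|\le|V_\Delta|-3$, i.e.\ $|V_\Delta|\ge\Delta+2$, contradicting $|V_\Delta|\le\Delta-2$. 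The parts I expect to require the most care are the verification of $(\star)$ (that $U$ is genuinely closed under neighbors, which needs both (P1) and Theorem~\ref{Thm:adj_small_vertex}), and making sure each invocation of Theorem~\ref{Thm:nonadj_Delta_vertex} meets its hypotheses—nonadjacent $\Delta$-vertices with distinct but intersecting $(\Delta-1)$-neighborhoods—so that the exact equality $|N_{\Delta-1}(C_1)\setminus N_{\Delta-1}(C_2)|=1$ is available, since that single exceptional vertex $a$ is what drives the final contradiction.
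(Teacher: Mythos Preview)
Your proof is correct and uses the same essential ingredients as the paper's---Theorems~\ref{Thm:vizing-fan}, \ref{Thm:adj_small_vertex}, and \ref{Thm:nonadj_Delta_vertex}, a connectivity argument through the cycles of $G_\Delta$, and an ``exceptional'' $(\Delta-1)$-vertex that has no $(\Delta-1)$-neighbors and misses an entire cycle of $G_\Delta$---but the organization is genuinely different. You separate the contradiction into two global bounds on $|V_\Delta|$: from the edge $xy$ you deduce $V_\Delta = N_\Delta(x)$ and hence $|V_\Delta|\le\Delta-2$, while from the hypothesis alone you extract the exceptional vertex $a$ and obtain $|V_\Delta|\ge\Delta+2$. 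The paper instead argues locally: starting from some $w\in N_\Delta(x)$, it uses a path argument (equivalent to your lemma $(\star)$, but inlined) to find $w'$ with $N_{\Delta-1}(w)\ne N_{\Delta-1}(w')$ and $N_{\Delta-1}(w)\cap N_{\Delta-1}(w')\ne\emptyset$, shows $x,y\in N_{\Delta-1}(w)\cap N_{\Delta-1}(w')$, takes the single exceptional vertex $z\in N_{\Delta-1}(w')\setminus N_{\Delta-1}(w)$, and proves directly that $N_{\Delta-1}(z)=\emptyset$ and $N_\Delta(z)\subseteq N_\Delta(x)$, giving $d_G(z)<d_G(x)$. Your explicit connectivity lemma $(\star)$ and the clean split into upper and lower bounds make the logical structure more transparent and show as a byproduct that $|V_\Delta|\ge\Delta+2$ whenever the hypothesis holds; the paper's route is shorter and reaches the degree contradiction in one pass without the intermediate global claim $V_\Delta=N_\Delta(x)$.
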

\pf
Assume to the contrary that there exist $x,y\in V_{\Delta-1}$ such that $xy\in E(G)$. By Lemma~\ref{biregular}, $N_\Delta(x)\not=\emptyset$. Let $w\in N_\Delta(x)$. By the assumption that there exist $u,v\in V_\Delta$ such that $N_{\Delta-1}(u)\ne N_{\Delta-1}(v)$,  there exists some $w''\in V_\Delta$ such that $N_{\Delta-1}(w)\ne N_{\Delta-1}(w'')$. By Theorem~\ref{Thm:vizing-fan} $(i)$, such $w''$ is not on the same cycle containing $w$ in $G_\Delta$. Since $G$ is connected, along a path in $G$ joining $w$ and $w''$, there exists some 
$w'\in V_{\Delta}$ such that either 
$N_{\Delta-1}(w)\ne N_{\Delta-1}(w')$ and $N_{\Delta-1}(w)\cap N_{\Delta-1}(w')\ne\emptyset$ or there exists an edge between $N_{\Delta-1}(w)$ and $N_{\Delta-1}(w')$. 
For the latter case, applying Theorem~\ref{Thm:adj_small_vertex}, we again 
see that $N_{\Delta-1}(w)\cap N_{\Delta-1}(w')\ne\emptyset$. 
Therefore, there exists $w'\in V_{\Delta}$ such that 
$N_{\Delta-1}(w)\ne N_{\Delta-1}(w')$ and $N_{\Delta-1}(w)\cap N_{\Delta-1}(w')\ne\emptyset$. 
 The choice of $w'$ implies 
\begin{equation}\label{2}
  |N_{\Delta-1}(w)\cap N_{\Delta-1}(w')|=\Delta-3  
\end{equation}
by Theorem~\ref{Thm:nonadj_Delta_vertex}. Thus by~\eqref{2} and Theorem~\ref{Thm:adj_small_vertex}
\begin{equation}\label{3}
  x,y\in N_{\Delta-1}(w)\cap N_{\Delta-1}(w').
\end{equation}

Let	$N_{\Delta-1}(w')\setminus N_{\Delta-1}(w)=\{z\}$. We claim that $N_{\Delta-1}(z)=\emptyset$. For otherwise, let $z'\in N_{\Delta-1}(z)$. Clearly $z'\not=z$. By Theorem~\ref{Thm:adj_small_vertex}, $z'\in N_{\Delta-1}(w')\setminus N_{\Delta-1}(w)$, giving a contradiction to $N_{\Delta-1}(w')\setminus N_{\Delta-1}(w)=\{z\}$. We then claim that $N_\Delta(z)\subseteq N_\Delta(x)$. For otherwise let $w^*\in N_\Delta(z)\setminus N_\Delta(x)$. As $x\in N_{\Delta-1}(w')$, it follows that $w^*\ne w'$. Since $z\in N_{\Delta-1}(w^*)\cap N_{\Delta-1}(w')$, it follows that $|N_{\Delta-1}(w^*)\cap N_{\Delta-1}(w')|\ge \Delta-3$ by Theorem~\ref{Thm:nonadj_Delta_vertex} (it can happen that $N_{\Delta-1}(w^*)=N_{\Delta-1}(w')$). Thus $N_{\Delta-1}(w^*)\cap N_{\Delta-1}(w')$ contains at least one of $x,y$ by~\eqref{3}, and so $x\in N_{\Delta-1}(w^*)$ by Theorem~\ref{Thm:adj_small_vertex}, contradicting the choice of $w^*$. Therefore we have $N_{\Delta-1}(z)=\emptyset$ and $N_\Delta(z)\subseteq N_\Delta(x)$.  However,  $d_G(z)\le |N_\Delta(x)|<|N_\Delta(x)\cup \{y\}|\le d_G(x)$, contradicting $d_G(x)=d_G(z)=\Delta-1$. This completes the proof. \qed
	
We now prove Conjecture~\ref{core-conjecture-2} for $\Delta\ge 4$ as below. 

\begin{THM}\label{mainthm2}
If $G$ is an HZ-graph with maximum degree $\Delta\ge 4$, then 
 $G\in \Of_\Delta$. 
\end{THM}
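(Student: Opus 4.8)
The plan is to deduce the global structure of $G$ from the ``local agreement'' theorems, treating the cases $\Delta\ge 7$ and $4\le\Delta\le 6$ separately (the small cases $\Delta=4,5,6$ being handled either by the known result of Cranston--Rabern for $\Delta=4$ together with a direct argument, or by a uniform argument that only needs Theorems~\ref{Thm:vizing-fan} and~\ref{Thm:adj_small_vertex}; I expect the bulk of the work to be the $\Delta\ge 7$ case, so I describe that). First I would recall from Lemma~\ref{biregular} that $G$ is $\Delta$-critical, $\delta(G)=\Delta-1$, $G_\Delta$ is $2$-regular (a disjoint union of cycles), and every vertex has at least two neighbors in $G_\Delta$; in particular $V(G)=V_\Delta\cup V_{\Delta-1}$. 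The goal is to show $V_\Delta$ and $V_{\Delta-1}$ are each ``clustered'' so that $G$ looks like the $K_{n_1,n_2}$-based construction of Definition~\ref{class-O}.

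The key step is to prove that the map sending a component $C$ of $G_\Delta$ to the common set $N_{\Delta-1}(C)$ is well-defined and essentially injective, and symmetrically for $V_{\Delta-1}$. By Theorem~\ref{Thm:vizing-fan}~(\ref{common}), $N_{\Delta-1}$ is constant on each cycle of $G_\Delta$; call the common value $S_C$ for the component $C$. If $G$ had two components $C\ne C'$ of $G_\Delta$ with $S_C\ne S_{C'}$, then (as in the proof of Corollary~\ref{Thm:adj_small_vertex2}) connectivity of $G$ yields $u,v\in V_\Delta$ on different cycles with $N_{\Delta-1}(u)\ne N_{\Delta-1}(v)$ and $N_{\Delta-1}(u)\cap N_{\Delta-1}(v)\ne\emptyset$, whence Theorem~\ref{Thm:nonadj_Delta_vertex} forces $|N_{\Delta-1}(u)\cap N_{\Delta-1}(v)|=\Delta-3$, and Corollary~\ref{Thm:adj_small_vertex2} tells us $V_{\Delta-1}$ is independent. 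I would then push this: using Theorem~\ref{Thm:adj_small_vertex} (vacuously true once $V_{\Delta-1}$ is independent) and a counting/propagation argument along paths in $G_\Delta$, show that in fact there can be at most two distinct values among the $S_C$, and narrow down the possible overlap pattern so tightly that one reaches a contradiction with $G$ being connected and every $\Delta-1$-vertex having $\ge 2$ neighbors in $G_\Delta$ — unless $G_\Delta$ is a single cycle. Hence $G_\Delta$ is a single cycle $C$ of length $n_1$, and $N_{\Delta-1}(v)=S_C=:B$ is the same set $B\subseteq V_{\Delta-1}$ for every $v\in V(C)$.

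Next I would pin down $B$ and the bipartite structure. Since each $v\in V(C)$ has exactly two neighbors on $C$ and the rest outside, $d_G(v)=\Delta$ gives $|B|=\Delta-2=:n_2$; since every $w\in V_{\Delta-1}$ has $\ge 2$ neighbors in $V_\Delta=V(C)$, and $N_{\Delta-1}(v)=B$ for all $v\in V(C)$ means the only $\Delta-1$-vertices adjacent to $C$ are those in $B$, connectivity forces $V_{\Delta-1}=B$, so $V(G)=V(C)\cup B$ with $|V(C)|=n_1$, $|B|=n_2=\Delta-2$. Each $v\in V(C)$ is adjacent to all of $B$ (it has $\Delta-2$ neighbors off $C$ and they all lie in $B$), so $G[V(C),B]=K_{n_1,n_2}$; $G[V(C)]$ is the $2$-regular graph $C$; and $G[B]$ is $(\Delta-1-n_1)$-regular since each $w\in B$ has $n_1$ neighbors in $V(C)$ and degree $\Delta-1$. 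Finally $n_1+n_2=|V(G)|$ is odd because $G$ is Class~2 hence (being overfull-or-not, and we are proving it is in $\Of_\Delta$ which are overfull) has odd order — more directly, $\Delta$-critical Class~2 graphs with this structure have odd order by a parity count on $|E(G)|$. This is exactly the recipe of Definition~\ref{class-O}, so $G\in\Of_\Delta$.

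I expect the main obstacle to be the middle step: ruling out the possibility that $G_\Delta$ has several components with different (but overlapping) $N_{\Delta-1}$-sets, i.e. showing $G_\Delta$ is a single cycle. Theorem~\ref{Thm:nonadj_Delta_vertex} only bounds pairwise overlaps by $\Delta-3$ and requires $\Delta\ge 7$, so the argument must combine it with degree counting at the $\Delta-1$-vertices (each lies in at most two of the sets $S_C$, by Theorem~\ref{Thm:adj_small_vertex} once $V_{\Delta-1}$ is independent, and has exactly $2$ or more neighbors per relevant cycle) to force a global contradiction; handling $\Delta\in\{4,5,6\}$, where Theorem~\ref{Thm:nonadj_Delta_vertex} is unavailable, will require the separate known classifications or an ad hoc argument. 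The rest is bookkeeping with Lemma~\ref{biregular} and a parity count.
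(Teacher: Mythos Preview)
Your outline has the right overall shape, but there are two concrete gaps and one misstatement.

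First, the target ``$G_\Delta$ is a single cycle'' is wrong: in the class $\Of_\Delta$ the $2$-regular graph inserted on the $n_1$ side is an arbitrary $2$-regular simple graph, not necessarily a cycle. What you actually need to prove is that $N_{\Delta-1}(u)=N_{\Delta-1}(v)$ for \emph{all} $u,v\in V_\Delta$ (equivalently, all the sets $S_C$ coincide), not that there is only one component $C$. This also explains why you do not need a separate treatment of $\Delta\in\{4,5,6\}$: the paper observes that if some $N_{\Delta-1}(u)\ne N_{\Delta-1}(v)$ with a common $(\Delta-1)$-neighbour $w$, then $d_G(w)\ge |V(C_u)|+|V(C_v)|\ge 6$ by Theorem~\ref{Thm:vizing-fan}(\ref{common}), forcing $\Delta\ge 7$; hence for $\Delta\le 6$ all $N_{\Delta-1}(\cdot)$ automatically agree.

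Second, and more seriously, your parity step is circular. You write that $n_1+n_2$ is odd ``because\ldots we are proving it is in $\Of_\Delta$'', and the fallback ``parity count on $|E(G)|$'' does not actually yield oddness. The paper closes this gap using Theorem~\ref{Thm:vizing-fan}(\ref{ele}), which you never invoke: once $N_{\Delta-1}(u)=N_{\Delta-1}(v)$ for all $u,v\in V_\Delta$, pick $r\in V_\Delta$ and the promised $s\in N_{\Delta-1}(r)$, $\varphi\in\CC^\Delta(G-rs)$ with $N_{\Delta-1}[r]=V_{\Delta-1}\cup\{r\}$ $\varphi$-elementary; since the $\Delta$-vertices miss no colour, $V(G)$ is $\varphi$-elementary, and counting missing colours forces $|V(G)|$ odd. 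Without this, your proof does not conclude.

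Third, your ``counting/propagation'' step for ruling out distinct $S_C$ is only a sketch. The paper's argument is quite specific: fix $u$, let $A=\{a\in V_\Delta: N_{\Delta-1}(a)=N_{\Delta-1}(u)\}$ and partition the remaining $\Delta$-vertices meeting $N_{\Delta-1}(u)$ into classes $B_1,\dots,B_t$ by their $(\Delta-1)$-neighbourhoods; Theorem~\ref{Thm:nonadj_Delta_vertex} gives $|N_{\Delta-1}(A)\setminus N_{\Delta-1}(B_i)|=1$, and pigeonhole plus a degree comparison at two vertices of $N_{\Delta-1}(A)$ forces $t\ge\Delta-2$; then double-counting $|E_G(A\cup B,\,N_{\Delta-1}(A))|$ against $(\Delta-1)(\Delta-2)$ yields $\Delta\le 2.5$, a contradiction. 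Your proposal does not contain these ideas, and the vague appeal to ``at most two distinct values'' is neither what is proved nor what is needed.
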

\begin{proof}

 Assume to the contrary that there exists an HZ-graph $G$ with maximum degree $\Delta\ge 4$ such that $G\not\in \Of_\Delta$. Let $n=|V(G)|$.  First assume that $N_{\Delta-1}(u)=N_{\Delta-1}(v)$ for every pair $u,v\in V_\Delta$.   Then $V_\Delta$, $V_{\Delta-1}$ and the edges between them form a complete bipartite graph. Since  $G\not\in \Of_\Delta$, it follows that $n$ is even. Let $r\in V_\Delta$. The above assumption also implies that  $N_{\Delta-1}(r)=V_{\Delta-1}$. By Theorem~\ref{Thm:vizing-fan} \eqref{ele}, there exist $s\in N_{\Delta-1}(r)=V_{\Delta-1}$ and 
$\varphi\in \CC^\Delta(G-rs)$ such that   
$N_{\Delta-1}[r]=V_{\Delta-1}\cup \{r\}$ is $\varphi$-elementary, which thereby implies that $V(G)$ is $\varphi$-elementary. Therefore, each color in $\pbar(N_{\Delta-1}[r])$ is missed  at exactly one vertex in $V(G)$, showing that $n$ is odd. 
This gives 
 a contradiction.

We now assume that there exist $u, v\in V_{\Delta}$ such that  $N_{\Delta-1}(u)\ne N_{\Delta-1}(v)$. For each $w\in V_\Delta$, let $C_w$ be the cycle in  $G_\Delta$ that contains $w$. 
By Theorem~\ref{Thm:vizing-fan} \eqref{common},   $C_u$ and $C_v$
are disjoint. Since $G_\Delta$ is 2-regular, $uv\notin E(G)$.
As $G$ is connected, there is a path in $G$ joining $u$ and $v$.
Furthermore, by Theorem~\ref{Thm:vizing-fan} (\ref{common}) and Theorem~\ref{Thm:adj_small_vertex}, there exists a path in $G$ joining $u$ and $v$ with alternating vertices from $V_\Delta$ and $V_{\Delta-1}$. Thus we may choose $u,v\in V_\Delta$ with  
$N_{\Delta-1}(u)\ne N_{\Delta-1}(v)$ such that there exists a path $uwv$ with $w\in V_{\Delta-1}$. Then $d_G(w)\ge |V(C_u)|+|V(C_v)|\ge 6$
by Theorem~\ref{Thm:vizing-fan} \eqref{common}.  
Thus $\Delta=d_G(w)+1\ge 7$.  Applying Corollary~\ref{Thm:adj_small_vertex2}, it follows that $V_{\Delta-1}$ is an independent set of $G$.
 
Let $A\subseteq V_\Delta$ be the set of all vertices $a$ satisfying $N_{\Delta-1}(a)=N_{\Delta-1}(u)$, and let $B\subseteq V_\Delta$ be the set of all vertices $b$ satisfying $N_{\Delta-1}(b)\ne N_{\Delta-1}(u)$ and $N_{\Delta-1}(b)\cap N_{\Delta-1}(u)\ne \emptyset$. Clearly $u\in A$ and $v\in B$, so $A$ and $B$ are non-empty. Partition $B$ into non-empty subsets $B_1,B_2,\ldots,B_t$ such that for each $i\in[1,t]$, all vertices in $B_i$ have the same neighborhood in $V_{\Delta-1}$. By Theorem~\ref{Thm:vizing-fan}~\eqref{common}, each of $A,B_1,B_2,\ldots,B_t$ induces a union of disjoint cycles in $G_\Delta$. So $|A|\ge 3$ and $|B_i|\ge 3$ for each $i\in[1,t]$.
 
Now we claim   $t\ge \Delta-2$. 
 Assume otherwise  $t\le \Delta-3$.  
Since each $i\in [1,t]$, $|N_{\Delta-1}(A)\setminus N_{\Delta-1}(B_i)|=1$ by Theorem~\ref{Thm:nonadj_Delta_vertex} and  $|N_{\Delta-1}(A)|=\Delta-2$, by the Pigeonhole Principle, it follows that $N_{\Delta-1}(A)\cap \left(\bigcap_{i=1}^t N_{\Delta-1}(B_i)\right)\not=\emptyset$.   Let $z\in N_{\Delta-1}(A)\cap \left(\bigcap_{i=1}^t N_{\Delta-1}(B_i)\right)$ and $z'\in N_{\Delta-1}(A)\setminus N_{\Delta-1}(B_1)$. Then 
  \begin{eqnarray*}
 	 |A|+\sum\limits_{1\le i\le t}|B_i|= d_G(z)=d_G(z')\le |A|+ \sum\limits_{2\le i\le t}|B_i|, 
 \end{eqnarray*}
 achieving a contradiction. Hence $t\ge \Delta-2$. 
 
 We now achieve a contradiction to the assumption  $\Delta\ge 7$
 by counting the number of edges in $G$ between $N_{\Delta-1}(A)$ and $A\cup B$.
 Note that $|N_{\Delta-1}(A)|=\Delta-2$. 
 Since each vertex in $B$ has exactly $\Delta-3$ neighbors in $N_{\Delta-1}(A)$ and $|B_i| \ge 3$ for each $i\in [1,t]$, we have
 \[ |E_G (A\cup B,N_{\Delta-1}(A))| = 
 |A|(\Delta-2) + 
 |\cup_{i=1}^tB_i | (\Delta-3) \ge 3(\Delta-2)+3t(\Delta -3) \ge 3(\Delta -2)^2. 
 \]
 
On the other hand, since $N_{\Delta-1}(A)$ is an independent set and every vertex in it has degree $\Delta -1$ in $G$, we  have 
 \[
  |E_G (A\cap B,N_{\Delta-1}(A))| = (\Delta-1) (\Delta-2). 
 \]
 Solving $(\Delta-1)(\Delta -2) \ge 3(\Delta -2)^2$ gives  $2\le \Delta\le 2.5$, 
achieving a  desired contradiction. 
\end{proof}

%

\section{Definitions and Preliminary Lemmas}\label{lemma}
In this section, we present a few known results and some new results.
Those will be the foundations for showing one of the main theorems. 
 Let $G$ be a graph, 
$\varphi \in \CC^k(G-e)$ for some $e\in E(G)$ and some integer $k\ge 0$. 
We start with some definitions and notation.



  For two distinct colors $\alpha,\beta \in [1,k]$, let  $H$ be the subgraph of $G$
with $V(H)=V(G)$ and $E(H)$ consisting of edges from $E(G)$ that are colored by $\alpha$
or $\beta$ with respect to $\varphi$. Each component of $H$ is either 
an even cycle or a path, which is called an \emph{$(\alpha,\beta)$-chain} of $G$
with respect to $\varphi$.  If we interchange the colors $\alpha$ and $\beta$
on an $(\alpha,\beta)$-chain $C$ of $G$, we get a new edge $k$-coloring  of $G$, 
and we write $$\varphi'=\varphi/C.$$
This operation is called a \emph{Kempe change}. 
For a color $\alpha$, a sequence of 
{\it Kempe  $(\alpha,*)$-changes}  is a sequence of  
Kempe changes that each involves the exchanging of the color $\alpha$
and another color from $[1,k]$.

Let  $x,y\in V(G)$, and  $\alpha, \beta, \gamma\in [1,k]$ be three colors.   If $x$ and $y$
are contained in a same  $(\alpha,\beta)$-chain of $G$ with respect to $\varphi$, we say $x$ 
and $y$ are \emph{$(\alpha,\beta)$-linked} with respect to $\varphi$.
Otherwise, $x$ and $y$ are \emph{$(\alpha,\beta)$-unlinked} with respect to $\varphi$. Without specifying $\varphi$, when we just say  $x$ and $y$ are $(\alpha,\beta)$-linked or $x$ and $y$ are $(\alpha,\beta)$-unlinked, we mean they are linked or unlinked with respect to the current edge coloring. 
Let $P$ be an 
$(\alpha,\beta)$-chain of $G$ with respect to $\varphi$ that contains both $x$ and $y$. 
If $P$ is a path, denote by $\mathit{P_{[x,y]}(\alpha,\beta, \varphi)}$  the subchain  of $P$ that has endvertices $x$
and $y$.  By \emph{swapping  colors} along  $P_{[x,y]}(\alpha,\beta,\varphi)$, we mean 
exchanging the two colors $\alpha$
and $\beta$ on the path $P_{[x,y]}(\alpha,\beta,\varphi)$. 
The notion $P_{[x,y]}(\alpha,\beta)$ always represents the $(\alpha,\beta)$-chain
with respect to the current edge coloring. 
Define  $P_x(\alpha,\beta,\varphi)$ to be an $(\alpha,\beta)$-chain or an $(\alpha,\beta)$-subchain of $G$ with respect to $\varphi$ that starts at $x$  and ends at a different vertex missing exactly one of $\alpha$ and $\beta$.    
(If $x$ is an endvertex of the $(\alpha,\beta)$-chain that contains $x$, then $P_x(\alpha,\beta,\varphi)$ is unique.  Otherwise, we take one segment of the whole chain to be 
$P_x(\alpha,\beta,\varphi)$. We will specify the segment when it is used.) 
If  $u$  is a vertrex on  $P_x(\alpha,\beta,\varphi)$, we  write  {$\mathit {u\in P_x(\alpha,\beta, \varphi)}$}; and if $uv$  is an edge on  $P_x(\alpha,\beta,\varphi)$, we  write  {$\mathit {uv\in P_x(\alpha,\beta, \varphi)}$}.  Similarly, the notion $P_x(\alpha,\beta)$ always represents the $(\alpha,\beta)$-chain
with respect to the current edge coloring. 
If $u,v\in P_x(\alpha,\beta)$ such that $u$ lies between $x$ and $v$, 
then we say that $P_x(\alpha,\beta)$ \emph{meets $u$ before $v$}. 
Suppose that   $\alpha\in \pbar(x)$ and  $\beta,\gamma\in \varphi(x)$. An $\mathit{(\alpha,\beta)-(\beta,\gamma)}$
\emph{swap at $x$}  consists of two operations:  first swaps colors on $P_x(\alpha,\beta, \varphi)$ to get an edge  $k$-coloring $\varphi'$, and then swaps
colors on $P_x(\beta,\gamma, \varphi')$. 
By convention, an	$(\alpha,\alpha)$-swap at $x$ does  nothing at $x$. 
Suppose the current color of an  edge $uv$ of $G$
is $\alpha$, the notation  $\mathit{uv: \alpha\rightarrow \beta}$  means to recolor  the edge  $uv$ using the color $\beta$. 
Recall that $\pbar(x)$ is the set of colors not present 
at $x$. 
If $|\pbar(x)|=1$, we will also use $\pbar(x)$ to denote the  color that is missing at $x$. 
When recoloring a graph, we say the current coloring is {\it conflicting at $x$ with respect to a color}
if there are at least two edges incident to $x$ that are colored  by the specified color.


Let   
 $T$ be  a sequence of vertices  and edges of  $G$. We denote by \emph{$V(T)$}  
 the set of vertices from $V(G)$ that are contained in $T$, and by  
\emph{$E(T)$}  the set of edges  from $E(G)$ that are contained in $T$.
If $V(T)$ is $\varphi$-elementary,
then for a color  $\tau\in \pbar(V(T))$,  we denote by  $\mathit{\pbar^{-1}_T(\tau)}$ the  unique vertex  in $V(T)$ at which $\tau$ 
is missed.  For a coloring $\varphi'\in \CC^\Delta(G-e)$, 
$\varphi'$ is called {\it $T$-stable} with respect to $\varphi$ if for every  $x\in V(T)$ it holds that $\pbar'(x)=\pbar(x)$, 
and for every  $f\in E(T)$ it holds that $\varphi'(f)=\varphi(f)$.  Clearly, $\varphi$ is 
$T$-stable with respect to itself. 

Let $\alpha, \beta, \gamma, \tau\in[1,k]$. 
We will use a  matrix with two rows to denote a sequence of operations  taken  on $\varphi$.
Each entry in the first row represents a path or  a sequence of vertices. 
Each entry in the second row, indicates the action taken on the object above this entry. 
We require the operations to be taken to follow the ``left to right'' order as they appear in 
the matrix. 
For example,   the matrix below indicates 
three operations taken on the graph based 
on the coloring from the previous step:
\[
\begin{bmatrix}
P_{[a, b]}(\alpha, \beta)  & s_c:s_{d} & rs\\
\alpha/\beta & \text{shift} & \gamma \rightarrow \tau 
\end{bmatrix}.
\]
\begin{enumerate}[Step 1]
	\item Swap colors on the $(\alpha,\beta)$-subchain $P_{[a, b]}(\alpha, \beta,\varphi) $.
	\item Based on the coloring obtained from  Step 1, shift from $s_c$ to $s_d$
	for vertices $s_c, s_{c+1}, \ldots, s_d$. (Shifting will be defined shortly.)
	
	\item Based on the coloring obtained from  Step 2,  do  $rs: \gamma \rightarrow \tau $. 
\end{enumerate}


%
 Let $w\in V(G)$ and $p\ge 1$.  A \emph{star} centered at $w$ with 
 $p$ leaves 
 is a subgraph of $G$ that is isomorphic to the complete bipartite graph $K_{1,p}$ such that $w$ 
 has degree $p$ in the subgraph. 
 If  $v_1, \ldots, v_p \in N_G(w)$  are the leaves,  we denote the star by 
 $S(w; v_1,\ldots, v_p)$. 
 
 Let $a,b$ be two positive integers. 
 If $b\ge a$, we  abbreviate  a vertex 
 sequence $s_a, s_{a+1}, \ldots, s_{b}$ as $s_a:s_b$. 
 If $b<a$, then $s_a:s_b$ denotes  an empty sequence. 
 The notation $[a,b]$ stands for the set $\{a,  \ldots, b\}$ 
 if $b\ge a$, and $\emptyset$ otherwise. 
 If $F=(a_1,\ldots, a_t )$ is a sequence, then for a new 
 entry $b$, 
 $(F, b)$  denotes the sequence $(a_1,\ldots, a_t, b)$.

\subsection{Multifan}

Let  $G$ be a graph, $e=rs_1\in E(G)$ and $\varphi\in \CC^k(G-e)$ for some integer $k\ge 0$.
A \emph{multifan} centered at $r$ with respect to $e$ and $\varphi$
is a sequence $F_\varphi(r,s_1:s_p):=(r, rs_1, s_1, rs_2, s_2, \ldots, rs_p, s_p)$ with $p\geq 1$ consisting of  distinct vertices $r, s_1,s_2, \ldots , s_p$ and distinct edges $rs_1, rs_2,\ldots, rs_p$ satisfying the following condition:
\begin{enumerate}[(F1)]
	\item For every edge $rs_i$ with $i\in [2,p]$,  there is a vertex $s_j$ with $j\in [1,i-1]$ such that 
	$\varphi(rs_i)\in \pbar(s_j)$. 
\end{enumerate}
We will simply denote a multifan  $F_\varphi(r,s_1: s_{p})$ by $F$ if 
$\varphi$ and the vertices and edges in $F_\varphi(r,s_1: s_{p})$  are clear. 
Let $F_\varphi(r,s_1: s_{p})$ be a multifan. 
By its definition, for any $p^*\in [1,p]$,  $F_\varphi(r,s_1: s_{p^*})$
is a multifan. 
The following result regarding a multifan can be found in \cite[Theorem~2.1]{StiebSTF-Book}.

\begin{LEM}
	\label{thm:vizing-fan1}
	Let $G$ be a Class 2 graph and $F_\varphi(r,s_1:s_p)$  be a multifan with respect to a critical edge $e=rs_1$ and a coloring $\varphi\in \CC^\Delta(G-e)$. Then  the following statements  hold. 
	 \begin{enumerate}[(a)]
	 	\item $V(F)$ is $\varphi$-elementary. \label{thm:vizing-fan1a}
	 	\item Let $\alpha\in \pbar(r)$. Then for every $i\in [1,p]$  and $\beta\in \pbar(s_i)$,  $r$ 
	 	and $s_i$ are $(\alpha,\beta)$-linked with respect to $\varphi$. \label{thm:vizing-fan1b}
	 \end{enumerate}
\end{LEM}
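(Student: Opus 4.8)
The plan is to establish (a) and (b) simultaneously by induction on the number $p$ of spoke edges of the multifan $F=F_\varphi(r,s_1:s_p)$, using only the criticality of $e=rs_1$ together with elementary Kempe-chain manipulations. For the base case $p=1$ the multifan is $(r,rs_1,s_1)$ with $rs_1=e$ uncoloured: part (a) holds because a colour in $\pbar(r)\cap\pbar(s_1)$ could be assigned to $e$, extending $\varphi$ to a $\Delta$-edge-colouring of $G$ and contradicting that $G$ is Class $2$; and for (b), if $r$ and $s_1$ were $(\alpha,\beta)$-unlinked, interchanging $\alpha$ and $\beta$ on the $(\alpha,\beta)$-chain through $s_1$ (which avoids $r$) would yield a colouring in which both $r$ and $s_1$ miss $\alpha$, and again $e$ could be coloured $\alpha$.

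For the inductive step I would first isolate the \emph{shift} operation. By (F1), choosing for each $i\in[2,p]$ a witness $j_i\in[1,i-1]$ with $\varphi(rs_i)\in\pbar(s_{j_i})$ arranges $s_1,\dots,s_p$ into a tree rooted at $s_1$; rotating the spoke colours along the path of this tree from $s_1$ to any $s_i$ produces a colouring $\varphi'$ in which $rs_i$ is the uncoloured edge, $\pbar'(r)=\pbar(r)$, and $\pbar'(s_i)\supseteq\pbar(s_i)$. Next, deleting $s_p$ leaves a multifan $F'=F_\varphi(r,s_1:s_{p-1})$ (condition (F1) for the surviving indices is inherited), so by the induction hypothesis $V(F')$ is $\varphi$-elementary and (b) holds for every $i\le p-1$. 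It then remains to show $\pbar(s_p)\cap\pbar(v)=\emptyset$ for every $v\in V(F')$ and that (b) holds for $i=p$.

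The case $v=r$ is dispatched by shifting: if $\beta\in\pbar(r)\cap\pbar(s_p)$, shift the uncoloured edge to $rs_p$ and then colour $rs_p$ with $\beta$, colouring all of $G$ — impossible. The case $v=s_i$ with $i\le p-1$ is the crux. Suppose $\gamma\in\pbar(s_i)\cap\pbar(s_p)$ and fix $\alpha\in\pbar(r)$; the cases already treated give $\alpha\notin\{\gamma\}\cup\pbar(s_i)\cup\pbar(s_p)$. By induction-(b), $r$ and $s_i$ lie on a common $(\alpha,\gamma)$-path, whose two endpoints are then forced to be $r$ (which misses $\alpha$ but not $\gamma$, by $\varphi$-elementarity of $V(F')$) and $s_i$; since $s_p$ also misses $\gamma$, it is neither an endpoint nor an interior vertex of that path, so $r$ and $s_p$ are $(\alpha,\gamma)$-unlinked. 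Interchanging $\alpha$ and $\gamma$ on the $(\alpha,\gamma)$-chain through $s_p$ gives $\varphi'$ with $\alpha\in\pbar'(r)\cap\pbar'(s_p)$. The point is that $F$ is still a multifan with respect to $\varphi'$: no spoke changes colour (the swapped chain avoids $r$), and the only way a spoke $rs_k$ could violate (F1) would be $\varphi(rs_k)=\gamma$ with its witness $s_{j_k}$ on the swapped chain, but then $s_{j_k}$ misses $\gamma$, and $\varphi$-elementarity of $V(F')$ forces $s_{j_k}=s_i$, which lies on the other chain and is untouched. Hence the already-proved case $v=r$ applies to $\varphi'$ and gives $\pbar'(r)\cap\pbar'(s_p)=\emptyset$, a contradiction. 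Finally, (b) for $i=p$ follows by the same swap-and-recolour argument as in the base case, now using that (a) has just been established for $F$ to see that the multifan structure survives the Kempe change.

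I expect the only genuine difficulty to be the bookkeeping in the $v=s_i$ step: one must track precisely which missing sets and which spoke colours are disturbed by the Kempe interchange and verify that the multifan property is preserved, so that the previously settled case may be reused. Everything else reduces either to a direct ``colour the uncoloured edge'' argument from criticality or to a single-chain Kempe swap.
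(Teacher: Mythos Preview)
Your proposal is correct. The paper does not supply its own proof of this lemma but cites it as Theorem~2.1 in the Stiebitz--Scheide--Toft--Favrholdt book, and your inductive argument via Kempe changes and the shift operation is essentially the standard proof found there.
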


Let $F_\varphi(r,s_1:s_p)$  be a multifan.  We call $s_{\ell_1},s_{\ell_2}, \ldots, s_{\ell_k}$, a subsequence of $s_1:s_p$, an  \emph{$\alpha$}-sequence with respect to $\varphi$ and $F$ if the following holds:
$$
\varphi(rs_{\ell_1})= \alpha\in \pbar(s_1),  \quad \varphi(rs_{\ell_i})\in \pbar(s_{\ell_{i-1}}), \quad  i\in [2,k].
$$
A vertex in an $\alpha$-sequence is called an \emph{$\alpha$-inducing vertex} with respect to $\varphi$ and $F$, and a missing color at an $\alpha$-inducing vertex is called an \emph{$\alpha$-inducing color}. For convenience, $\alpha$ itself is also an $\alpha$-inducing color. We say $\beta$ is {\it induced by} $\alpha$ if $\beta$ is $\alpha$-inducing. By Lemma~\ref{thm:vizing-fan1} (a) and the definition of multifan, each color in $\pbar(V(F))$ is induced by a unique color in $\pbar(s_1)$. Also if $\alpha_1,\alpha_2$ are two distinct colors in $\pbar(s_1)$, then an $\alpha_1$-sequence is disjoint with an $\alpha_2$-sequence. For two distinct $\alpha$-inducing colors $\beta$ and $\delta$, we write {$\mathit \delta \prec \beta$} if there exists an $\alpha$-sequence $s_{\ell_1},s_{\ell_2}, \ldots, s_{\ell_k}$ such that $\delta\in\pbar(s_{\ell_i})$, $\beta\in\pbar(s_{\ell_j})$ and $i<j$. For convenience, $\alpha\prec\beta$ for any $\alpha$-inducing color $\beta\not=\alpha$. Then all $\alpha$- inducing colors form a poset with order $\prec$. An $\alpha$-inducing color is called a \emph{last $\alpha$-inducing color} if it is a maximal element in the poset.

As a consequence of Lemma~\ref{thm:vizing-fan1} (a), we have the following properties for a multifan. 
 \begin{LEM}
 	\label{thm:vizing-fan2}
Let $G$ be a Class 2 graph and $F_\varphi(r,s_1:s_p)$  be a multifan with respect to a critical edge $e=rs_1$ and a coloring $\varphi\in \CC^\Delta(G-e)$. For two colors $\delta\in \pbar(s_i)$ and $\lambda\in \pbar(s_j)$ with  $i,j\in [1,p]$ and $i\ne j$, the following statements  hold.
 	\begin{enumerate}[(a)]
 		\item If $\delta$ and $\lambda$ are induced by different colors, then $s_i$ and $s_j$ are $(\delta, \lambda)$-linked with respect to $\varphi$. 
 		\label{thm:vizing-fan2-a}
 		\item If $\delta$ and $\lambda$ are induced by the same color, $\delta\prec\lambda$, and $s_i$ and $s_j$ are $(\delta, \lambda)$-unlinked with respect to $\varphi$, 
 		then $r\in P_{s_j}(\lambda, \delta, \varphi)$.  	\label{thm:vizing-fan2-b}
 	\end{enumerate}
 	
 \end{LEM}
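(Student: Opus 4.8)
The plan is to derive both parts from Lemma~\ref{thm:vizing-fan1}, namely from the elementariness of $V(F)$ (part~(a)) together with the linkage statement (part~(b)), by arguing about where the relevant Kempe chains can end. Throughout, fix $\alpha\in\pbar(r)$; note $\alpha$ is well-defined since $e=rs_1$ is uncolored and $d_G(r)\le\Delta$, so $\pbar(r)\ne\emptyset$. Since $V(F)$ is $\varphi$-elementary, the colors $\delta\in\pbar(s_i)$ and $\lambda\in\pbar(s_j)$ are distinct, and each is missing at exactly one vertex of $V(F)$; in particular $\alpha\notin\{\delta,\lambda\}$ unless one of $s_i,s_j$ equals the (nonexistent) vertex missing $\alpha$ inside $V(F)$ — more carefully, $\alpha\in\pbar(r)$ and $r\notin\{s_i,s_j\}$, so if $\delta=\alpha$ then $s_i$ would share a missing color with $r$, contradicting elementariness; hence $\alpha\notin\{\delta,\lambda\}$ and we may freely speak of $(\delta,\lambda)$-chains as genuinely two-colored objects not involving $\alpha$.

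For part~(a), suppose $\delta$ and $\lambda$ are induced by different colors $\alpha_1,\alpha_2\in\pbar(s_1)$ (here I use the poset/$\alpha$-sequence language set up just before the lemma: every color of $\pbar(V(F))$ is induced by a unique color of $\pbar(s_1)$). Assume for contradiction that $s_i$ and $s_j$ are $(\delta,\lambda)$-unlinked. The first move is to perform a Kempe change on the $(\delta,\lambda)$-chain ending at $s_i$, i.e. set $\varphi'=\varphi/P_{s_i}(\delta,\lambda,\varphi)$. Because $s_i$ and $s_j$ lie on different $(\delta,\lambda)$-chains, this chain avoids $s_j$, and one checks that $F$ remains a multifan with respect to $\varphi'$ — the key point being that the sequence of edge colors $\varphi(rs_\ell)$ along the $\alpha_1$- and $\alpha_2$-sequences, and the missing-color conditions (F1), are preserved because we only touched edges colored $\delta$ or $\lambda$ and these colors do not reappear as $\varphi(rs_\ell)$ in a way that breaks the fan structure (this is where the ``induced by different colors'' hypothesis is used: it guarantees the two sequences are disjoint, so swapping $\delta$ at $s_i$ does not propagate a conflict into the $\lambda$-sequence). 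After the swap, $\delta\in\pbar'(r)$ or $\delta$ becomes a new shared missing color between $r$ and $s_j$ — in any case $V(F)$ would fail to be $\varphi'$-elementary, contradicting Lemma~\ref{thm:vizing-fan1}(a) applied to the multifan $F$ under $\varphi'$. The cleanest packaging is: if $s_i,s_j$ were $(\delta,\lambda)$-unlinked, the chain from $s_i$ would end at some vertex missing $\delta$ or $\lambda$; by elementariness within $V(F)$ that vertex is outside $V(F)$, so after swapping, $s_i$ now misses $\lambda$ as well, giving $\pbar'(s_i)\cap\pbar'(s_j)\ni\lambda$, the desired contradiction.

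For part~(b), now $\delta$ and $\lambda$ are induced by the same color, $\delta\prec\lambda$, and $s_i,s_j$ are $(\delta,\lambda)$-unlinked. Consider $P=P_{s_j}(\lambda,\delta,\varphi)$, the $(\delta,\lambda)$-chain starting at $s_j$; I must show $r\in V(P)$. Since $\lambda\in\pbar(s_j)$, the chain $P$ starts with an edge colored $\delta$ at $s_j$ and ends at a vertex $z$ missing exactly one of $\delta,\lambda$. If $z\in V(F)$ then $z=s_i$ (the unique vertex of $V(F)$ missing $\delta$) — but that contradicts unlinkedness; so $z\notin V(F)$, and in particular $r\notin\{z\}$. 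Now I swap along $P$ to get $\varphi'$: this makes $s_j$ miss $\delta$. Then I argue $F$ is still ``almost'' a multifan and re-examine elementariness: $\delta\prec\lambda$ means in the $\alpha$-sequence, $s_i$ (missing $\delta$) appears before $s_j$ (missing $\lambda$), so there is an edge $rs_m$ with $\varphi(rs_m)=\lambda$ for some $s_m$ later than $s_i$ in the sequence — recoloring that edge or tracking that $\lambda$ now must be missing somewhere forces, via the (F1) conditions and elementariness, the only consistent possibility to be that the $(\delta,\lambda)$-chain through $s_j$ in fact passes through $r$. Concretely: if $r\notin V(P)$, then after $\varphi'=\varphi/P$ the fan $F$ with the roles adjusted is still a multifan under $\varphi'$ (the edge colors $\varphi(rs_\ell)$ are unchanged since $r\notin V(P)$ and (F1) still holds as $s_j$ now misses $\delta$, which was induced appropriately), so $V(F)$ is $\varphi'$-elementary by Lemma~\ref{thm:vizing-fan1}(a); but $\delta\in\pbar'(s_i)\cap\pbar'(s_j)$, contradiction. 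Hence $r\in P_{s_j}(\lambda,\delta,\varphi)$.

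The main obstacle I anticipate is the bookkeeping in the second bullet of each part: verifying rigorously that after a Kempe change on a $(\delta,\lambda)$-chain the sequence $F$ remains a multifan (conditions (F1) and distinctness of edges/vertices), so that Lemma~\ref{thm:vizing-fan1}(a) can be re-applied to produce the contradiction. This requires carefully checking that swapping $\delta$ and $\lambda$ does not disturb any edge $rs_\ell$ (true, since those edges are colored with $\alpha$-sequence colors, and one must confirm $\delta,\lambda$ never equal such a color at the wrong place) and does not disturb the missing-color witnesses used in (F1) for indices $\le i$ or between $i$ and $j$. In part~(b) the hypothesis $\delta\prec\lambda$ is exactly what makes this bookkeeping go through in the right direction; getting that dependence precise — rather than merely plausible — is the delicate step.
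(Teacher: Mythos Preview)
Your approach is the same as the paper's: swap a $(\delta,\lambda)$-chain and invoke Lemma~\ref{thm:vizing-fan1}\,\eqref{thm:vizing-fan1a} to contradict elementariness. The obstacle you flag at the end is real, and your provisional claim that ``$F$ remains a multifan with respect to $\varphi'$'' is false in general. After swapping along $P_{s_i}(\delta,\lambda)$ (or $P_{s_j}$), the terminal vertex no longer misses its old color, so any later edge $rs_m$ whose (F1) witness was that vertex loses its witness; and if the chain passes through $r$, edges at $r$ colored $\delta$ or $\lambda$ change too. So the full $F$ need not survive.

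The fix, which the paper carries out, is to pass to an explicit \emph{sub}-multifan. For (a), swap along $P_{s_j}(\delta,\lambda,\varphi)$ and take $F^*$ to be the $\alpha$-sequence up to $s_i$ followed by the $\beta$-sequence up to $s_j$. Every edge color $\varphi(rs_\ell)$ appearing in these two truncated sequences is either $\alpha$, $\beta$, or an $\alpha$- (resp.\ $\beta$-) inducing color strictly $\prec\delta$ (resp.\ $\prec\lambda$), hence is neither $\delta$ nor $\lambda$; so these edges are untouched by the swap even if the chain passes through $r$. The only vertex of $F^*$ whose missing set changed is $s_j$, and it is terminal in $F^*$, so no (F1) condition uses it. Then $\delta\in\pbar'(s_i)\cap\pbar'(s_j)$ contradicts Lemma~\ref{thm:vizing-fan1}\,\eqref{thm:vizing-fan1a} for $F^*$. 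For (b), the extra hypothesis $r\notin P_{s_j}(\lambda,\delta,\varphi)$ guarantees no edge at $r$ changes under the swap; using $\delta\prec\lambda$ to place $s_i$ before $s_j$ in a common $\alpha$-sequence, one truncates $F$ at $s_j$ and observes that $F_{\varphi'}(r,s_1{:}s_j)$ is still a multifan (only $\pbar(s_j)$ changed, and $s_j$ is terminal), now with $\delta\in\pbar'(s_i)\cap\pbar'(s_j)$. This is exactly the bookkeeping you anticipated; once you write down the truncated multifans explicitly, the verification of (F1) is immediate.
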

\begin{proof}
	
For (a),  suppose otherwise that $s_i$ and $s_j$ are $(\delta,\lambda)$-unlinked with respect to $\varphi$. Assume that $\delta$ and $\lambda$ are induced by $\alpha$ and $\beta$ respectively where $\alpha,\beta$ are two distinct colors from $\pbar(s_1)$. Let $s_{i_1},s_{i_2},\ldots,s_{i_k}=s_i$ be the $\alpha$-sequence containing $s_i$, and $s_{j_1},s_{j_2},\ldots,s_{j_\ell}=s_j$ be the $\beta$-sequence containing $s_j$. Since $V(F)$ is $\varphi$-elementary, $s_i$ is the only vertex in $F$ 
that misses $\delta$. Therefore, the other end  
of $P_{s_j}(\delta,\lambda,\varphi)$ is outside of $V(F)$.  Let $\varphi'=\varphi/ P_{s_j}(\delta,\lambda,\varphi)$. 
It is clear that  $\varphi'\in \CC^\Delta(G-e)$, and $F^*=(r,rs_{i_1},s_{i_2},\ldots,s_{i_k},rs_{j_1},s_{j_1},\ldots,s_{j_\ell})$
is a multifan under $\varphi'$. However, $\delta\in \pbar'(s_i)\cap \pbar'(s_j)$, contradicting Theorem~\ref{thm:vizing-fan1} (a).  

For (b),  suppose otherwise that $r\not\in P_{s_j}(\lambda, \delta,\varphi)$. 
Assume, without loss of generality, that  $i<j$, and  $  s_2, \ldots, s_i, s_{i+1}, \ldots, s_j$ is an $\alpha$-sequence where $\alpha\in \pbar(s_1)$.
Since $V(F)$ is $\varphi$-elementary, $s_i$ is the only vertex in $F$ 
that misses $\delta$. Therefore,  when $s_i$ and $s_j$ are $(\delta,\lambda)$-unlinked with respect to $\varphi$, the other end  
of $P_{s_j}(\delta,\lambda,\varphi)$ is outside of $V(F)$.   
Let $\varphi'=\varphi/ P_{s_j}(\delta,\lambda,\varphi)$.  It is clear that  $\varphi'\in \CC^\Delta(G-e)$. 
Since $r\not\in P_{s_j}(\lambda, \delta,\varphi)$, 
$\varphi'$ agrees with $\varphi$ on $F$ at every edge and every vertex except $s_j$. 
Therefore,  the sequence $F_{\varphi'}(r,s_1: s_j)$, 
obtained from $F_\varphi(r,s_1:s_p)$ by deleting every edge and every vertex after $s_j$
is still a multifan. However, $\delta\in \pbar'(s_i)\cap \pbar'(s_j)$, contradicting Theorem~\ref{thm:vizing-fan1} (a).  
\end{proof}

\subsection{Multifan in HZ-graphs}\label{vizingfanhz}
As $\Delta$-degree vertices in a multifan do not miss any color, 
for  multifans in an HZ-graph, we add a further requirement in its definition as follows: all vertices of the multifan except the center vertex have degree $\Delta-1$.  In the remainder of this paper, we use this new definition for all multifans in HZ-graphs. 

Let $G$ be an HZ-graph with $\Delta\ge 3$, $rs_1\in E(G)$ with $r\in V_\Delta$ and $s_1\in V_{\Delta-1}$,
and $\varphi\in \CC^\Delta(G-rs_1)$. Let $F_\varphi(r,s_1:s_p)$ be a multifan. By its definition,  except $s_1$,  every other $s_i$ misses exactly one color with respect to $\varphi$ in $F$. Note that $|\pbar(s_1)|=2$, and so every color in $\pbar(V(F))$ is induced by one of the two colors in $\pbar(s_1)$. So $s_1,s_2,\ldots,s_p$ can be divided into two sequences. Therefore, we can equip  $F$ with additional  properties.  

Let $F$ be a multifan in  $G$ with respect to $rs_1$ and $\varphi\in \CC^\Delta(G-rs_1)$. 
We call $F$ a \emph{typical multifan}, denoted $F_\varphi(r, s_1:s_\alpha:s_\beta)$,  if $\pbar(r)=\{1\}$, $\pbar(s_1)=\{2,\Delta\}$ and the following hold:
\begin{enumerate}[(1)]
	\item Either $|V(F)|=2$, or $|V(F)|\ge 3$ and 
	there exist  $\alpha\in[2,\beta]$  such that 
	$s_2, \ldots, s_\alpha$  is a $2$-inducing sequence 
	and $s_{\alpha+1}, \ldots, s_{\beta}$  is a $\Delta$-inducing sequence of $F$, where $\beta=|V(F)|-1$. 	  
	\item If $|V(F)|\ge 3$, then for each $i\in [2,\beta]$, $\varphi(rs_i)=i$ and $\pbar(s_i)=i+1$ except when $i=\alpha+1\in [3,\beta]$.  In this case,	$\varphi(rs_{\alpha+1})=\Delta$ and $\pbar(s_{\alpha+1})=\alpha+2$.
\end{enumerate}
 
Clearly by relabelling   vertices and colors if necessary, any multifan in an HZ-graph can be assumed to be a typical multifan. If $\alpha\ge 2$
	and $\beta>\alpha$, we  say $F$  
	has  {\it two sequences}. Otherwise we say $F$ has {\it one sequence}. For a typical multifan $F=F_\varphi(r, s_1:s_\alpha:s_\beta)$, if $\alpha=\beta$, then we write $F=F_\varphi(r,s_1:s_\alpha)$ and say that $F$ is a {\it typical 2-inducing multifan}. The graph depicted  in Figure~\ref{f11} 
 shows a typical multifan within the neighborhood of a $\Delta$-vertex $r$ in an HZ-graph.

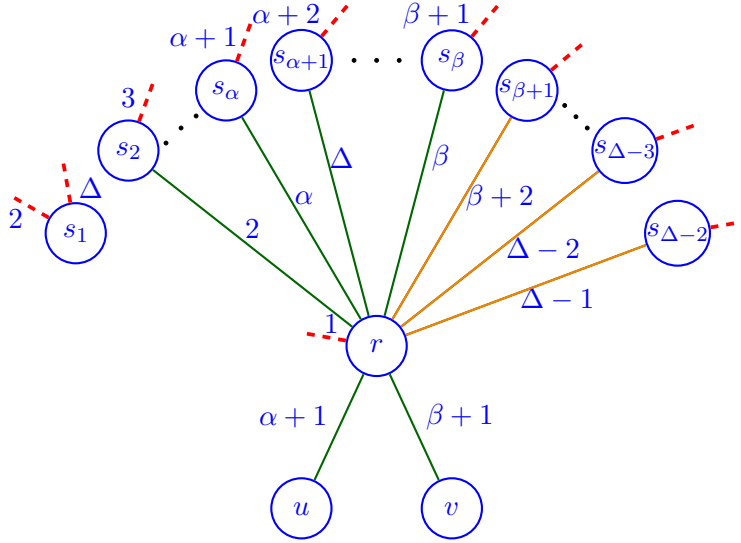
\begin{figure}[!htb]
\begin{center}
	\begin{tikzpicture}[scale=1]
	
	{\tikzstyle{every node}=[draw ,circle,fill=white, minimum size=0.8cm,
		inner sep=0pt]
		\draw[blue,thick](0,-3) node (r)  {$r$};
	\draw [blue,thick](-4, -1.5) node (s1)  {$s_1$};
	\draw [blue,thick](-3.3, -0.4) node (s2)  {$s_2$};
	\draw[blue,thick] (-2, 0.4) node (sa)  {$s_\alpha$};
	\draw [blue,thick](-1, 0.8) node (sa2)  {$s_{\alpha+1}$};
		\draw [blue,thick](1, 0.8) node (sb)  {$s_{\beta}$};
		\draw [blue,thick](2, 0.4) node (sb2)  {$s_{\beta+1}$};
		\draw[blue,thick] (3.3, -0.4) node (sd1)  {$s_{\Delta-3}$};
		\draw [blue,thick](4, -1.5) node (sd2)  {$s_{\Delta-2}$};
	\draw [blue,thick](-1, -5.16) node (u)  {$u$};
		\draw [blue,thick](1, -5.16) node (v)  {$v$};
		}
	\path[draw,thick,black!60!green]
	(r) edge node[name=la,above,pos=0.5] {\color{blue}$2$} (s2)
		(r) edge node[name=la,pos=0.6] {\color{blue}\quad$\alpha$} (sa)
			(r) edge node[name=la,pos=0.7] {\color{blue}\quad$\Delta$} (sa2)
				(r) edge node[name=la,pos=0.7] {\color{blue}\quad$\beta$} (sb)
					(r) edge node[name=la,pos=0.6] {\color{blue}\qquad\,\,\,$\beta+2$} (sb2)
						(r) edge node[name=la,pos=0.5] {\color{blue}\quad \qquad$\Delta-2$} (sd1)
							(r) edge node[name=la,pos=0.4] {\color{blue}\qquad\quad\,\,\,\,\,$\Delta-1$} (sd2)
							(r) edge node[name=la,pos=0.4] {\color{blue}$\alpha+1$\qquad\quad\,\,\,} (u)
							(r) edge node[name=la,pos=0.4] {\color{blue}\qquad\quad\,\,\,$\beta+1$} (v);

		\draw[orange, thick] (r) --(sb2); 
	\draw [orange, thick](r) --(sd1); 
	\draw [orange, thick](r) --(sd2); 
	
	\draw[dashed, red, line width=0.5mm] (r)--++(170:1cm); 
	\draw[dashed, red, line width=0.5mm] (s1)--++(150:1cm); 
	\draw[dashed, red, line width=0.5mm] (s1)--++(100:1cm); 
	\draw[dashed, red, line width=0.5mm] (s2)--++(70:1cm); 
	\draw[dashed, red, line width=0.5mm] (sa)--++(70:1cm); 
	\draw[dashed, red, line width=0.5mm] (sa2)--++(50:1cm); 
	\draw[dashed, red, line width=0.5mm] (sb)--++(50:1cm); 
	\draw[dashed, red, line width=0.5mm] (sb2)--++(40:1cm); 
	\draw[dashed, red, line width=0.5mm] (sd1)--++(20:1cm); 
	\draw[dashed, red, line width=0.5mm] (sd2)--++(10:1cm); 

	\draw[blue] (-0.6, -2.7) node {$1$};  
	\draw[blue] (-4.8, -1.3) node {$2$};  
	\draw[blue] (-3.8, -0.9) node {$\Delta$};  
	\draw[blue] (-3.3, 0.3) node {$3$};  
		\draw[blue] (-2.3, 1.1) node {$\alpha+1$};  
		\draw[blue] (-1.2, 1.4) node {$\alpha+2$};  
			\draw[blue] (0.8, 1.4) node {$\beta+1$}; 
			
			{\tikzstyle{every node}=[draw ,circle,fill=black, minimum size=0.05cm,
inner sep=0pt]
\draw(-2.4,0.06) node (f1)  {};
\draw(-2.6,-0.1) node (f1)  {};
\draw(-2.8,-0.3) node (f1)  {};
\draw(-0.3,0.8) node (f1)  {};
\draw(0,0.8) node (f1)  {};
\draw(0.3,0.8) node (f1)  {};
\draw(2.5,0.2) node (f1)  {};
\draw(2.65,0.05) node (f1)  {};
\draw(2.8
,-0.1) node (f1)  {};
} 
	\end{tikzpicture}
\end{center}
	\caption{A typical multifan $F_\varphi(r, s_1:s_\alpha:s_\beta)$ in the neighborhood of $r$, where a dashed line at a vertex indicates a color missing at the vertex.}
\label{f11}
\end{figure}


The following Lemma indicates  that  in an HZ-graph, any multifan can be assumed to be a typical multifan with only  one sequence. 

\begin{LEM}\label{2-inducing }
	Let $G$ be an HZ-graph with maximum degree $\Delta\ge 3$, 
	$rs_1\in E(G)$ with $r\in V_\Delta$ and $s_1\in V_{\Delta-1}$,
	and $\varphi\in \CC^\Delta(G-rs_1)$. 
	Then  for every  multifan $F=F_\varphi(r,s_1: s_p)$ of $G$,  there exists a coloring $\varphi'\in \CC^\Delta(G-rs_p)$ and a typical multifan $F^*$ centered at $r$ with respect to $rs_p$ and $\varphi'$ such that $V(F^*)=V(F)$ and $F^*$ has one sequence.  
\end{LEM}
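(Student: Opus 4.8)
The plan is to keep the centre $r$ fixed, reroute the uncoloured edge from $rs_1$ to $rs_p$ by a few elementary recolourings, and then read off a reversed fan which, after a relabelling of colours, automatically has a single inducing sequence. If $|V(F)|=2$ there is nothing to prove: then $p=1$ and, after relabelling colours so that $\pbar(r)=\{1\}$ and $\pbar(s_1)=\{2,\Delta\}$, $F$ itself is a typical multifan with one sequence with respect to $rs_p=rs_1$ and $\varphi$. So assume $|V(F)|\ge 3$. By the remark preceding the statement (which rests on Lemma~\ref{thm:vizing-fan1}(\ref{thm:vizing-fan1a})) we may relabel vertices and colours so that $F=F_\varphi(r,s_1:s_\alpha:s_\beta)$ is a typical multifan with $\beta=p$; in particular $\pbar(r)=\{1\}$, $\pbar(s_1)=\{2,\Delta\}$, and $\varphi(rs_i)=i$, $\pbar(s_i)=\{i+1\}$ for $i$ in the $2$-inducing sequence (i.e. $i\in[2,\alpha]$) and in the $\Delta$-inducing sequence (i.e. $i\in[\alpha+2,\beta]$), while $\varphi(rs_{\alpha+1})=\Delta$ and $\pbar(s_{\alpha+1})=\{\alpha+2\}$ when $\alpha<\beta$. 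Since $V(F)$ is $\varphi$-elementary, it misses $p+2$ pairwise distinct colours; in particular $\pbar(s_\beta)=\{\beta+1\}$ with $\beta+1\notin\{1,2,\Delta\}$, and $\beta=p\le\Delta-2$.

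Next I reroute the uncoloured edge to $rs_p$. If $\alpha=\beta$, I shift it along the whole sequence: successively set $rs_i:=\varphi(rs_{i+1})$ for $i=1,\dots,p-1$, which is legal since $\varphi(rs_{i+1})\in\pbar(s_i)$ at each step (starting with $\varphi(rs_2)=2\in\pbar(s_1)$), and then $rs_p$ is uncoloured. If $\alpha<\beta$, I first bridge the two sequences by moving the colour $\Delta$ from $rs_{\alpha+1}$ onto $rs_1$ (legal because $\Delta\in\pbar(s_1)$ and $\Delta=\varphi(rs_{\alpha+1})$ is the only $\Delta$-coloured edge at $r$), so that $rs_{\alpha+1}$ becomes the uncoloured edge, and then shift along the $\Delta$-sequence, setting $rs_i:=\varphi(rs_{i+1})$ for $i=\alpha+1,\dots,\beta-1$, so that $rs_\beta=rs_p$ is uncoloured. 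In both cases the outcome is a colouring $\varphi'\in\CC^\Delta(G-rs_p)$, and a routine bookkeeping of missing colours --- each shift step turns $\pbar(s_i)=\{i+1\}$ into $\{i\}$; the bridging step turns $\pbar(s_1)$ into $\{2\}$ and $\pbar(s_{\alpha+1})$ into $\{\Delta\}$; $s_p$ ends up missing $\varphi(rs_p)$ together with $\beta+1$; and $\pbar(r)=\{1\}$ throughout --- pins down $\varphi'$ on $V(F)$.

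Now I exhibit $F^*$. When $\alpha=\beta$, let $F^*=(r,rs_p,s_p,rs_{p-1},s_{p-1},\dots,rs_1,s_1)$, the reversal of $F$; when $\alpha<\beta$, let $F^*=(r,rs_p,s_p,\dots,rs_{\alpha+1},s_{\alpha+1},rs_1,s_1,rs_2,s_2,\dots,rs_\alpha,s_\alpha)$, namely the reversed $\Delta$-sequence, then $s_1$, then the $2$-sequence. From the missing-colour computation above, in this listing the colour of every edge lies in $\pbar$ of the immediately preceding vertex, so condition (F1) holds and $F^*$ is a multifan; moreover $V(F^*)=\{r,s_1,\dots,s_p\}=V(F)$. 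The new first vertex $s_p$ misses exactly two colours, one of which is the colour of the next edge of $F^*$; relabelling that colour as $2$, the other missing colour of $s_p$ as $\Delta$, and the colours occurring further along the chain as $3,4,5,\dots$ in the order they appear, turns $F^*$ into a typical multifan with one sequence. This relabelling is a genuine permutation of $[1,\Delta]$: by $\varphi$-elementarity the colours involved are pairwise distinct and distinct from $1$, and $\beta+1\le\Delta-1$ is exactly what keeps the ``spare'' colour $\Delta$ at $s_p$ from clashing with the colours used along the chain.

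The genuinely routine part is the bookkeeping of missing colours through the shift and bridging steps; the one place needing care is verifying, in the case $\alpha<\beta$, that after bridging the reversed $\Delta$-sequence links first to $s_1$ and then into the $2$-sequence, and that the final relabelling is well defined in the short cases $\beta=\alpha+1$ and $\beta=\alpha+2$. I expect the main obstacle to be organising this case analysis compactly rather than any real difficulty, the structure of a typical multifan being rigid enough to make each verification short.
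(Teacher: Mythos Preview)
Your proof is correct and follows essentially the same approach as the paper's: recolour $rs_1$ with $\Delta$, shift along the $\Delta$-inducing sequence so that $rs_\beta=rs_p$ becomes the uncoloured edge, and read off the reversed fan $F^*=(r,rs_\beta,s_\beta,\dots,rs_{\alpha+1},s_{\alpha+1},rs_1,s_1,rs_2,\dots,rs_\alpha,s_\alpha)$ as a single-sequence multifan. Your treatment is in fact slightly more careful than the paper's, which dismisses the one-sequence case $\alpha=\beta$ with ``we are done'' without making the shift to $rs_p$ explicit.
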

\pf 
By the definition of multifan, $s_p$ is the last $\eta$-inducing color for some $\eta\in\pbar(s_1)$. Thus we may assume, without loss of generality, that $F=F_\varphi(r,s_1:s_\alpha:s_\beta)$ is a typical multifan and $s_p=s_\beta$. 
Clearly if $F$ has only one sequence then we are done. Thus we assume that  $\beta\ge \alpha+1 \ge 3$. Let $\varphi'$ be obtained from $\varphi$ by uncoloring $rs_\beta$, doing  $rs_i: i \rightarrow i+1$ for each $i\in [\alpha+1, \beta-1]$ and coloring $rs_1$ by $\Delta$. Now $\pbar'(s_\beta)=\{\beta,\beta+1\}$, $F^*=(r, rs_\beta, s_\beta, rs_{\beta-1}, s_{\beta-1}, \ldots,  rs_{\alpha+1}, s_{\alpha+1}, rs_1, s_1, rs_2, s_2, \ldots, rs_\alpha, s_\alpha)$ is a $\beta$-inducing multifan with respect to $rs_\beta$ and $\varphi'$.
By permuting the name of the colors
  and the label of the vertices, we obtain the desired  multifan.
\qed

\subsection{Kierstead path}

Let $G$ be a graph, $e=v_0v_1\in E(G)$, and  $\varphi\in \CC^k(G-e)$ for some integer $k\ge 0$.
A \emph{Kierstead path}  with respect to $e$ and $\varphi$
is a sequence $K=(v_0, v_0v_1, v_1, v_1v_2, v_2, \ldots, v_{p-1}, v_{p-1}v_p,  v_p)$ with $p\geq 1$ consisting of  distinct vertices $v_0,v_1, \ldots , v_p$ and distinct edges $v_0v_1, v_1v_2,\ldots, v_{p-1}v_p$ satisfying the following condition:
\begin{enumerate}[(K1)]
	\item For every edge $v_{i-1}v_i$ with $i\in [2,p]$,  there is a vertex $v_j$ with $j\in [1,i-1]$ such that 
	$\varphi(v_{i-1}v_i)\in \pbar(v_j)$. 
\end{enumerate}

Clearly a Kierstead path with at most 3 vertices is a multifan. We consider Kierstead paths with $4$ vertices. In the following lemma, statement $(a)$ was proved in Theorem 3.3 from~\cite{StiebSTF-Book} and, analogous to Lemma~\ref{thm:vizing-fan2}, statement $(b)$ is a consequence of $(a)$.

\begin{LEM}[]\label{Lemma:kierstead path1}
	Let $G$ be a Class 2 graph,
	 $e=v_0v_1\in E(G)$ be a critical edge, and $\varphi\in \CC^\Delta(G-e)$. If $K=(v_0, v_0v_1, v_1, v_1v_2,  v_2, v_2v_3, v_3)$ is a Kierstead path with respect to $e$
		and $\varphi$, then the following statements hold.
	\begin{enumerate}[(a)]
 		\item If $\max\{d_G(v_2), d_G(v_3)\}<\Delta$, then $V(K)$ is $\varphi$-elementary.
 		\item For any two colors $\alpha\in\pbar(v_0)$ and 
$\delta\in \pbar(v_3)$, if  $\max\{d_G(v_2), d_G(v_3)\}<\Delta$ and $\alpha \not\in \{\varphi(v_1v_2), \varphi(v_2v_3)\}$, 
then 
 $v_3$ and $v_0$ are $(\delta, \alpha)$-linked with respect to $\varphi$.
 	\end{enumerate}

\end{LEM}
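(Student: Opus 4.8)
The plan is to derive part~(b) from part~(a) by a single Kempe change, exactly in the spirit in which Lemma~\ref{thm:vizing-fan2} is obtained from Lemma~\ref{thm:vizing-fan1}. First I would unpack part~(a): since $\max\{d_G(v_2),d_G(v_3)\}<\Delta$, the set $V(K)$ is $\varphi$-elementary, so the missing sets $\pbar(v_0),\pbar(v_1),\pbar(v_2),\pbar(v_3)$ are pairwise disjoint. With $\alpha\in\pbar(v_0)$ and $\delta\in\pbar(v_3)$ this yields $\alpha\ne\delta$, $\alpha\notin\pbar(v_1)\cup\pbar(v_2)\cup\pbar(v_3)$, and $\delta\notin\pbar(v_0)\cup\pbar(v_1)\cup\pbar(v_2)$. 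Combining the Kierstead-path condition (K1) with these facts and the hypothesis $\alpha\notin\{\varphi(v_1v_2),\varphi(v_2v_3)\}$, I obtain that neither colored edge of $K$, namely $v_1v_2$ or $v_2v_3$, is colored $\alpha$ or $\delta$.

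Now suppose for contradiction that $v_0$ and $v_3$ are $(\delta,\alpha)$-unlinked with respect to $\varphi$, and let $P=P_{v_3}(\delta,\alpha,\varphi)$ be the $(\delta,\alpha)$-chain through $v_3$. Since $v_3$ misses $\delta$ but not $\alpha$, $P$ is a path with $v_3$ as an endpoint, and $v_0\notin V(P)$ by the unlinked assumption. Set $\varphi'=\varphi/P\in\CC^\Delta(G-v_0v_1)$. The key bookkeeping is the following: $v_0\notin V(P)$ gives $\pbar'(v_0)=\pbar(v_0)\ni\alpha$; $v_3$ is an endpoint of $P$ incident to exactly one edge of $P$, which is necessarily $\alpha$-colored, so after the swap $\alpha\in\pbar'(v_3)$; each of $v_1,v_2$ misses neither $\alpha$ nor $\delta$, so any of them lying on $P$ is an internal vertex of $P$, whence $\pbar'(v_1)=\pbar(v_1)$ and $\pbar'(v_2)=\pbar(v_2)$; and because $v_1v_2,v_2v_3$ carry colors different from $\alpha$ and $\delta$, they lie on no $(\delta,\alpha)$-chain, so $\varphi'$ agrees with $\varphi$ on them.

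Consequently $K$ is still a Kierstead path with respect to $v_0v_1$ and $\varphi'$: the vertices and edges are unchanged, $v_0v_1$ is still uncolored, and condition (K1) for $v_1v_2$ and $v_2v_3$ is inherited from $\varphi$ since both their colors and the missing sets $\pbar'(v_0),\pbar'(v_1)$ are unchanged. As $\max\{d_G(v_2),d_G(v_3)\}<\Delta$ depends only on $G$, part~(a) applies to $(K,\varphi')$ and forces $V(K)$ to be $\varphi'$-elementary, contradicting $\alpha\in\pbar'(v_0)\cap\pbar'(v_3)$. This proves (b). I do not foresee a genuine obstacle here; the only delicate point is precisely this bookkeeping — verifying that the Kempe change on $P$ leaves the missing sets at $v_0,v_1,v_2$ and the colors of $v_1v_2,v_2v_3$ intact — so that part~(a) may legitimately be reapplied to the recolored configuration.
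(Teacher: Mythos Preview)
Your proposal is correct and is precisely the argument the paper has in mind: the paper does not spell out a proof of (b) but says only that ``analogous to Lemma~\ref{thm:vizing-fan2}, statement~(b) is a consequence of~(a),'' which is exactly the single Kempe change plus reapplication of (a) that you carry out. Your bookkeeping is sound; in particular, (K1) and $\varphi$-elementarity together force $\varphi(v_1v_2),\varphi(v_2v_3)\ne\delta$, and the hypothesis excludes $\alpha$, so the two colored edges of $K$ survive the swap and $K$ remains a Kierstead path under $\varphi'$.
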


\subsection{Pseudo-multifan}

In this subsection, we introduce a concept called ``pseudo-multifan'' and study some properties 
of it.  
Let $G$ be a graph,  $e=rs_1\in E(G)$,  and  $\varphi\in \CC^k(G-e)$ for some integer $k\ge 0$. A multifan $F_\varphi(r,s_1:s_t)$ is called \emph{maximum} at $r$ if $|V(F)|$ is maximum among all multifans with respect to $rs$  and $\varphi'\in \CC^k(G-rs)$ for some $s\in N_{\Delta-1}(r)$.

A  \emph{pseudo-multifan} with respect to $rs_1$ and $\varphi\in \CC^k(G-rs_1)$ is a sequence $$S:=S_\varphi(r,s_1:s_t:s_p):=(r, rs_1, s_1, rs_2, s_2, \ldots,rs_t, s_t, rs_{t+1},  s_{t+1}, \ldots, s_{p-1},  rs_p, s_p)$$ 
with $p\geq 2$  and  $t\ge 1$ consisting of distinct  vertices $r,s_1, \ldots , s_p$ and distinct edges $rs_1, rs_2, \ldots, rs_p$ satisfying the following conditions:
\begin{enumerate}[(P1)]
	\item $(r, rs_1, s_1, rs_2, s_2, \ldots,rs_t, s_t)$, denoted by $F_\varphi(r,s_1:s_t)$, is a multifan, and it is maximum at $r$.
		\item $V(S)$ is $\varphi$-elementary. Moreover, for every $F$-stable $\varphi'\in \CC^k(G-e)$ with respect to $\varphi$, $V(S)$ is $\varphi'$-elementary.
\end{enumerate}
Clearly every maximum multifan is a pseudo-multifan, and if $S$ is a pseudo-multifan 
with respect to  $\varphi$ and a multifan  $F$,  then by the definition above,  $S$ is a pseudo-multifan under every  $F$-stable coloring $\varphi'$.
 Colors in $\pbar(\{s_{t+1}, \ldots, s_p\})$
are called {\it pseudo-missing colors} of $S$. Let $G$ be an HZ-graph, $e=rs_1\in E(G)$ with $r\in V_\Delta$ and $s_1\in V_{\Delta-1}$, and $\varphi\in \CC^\Delta(G-e)$.  
We call a pseudo-multifan  $S$ {\it typical} (resp. {\it typical 2-inducing}) if the multifan that is contained in $S$
is typical (resp. typical 2-inducing). 

A sequence of distinct vertices 
$s_{h_1}, s_{h_2}, \ldots, s_{h_t} \in N_{\Delta-1}(r)$  form a \emph{rotation}  if 
\begin{enumerate}[(1)]
	\item $\{s_{h_1}, s_{h_t},  \ldots, s_{h_t}\}$ is $\varphi$-elementary, and 
	\item for each $\ell$ with $\ell\in [1,t]$, it holds $\varphi(rs_{h_\ell})=\pbar(s_{h_{\ell-1}})$ where $h_0=h_t$. 
\end{enumerate}
An example of a rotation is given in Figure~\ref{rt}. 

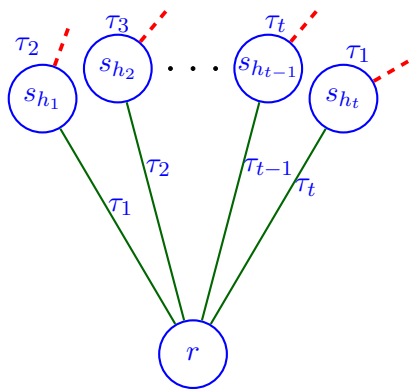
\begin{figure}[!htb]
\begin{center}
	\begin{tikzpicture}[scale=1]
	
	{\tikzstyle{every node}=[draw ,circle,fill=white, minimum size=0.9cm,
		inner sep=0pt]
		\draw[blue,thick] (0, -3) node (r)  {$r$};
	\draw[blue,thick] (-2, 0.4) node (sa)  {$s_{h_1}$};
	\draw [blue,thick](-1, 0.8) node (sa2)  {$s_{h_2}$};
		\draw [blue,thick](1, 0.8) node (sb)  {$s_{h_{t-1}}$};
			\draw [blue,thick](2, 0.4) node (sb2)  {$s_{h_t}$};
		}
	\path[draw,thick,black!60!green]
		(r) edge node[name=la,pos=0.6] {\color{blue}\quad$\tau_1$} (sa)
			(r) edge node[name=la,pos=0.7] {\color{blue}\quad$\tau_2$} (sa2)
			(r) edge node[name=la,pos=0.7] {\color{blue}\quad\,\,\,\,\,$\tau_{t-1}$} (sb)
				(r) edge node[name=la,pos=0.7] {\color{blue}\quad$\tau_t$} (sb2);

	\draw[dashed, red, line width=0.5mm] (sa)--++(70:1cm); 
	\draw[dashed, red, line width=0.5mm] (sa2)--++(50:1cm); 
	\draw[dashed, red, line width=0.5mm] (sb)--++(50:1cm); 
	\draw[dashed, red, line width=0.5mm] (sb2)--++(30:1cm);

		\draw[blue] (-2.2, 1.1) node {$\tau_2$};  
		\draw[blue] (-1.0, 1.4) node {$\tau_3$};  
			\draw[blue] (1.1, 1.4) node {$\tau_{t}$}; 
			\draw[blue] (2.2, 1.0) node {$\tau_{1}$}; 
			
			{\tikzstyle{every node}=[draw ,circle,fill=black, minimum size=0.05cm,
inner sep=0pt]

\draw(-0.3,0.8) node (f1)  {};
\draw(0,0.8) node (f1)  {};
\draw(0.3,0.8) node (f1)  {};

} 
	\end{tikzpicture}
\end{center}
	\caption{A rotation in the neighborhood of $r$.}
	\label{rt}
\end{figure}

Assume $N_{\Delta-1}(r)=\{s_1, s_2,\ldots, s_{\Delta-2}\}$. Let $i,j$ be integers with $2\le i\le j\le \Delta-2$. Then the \emph{shifting from $s_i$ to $s_j$}  is an operation that,  for each $\ell $ with $ \ell\in[i,j]$,  replaces the current color of  $rs_\ell$ by  the color in $\pbar(s_\ell)$.  We will apply shifting either on a sequence of vertices from a multifan 
or on a rotation. Note that we sometimes have $i>j$ when applying a shifting, in which  case the shifting does not change any color.
%
\begin{LEM}\label{pseudo-fan-ele:e}
Let $G$ be an HZ-graph with maximum degree $\Delta\ge 3$, 
	$r\in V_{\Delta}$
	with $N_{\Delta-1}(r)=\{s_1, s_2,\ldots, s_{\Delta-2}\}$,  and $\varphi\in \CC^\Delta(G-rs_1)$. Suppose that there exists a typical pseudo-multifan $S:=S_\varphi(r, s_1: s_t: s_{\Delta-2})$, and the maximum multifan $F:=F_\varphi(r,s_1:s_\alpha:s_t)$ contained in $S$ is typical.  Then for each  $i\in [2,\alpha]$, if $\varphi'$ is the coloring obtained from $\varphi$ by uncoloring $rs_{i}$, shifting from $s_{2}$ to $s_{i-1}$ and coloring $rs_1$ by $2$, then $$S^*=(r,rs_{i},s_{i},rs_{i-1},s_{i-1},\ldots,s_{2},rs_1,s_1,rs_{i+1},s_{i+1},\ldots,s_t,\ldots,s_{\Delta-2})$$ is a pseudo-multifan with respect to $\varphi'$.
\end{LEM}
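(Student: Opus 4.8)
The plan is to write $\varphi'$ down explicitly, identify the multifan $F^*$ that will sit inside $S^*$, and then verify the two defining conditions (P1) and (P2) of a pseudo-multifan (recall every edge of an HZ-graph is critical, so the pseudo-multifan machinery applies to $rs_i$). We may assume $|V(F)|\ge 3$, i.e.\ $\alpha\ge 2$, since otherwise $[2,\alpha]=\emptyset$ and there is nothing to prove. Because $F$ is typical, $\pbar(r)=\{1\}$, $\pbar(s_1)=\{2,\Delta\}$, and for $\ell\in[2,t]$ we have $\varphi(rs_\ell)=\ell$ and $\pbar(s_\ell)=\{\ell+1\}$, the only exception being $\varphi(rs_{\alpha+1})=\Delta$, $\pbar(s_{\alpha+1})=\{\alpha+2\}$; as $i\le\alpha$, every edge the operation touches lies in the $2$-inducing segment. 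Carrying out the three steps, $\varphi'$ is the colouring with $\varphi'(rs_1)=2$, $\varphi'(rs_\ell)=\ell+1$ for $\ell\in[2,i-1]$, with $rs_i$ uncoloured, and agreeing with $\varphi$ elsewhere. Since the colours at $r$ are merely permuted and each colour $\ell+1$ newly placed on $rs_\ell$ was missing at $s_\ell$, we get $\varphi'\in\CC^\Delta(G-rs_i)$ with $\pbar'(r)=\{1\}$, $\pbar'(s_1)=\{\Delta\}$, $\pbar'(s_\ell)=\{\ell\}$ for $\ell\in[2,i-1]$, $\pbar'(s_i)=\{i,i+1\}$, and $\pbar'(s_\ell)=\pbar(s_\ell)$ for $\ell\ge i+1$.

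For (P1), I would take $F^*=(r,rs_i,s_i,rs_{i-1},s_{i-1},\dots,rs_2,s_2,rs_1,s_1,rs_{i+1},s_{i+1},\dots,rs_t,s_t)$, the initial segment of $S^*$, and check (F1) against the missing sets above: along $s_i,s_{i-1},\dots,s_2,s_1$ the colour $\varphi'(rs_\ell)$ is missing at $s_{\ell+1}$, with $\pbar'(s_i)=\{i,i+1\}$ and $\pbar'(s_2)=\{2\}$ covering the two ends; then $\varphi'(rs_{i+1})\in\pbar'(s_i)$, then $\varphi'(rs_{\alpha+1})=\Delta\in\pbar'(s_1)$ when $\alpha<t$, and the remaining $rs_\ell$ carry the colour $\ell$ (or $\alpha+2$ at $\ell=\alpha+1$), missing at $s_{\ell-1}$ exactly as in $F$. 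Hence $F^*$ is a multifan with respect to $rs_i$ and $\varphi'$, and since $|V(F^*)|=|V(F)|$ while $F$ is maximum at $r$, so is $F^*$. For the first half of (P2), $V(S^*)=V(S)$; because $V(S)$ is $\varphi$-elementary with $\Delta-1$ vertices missing $\Delta$ colours in total, every colour is missed exactly once on $V(S)$, so the pseudo-vertices $s_{t+1},\dots,s_{\Delta-2}$ miss precisely $\{t+2,\dots,\Delta-1\}$, which the operation leaves alone. Together with the missing sets above this displays $\pbar'$ on $V(S^*)$ as a disjoint partition of $[1,\Delta]$ (the only non-singleton block being $\{i,i+1\}$ at $s_i$), so $V(S^*)$ is $\varphi'$-elementary.

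The hard part will be the ``moreover'' clause of (P2): that $V(S^*)$ stays elementary under every $F^*$-stable $\varphi''\in\CC^\Delta(G-rs_i)$. The idea is to pull $\varphi''$ back through the operation. By $F^*$-stability, $\varphi''(rs_\ell)=\varphi'(rs_\ell)$ for all $\ell\in[1,t]$, so $\varphi''(rs_1)=2$, $\varphi''(rs_\ell)=\ell+1$ for $\ell\in[2,i-1]$, and $rs_i$ is uncoloured. I would define $\psi$ by uncolouring $rs_1$, setting $\psi(rs_\ell)=\ell$ for $\ell\in[2,i]$, and keeping every other edge as in $\varphi''$. The same bookkeeping as in the first paragraph gives $\psi\in\CC^\Delta(G-rs_1)$, and, checking vertex by vertex on $V(F)=\{r,s_1,\dots,s_t\}$, that $\psi$ agrees with $\varphi$ on $E(F)$ and $\overline{\psi}$ with $\pbar$ on $V(F)$; thus $\psi$ is $F$-stable with respect to $\varphi$. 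Moreover, applying the lemma's operation to $\psi$ returns $\varphi''$ exactly, because that operation re-sets precisely the edges $rs_1,\dots,rs_i$ on which $\psi$ and $\varphi''$ differ. Now (P2) for $S$, applied to the $F$-stable colouring $\psi$, gives that $V(S)$ is $\overline{\psi}$-elementary, so $s_{t+1},\dots,s_{\Delta-2}$ miss exactly $\{t+2,\dots,\Delta-1\}$ under $\psi$, hence also under $\varphi''$, as the operation changes no edge at these vertices. Finally, for $r,s_1,\dots,s_t\in V(F^*)$ the $\varphi''$-missing set equals $\pbar'$ by $F^*$-stability, while for $\ell\in[t+1,\Delta-2]$ it is $\{t+2,\dots,\Delta-1\}$ as just shown; so the $\varphi''$-missing sets on $V(S^*)$ reproduce the same disjoint partition of $[1,\Delta]$ as $\pbar'$ did, and $V(S^*)$ is $\varphi''$-elementary. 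The conceptual content is just that the operation commutes with pull-back, that $F^*$-stability on $V(F^*)=V(F)$ translates into $F$-stability of $\psi$, and that the pseudo-part stays controlled by (P2) for $S$; the obstacle is the careful tracking of how a single missing colour moves each time one edge at $r$ is recoloured.
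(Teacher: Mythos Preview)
Your proof is correct and follows essentially the same approach as the paper's: verify that $F^*$ is a maximum multifan, then for (P2) invert the shifting operation to pull an $F^*$-stable $\varphi''$ back to an $F$-stable colouring $\psi$ (the paper's $\varphi'''$) and invoke (P2) for $S$. The paper argues this step by contradiction and more tersely, while you work directly and track the missing sets explicitly, but the content is the same; one minor wording slip is that the $\varphi''$-missing sets on the pseudo-part need not be the \emph{same} partition of $\{t+2,\dots,\Delta-1\}$ as under $\varphi'$, only \emph{a} disjoint partition of it, which is all you use.
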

\pf
By the definition of shifting, we know that $$F^*=(r,rs_{i},s_{i},rs_{i-1},s_{i-1},\ldots,s_{2},rs_1,s_1,rs_{i+1},s_{i+1},\ldots,s_t)$$ is a multifan. Since $|V(F^*)|=|V(F)|$, $F^*$ is also a maximum multifan at $r$. So to show Lemma~\ref{pseudo-fan-ele:e}, it suffices to show that for any $F^*$-stable $\varphi''\in \CC^\Delta(G-rs_i)$ with respect to $\varphi'$, $V(S)$ is $\varphi''$-elementary. 
Suppose to  the contrary that there exists $F^*$-stable $\varphi''\in \CC^\Delta(G-rs_i)$ with respect to $\varphi'$ but $V(S)$
is not $\varphi''$-elementary.  As $\varphi''$ is $F^*$-stable with respect to $\varphi'$,
we can undo the operations we did before. More specifically, let $\varphi'''$ be the coloring obtained from $\varphi''$ by uncoloring $rs_1$, shifting from $s_2$ to $s_{i-1}$ and coloring $s_{i}$ by $i$. Then $\varphi'''$ is $F$-stable with respect to $\varphi$ and $\pbar'''(V(S))=\pbar''(V(S))$.
Thus, $V(S)$ is  not $\varphi''$-elementary implies that   $V(S)$ is not $\varphi'''$-elementary. This contradicts the assumption that $V(S)$
is elementary under any $F$-stable coloring with respect to $\varphi$. \qed

\begin{LEM}\label{pseudo-fan-ele}
	Let $G$ be an HZ-graph with maximum degree $\Delta\ge 3$, 
	$r\in V_{\Delta}$
	with $N_{\Delta-1}(r)=\{s_1, s_2,\ldots, s_{\Delta-2}\}$,  and $\varphi\in \CC^\Delta(G-rs_1)$. If there exists a pseudo-multifan $S:=S_\varphi(r, s_1: s_t: s_{\Delta-2})$  with 
	 $\delta \in \pbar(s_j)$ for some $j\in [t+1,\Delta-2]$ and with $F:=F_\varphi(r,s_1:s_t)$ being  the maximum multifan contained in $S$, then the following statements hold.
	\begin{enumerate}[(a)]
		\item $\{s_{t+1}, \ldots, s_{\Delta-2}\}$ can be partitioned 
		into rotations with respect to $\varphi$. \label{pseudo-a}
		\item $s_j$ and $r$ are $(\delta,1)$-linked with respect to  $\varphi$. \label{pseudo-a1}
		\item For every color $\gamma\in \pbar(V(F)\setminus \{r\})$,  it holds 
		$r\in P_{y}(\delta,\gamma)=P_{s_j}(\delta,\gamma)$, where $y=\mathit{\pbar_{F}^{-1}(\gamma)}$.  
		Furthermore, for $z\in N_G(r)$ such that $\varphi(rz)=\gamma$,  
		 $P_{y}(\delta,\gamma)$
		meets $z$ before  $r$.  \label{pseudo-b}
		\item For every $\delta^*\in \pbar(V(S)\setminus V(F))$ with $\delta^*\ne \delta$, it holds $P_{y}(\delta,\delta^*)=P_{s_{j}}(\delta,\delta^*)$,  where $y=\mathit{\pbar_{S}^{-1}(\delta^*)}$. Furthermore, either $r\in P_{s_j}(\delta,\delta^*)$ or $P_r(\delta, \delta^*)$ is an even cycle. 
			 \label{pseudo-c}
	\end{enumerate}
\end{LEM}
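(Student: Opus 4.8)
The plan is to establish the four statements in sequence, using the elementarity of $V(S)$ under every $F$-stable coloring (property (P2)) as the engine, together with the multifan linkage facts in Lemma~\ref{thm:vizing-fan1} and Lemma~\ref{thm:vizing-fan2}. First, for \eqref{pseudo-a}, I would argue that each $s_\ell$ with $\ell\in[t+1,\Delta-2]$ has $\varphi(rs_\ell)\in\pbar(s_{\ell'})$ for some $s_{\ell'}\in N_{\Delta-1}(r)$: indeed, since $S$ is a pseudo-multifan and $F$ is maximum at $r$, no vertex $s_\ell$ ($\ell>t$) can be appended to the multifan $F$ to enlarge it, which forces $\varphi(rs_\ell)$ to be present at every vertex of $V(F)$; but $\varphi(rs_\ell)\in\varphi(r)$ as well (it colors an edge at $r$, and $1\in\pbar(r)$ so $\varphi(rs_\ell)\ne 1$), and by elementarity of $V(S)$ the only vertices of $V(S)$ that could miss $\varphi(rs_\ell)$ lie in $\{s_{t+1},\dots,s_{\Delta-2}\}$. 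Using that $N_{\Delta-1}(r)=\{s_1,\dots,s_{\Delta-2}\}$ has exactly $\Delta-2$ vertices and that the colors $\{\varphi(rs_\ell):\ell>t\}$ together with the missing colors of these vertices form a system where each color present on an $rs_\ell$ edge is missing at exactly one other $s_{\ell'}$ (again by elementarity plus a counting of present/missing colors at $r$ versus at the $s$'s), the map $s_\ell\mapsto(\text{the }s_{\ell'}\text{ missing }\varphi(rs_\ell))$ is a permutation of $\{s_{t+1},\dots,s_{\Delta-2}\}$, whose cycle decomposition gives the desired partition into rotations.

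For \eqref{pseudo-a1}, I would apply Lemma~\ref{thm:vizing-fan1}\eqref{thm:vizing-fan1b}-style reasoning but adapted to the pseudo-missing vertex $s_j$. Take the $(\delta,1)$-chain through $s_j$; since $1\in\pbar(r)$ and $\delta\in\pbar(s_j)$, if $r$ and $s_j$ were $(\delta,1)$-unlinked, then the Kempe change $\varphi'=\varphi/P_{s_j}(\delta,1,\varphi)$ is still an edge $\Delta$-coloring of $G-rs_1$, it is $F$-stable with respect to $\varphi$ because $r$ misses $1$ and the chain does not pass through $r$ (so no edge or vertex of $F$ is touched — here one must check $1\notin\varphi(V(F))$ appropriately and $\delta\notin\pbar(V(F))$ since $\delta$ is pseudo-missing and $V(S)$ elementary), and under $\varphi'$ the vertex $s_j$ now misses $1$, contradicting that $r$ also misses $1$ while $V(S)$ is elementary for all $F$-stable colorings. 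So $r\in P_{s_j}(\delta,1)$.

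For \eqref{pseudo-b}, given $\gamma\in\pbar(V(F)\setminus\{r\})$ with $y=\pbar_F^{-1}(\gamma)$, I would again consider $P_{s_j}(\delta,\gamma)$: by elementarity of $V(S)$, $\delta$ is missed only at $s_j$ and $\gamma$ only at $y$ within $V(S)$, so the $(\delta,\gamma)$-chain from $s_j$ is the same as the one from $y$ unless it terminates outside $V(S)$; a Kempe change on $P_{s_j}(\delta,\gamma)$ not through $r$ would be $F$-stable (needs $\gamma,\delta\notin$ the relevant edges/missing-sets of $F$ except at $y$) and would make $s_j$ miss $\gamma$, contradicting elementarity again, so $r\in P_{s_j}(\delta,\gamma)=P_y(\delta,\gamma)$. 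The "meets $z$ before $r$" clause then follows because the edge $rz$ has color $\gamma$, so along the chain the step into $r$ is via the $\gamma$-edge $zr$, forcing $z$ to immediately precede $r$; one checks $r$ is not an endpoint of this chain (it misses neither $\delta$ nor $\gamma$) and reads off the order. Finally \eqref{pseudo-c} is the analogue with two pseudo-missing colors $\delta,\delta^*$ both missed in $V(S)\setminus V(F)$: $P_{s_j}(\delta,\delta^*)=P_y(\delta,\delta^*)$ with $y=\pbar_S^{-1}(\delta^*)$ by the same elementarity argument (a Kempe change avoiding $r$ is $F$-stable and creates a repeated missing color in $V(S)$); and for the dichotomy, note $r$ misses neither $\delta$ nor $\delta^*$, so if $r$ lies on the $(\delta,\delta^*)$-chain $P_r(\delta,\delta^*)$ then $r$ is an interior vertex and the chain is either a path with both endpoints elsewhere or an even cycle — ruling out the "path not containing both $s_j$ and $r$" case by the linkage we just proved leaves exactly: either $r\in P_{s_j}(\delta,\delta^*)$ (the chain is a path through both $r$ and $s_j$) or $P_r(\delta,\delta^*)$ is a separate even cycle disjoint from $s_j$.

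The main obstacle I expect is verifying $F$-stability of each Kempe change, i.e.\ that the chain being switched genuinely avoids every vertex and edge of $F$ (not just $r$): this requires knowing precisely which colors appear on edges of $F$ and which colors are missing at vertices of $F$, and checking that $\delta$ (pseudo-missing) and the second color of the chain are disjoint from those in the right way. For a typical multifan the edge colors at $r$ are $\{2,3,\dots,\beta+1\}$-ish and the missing colors at the $s_i$ are $\{3,4,\dots\}$, so one must confirm $\delta\ge$ something or more carefully that $\delta\notin\pbar(V(F))$ (immediate from elementarity of $V(S)\supseteq V(F)\cup\{s_j\}$ and $\delta\in\pbar(s_j)$) and that the chain doesn't run along an $F$-edge — this is where a careful case check on whether the second color is one of the $\varphi(rs_i)$ is needed, mirroring the proof of Lemma~\ref{thm:vizing-fan2}. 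Everything else is bookkeeping with the poset structure of inducing colors and the rotation permutation.
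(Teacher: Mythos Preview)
Your arguments for \eqref{pseudo-a} and \eqref{pseudo-a1} are correct and essentially match the paper. The real gap is in \eqref{pseudo-b}, and it is not just a bookkeeping issue with $F$-stability --- it is a missing idea.

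Your plan for \eqref{pseudo-b} is: assume $r\notin P_{s_j}(\delta,\gamma)$, swap along $P_{s_j}(\delta,\gamma)$, claim the result is $F$-stable, and derive a contradiction from (P2). But consider the case $P_{s_j}(\delta,\gamma)=P_y(\delta,\gamma)$ with $r\notin P_{s_j}(\delta,\gamma)$. After the swap, $\pbar(y)$ changes from $\gamma$ to $\delta$; since $y\in V(F)$, the new coloring is \emph{not} $F$-stable, and (P2) gives you nothing. The paper handles precisely this situation (its Case~1) by instead swapping along the $(\delta,\gamma)$-chain $Q$ that contains $r$, which recolors $rz$ and $rs_{j+1}$ (the $\delta$-edge into the rotation). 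This allows the rotation $s_{j+1},\dots,s_\ell,s_j$ to be appended to $F$ to form a strictly larger multifan $F^*$, contradicting (P1), the maximality of $F$. In other words, (P2) alone is insufficient; you must also exploit (P1). A second subcase (the paper's Case~2: $r\in P_{s_j}$ but $r\notin P_y$) likewise requires building a multifan that incorporates the rotation and then invoking Lemma~\ref{thm:vizing-fan1}\eqref{thm:vizing-fan1a}, not (P2).

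Your argument for the ``meets $z$ before $r$'' clause is also incomplete: $r$ has both a $\gamma$-edge $rz$ and a $\delta$-edge $rs_{j+1}$, so the chain from $y$ could reach $r$ via $s_{j+1}$ first. The paper rules this out by shifting along the rotation $s_j,\dots,s_\ell$ (which is $F$-stable) and then reapplying the first part of \eqref{pseudo-b}. Similarly, in \eqref{pseudo-c} your dichotomy argument does not actually exclude the possibility that the $(\delta,\delta^*)$-component through $r$ is a path with both endpoints outside $V(S)$; the paper again uses rotation shifts to manufacture an $F$-stable coloring violating (P2).
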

\pf By relabeling colors and vertices, we assume $F$ is typical. Let  $F=F_\varphi(r,s_1:s_\alpha:s_\beta)$ be a typical multifan, where  $\beta=t$.

For   statement \eqref{pseudo-a}, by the definition of multifan, we have $\varphi(\{rs_2,rs_3,\ldots,rs_\beta\})=\pbar(V(F)) \setminus \{1,\alpha+1,\beta+1\}$. Since $V(S)$ is $\varphi$-elementary, $\cup_{i=\beta+1}^{\Delta-2}\pbar(s_i)=[1,\Delta] \setminus \pbar(V(F))$. Note that $\{\varphi(rs_i)\,:\, i\in [\beta+1, \Delta-2]\}=[1,\Delta]\setminus \big (\varphi(\{rs_2,rs_3,\ldots,rs_\beta\})\cup\{1,\alpha+1,\beta+1\}\big)$.  Hence
$$\cup_{i=\beta+1}^{\Delta-2}\pbar(s_i)=\{\varphi(rs_i)\,:\, i\in [\beta+1, \Delta-2]\}.$$
Thus, the sequence of missing colors  $\pbar(s_{\beta+1}), \ldots, \pbar(s_{\Delta-2})$
is a  permutation  of the sequence of colors $\varphi(rs_{\beta+1}), \ldots, \varphi(rs_{\Delta-2})$.    
Since every permutation can be partitioned into disjoint cycles, 
 $\{{s_{\beta+1}, \ldots, s_{\Delta-2}}\}$  has a partition into rotations. 
This finishes the proof for \eqref{pseudo-a}. 

Notice that $j\in [t+1,\Delta-2]$,  and statement \eqref{pseudo-a} implies that there is a rotation containing $s_j$. Assume without loss of generality that this rotation is $s_j, s_{j+1}, \ldots, s_\ell$
in the remainder of this proof. 

For  \eqref{pseudo-a1}, if $s_j$ and $r$
 are $(\delta,1)$-unlinked with respect to $\varphi$, then $P_{s_j}(\delta,1)$ ends at a vertex outside $V(F)$ and does not contain any edge in $F$. Thus $\varphi'=\varphi/P_{s_j}(\delta,1)$ is $F$-stable with respect to $\varphi$. But $V(S)$ is not $\varphi'$-elementary, giving a contradiction to (P2) in the definition of a pseudo-multifan.  

For the first part of statement\eqref{pseudo-b}, suppose to the contrary that
there exists $\gamma\in \pbar(s_i)$ for some $i\in[1, \beta]$ such that $r\in P_{s_i}(\delta,\gamma)=P_{s_j}(\delta,\gamma)$ does not hold. Assume without loss of generality that $i\in [1,\alpha]$. Then we have the following three cases: $r\notin P_{s_i}(\delta,\gamma)$ and $r\notin P_{s_j}(\delta,\gamma)$; $r\notin P_{s_i}(\delta,\gamma)$ and $r\in P_{s_j}(\delta,\gamma)$; and $r\in P_{s_i}(\delta,\gamma)$ and $r\notin P_{s_j}(\delta,\gamma)$.

Suppose that $r\notin P_{s_i}(\delta,\gamma)$ and $r\notin P_{s_j}(\delta,\gamma)$. Then let $\varphi'=\varphi/Q$,  where $Q$ is the $(\delta,\gamma)$-chain containing $r$. Note that $\varphi'$ and $\varphi$ agree on every edge incident to $r$ except two edges $rs_{j+1}$ and $rz$ where $z$ is defined in statement~\eqref{pseudo-b}. Since $r\notin P_{s_i}(\delta,\gamma)$, $r\notin P_{s_j}(\delta,\gamma)$ and $N_{\Delta-1}(r)$ is $\varphi$-elementary, $\pbar'(s_i)=\pbar(s_i)$ for all $s_i\in N_{\Delta-1}(r)$. Thus under the new coloring $\varphi'$, $F^*=(r, rs_1, s_1, \ldots,s_i, rs_{j+1}, s_{j+1}, \ldots, rs_\ell, s_\ell, rs_j, s_j, rs_{i+1},  s_{i+1}, \ldots,s_\beta)$ is a multifan (recall that $\{s_j,s_{j+1},\ldots,s_l\}$ is the rotation containing $s_j$) because $\pbar'(s_i)=\gamma=\varphi'(rs_{j+1})$,   $\pbar'(s_j)=\delta=\varphi'(rs_{i+1})$ if $i<\alpha$, and   $\varphi'(s_{i+1})=\Delta=\pbar'(s_{1})$ if $i=\alpha$.  As $|V(F)|<| V(F^*)|$, 
we obtain a contradiction to the maximality of $F$.

Suppose that $r\notin P_{s_i}(\delta,\gamma)$ and $r\in P_{s_j}(\delta,\gamma)$. Then let $\varphi'=\varphi/P_{s_j}(\delta,\gamma)$. Similar to the case above, one can easily check that $F^*=(r, rs_1, s_1, \ldots,s_i, rs_{j+1}, s_{j+1}, \ldots, rs_\ell, s_\ell, rs_j, s_j)$ is a multifan. 
Since $\pbar'(s_i)=\pbar'(s_j)=\gamma$, we obtain a contradiction to 
Lemma~\ref{thm:vizing-fan1} \eqref{thm:vizing-fan1a} that 
 $V(F^*)$ is $\varphi'$-elementary. 

Suppose that $r\in P_{s_i}(\delta,\gamma)$ and $r\notin P_{s_j}(\delta,\gamma)$. Then let $\varphi'=\varphi/P_{s_j}(\delta,\gamma)$. Note that $\varphi'$ is $F$-stable with respect to $\varphi$, thus by the definition of a pseudo-multifan, $V(S)$ is $\varphi'$-elementary. But $\pbar'(s_i)=\pbar'(s_j)=\gamma$, giving a contradiction. This completes the proof of the first part of statement \eqref{pseudo-b}.

For the second  part of statement\eqref{pseudo-b}, assume to the contrary that $P_{y}(\delta,\gamma)$
meets $r$ before  $z$. Then $P_{y}(\delta,\gamma)$ meets $s_{j+1}$ before $r$. Let $\varphi'$ be obtained from $\varphi$ by shifting from $s_{j}$
to $s_{\ell}$. Then  $r\not\in P_{y}(\delta,\gamma, \varphi')$, showing a contradiction to 
the first part of~\eqref{pseudo-b}. 

For the first part of statement\eqref{pseudo-c}, assume to the contrary that there exists $\delta^*=\pbar(s_{j^*})$ for some $j^*\ne j$ and $j^*\in [t+1, \Delta-2]$ such that $P_{s_j}(\delta,\delta^*)\ne P_{s_{j^*}}(\delta,\delta^*)$. Then let $\varphi'=\varphi/P_{s_j}(\delta,\delta^*)$.
Note that $\varphi'$ is $F$-stable with respect to $\varphi$, but $V(S)$ is not $\varphi'$-elementary, 
showing a contradiction to the  definition of a pseudo-multifan. 
For the second part of \eqref{pseudo-c}, 
assume that $r\not\in P_{s_j}(\delta,\delta^*)$
and the $(\delta,\delta^*)$-chain  containing $r$
is a path $Q$. By~\eqref{pseudo-a}, we let $s_{\ell_1}, \ldots, s_{\ell_k}$ be a rotation with $\varphi(rs_{\ell_1})=\pbar(s_{\ell_t})=\delta^*$ (note $s_{\ell_k}=s_{j^*}$). Note that the path $Q$ contains $rs_{j+1}$ and $rs_{j^*}$ since $\varphi(rs_{j+1})=\delta$ and $\varphi(s_{j^*})=\delta^*$. So $Q-rs_{j+1}-rs_{j^*}$ consists of two disjoint paths, say $Q_j$ and $Q_{j^*}$, which contain $s_{j+1}$ and $s_{j^*}$ respectively. Let $\varphi'$ be obtained from $\varphi$ by shifting from $s_{j}$ to $s_{l}$
and from $s_{\ell_1}$ to $s_{\ell_k}$ (only shift once if they are the same sequence up to permutation). Then $P_{s_{j+1}}(\delta,\delta^*,\varphi')=Q_j$. Let $\varphi^*=\varphi'/P_{s_{j+1}}(\delta, \delta^*,\varphi')$. We see that $\varphi^*$ is $F$-stable with respect to $\varphi$, but $V(S)$
is not $\varphi^*$-elementary, giving a contradiction.  \qed

\subsection{Lollipop}


Let $G$ be an $HZ$ graph,   $e=rs_1\in E(G)$ with $r\in V_\Delta$ and 
$s_1\in V_{\Delta-1}$, and let $\varphi\in \CC^\Delta(G-e)$. Then a \emph{lollipop}
centered at $r$ (shown in Figure~\ref{lol}) is a sequence 
$L=(F, ru, u,ux, x)$ of distinct vertices and edges such that $F=F_\varphi(r,s_1:s_\alpha:s_\beta)$ is a typical multifan, $u\in N_\Delta(r)$ and $x\in N_{\Delta-1}(u)$ 
with $x\not\in\{s_1,\ldots, s_\beta\}$.

In this section, we will establish  fundamental properties for a lollipop in an HZ-graph, which will enable us to show that a lollipop in an HZ-graph is elementary in the next section, revealing the truth of Theorem~\ref{Thm:vizing-fan}. 

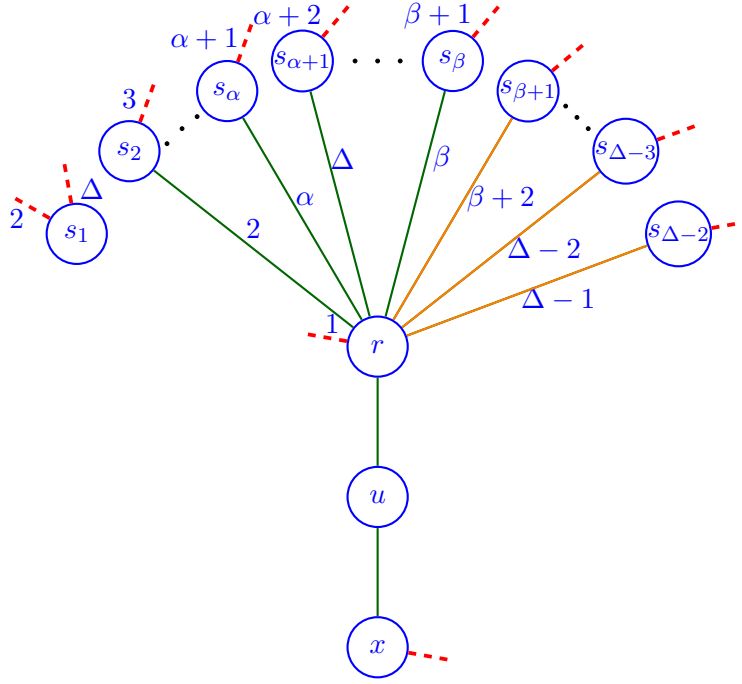
\begin{figure}[!htb]
	\begin{center}
		\begin{tikzpicture}[scale=1]
		
		{\tikzstyle{every node}=[draw ,circle,fill=white, minimum size=0.8cm,
			inner sep=0pt]
			\draw[blue,thick](0,-3) node (r)  {$r$};
			\draw [blue,thick](-4, -1.5) node (s1)  {$s_1$};
			\draw [blue,thick](-3.3, -0.4) node (s2)  {$s_2$};
			\draw[blue,thick] (-2, 0.4) node (sa)  {$s_\alpha$};
			\draw [blue,thick](-1, 0.8) node (sa2)  {$s_{\alpha+1}$};
			\draw [blue,thick](1, 0.8) node (sb)  {$s_{\beta}$};
			\draw [blue,thick](2, 0.4) node (sb2)  {$s_{\beta+1}$};
			\draw[blue,thick] (3.3, -0.4) node (sd1)  {$s_{\Delta-3}$};
			\draw [blue,thick](4, -1.5) node (sd2)  {$s_{\Delta-2}$};
			\draw [blue,thick](0, -5) node (u)  {$u$};
			\draw [blue,thick](0, -7) node (x)  {$x$};
		}
		\path[draw,thick,black!60!green]
		(r) edge node[name=la,above,pos=0.5] {\color{blue}$2$} (s2)
		(r) edge node[name=la,pos=0.6] {\color{blue}\quad$\alpha$} (sa)
		(r) edge node[name=la,pos=0.7] {\color{blue}\quad$\Delta$} (sa2)
		(r) edge node[name=la,pos=0.7] {\color{blue}\quad$\beta$} (sb)
		(r) edge node[name=la,pos=0.6] {\color{blue}\qquad\,\,\,$\beta+2$} (sb2)
		(r) edge node[name=la,pos=0.5] {\color{blue}\quad \qquad$\Delta-2$} (sd1)
		(r) edge node[name=la,pos=0.4] {\color{blue}\qquad\quad\,\,\,\,\,$\Delta-1$} (sd2)
		(r) edge node[name=la,pos=0.4] {\color{blue}\qquad\quad\,\,\,} (u)
		(u) edge node[name=la,pos=0.4] {\color{blue}\qquad\quad\,\,\,} (x);

		\draw[orange, thick] (r) --(sb2); 
		\draw [orange, thick](r) --(sd1); 
		\draw [orange, thick](r) --(sd2); 
		
		\draw[dashed, red, line width=0.5mm] (r)--++(170:1cm); 
		\draw[dashed, red, line width=0.5mm] (s1)--++(150:1cm); 
		\draw[dashed, red, line width=0.5mm] (s1)--++(100:1cm); 
		\draw[dashed, red, line width=0.5mm] (s2)--++(70:1cm); 
		\draw[dashed, red, line width=0.5mm] (sa)--++(70:1cm); 
		\draw[dashed, red, line width=0.5mm] (sa2)--++(50:1cm); 
		\draw[dashed, red, line width=0.5mm] (sb)--++(50:1cm); 
		\draw[dashed, red, line width=0.5mm] (sb2)--++(40:1cm); 
		\draw[dashed, red, line width=0.5mm] (sd1)--++(20:1cm); 
		\draw[dashed, red, line width=0.5mm] (sd2)--++(10:1cm); 
			\draw[dashed, red, line width=0.5mm] (x)--++(350:1cm); 
		\draw[blue] (-0.6, -2.7) node {$1$};  
		\draw[blue] (-4.8, -1.3) node {$2$};  
		\draw[blue] (-3.8, -0.9) node {$\Delta$};  
		\draw[blue] (-3.3, 0.3) node {$3$};  
		\draw[blue] (-2.3, 1.1) node {$\alpha+1$};  
		\draw[blue] (-1.2, 1.4) node {$\alpha+2$};  
		\draw[blue] (0.8, 1.4) node {$\beta+1$}; 
		
			{\tikzstyle{every node}=[draw ,circle,fill=black, minimum size=0.05cm,
			inner sep=0pt]
			\draw(-2.4,0.06) node (f1)  {};
			\draw(-2.6,-0.1) node (f1)  {};
			\draw(-2.8,-0.3) node (f1)  {};
			\draw(-0.3,0.8) node (f1)  {};
			\draw(0,0.8) node (f1)  {};
			\draw(0.3,0.8) node (f1)  {};
			\draw(2.5,0.2) node (f1)  {};
			\draw(2.65,0.05) node (f1)  {};
			\draw(2.8
			,-0.1) node (f1)  {};
		} 
		
		\end{tikzpicture}
	\end{center}
	\caption{A lollipop centered at $r$, where $x$ can be the same as some $s_i$ for $i\in [\beta+1, \Delta-2]$.}
\label{lol}
\end{figure}

\subsubsection{Fundamental properties of a lollipop}
\begin{LEM}\label{Lemma:extended multifan}
Let  $G$ be an HZ-graph with maximum degree $\Delta\ge 3$, $r\in V_\Delta$,  $N_{\Delta-1}(r)=\{s_1, s_2,\ldots, s_{\Delta-2}\}$, and $\varphi\in \CC^\Delta(G-rs_1)$, and let 
$F=F_\varphi(r,s_1:s_\alpha:s_\beta)$ be a typical multifan and  $L=(F,ru,u,ux,x)$ be a lollipop centered at $r$ such that  $\varphi(ru)=\alpha+1$.  If $\pbar(x)=\alpha+1$,
then $\varphi(ux)\ne 1$.  Furthermore,  if $\varphi(ux)=\tau$ 
is a 2-inducing color with respect to $\varphi$ and $F$, then  the following statements hold.
	\begin{enumerate}[(a)]
		\item   $ux\in P_r(\tau,1)$.  \label{Evizingfan-a}

		\item Let $P_x(\tau,1)$ be the $(\tau,1)$-chain starting at $x$ not containing $ux$. Then $P_x(\tau,1)$ ends at $r$.  \label{Evizingfan-b}
		\item For any 2-inducing color $\delta$ with $\tau\prec \delta$, 
		$r\in P_{s_1}(\Delta, \delta)=P_{s_{\delta-1}}(\Delta,\delta)$. \label{Evizingfan-c}
		\item  For any $\Delta$-inducing color $\delta$, $r\in P_{s_{\delta-1}}(\delta, \alpha+1 )=P_{s_{\alpha}}(\delta, \alpha+1)$, where $s_{\Delta-1}=s_1$ if $\delta=\Delta$.
		 \label{Evizingfan-d}
		 \item For any 2-inducing color $\delta$ with $\delta\prec \tau$, 
		 $r\in P_{s_\alpha}(\delta, \alpha+1)=P_{s_{\delta-1}}(\delta, \alpha+1)$. \label{Evizingfan-e}
	\end{enumerate}
\end{LEM}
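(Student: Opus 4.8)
The plan is to settle $\varphi(ux)\ne 1$ by a single Kempe change and then to derive (a)--(e) by contradiction, each reduction landing on either this inequality or Lemma~\ref{thm:vizing-fan1}(a) (elementariness of a multifan). Throughout I use the typical normalisation $\pbar(r)=\{1\}$, $\pbar(s_1)=\{2,\Delta\}$, $\pbar(s_i)=i+1$ $(i\in[2,\alpha])$, so that the $2$-inducing colours are $2,3,\dots,\alpha+1$.

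Suppose first $\varphi(ux)=1$. Since $d_G(u)=\Delta$, $u$ misses no colour, so the $(1,\alpha+1)$-chain through the edge $ux$ enters $u$ along its only $1$-edge $ux$ and leaves along its only $(\alpha+1)$-edge $ru$, and it stops at $x$ (which misses $\alpha+1$) and at $r$ (which misses $1$); hence it is exactly the path $x\,u\,r$. Interchanging $1$ and $\alpha+1$ on it gives $\varphi_1\in\CC^\Delta(G-rs_1)$ that differs from $\varphi$ only on $ux,ru$, so the missing sets at $s_1,\dots,s_\beta$ are unchanged, $F$ is still a multifan for $rs_1$ and $\varphi_1$, yet $\pbar_1(r)=\{\alpha+1\}=\pbar_1(s_\alpha)$ with $r,s_\alpha\in V(F)$ --- contradicting Lemma~\ref{thm:vizing-fan1}(a). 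Thus $\varphi(ux)\ne 1$. Now let $\varphi(ux)=\tau$ with $\tau$ a $2$-inducing colour. Since $\tau\in\varphi(x)$ and $\pbar(x)=\alpha+1$, we get $\tau\ne\alpha+1$, so $\tau\in\{2,\dots,\alpha\}$ and $\tau=\pbar(s_{\tau-1})$ ($s_1$ in the role of $s_{\tau-1}$ when $\tau=2$). For (a), suppose $ux\notin P_r(\tau,1)$ and let $Q$ be the $(\tau,1)$-chain through $ux$; then $r\notin Q$. By Lemma~\ref{thm:vizing-fan1}(b) $r$ and $s_{\tau-1}$ are $(1,\tau)$-linked, so $s_{\tau-1}\in P_r(\tau,1)$ and hence $s_{\tau-1}\notin Q$; and as $d_G(x)\le\Delta-1$ with $x$ missing only $\alpha+1\notin\{\tau,1\}$, the vertex $x$ is interior to $Q$. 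Consequently $\varphi_2:=\varphi/Q$ changes no missing set among $\{r,s_1,\dots,s_\beta\}$, so $F$ is still a typical multifan with $\tau$ still $2$-inducing, $L$ is still a lollipop, $\varphi_2(ru)=\alpha+1$, $\pbar_2(x)=\alpha+1$, and $\varphi_2(ux)=1$ --- contradicting the inequality above. Hence $ux\in P_r(\tau,1)$.

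For (b), by (a) the chain $C:=P_r(\tau,1)$ contains the edge $ux$, hence $u$ and $x$ as interior vertices, and (by Lemma~\ref{thm:vizing-fan1}(b) again) has endpoints $r$ and $s_{\tau-1}$; so (b) amounts to showing $x$ lies between $r$ and $u$ on $C$, the alternative being $C=r\cdots u\,x\cdots s_{\tau-1}$. I would exclude the latter by working with the $(\tau,\alpha+1)$-chain through $ru$, which --- $u$ missing no colour --- is the chain $x\,u\,r\,s_\tau\cdots$: a full-chain Kempe change there, followed by a relabelling (needed because the change moves the colour $1$ off $r$), returns a colouring that is $F$-stable after the relabelling and in which either the lollipop hypotheses hold with $ux$ recoloured $1$, or two vertices of $V(F)$ share a missing colour --- contradicting $\varphi(ux)\ne 1$ or Lemma~\ref{thm:vizing-fan1}(a) respectively. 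In (c) and (d) the stated chain equalities follow at once from Lemma~\ref{thm:vizing-fan2}(a), since $\Delta$ and $\delta$ (resp.\ $\delta$ and $\alpha+1$) are induced by different colours; in (e) one must first rule out, using the lollipop, that $s_{\delta-1}$ and $s_\alpha$ are $(\delta,\alpha+1)$-unlinked (Lemma~\ref{thm:vizing-fan2}(b) would otherwise pin $r$ to the wrong chain). In all of (c)--(e), $r$'s membership in the common chain follows by the same device: if $r$ lay off it, the Kempe change on the $(\cdot,\cdot)$-chain through $r$ --- which avoids $ux$ because $\tau\notin\{\Delta,\delta,\alpha+1\}$ in the relevant ranges --- keeps the relevant part of $F$ a multifan and contradicts (a), (b), or Lemma~\ref{thm:vizing-fan1}(a).

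The step I expect to be the main obstacle is (b), and along the same lines the ``$r$ is on the chain'' parts of (c)--(e): because $u$ and $x$ are interior vertices of every chain in play, no convenient partial swap is available (a partial swap at an interior vertex creates a colour conflict), so one is forced through full-chain swaps, colour relabellings and shifts, and the delicate point is to keep all the invariants ($F$ remaining a typical multifan, $\tau$ remaining $2$-inducing, $\pbar(x)=\alpha+1$ and $\varphi(ru)=\alpha+1$ persisting) aligned so that the colouring one finally reaches genuinely contradicts something already established.
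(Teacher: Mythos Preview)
Your treatment of $\varphi(ux)\ne 1$ and of (a) is correct and matches the paper's argument. The gap is in (b)--(e), where the manoeuvres you sketch do not close.

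For (b), the full $(\tau,\alpha+1)$-swap along $x\,u\,r\,s_\tau\cdots$ has an uncontrolled second endpoint: it may be $s_{\tau-1}$, $s_\alpha$, or a vertex outside $V(F)$, and only in the first of these does a multifan non-elementariness fall out; in the others the $2$-inducing sequence is truncated at $s_{\tau-1}$ (because $\varphi'(ru)=\tau$ now) and $s_\tau,\dots,s_\alpha$ drop out of any multifan through $s_1$, so no two fan-vertices share a colour. The paper instead exploits the assumed orientation of $P_r(1,\tau)$ directly: shift from $s_\tau$ to $s_\alpha$, swap the subchain $P_{[s_\tau,u]}(1,\tau)$, then recolour $ux:\tau\to\alpha+1$ and $ur:\alpha+1\to 1$. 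After these four targeted moves $(r,rs_1,s_1,\dots,s_{\tau-1})$ is still a multifan and $\tau\in\pbar(r)\cap\pbar(s_{\tau-1})$.

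For (c)--(e), swapping ``the chain through $r$'' recolours $ru$ away from $\alpha+1$, so you can no longer invoke (a) or (b); and since $s_\alpha,s_{\delta-1}$ lie on the \emph{other} chain by hypothesis, no two fan-vertices end up sharing a missing colour either. The missing idea is an auxiliary colouring $\varphi^*$ built from (b): swap $P_x(1,\tau)$ (which by (b) ends at $r$), then interchange $\tau$ and $\alpha+1$ on the path $rux$. Under $\varphi^*$ one has $\pbar^*(r)=\{\alpha+1\}$, while both $(r,rs_1,\dots,s_{\tau-1})$ and $(r,rs_1,s_1,rs_{\alpha+1},\dots,s_\beta)$ remain multifans. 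Now if in (c)--(e) the vertex $r$ were off the stated chain, that chain avoids every edge touched in forming $\varphi^*$ (its colours are disjoint from $\{1,\tau\}$, and it misses $r$), hence is unchanged under $\varphi^*$; but under $\varphi^*$ the relevant $s_{\delta-1}$ lies in one of the two multifans above and must be $(\alpha+1,\delta)$-linked with $r$ by Lemma~\ref{thm:vizing-fan1}(b), a contradiction.
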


\begin{proof}
	The assertion $\varphi(ux)\ne 1$ is clear. As otherwise, 
	$P_r(1,\alpha+1,\varphi)=rux$, contradicting Lemma~\ref{thm:vizing-fan1} \eqref{thm:vizing-fan1b} that 
 $r$ and $s_\alpha$ are $(1,\alpha+1)$-linked  with respect to $\varphi$. 
	
	Suppose that~\eqref{Evizingfan-a} fails, then   $ux\notin P_r(1,\tau)$. 
	Since $\tau$ is 2-inducing, $\tau\ne 1$.   Let $Q$ be the $(1,\tau)$-chain containing $u$.  Then $Q$ does not contain $r$ or $s_{\tau-1}$, since 
	$r$ and $s_{\tau-1}$ are $(1,\tau)$-linked with respect to $\varphi$ by 
	Lemma~\ref{thm:vizing-fan1} \eqref{thm:vizing-fan1b}. 
Therefore $\varphi'=\varphi/Q$  is $F$-stable. Consequently,  $r$ and $s_\alpha$ are still $(1,\alpha+1)$-linked  with respect to $\varphi'$.   However $P_r(1,\alpha+1,\varphi')$ ends at $x$,  giving a contradiction.

	For statement~\eqref{Evizingfan-b},  by~\eqref{Evizingfan-a},  it follows that $ux\in P_r(\tau,1)$. 
	Thus $P_x(\tau,1)$ is a subpath of $P_r(\tau,1)=P_{s_{\tau-1}}(\tau,1)$. 
	So $P_x(\tau,1)$   ends at either $r$  or $s_{\tau-1}$. 
	Assume to the contrary that $P_{x}(\tau,1)$ ends at $s_{\tau-1}$. Then $P_r(1,\tau)$ meets $u$ before $x$, and so $P_{[s_\tau,u]}(1,\tau)$ does not contain any edge from the lollipop $L$. Hence we can do the following operations:
	\[
\begin{bmatrix}
		s_\tau:s_\alpha & P_{[s_\tau,u]}(1,\tau)  & ux & ur\\
		\text{shift} & 1/\tau & \tau \rightarrow \alpha+1 & \alpha+1 \rightarrow 1
	\end{bmatrix}.
	\]
Clearly $(r,rs_1,s_1,\ldots,s_{\tau-1})$ is still a multifan under the new coloring, but $\tau$ is missing at both $r$ and $s_{\tau-1}$, showing a contradiction to Lemma~\ref{thm:vizing-fan1} \eqref{thm:vizing-fan1a}.	

Before proving the remaining statements, we introduce a new coloring $\varphi^*$ established on statement ~\eqref{Evizingfan-b}. Let $\varphi^*$ be the coloring obtained from $\varphi$ by doing the following operations:
	\[
\begin{bmatrix}
		P_{x}(1,\tau)  & rux\\
	1/\tau & \tau/(\alpha+1)
	\end{bmatrix}.
	\]
where $P_{x}(1,\tau)$ is defined in~\eqref{Evizingfan-b}. Let $E_{ch}=E(P_{x}(1,\tau))\cup \{ux,ur\}$. Clearly $\varphi^*$ and $\varphi$ agree on all edges in $E(G)\setminus E_{ch}$. Note that $\pbar^*(r)=\alpha+1$ and $\pbar^*(s)=\pbar(s)$ for all $s\in V(F)\setminus\{r\}$, and $(r,rs_1,s_1,rs_2,s_2\ldots,s_{\tau-1})$ and $(r,rs_1,s_1,rs_{\alpha+1},s_{\alpha+1},\ldots,s_{\beta})$ are multifans under $\varphi^*$. These properties will be frequently used in the following proof.

%

%
%

Now for the statement(c), firstly  
 $P_{s_1}(\Delta, \delta)=P_{s_{\delta-1}}(\Delta,\delta)$ by Lemma~\ref{thm:vizing-fan2} \eqref{thm:vizing-fan2-a}.
 Assume to the contrary that $r\not\in P_{s_1}(\Delta, \delta)$. Since $\{\Delta,\delta\}\cap \{1,\tau\}=\emptyset$, $P_{s_1}(\Delta, \delta)=P_{s_{\delta-1}}(\Delta,\delta)$ does not contain any edge from $E_{ch}$. Thus $P_{s_1}(\Delta, \delta,\varphi^*)=P_{s_{\delta-1}}(\Delta,\delta,\varphi^*)=P_{s_1}(\Delta, \delta,\varphi)$, and so $r\notin P_{s_1}(\Delta, \delta,\varphi^*)$. Let $\varphi'=\varphi^*/P_{s_1}(\Delta, \delta,\varphi^*)$. Then $\delta\in \pbar'(s_1)$ and $(r,rs_1,s_1,rs_{\delta},s_{\delta},\ldots,s_{\alpha})$ is a multifan under $\varphi'$.  However $\alpha+1$ is missing at both $r$ and $s_{\alpha}$, giving a contradiction to Lemma~\ref{thm:vizing-fan1} \eqref{thm:vizing-fan1a}.


For  statement~\eqref{Evizingfan-d},  firstly 
$P_{s_{\delta-1}}(\alpha+1, \delta)=P_{s_{\alpha}}(\alpha+1, \delta)$ by Lemma~\ref{thm:vizing-fan2} \eqref{thm:vizing-fan2-a}.
Assume to the contrary that $r\not\in P_{s_{\delta-1}}(\alpha+1, \delta)$. Since $\{\alpha+1,\delta\}\cap \{1,\tau\}=\emptyset$, $P_{s_\alpha}(\alpha+1, \delta)=P_{s_{\delta-1}}(\alpha+1,\delta)$ does not contain any edge from $E_{ch}$. Thus $P_{s_\alpha}(\alpha+1, \delta,\varphi^*)=P_{s_{\delta-1}}(\alpha+1,\delta,\varphi^*)=P_{s_{\delta-1}}(\alpha+1, \delta,\varphi)$, and so $r\notin P_{s_{\delta-1}}(\alpha+1, \delta,\varphi^*)$. 
This gives a contradiction to Lemma~\ref{thm:vizing-fan1}~\eqref{thm:vizing-fan1b} that 
$r$ and $s_{\delta-1}$ are $(\alpha+1,\delta)$-linked with respect to $\varphi^*$, since 
$(r,rs_1,s_1,rs_{\alpha+1},s_{\alpha+1},\ldots,s_{\beta})$ is a multifan under $\varphi^*$.


For  statement~\eqref{Evizingfan-e},  if it fails then we have that either  $P_{s_\alpha}(\alpha+1, \delta)\ne P_{s_{\delta-1}}(\alpha+1,\delta )$ or 
$P_{s_\alpha}(\alpha+1, \delta)=P_{s_{\delta-1}}(\alpha+1,\delta)$ but 
$r\not\in P_{s_\alpha}(\alpha+1, \delta)$. 
For the first case,  $r\in P_{s_\alpha}(\alpha+1, \delta)$ by Lemma~\ref{thm:vizing-fan2}~\eqref{thm:vizing-fan2-b} and so $r\not\in P_{s_{\delta-1}}(\alpha+1,\delta )$. 
Therefore  $r\not\in P_{s_{\delta-1}}(\alpha+1,\delta )$ in both cases. 
Consequently, $E(P_{\delta-1}(\alpha+1, \delta))\cap E_{ch}=\emptyset$.
Hence,  $P_{s_{\delta-1}}(\alpha+1,\delta,\varphi^*)= P_{s_{\delta-1}}(\alpha+1, \delta,\varphi)$
and $r\not\in P_{s_{\delta-1}}(\alpha+1,\delta,\varphi^*)$.  This gives a contradiction, since 
under $\varphi^*$, $(r,rs_1,s_1,rs_2,s_2\ldots,s_{\tau-1})$ is a multifan, and so $r$ and $s_{\delta-1}$ should be $(\alpha+1,\delta)$-linked Lemma~\ref{thm:vizing-fan1}~\eqref{thm:vizing-fan1b}.  This finishes  the proof of statement~\eqref{Evizingfan-e} and Lemma~\ref{Lemma:extended multifan}.  \end{proof}

	\begin{LEM}\label{Lemma:pseudo-fan0}
	Let  $G$ be an HZ-graph with maximum degree $\Delta\ge 3$, $r\in V_\Delta$,  $N_{\Delta-1}(r)=\{s_1, s_2,\ldots, s_{\Delta-2}\}$, and $\varphi\in \CC^\Delta(G-rs_1)$, and let 
	$F:=F_\varphi(r,s_1:s_\alpha:s_\beta)$ be a typical multifan and  $L:=(F,ru,u,ux,x)$ be a lollipop centered at $r$ such that  $\varphi(ru)=\alpha+1$.   Then for   $s_{h_1}\in \{s_{\beta+1}, \ldots, s_{\Delta-2} \}$ with $\varphi(rs_{h_1})=\tau_1\in \{\beta+2, \cdots, \Delta-1\}$,  the following statements hold.
		\begin{enumerate}[(1)] 
			\item \label{only-fan}	
			If exists a vertex $w\in V(G)\setminus (V(F)\cup \{s_{h_1}\})$ such that $w\in P_r(\tau_1,1,\varphi')$
			for every {\bf $F$-stable} $\varphi' \in \CC^{\Delta}(G-rs_1)$,   
					then there exists a sequence of distinct vertices $s_{h_1},s_{h_2}, \ldots, s_{h_t}\in  \{s_{\beta+1}, \ldots, s_{\Delta-2} \}$ satisfying the following conditions: 
		\begin{enumerate}[(a)]
			\item  $\varphi(rs_{h_{i+1}})=\pbar(s_{h_i})\in  \{\beta+2, \cdots, \Delta-1\}$ for each $i\in [1,t-1]$;
						 \label{Lemma:pseudo-fan0-a}
			\item  $s_{h_i}$ and $r$ are $(\pbar(s_{h_i}), 1)$-linked with respect to $\varphi$ for each $i\in[1,t]$; \label{Lemma:pseudo-fan0-b}
			\item $\pbar(s_{h_t})=\tau_1$. \label{Lemma:pseudo-fan0-c}
				\end{enumerate}
			\item \label{fan-and-x}
			If  $\pbar(x)=\alpha+1$ and there exists a vertex $w\in V(G)\setminus (V(F)\cup \{s_{h_1}\})$ such that $w\in P_r(\tau_1,1,\varphi')$
for every  {\bf $L$-stable} $\varphi' \in \CC^{\Delta}(G-rs_1)$  obtained from $\varphi$ 
through a sequence of Kempe  $(1,*)$-changes, 
then there exists a sequence of distinct vertices $s_{h_1},s_{h_2}, \ldots, s_{h_t}\in  \{s_{\beta+1}, \ldots, s_{\Delta-2} \}$ satisfying the following conditions: 
	\begin{enumerate}[(a)]
	\item  $\varphi(rs_{h_{i+1}})=\pbar(rs_{h_i})\in  \{\beta+2, \cdots, \Delta-1\}$  for each $i\in [1,t-1]$;
	\label{Lemma:pseudo-fan0-a1}
	\item $s_{h_i}$ and $r$ are $(\pbar(s_{h_i}), 1)$-linked with respect to $\varphi$ for each $i\in[1,t-1]$; \label{Lemma:pseudo-fan0-b1}
	\item $\pbar(s_{h_t})=\tau_1$ or $\pbar(s_{h_t})=\alpha+1$.  If $\pbar(s_{h_t})=\tau_1$,
	then $s_{h_t}$ and $r$ are $(\tau_{1}, 1)$-linked with respect to $\varphi$. \label{Lemma:pseudo-fan0-c1}
\end{enumerate}
	\end{enumerate}
\end{LEM}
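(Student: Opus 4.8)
The plan is to construct each of the two required sequences greedily, one vertex at a time starting from the given $s_{h_1}$, and to invoke the hypothesis about the $(\tau_1,1)$-chain only at the end, to force the construction to close up at the correct colour. It helps to first record the arithmetic of a typical multifan: $\pbar(V(F))=\{1,2,\dots,\beta+1,\Delta\}$, and since $G_\Delta$ is $2$-regular (Lemma~\ref{biregular}) the vertex $r$ has, apart from $u$, exactly one further $\Delta$-neighbour $u'$; consequently the $\Delta-1-\beta$ edges $\{rs_{\beta+1},\dots,rs_{\Delta-2}\}\cup\{ru'\}$ carry precisely the $\Delta-1-\beta$ colours $\{\beta+1,\dots,\Delta-1\}$. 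In particular $\tau_1\notin\pbar(V(F))$, and every colour of $\{\beta+2,\dots,\Delta-1\}\setminus\{\varphi(ru')\}$ is the colour of a unique edge $rs$ with $s\in\{s_{\beta+1},\dots,s_{\Delta-2}\}$.

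For statement~\eqref{only-fan} I would maintain, while building $s_{h_1},\dots,s_{h_j}$, that \eqref{Lemma:pseudo-fan0-a}--\eqref{Lemma:pseudo-fan0-b} hold at all smaller indices and that the colours $\tau_1,\pbar(s_{h_1}),\dots,\pbar(s_{h_{j-1}})$ are pairwise distinct and lie in $\{\beta+2,\dots,\Delta-1\}$. For the current vertex set $\gamma:=\pbar(s_{h_j})$. One first checks $\gamma\ne1$: if $j=1$ and $\pbar(s_{h_1})=1$ then $P_r(\tau_1,1,\varphi)$ is just the edge $rs_{h_1}$, so no vertex outside $V(F)\cup\{s_{h_1}\}$ lies on it, contradicting the hypothesis; if $j\ge2$ and $\pbar(s_{h_j})=1$ then $r$ and $s_{h_j}$ would be the two ends of the $(\gamma_{j-1},1)$-chain through $rs_{h_j}$ (with $\gamma_{j-1}:=\varphi(rs_{h_j})=\pbar(s_{h_{j-1}})$), so $s_{h_{j-1}}$ would not lie on it, contradicting \eqref{Lemma:pseudo-fan0-b} at $j-1$. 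One then checks that $\gamma\notin\pbar(V(F)\setminus\{r\})$ and that $s_{h_j}$ and $r$ are $(\gamma,1)$-linked, together, by examining the $(1,\gamma)$-chain $P_{s_{h_j}}(1,\gamma)$: if it avoided $r$ it would avoid every vertex of $F$ as an endpoint (using Lemma~\ref{thm:vizing-fan1} \eqref{thm:vizing-fan1b} to locate the $(1,\gamma)$-chain through any vertex of $F$ missing $\gamma$), so the Kempe change on it is $F$-stable and reroutes $P_r(\tau_1,1)$ off $w$, a contradiction; and once it contains $r$, a three-endpoints count forces $\gamma\notin\pbar(V(F)\setminus\{r\})$, hence $\gamma\in\{\beta+2,\dots,\Delta-1\}$. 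Now if $\gamma=\tau_1$ we stop with $t=j$ and \eqref{Lemma:pseudo-fan0-a}--\eqref{Lemma:pseudo-fan0-c} hold by construction and the two checks. If $\gamma\ne\tau_1$, a further three-endpoints argument (each $s_{h_k}$ and $r$ being $(\gamma_k,1)$-linked endpoints) shows $\gamma$ is not one of the colours already on $rs_{h_1},\dots,rs_{h_j}$; it then remains to exclude $\gamma=\varphi(ru')$, after which $\gamma=\varphi(rs_{h_{j+1}})$ for a new vertex $s_{h_{j+1}}\in\{s_{\beta+1},\dots,s_{\Delta-2}\}$ and the loop continues. As the $s_{h_i}$ are distinct and $\{s_{\beta+1},\dots,s_{\Delta-2}\}$ is finite, the loop halts --- necessarily at $\gamma=\tau_1$.

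Statement~\eqref{fan-and-x} is handled by the same loop, but every colouring used must be $L$-stable and obtained from $\varphi$ by Kempe $(1,*)$-changes, so that the lollipop persists ($\varphi(ru)=\alpha+1$, $\pbar(x)=\alpha+1$); accordingly all rerouting Kempe changes above must be replaced by $(1,*)$-ones. Then Lemma~\ref{Lemma:extended multifan}, applied with the $2$-inducing colour $\varphi(ux)$, supplies the linkedness inputs that Lemma~\ref{thm:vizing-fan1} \eqref{thm:vizing-fan1b} gave before; this is exactly what makes $\pbar(s_{h_t})=\alpha+1$ an admissible terminal outcome (reached when the winding arrives at a vertex missing $\alpha+1$, forced by elementarity to interact with $x$) and why \eqref{Lemma:pseudo-fan0-b1} is asserted only for $i\le t-1$. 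I expect the main obstacle, in both parts, to be the step that rules out a ``bad'' termination --- that $\gamma=\pbar(s_{h_j})$ cannot equal $\varphi(ru')$ (nor, in part~\eqref{fan-and-x}, any colour that fails to close the winding at $\tau_1$ or $\alpha+1$): this is precisely the place where the purely existential hypothesis on $w$ must be turned into structure, by shifting the current partial chain $s_{h_1},\dots,s_{h_j}$ and exhibiting from the result an admissible (and $F$- or $L$-stable) Kempe change that drives $w$ off $P_r(\tau_1,1)$, and making that rerouting legitimate is where the bulk of the work lies.
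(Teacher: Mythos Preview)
Your greedy outline matches the paper's, but the verification step contains a real gap. You claim that if $P_{s_{h_j}}(1,\gamma)$ avoids $r$ then the single Kempe change on it ``reroutes $P_r(\tau_1,1)$ off $w$''. This is correct only for $j=1$: there the swap gives $\pbar'(s_{h_1})=1$, so $P_r(\tau_1,1,\varphi')=rs_{h_1}$ stops dead and $w$ is off it. For $j\ge 2$ the swap gives $\pbar'(s_{h_j})=1$, but the $(\tau_1,1)$-chain from $r$ still enters through $rs_{h_1}$ (whose colour $\tau_1$ is untouched) and wanders off into the graph; nothing forces it to miss $w$. The paper handles this by taking a \emph{maximal} sequence $s_{h_1},\dots,s_{h_k}$ satisfying (a)--(b), locating the first failure at $s_{h_{k+1}}$, and then performing a \emph{cascade} of $(1,*)$-swaps: first $(\tau_{k+2},1)$ at $s_{h_{k+1}}$ (making $P_r(1,\tau_{k+1})=rs_{h_{k+1}}$), then $(\tau_{k+1},1)$ at $s_{h_k}$ (making $P_r(1,\tau_k)=rs_{h_k}$), and so on down to $(\tau_2,1)$ at $s_{h_1}$, after which $P_r(1,\tau_1)=rs_{h_1}$ and $w$ is gone. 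Your final paragraph gestures at a shifting argument, but you locate it only at a ``bad termination''; in fact it is the core mechanism, needed the moment $j\ge 2$.

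Separately, your worry about $\gamma=\varphi(ru')$ is a non-issue. The second $\Delta$-neighbour $u'$ of $r$ carries the last $\Delta$-inducing colour $\beta+1$, which lies in $\pbar(V(F))$; this case is therefore already absorbed by your check $\gamma\notin\pbar(V(F)\setminus\{r\})$. Once that check passes, every colour in $\{\beta+2,\dots,\Delta-1\}$ is $\varphi(rs)$ for a unique $s\in\{s_{\beta+1},\dots,s_{\Delta-2}\}$, so the next vertex $s_{h_{j+1}}$ always exists and no extra exclusion is required.
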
 
\begin{proof}
	We show~\eqref{only-fan} and \eqref{fan-and-x} simultaneously.  Let $\pbar(s_{h_1})=\tau_2$.
	For \eqref{fan-and-x}, we may assume that $\tau_2\ne \alpha+1$, as otherwise, $s_{h_1}$
	is the desired sequence for it.  Note that $\tau_2\ne 1$, since otherwise $w\notin P_r(\tau_1,1,\varphi)=rs_{h_1}$, giving a contradiction.
	We claim that $s_{h_1}$ satisfies $(a)$ and $(b)$ in  \eqref{only-fan}. 
	If  $(a)$ fails, then $\tau_2\in [1,\Delta]\setminus\{\beta+2,\ldots, \Delta-1\}=\pbar(V(F))$, so $r$ and $\mathit{\pbar^{-1}_F(\tau_2)}$ are $(\tau_2,1)$-linked by Lemma~\ref{thm:vizing-fan1}~\eqref{thm:vizing-fan1b}; if $(b)$ fails, then $s_{h_1}$ and $r$ are $(\tau_2,1)$-unlinked. In both cases, we have $s_{h_1}$ and $r$ are $(\tau_2,1)$-unlinked. Now let $\varphi'=\varphi/P_{s_{h_1}}(\tau_2,1)$.  
	Clearly, $\varphi'$ is $F$-stable. Since 
	$r$ and $s_\alpha$ are $(1,\alpha+1)$-linked 
	with respect to $\varphi$ by Lemma~\ref{thm:vizing-fan1}~\eqref{thm:vizing-fan1b}
	and $\varphi(ru)=\alpha+1$, 
	we have  $\varphi'(ru)=\varphi(ru)$.
	For ~\eqref{only-fan}, we already achieve a contradiction
	since $\pbar'(s_{h_1})=1$ implies 
	$w\not\in P_r(\tau_1,1,\varphi')=rs_{h_1}$. 
	For \eqref{fan-and-x},  since  $\tau_2\ne \alpha+1$,  $\pbar'(x)=\pbar(x)$. 
	By Lemma~\ref{Lemma:extended multifan},
	the color $\varphi(ux)$ on $ux$ will keep unchanged under 
	any Kempe $(1,*)$-change not involving vertices from $V(F)\cup\{x\}$. 
	Thus, $\varphi'$ is $L$-stable. We again reach a contradiction 
	since  $w\notin P_r(\tau_1,1,\varphi')=rs_{h_1}$.

	Now $s_{h_1}$ is a sequence that satisfies $(a)$ and $(b)$ in  \eqref{only-fan}.  
	 Let $s_{h_1}, \ldots, s_{h_k}$ be a longest sequence of vertices from $ \{s_{\beta+1}, \ldots, s_{\Delta-2} \}$ that satisfies $(a)$ and $(b)$ in \eqref{only-fan}. 
	 We are done if $\pbar(s_{h_k})=\tau_1$. 
	 Thus, assume  $\pbar(s_{h_k})=\tau_{k+1} \ne \tau_1$. 
	 By $(a)$ we have $\tau_{k+1}\in\{\beta+1, \ldots, \Delta-1\}$. Since each $s_{h_i}$, including $s_{h_{k}}$, is $(\tau_{i+1},1)$-linked with $r$, we know  $\tau_{k+1}\notin \{\tau_1,\tau_2,\ldots,\tau_{k}\}$. Thus, there exists $s_{h_{k+1}}\in N_{\Delta-1}(r)$ such that $\varphi(rs_{h_{k+1}})=\tau_{k+1}$.   Let $\pbar(s_{h_i})=\tau_{i+1}$ for each $i\in [1,k+1]$. 
	 By the maximality of the sequence $s_{h_1},\ldots, s_{h_k}$, 
	 either  $\pbar(s_{h_{k+1}})\in \pbar(V(F))$ or $\pbar(s_{h_{k+1}})\in \{\beta+2,\ldots, \Delta-1\}$ and $s_{h_{k+1}}$ and $r$ are $(\tau_{k+2},1)$-unlinked.  
	 In both cases,  
	 $s_{h_{k+1}}$ and $r$ are $(\tau_{k+2},1)$-unlinked.  
	 Again,  for \eqref{fan-and-x},  we assume  $\pbar(s_{h_{k+1}}) \ne \alpha+1$. 
	 Thus, we assume that we are  proving \eqref{only-fan} and proving \eqref{fan-and-x}
	under the assumption that $\pbar(s_{h_{k+1}})\ne \alpha+1$. 
		In both cases, we 
	do a sequence of Kempe changes around $r$ from $s_{h_{k+1}}$ to $s_{h_1}$ as below 
	to reach a contradiction: 
	\begin{enumerate}[(1)]
		\item Swap colors along $P_{s_{h_{k+1}}}(\tau_{k+2},1)$ (after (1), $P_r(1, \tau_{k+1})=rs_{{h_{k+1}}}$); \vspace{-2mm}
		\item Swap colors along $P_{s_{h_{k}}}(\tau_{k+1},1)$ (after (2), $P_r(1, \tau_{k})=rs_{{h_{k}}}$); \vspace{-2mm}
		\item Continue the same kind of Kempe change from $s_{h_{k-1}}$ to $s_{h_3}$;  \vspace{-2mm}
		\item Swap colors along $P_{s_{h_{2}}}(\tau_{3},1)$ (after (4), $P_r(1, \tau_{2})=rs_{{h_{2}}}$);  \vspace{-2mm}
		\item Swap colors along $P_{s_{h_{1}}}(\tau_{2},1)$ (after (5), $P_r(1, \tau_{1})=rs_{{h_{1}}}$). 
	\end{enumerate}
	
	Let the current  coloring  be $\varphi'$. Clearly,  $\varphi'$ is obtained from $\varphi$ 
	through a sequence of   Kempe $(1,*)$-changes. For \eqref{only-fan},  
	$\varphi'$ is $F$-stable with respect to $\varphi$ with $\varphi'(ru)=\varphi(ru)$. 
	For the case of proving~\eqref{fan-and-x},   as we assumed  $\pbar(s_{h_{k+1}})\ne \alpha+1$, 
	we have      $\pbar'(x)=\pbar(x)$.  By Lemma~\ref{Lemma:extended multifan},  	the color $\varphi(ux)$ on $ux$ will keet unchanged under 
	any Kempe $(1,*)$-change not involving vertices from $V(F)\cup\{x\}$. 
	Thus, $\varphi'$
	is $L$-stable with respect to $\varphi$. 
	However, in both cases, 
	$w\not\in P_r(\tau_1,1,\varphi')$.  
	This gives a contradiction to the assumptions  in ~\eqref{only-fan} and \eqref{fan-and-x}. 
\end{proof}

\subsubsection{Adjacency in   a lollipop}
Let $G$ be an $HZ$ graph,   $e=rs_1\in E(G)$ with $r\in V_\Delta$ and 
$s_1\in V_{\Delta-1}$, and $\varphi\in \CC^\Delta(G-e)$, 
and let $F:=F_\varphi(r,s_1:s_\alpha:s_\beta)$ be a typical multifan.  
In this subsection, we show that if  there exists a lollipop $L=(F, ru, u,ux, x)$ in $G$, 
 then $u$ is not adjacent to at least two vertices in $\{s_1, \ldots, s_\beta\}$ (if $\beta\ge 2$).
The existence of more small degree neighbors of $u$ outside the multifan $F$ provides us more room to work 
on. This extra structure will provide us a tool to proof Theorem~\ref{Thm:vizing-fan} in Section~\ref{thm2.1proof}.


\begin{LEM}\label{Lem:2-non-adj1}
	Let  $G$ be an HZ-graph with maximum degree $\Delta\ge 3$, $r\in V_\Delta$,  $N_{\Delta-1}(r)=\{s_1, s_2,\ldots, s_{\Delta-2}\}$, and $\varphi\in \CC^\Delta(G-rs_1)$, and let 
	$F:=F_\varphi(r,s_1:s_\alpha)$ be a typical 2-inducing  multifan and  $L:=(F,ru,u,ux,x)$ be a lollipop centered at $r$.  If $\varphi(ru)=\alpha+1$, $\pbar(x)=\alpha+1$, 
	and $\varphi(ux)=\Delta$, 
	then  $u\not\sim s_1$ and $u\not\sim s_\alpha$. 
\end{LEM}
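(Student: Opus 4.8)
The plan is a proof by contradiction. Suppose $u$ is adjacent to $s_\alpha$; the case $u\sim s_1$ will be handled analogously at the end. Since $u\in V_\Delta$ we have $\pbar(u)=\emptyset$, so $u$ is incident with exactly one edge of each colour; as $\varphi(ru)=\alpha+1$ and $\varphi(ux)=\Delta$, the colour $\gamma:=\varphi(us_\alpha)$ lies in $[1,\Delta]\setminus\{\alpha,\alpha+1,\Delta\}$. If $\alpha=1$ then $s_\alpha=s_1$ and the claim reduces to $u\not\sim s_1$, so I would assume $\alpha\ge 2$; then the $\varphi$-elementarity of $V(F)$ (Lemma~\ref{thm:vizing-fan1}\eqref{thm:vizing-fan1a}) forces $1,2,\alpha+1,\Delta$ to be four distinct colours. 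Recall $rs_1$ is critical (Lemma~\ref{biregular}), so Lemmas~\ref{thm:vizing-fan1} and~\ref{thm:vizing-fan2} apply to $F$ under any recolouring keeping $F$ a multifan.

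First I would establish three pieces of forced structure, none of which uses the adjacency. (i) \emph{$u$ lies on the $(1,\Delta)$-chain $Q_0$ through $r$ and $s_1$.} Such a chain exists by Lemma~\ref{thm:vizing-fan1}\eqref{thm:vizing-fan1b}. If $u\notin Q_0$, the $(1,\Delta)$-chain $Q$ containing $u$ avoids $r$, $s_1$ and every edge of $F$ (no $F$-edge is coloured $1$ or $\Delta$), so $\varphi':=\varphi/Q$ is $F$-stable; but then $\varphi'(ux)=1$, $\varphi'(ru)=\alpha+1$, $\pbar'(r)=\{1\}$, $\pbar'(x)=\{\alpha+1\}$, so $rux$ is a $(1,\alpha+1)$-chain ending at $x\ne s_\alpha$, contradicting Lemma~\ref{thm:vizing-fan1}\eqref{thm:vizing-fan1b} for $F$ under $\varphi'$. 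Hence $u\in Q_0$, necessarily as an interior vertex whose two $Q_0$-edges are $ux$ (colour $\Delta$) and some $uw$ (colour $1$). (ii) \emph{The $(\Delta,\alpha+1)$-chain $R$ through $x$ begins $x\to u\to r\to\cdots$} (forced, since $\pbar(x)=\{\alpha+1\}$, $\varphi(ux)=\Delta$, $\varphi(ru)=\alpha+1$, $\pbar(r)=\{1\}$), \emph{and $s_1\notin R$}: because $\Delta\in\pbar(s_1)$ is $\Delta$-inducing and $\alpha+1\in\pbar(s_\alpha)$ is $2$-inducing, $s_1$ and $s_\alpha$ are $(\Delta,\alpha+1)$-linked by Lemma~\ref{thm:vizing-fan2}\eqref{thm:vizing-fan2-a}, so $s_1\in R$ would force $s_\alpha\in R$ as well, and then the path $R$ would contain the three distinct ``endpoint-type'' vertices $x,s_1,s_\alpha$ (each missing one of $\Delta,\alpha+1$) --- impossible. (iii) \emph{$s_\alpha\notin R$}: if $s_\alpha\in R$, then $x$ and $s_\alpha$ are the two ends of $R$, $r$ is interior to $R$ with $R$-edges $ru$ (colour $\alpha+1$) and some $rz$ (colour $\Delta$), and $\varphi'':=\varphi/R$ changes nothing on $F$ except $\pbar''(s_\alpha)=\{\Delta\}$; thus $F$ is still a multifan under $\varphi''$ while $\Delta\in\pbar''(s_1)\cap\pbar''(s_\alpha)$, contradicting Lemma~\ref{thm:vizing-fan1}\eqref{thm:vizing-fan1a}.

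With (i)--(iii) in hand I would finally exploit the edge $us_\alpha$ of colour $\gamma$. If $\gamma=1$, then $uw=us_\alpha$, so $s_\alpha$ is an interior vertex of $Q_0$, appearing as $s_\alpha\xrightarrow{1}u\xrightarrow{\Delta}x$; the plan is to first recolour the $(\Delta,\alpha+1)$-chain starting at $s_\alpha$ (by Lemma~\ref{thm:vizing-fan2}\eqref{thm:vizing-fan2-a} it runs to $s_1$ and, by (ii)--(iii), avoids $r,u,x$), thereby making $s_\alpha$ miss $\Delta$, and then to interchange a well-chosen segment of the updated $(1,\Delta)$-chain so as either to place $s_\alpha$ onto $R$ --- returning to the impossible configuration of (iii) --- or to make $rux$ a $(1,\alpha+1)$-chain, contradicting Lemma~\ref{thm:vizing-fan1}\eqref{thm:vizing-fan1b}. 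If $\gamma\ne 1$, then $us_\alpha$ lies on a $(\gamma,*)$-chain disjoint from $Q_0$ and from $R$, and interchanging the $(\gamma,1)$- or $(\gamma,\alpha+1)$-chain through $s_\alpha$ is used either to relocate $s_\alpha$ (again reducing to (iii)) or to produce a colouring in which $\pbar(r)\cap\pbar(s_1)\ne\emptyset$, so that $rs_1$ could be coloured --- contradicting that $G$ is Class $2$. The bookkeeping of the requisite sequence of Kempe $(1,*)$-changes around $r$ is handled as in Lemma~\ref{Lemma:pseudo-fan0}, and the ``safe sub-chain interchanges'' are of the same type used in the proof of Lemma~\ref{Lemma:extended multifan}.

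The main obstacle is this last step. Facts (i)--(iii) are rigid and essentially free, but the genuine work is showing that a single extra neighbour of $u$ inside the fan over-constrains the system of mutually dependent chains $Q_0$, $R$, the $(1,\alpha+1)$-chain through $r$, and the $(\gamma,\cdot)$-chain through $s_\alpha$; this forces a careful split on $\gamma$ together with repeated verification that the chains we wish to interchange have endpoints missing one of the two relevant colours, so that each interchange yields a valid $\Delta$-edge-colouring. The case $u\sim s_1$ follows the same outline: now $r,u,s_1$ form a triangle with $rs_1$ uncolored, the extra edge $us_1$ has colour in $[1,\Delta]\setminus\{2,\alpha+1,\Delta\}$, facts (i)--(iii) hold verbatim, and $us_1$ plays the role that $us_\alpha$ did above in driving the contradiction.
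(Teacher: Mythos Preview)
Your facts (i)--(iii) are correct and are indeed the right kind of structural constraints, but the proposal stops precisely where the real argument begins. Everything after ``With (i)--(iii) in hand'' is only a plan: you write ``the plan is to first recolour\ldots\ and then to interchange a well-chosen segment\ldots\ so as either to\ldots\ or to\ldots'', and for $\gamma\ne 1$ you say only that a swap ``is used either to relocate $s_\alpha$\ldots\ or to produce a colouring''. None of these alternatives is actually executed, and you yourself flag this as ``the main obstacle''. The difficulty is genuine: when $\gamma\notin\pbar(V(F))$ (i.e.\ $\gamma\in\{\alpha+2,\ldots,\Delta-1\}$), a single Kempe change at $s_\alpha$ does not yield a contradiction; one must first build a rotation-like sequence $s_{h_1},\ldots,s_{h_t}$ in $N_{\Delta-1}(r)\setminus V(F)$ (this is exactly the content of Lemma~\ref{Lemma:pseudo-fan0}) and then perform a carefully ordered cascade of swaps. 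Your sketch does not indicate awareness of this case or how to handle it, and the dichotomy $\gamma=1$ versus $\gamma\ne 1$ is not the productive one.

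For comparison, the paper proves the stronger Lemma~\ref{Lem:2-non-adj1*} (allowing $F$ to have two sequences), from which Lemma~\ref{Lem:2-non-adj1} follows immediately since a $2$-inducing multifan has no $\Delta$-inducing colour other than $\Delta$ itself, already used on $ux$. Two structural choices differ from yours. First, the paper treats $u\sim s_1$ as the primary case; the key device is Claim~\ref{not1}, which shows that under \emph{every} $L$-stable colouring one has $\varphi^*(us_1)\ne 1$ and $us_1\in P_r(\tau,1,\varphi^*)$. This is the analogue of your fact (i) but is both stronger (it is stable under all $L$-stable recolourings) and more directly usable, since it places the extra edge on a specific chain rather than merely placing $u$ on $Q_0$. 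The case split is then on whether $\tau\in\pbar(V(F))$ (a short explicit shift-and-recolour sequence) or $\tau\notin\pbar(V(F))$ (where Lemma~\ref{Lemma:pseudo-fan0} is invoked to produce the sequence $s_{h_1},\ldots,s_{h_t}$ and a further subcase analysis on $\pbar(s_{h_t})$ finishes). Second, the case $u\sim s_\alpha$ is not argued ``analogously'' but is \emph{reduced} to $u\sim s_1$ by first swapping along $P_{s_1}(\Delta,\alpha+1)$ --- which your (ii)--(iii) show is disjoint from $R$, so this part of your work is exactly what the reduction needs --- and then shifting from $s_2$ to $s_{\alpha-1}$, uncolouring $rs_\alpha$, and recolouring $rs_1$; in the resulting multifan $s_\alpha$ plays the role of $s_1$. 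Your claim that $u\sim s_1$ ``follows the same outline'' is thus backwards: it is the base case, and the symmetry is realised through this explicit transformation rather than by repeating the argument.
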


Instead of proving Lemma~\ref{Lem:2-non-adj1}, we prove the following stronger result,
besides implies Lemma~\ref{Lem:2-non-adj1},  which will also be used to prove 
Lemma~\ref{Lem:2-non-adj2}. 

\begin{LEM}\label{Lem:2-non-adj1*}
	Let  $G$ be an HZ-graph with maximum degree $\Delta\ge 3$, $r\in V_\Delta$,  $N_{\Delta-1}(r)=\{s_1, s_2,\ldots, s_{\Delta-2}\}$, and $\varphi\in \CC^\Delta(G-rs_1)$, and let 
$F:=F_\varphi(r,s_1:s_\alpha:s_\beta)$ be a typical multifan and  $L:=(F,ru,u,ux,x)$ be a lollipop centered at $r$.  If $\varphi(ru)=\alpha+1$, $\pbar(x)=\alpha+1$, 
 and $\varphi(ux)=\Delta$, 
then  the following two statements hold.
\begin{enumerate}[(1)]
\item If $u\sim s_1$, then $\varphi(us_1)$ is a $\Delta$-inducing color. 
\item If $u\sim s_\alpha$, then $\varphi(us_\alpha)$ is a $\Delta$-inducing color.
\end{enumerate}
\end{LEM}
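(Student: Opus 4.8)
The plan is to prove the contrapositive of each statement simultaneously, exploiting the symmetry in the roles of $s_1$ and $s_\alpha$ as the two ``ends'' of the typical $2$-inducing part of $F$ (for $s_1$, recall $2\in\pbar(s_1)$ is the $2$-inducing seed, while $\alpha+1\in\pbar(s_\alpha)$ is a last $2$-inducing color). So suppose $u\sim s_1$ (resp. $u\sim s_\alpha$) and $\varphi(us_1)=\sigma$ (resp. $\varphi(us_\alpha)=\sigma$) is \emph{not} a $\Delta$-inducing color; I want to derive a contradiction. First I would record the basic color bookkeeping at $u$: since $\varphi(ru)=\alpha+1$ and $\varphi(ux)=\Delta$, and $u\in V_\Delta$ so $u$ misses no color, every color of $[1,\Delta]$ appears exactly once on an edge at $u$. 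In particular $\alpha+1$ and $\Delta$ are ``used up'' at $u$, which constrains which Kempe chains can pass through $u$.

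The core of the argument should be Lemma~\ref{Lemma:extended multifan}: since $\varphi(ux)=\Delta$, that lemma's hypothesis ``$\varphi(ux)=\tau$ is $2$-inducing'' is \emph{not} met, but I expect instead to use the lemma in its contrapositive spirit together with Lemma~\ref{Lemma:pseudo-fan0}\eqref{fan-and-x}, or to first massage the coloring so that the edge $ux$ carries a $2$-inducing color. Concretely, the natural first move is: if $\sigma=\varphi(us_1)$ is neither $\Delta$-inducing nor equal to $1,2,\Delta,\alpha+1$, then $\sigma\in\pbar(V(F))$ forces $\sigma$ to be a $2$-inducing color (by the structure of a typical multifan, $\pbar(V(F))\setminus\{1\}$ splits into the $2$-inducing colors and the $\Delta$-inducing colors); if instead $\sigma\notin\pbar(V(F))$ then $\sigma=\varphi(rs_{h})$ for some $s_h\in\{s_{\beta+1},\dots,s_{\Delta-2}\}$, and I can try to rotate/shift that outer part (as in Lemma~\ref{pseudo-fan-ele}\eqref{pseudo-a}) to bring things into a controllable position. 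So the bulk of the work is the case $\sigma$ a $2$-inducing color: then $K=(s_\alpha\text{ or }s_1,\, \cdot,\, u,\, ux,\, x)$-type sequences, or better the Kierstead path $(x, xu, u, ur, r)$ or the lollipop $L$ itself, can be re-examined. I would swap along $P_{s_{\sigma-1}}(\sigma,1)$ or an appropriate $(1,\alpha+1)$ or $(\sigma,\Delta)$ chain to produce a coloring under which either $\pbar(r)$ and $\pbar(s_\alpha)$ both contain $\alpha+1$ (contradicting Lemma~\ref{thm:vizing-fan1}\eqref{thm:vizing-fan1a}, elementariness of $V(F)$), or a larger multifan at $r$ appears (contradicting maximality once we note $F$ sits inside a pseudo-multifan), or $r$ and $s_\alpha$ become $(1,\alpha+1)$-unlinked (contradicting Lemma~\ref{thm:vizing-fan1}\eqref{thm:vizing-fan1b}).

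In more detail for statement (2): assume $u\sim s_\alpha$, $\varphi(us_\alpha)=\sigma$ with $\sigma$ not $\Delta$-inducing. Using that $u$ misses nothing and $\varphi(ux)=\Delta$, the edge $ux$ is the unique $\Delta$-edge at $u$, so the $(\Delta,\alpha+1)$-chain through $u$ is exactly $x\,u\,r$ extended past $r$ along $\Delta$; this pins down $P_r(\alpha+1,\Delta)$. Now if $\sigma$ were $2$-inducing with $\sigma\prec$ (the last color $\alpha+1$), I would build the Kierstead path $(r, rs_1, s_1, \dots, s_\alpha, s_\alpha u, u)$ — or rather use that $s_\alpha u$ with color $\sigma$ and then $ur$ with color $\alpha+1$ extends the fan structure — and by swapping on $P_x(\Delta,\sigma)$ (which by the pinned-down chain structure behaves like the $P_x(\tau,1)$ of Lemma~\ref{Lemma:extended multifan}) move $\Delta$ off $ux$, recolor $ux\colon\Delta\to\sigma$ and $ur\colon\alpha+1\to\Delta$ (or similar), arriving at a coloring where $s_\alpha$ and $r$ jointly miss a common color or $F$ extends — contradiction. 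For statement (1) the argument is the mirror image, with $s_1$'s second missing color $2$ (or $\Delta$, since $\pbar(s_1)=\{2,\Delta\}$ in the typical labelling) playing the role of $\alpha+1$; here I would be careful that $s_1$ already misses $\Delta$, so $\varphi(us_1)=\sigma$ with $\sigma$ not $\Delta$-inducing but $\sigma$ possibly $=2$-inducing or outside, and again push a Kempe change to violate elementariness of $V(F)$ or maximality.

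The hard part will be the case analysis on what kind of color $\sigma=\varphi(us_1)$ or $\varphi(us_\alpha)$ is — in particular the sub-case where $\sigma$ is one of the ``outer'' colors $\varphi(rs_h)$ with $s_h\in\{s_{\beta+1},\dots,s_{\Delta-2}\}$ and the associated rotation (Lemma~\ref{pseudo-fan-ele}\eqref{pseudo-a}) interacts with the lollipop's edges $ru$ and $ux$. There one has to run a sequence of Kempe $(1,*)$-changes around $r$ (exactly the type of sequence constructed in Lemma~\ref{Lemma:pseudo-fan0}) while certifying at each step that the coloring stays $L$-stable, so that the final configuration still contains the lollipop and the contradiction (a repeated missing color in $V(F)$, or an unexpected $(1,\alpha+1)$-unlinking) is genuine. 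Keeping the $L$-stability invariant through the rotation — and handling the boundary case $\beta=1$ or $\alpha=\beta$ separately — is where the technical care concentrates.
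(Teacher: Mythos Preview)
Your overall shape is right—split on whether $\sigma=\varphi(us_1)$ lies in $\pbar(V(F))$ or not, and in the latter case invoke Lemma~\ref{Lemma:pseudo-fan0}\eqref{fan-and-x}—but two concrete ingredients are missing. First, you never isolate the invariant that drives both subcases of Case~1: for every $L$-stable $\varphi^*\in\CC^\Delta(G-rs_1)$ one has $\varphi^*(us_1)\neq 1$, and if $\varphi^*(us_1)=\tau$ then $us_1\in P_r(\tau,1,\varphi^*)$. (If this fails, arrange $\varphi'(us_1)=1$ while keeping $F$-stability; after an $(\alpha+1,1)$-swap at $x$ one gets $P_{s_1}(\Delta,1)=s_1ux$, contradicting Lemma~\ref{thm:vizing-fan1}\eqref{thm:vizing-fan1b}.) This invariant is exactly the hypothesis ``$w\in P_r(\tau_1,1,\varphi')$ for every $L$-stable $\varphi'$ obtained by Kempe $(1,*)$-changes'' needed to apply Lemma~\ref{Lemma:pseudo-fan0}\eqref{fan-and-x} with $w=u$; without it your appeal to that lemma in the outer-color subcase is unjustified. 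It is also what makes the $2$-inducing subcase go through: once $us_1\in P_r(\tau,1)$ you shift $s_\tau{:}s_\alpha$, swap the $u$-side of $P_r(\tau,1)$, then recolor $us_1:\tau\to\Delta$, $ux:\Delta\to\alpha+1$, $ur:\alpha+1\to 1$, leaving $\tau$ missing at both $r$ and $s_1$. Your suggested swaps along $P_{s_{\sigma-1}}(\sigma,1)$ or $P_x(\Delta,\sigma)$ do not reach this endpoint.

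Second, you write ``contradicting maximality once we note $F$ sits inside a pseudo-multifan,'' but neither maximality of $F$ nor a surrounding pseudo-multifan is hypothesized in this lemma, so that route is unavailable. Finally, for statement~(2) the paper does not run a parallel direct argument as you sketch. Since $r\in P_x(\Delta,\alpha+1)$ (the chain begins $xur$), one has $r\notin P_{s_1}(\Delta,\alpha+1)=P_{s_\alpha}(\Delta,\alpha+1)$ by Lemma~\ref{thm:vizing-fan2}\eqref{thm:vizing-fan2-a}; swapping that chain and then shifting $s_2{:}s_{\alpha-1}$, uncoloring $rs_\alpha$, coloring $rs_1$ by $2$ turns $s_\alpha$ into the new ``$s_1$'' with the entire lollipop configuration preserved, reducing (2) to (1) in one stroke. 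Your direct plan for (2) could be made to work but would duplicate the whole case analysis.
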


\begin{proof}
	Assume to the contrary that the statements fail. We 
	naturally have two cases. 
	
	\medskip 
	
	\emph{\noindent Case 1: $u\sim s_1$} and $\varphi(us_1)$ is not a $\Delta$-inducing color.  
			Let $\varphi(us_1)=\tau$. 	Note that $\tau\ne 2, \alpha+1, \Delta$. We first show that $us_1$ can not be 1 under any $L$-stable coloring.


	\begin{CLA}\label{not1}
For every  $L$-stable $\varphi^*\in \CC^\Delta(G-rs_1)$, it holds that $\varphi^*(us_1)\ne 1$.
Furthermore, if 
 $\varphi^*(us_1)=\varphi(us_1)=\tau$,   then $us_1\in P_r(\tau,1, \varphi^*)$. 
	\end{CLA}
\proof[Proof of Claim~\ref{not1}] 
Suppose instead  that $\varphi^*(us_{1})=1$ for the first part,  
and $us_{1}\notin P_r(\tau,1, \varphi^*)$  for the second part.  Let $\varphi'=\varphi^*$ in the former case and let $\varphi'=\varphi^*/Q$ in the latter case,  where $Q$ is the $(\tau,1)$-chain containing $us_1$. Clearly $\varphi'$ is $F$-stable with respect to $\varphi$. Since $P_r(\alpha+1,1,\varphi')= P_{s_\alpha}(\alpha+1,1,\varphi')$ by Lemma~\ref{thm:vizing-fan1}~\eqref{thm:vizing-fan1b}, $P_x(\alpha+1,1,\varphi')$ does not contain $r$. Thus $\varphi''=\varphi'/P_x(\alpha+1,1,\varphi')$ is $F$-stable with respect to $\varphi^*$ and $\varphi''(us_1)=\varphi'(us_1)=1$.  However,  $P_{s_1}(\Delta,1,\varphi'')=s_1ux$, contradicting Lemma~\ref{thm:vizing-fan1}~\eqref{thm:vizing-fan1b} that $s_1$ and $r$ are $(\Delta,1)$-linked with respect to $\varphi''$.  
\qed 

	\medskip 

\emph{\noindent Subcase 1.1: $\tau\in \pbar(V(F))$ is $2$-inducing}.  A precoloring  of  $L$ in this case  is depicted in Figure~\ref{f2}. 

\smallskip 
\begin{figure}[!htb]
	\begin{center}
		\begin{tikzpicture}[scale=1]
		
		{\tikzstyle{every node}=[draw ,circle,fill=white, minimum size=0.8cm,
			inner sep=0pt]
			\draw[blue,thick](0,-3) node (r)  {$r$};
			\draw [blue,thick](-4, -1.5) node (s1)  {$s_1$};
			\draw [blue,thick](-3.3, -0.4) node (s2)  {$s_2$};
			\draw[blue,thick] (-2, 0.4) node (sa)  {$s_{\tau-1}$};
			\draw [blue,thick](-1, 0.8) node (sa2)  {$s_{\tau}$};
			\draw [blue,thick](1, 0.8) node (sb)  {$s_{\alpha}$};
			\draw [blue,thick](2, 0.4) node (sb2)  {$s_{\alpha+1}$};
			\draw[blue,thick] (3.3, -0.4) node (sd1)  {$s_{\Delta-3}$};
			\draw [blue,thick](4, -1.5) node (sd2)  {$s_{\Delta-2}$};
			\draw [blue,thick](0, -5) node (u)  {$u$};
			\draw [blue,thick](0, -7) node (x)  {$x$};
		}
		\path[draw,thick,black!60!green]
		(r) edge node[name=la,above,pos=0.5] {\color{blue}$2$} (s2)
		(r) edge node[name=la,pos=0.9] {\color{blue}\quad\quad \,\,\,$\tau-1$} (sa)
		(r) edge node[name=la,pos=0.7] {\color{blue}\quad$\tau$} (sa2)
		(r) edge node[name=la,pos=0.7] {\color{blue}\quad$\alpha$} (sb)
		(r) edge node[name=la,pos=0.6] {\color{blue}\qquad\,\,\,$\alpha+2$} (sb2)
		(r) edge node[name=la,pos=0.6] {\color{blue}\qquad\,\quad$\alpha+1$} (u)
		(r) edge node[name=la,pos=0.5] {\color{blue}\quad \qquad$\Delta-2$} (sd1)
		(r) edge node[name=la,pos=0.4] {\color{blue}\qquad\quad\,\,\,\,\,$\Delta-1$} (sd2)
		(r) edge node[name=la,pos=0.4] {\color{blue}\qquad\quad\,\,\,} (u)
		(u) edge node[name=la,pos=0.4] {\color{blue}\qquad$\Delta$} (x);
		\path[draw,thick,magenta]
		(u) edge [bend left] node [left,name=la,pos=0.5] {\color{blue}\qquad$\tau$} (s1);

		\draw[orange, thick] (r) --(sb2); 
		\draw [orange, thick](r) --(sd1); 
		\draw [orange, thick](r) --(sd2); 
		
		\draw[dashed, red, line width=0.5mm] (r)--++(170:1cm); 
		\draw[dashed, red, line width=0.5mm] (s1)--++(150:1cm); 
		\draw[dashed, red, line width=0.5mm] (s1)--++(100:1cm); 
		\draw[dashed, red, line width=0.5mm] (s2)--++(70:1cm); 
		\draw[dashed, red, line width=0.5mm] (sa)--++(70:1cm); 
		\draw[dashed, red, line width=0.5mm] (sa2)--++(50:1cm); 
		\draw[dashed, red, line width=0.5mm] (sb)--++(50:1cm); 
		\draw[dashed, red, line width=0.5mm] (sb2)--++(40:1cm); 
		\draw[dashed, red, line width=0.5mm] (sd1)--++(20:1cm); 
		\draw[dashed, red, line width=0.5mm] (sd2)--++(10:1cm); 
		\draw[dashed, red, line width=0.5mm] (x)--++(350:1cm); 
		\draw[blue] (-0.6, -2.7) node {$1$};  
		\draw[blue] (-4.8, -1.3) node {$2$};  
		\draw[blue] (-3.8, -0.9) node {$\Delta$};  
		\draw[blue] (-3.3, 0.3) node {$3$};  
		\draw[blue] (-2.1, 1.1) node {$\tau$};  
		\draw[blue] (-1.2, 1.4) node {$\tau+1$};  
		\draw[blue] (0.8, 1.4) node {$\alpha+1$}; 
		\draw[blue] (0.9, -6.8) node {$\alpha+1$}; 
		{\tikzstyle{every node}=[draw ,circle,fill=black, minimum size=0.05cm,
			inner sep=0pt]
			\draw(-2.4,0.06) node (f1)  {};
			\draw(-2.6,-0.1) node (f1)  {};
			\draw(-2.8,-0.3) node (f1)  {};
			\draw(-0.3,0.8) node (f1)  {};
			\draw(0,0.8) node (f1)  {};
			\draw(0.3,0.8) node (f1)  {};
			\draw(2.5,0.2) node (f1)  {};
			\draw(2.65,0.05) node (f1)  {};
			\draw(2.8
			,-0.1) node (f1)  {};
		} 
		
		\end{tikzpicture}
	\end{center}
	\caption{Precoloring of $L$ in Subcase 1.1 of Lemma~\ref{Lem:2-non-adj1*}}
	\label{f2}
	
\end{figure}

By Claim~\ref{not1},  $us_1\in P_r(\tau,1)=P_{s_{\tau-1}}(\tau,1)$. Let $P_u(\tau,1)$ be the subpath of $P_r(\tau,1)$ starting at $u$ not containing $us_1$. Then $P_u(\tau,1)$ ends at  either $r$ or $s_{\tau-1}$. Consequently if we shift from $s_\tau$ to $s_\alpha$, then $P_u(\tau,1)$ will end at  either $s_\tau$ or $s_{\tau-1}$. Thus we can do the following operations:
\[
\begin{bmatrix}
s_{\tau}:s_{\alpha} & P_{u}(\tau,1)  & us_1 & ux & ur\\
\text{shift} & \tau/1 & \tau \rightarrow \Delta & \Delta \rightarrow \alpha+1 & \alpha+1 \rightarrow 1
\end{bmatrix}.
\]

Denote the new coloring by $\varphi'$. Now $\pbar'(s_1)=\pbar'(r)=\{\tau\}$, we can color $rs_1$ by $\tau$ to obtain a $\Delta$-edge coloring of $G$, which contradicts the fact that $G$ is Class 2.

\emph{\noindent Subcase 1.2: $\tau\notin \pbar(V(F))$}.  
Thus $\tau\in \{\beta+2,\ldots, \Delta-1\}$. 
 A  precoloring of  $L$ in this case is depicted in  Figure~\ref{f3}.

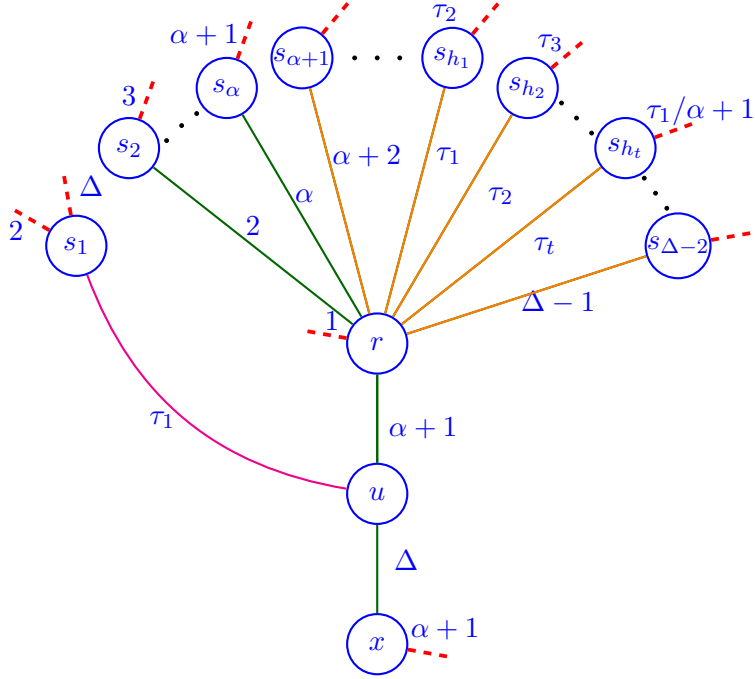
\begin{figure}[!htb]
	\begin{center}
		\begin{tikzpicture}[scale=1]
		
		{\tikzstyle{every node}=[draw ,circle,fill=white, minimum size=0.8cm,
			inner sep=0pt]
			\draw[blue,thick](0,-3) node (r)  {$r$};
			\draw [blue,thick](-4, -1.7) node (s1)  {$s_1$};
			\draw [blue,thick](-3.3, -0.4) node (s2)  {$s_2$};
			\draw[blue,thick] (-2, 0.4) node (sa)  {$s_\alpha$};
			\draw [blue,thick](-1, 0.8) node (sa2)  {$s_{\alpha+1}$};
			\draw [blue,thick](1, 0.8) node (sb)  {$s_{h_1}$};
			\draw [blue,thick](2, 0.4) node (sb2)  {$s_{h_2}$};
			\draw[blue,thick] (3.3, -0.4) node (sd1)  {$s_{h_t}$};
			\draw [blue,thick](4, -1.7) node (sd2)  {$s_{\Delta-2}$};
			\draw [blue,thick](0, -5) node (u)  {$u$};
			\draw [blue,thick](0, -7) node (x)  {$x$};
		}
		\path[draw,thick,black!60!green]
		(r) edge node[name=la,above,pos=0.5] {\color{blue}$2$} (s2)
		(r) edge node[name=la,pos=0.6] {\color{blue}\quad$\alpha$} (sa)
		(r) edge node[name=la,pos=0.7] {\color{blue}\quad\quad\,\,\,\,$\alpha+2$} (sa2)
		(r) edge node[name=la,pos=0.7] {\color{blue}\quad\,\,\,$\tau_1$} (sb)
		(r) edge node[name=la,pos=0.6] {\color{blue}\qquad\,\,\,$\tau_2$} (sb2)
		(r) edge node[name=la,pos=0.6] {\color{blue}\qquad\,\quad$\alpha+1$} (u)
		(r) edge node[name=la,pos=0.5] {\color{blue}\quad \qquad$\tau_t$} (sd1)
		(r) edge node[name=la,pos=0.4] {\color{blue}\qquad\quad\,\,\,\,\,$\Delta-1$} (sd2)
		(r) edge node[name=la,pos=0.4] {\color{blue}\qquad\quad\,\,\,} (u)
		(u) edge node[name=la,pos=0.4] {\color{blue}\qquad$\Delta$} (x);
		\path[draw,thick,magenta]
		(u) edge [bend left] node [left,name=la,pos=0.5] {\color{blue}\qquad$\tau_1$} (s1);
		
%
		
			\draw[orange, thick] (r) --(sb); 
		\draw[orange, thick] (r) --(sb2); 
		\draw [orange, thick](r) --(sd1); 
		\draw [orange, thick](r) --(sd2); 
		\draw [orange, thick](r) --(sa2); 
		
		\draw[dashed, red, line width=0.5mm] (r)--++(170:1cm); 
		\draw[dashed, red, line width=0.5mm] (s1)--++(150:1cm); 
		\draw[dashed, red, line width=0.5mm] (s1)--++(100:1cm); 
		\draw[dashed, red, line width=0.5mm] (s2)--++(70:1cm); 
		\draw[dashed, red, line width=0.5mm] (sa)--++(70:1cm); 
		\draw[dashed, red, line width=0.5mm] (sa2)--++(50:1cm); 
		\draw[dashed, red, line width=0.5mm] (sb)--++(50:1cm); 
		\draw[dashed, red, line width=0.5mm] (sb2)--++(40:1cm); 
		\draw[dashed, red, line width=0.5mm] (sd1)--++(20:1cm); 
		\draw[dashed, red, line width=0.5mm] (sd2)--++(10:1cm); 
		\draw[dashed, red, line width=0.5mm] (x)--++(350:1cm); 
		\draw[blue] (-0.6, -2.7) node {$1$};  
		\draw[blue] (-4.8, -1.5) node {$2$};  
		\draw[blue] (-3.8, -0.9) node {$\Delta$};  
		\draw[blue] (-3.3, 0.3) node {$3$};  
		\draw[blue] (-2.3, 1.1) node {$\alpha+1$};  
		\draw[blue] (0.9, 1.4) node {$\tau_2$}; 
		\draw[blue] (0.9, -6.8) node {$\alpha+1$}; 
			\draw[blue] (2.3, 1.0) node {$\tau_3$}; 
			\draw[blue] (4.3, 0.1) node {$\tau_1/\alpha+1$}; 
		

			
				{\tikzstyle{every node}=[draw ,circle,fill=black, minimum size=0.05cm,
				inner sep=0pt]
				\draw(-2.4,0.06) node (f1)  {};
				\draw(-2.6,-0.1) node (f1)  {};
				\draw(-2.8,-0.3) node (f1)  {};
					\draw(-0.3,0.8) node (f1)  {};
				\draw(0,0.8) node (f1)  {};
				\draw(0.3,0.8) node (f1)  {};
					\draw(2.45,0.2) node (f1)  {};
				\draw(2.65,0) node (f1)  {};
				\draw(2.85,-0.2) node (f1)  {};
				\draw(3.55,-0.8) node (f1)  {};
				\draw(3.7,-1) node (f1)  {};
				\draw(3.85,-1.2) node (f1)  {};
			} 
		\end{tikzpicture}
	\end{center}
	\caption{Precoloring of $L$ in Subcase 1.2 of Lemma~\ref{Lem:2-non-adj1*}}
	\label{f3}
\end{figure}

%
%

%
Let $\tau_1=\tau$ and $s_{h_1}\in N_{\Delta-1}(r)$ such that $\varphi(rs_{h_1})=\tau_1$. 
Since $us_1\in P_r(\tau,1, \varphi) $  and  the color on $us_{1}$ 
is not 1 under  every $L$-stable coloring by Claim~\ref{not1_3},  
for every  $L$-stable $\varphi' \in \CC^{\Delta}(G-rs_1)$  obtained from $\varphi$ 
through a sequence of  Kempe $(1,*)$-changes, it holds that 
$\varphi'(us_1)=\varphi(us_1)$.   Therefore, 
$u\in P_r(\tau,1, \varphi')$  by Claim~\ref{not1} again. 
Applying  Lemma~\ref{Lemma:pseudo-fan0} on $L$ with $u$ playing the role of $w$, 
we find a sequence of distinct vertices $s_{h_1}, \ldots, s_{h_t}\in  \{s_{\beta+1}, \ldots, s_{\Delta-2} \}$ satisfying the following conditions: 
\begin{enumerate}[(a)]
	\item  $\varphi(rs_{h_{i+1}})=\pbar(s_{h_i})=\tau_{i+1}\in  \{\beta+2, \cdots, \Delta-1\} $ for each $i\in [1,t-1]$; 
			\item $s_{h_i}$ and $r$ are $(\tau_{i+1}, 1)$-linked with respect to $\varphi$ for each $i\in [1,t-1]$;
	\item $\pbar(s_{h_t})=\tau_1$ or $\alpha+1$, and if $\pbar(s_{h_t})=\tau_1$, then  $s_{h_t}$ and $r$ are $(\tau_{1}, 1)$-linked with respect to $\varphi$. 
\end{enumerate} 
\medskip 

\emph{\noindent Subcase $\pbar(s_{h_t})=\tau_1$.}
In this case,  $t\ge 2$. By Claim~\ref{not1},  $us_1\in P_r(\tau,1)=P_{s_{h_t}}(\tau,1)$. 

{\it Suppose first that $P_{s_{h_t}}(\tau,1)$ meets $u$ before $s_1$}. 
We do the following operations:

\[
\begin{bmatrix}
P_{[s_{h_t},u]}(\tau,1)  &  us_1 & ux & ur\\
\tau/1 & \tau \rightarrow \Delta & \Delta \rightarrow \alpha+1 & \alpha+1 \rightarrow 1
\end{bmatrix}.
\]
The new coloring is $F$-stable, but $\alpha+1$ is missing at both $r$ and $s_\alpha$, giving a contradiction to Lemma~\ref{thm:vizing-fan1}~\eqref{thm:vizing-fan1b}.  {\it Thus $P_{s_{h_1}}(\tau,1)$ meets $s_1$ before $u$}.  Then shifting from $s_{h_1}$
to $s_{h_t}$  gives back to the previous case with $s_{h_1}$ playing the role of $s_{h_t}$. 


\medskip

\emph{\noindent Subcase $\pbar(s_{h_t})=\alpha+1$.}
Note that $t$ could be 1 in this case. 
By Claim~\ref{not1},  $us_1\in P_r(\tau,1)=P_{z}(\tau,1)$, for some vertex
$z\in V(G)\setminus (V(F)\cup \{x,s_{h_1}, \ldots, s_{h_t}\})$. \emph{Assume first that $s_{h_t}\ne x$}. 

{\it If $P_z(\tau,1)$ meets $u$ before $s_1$},  we do the following operations: 
\[
\begin{bmatrix}
P_{[z,u]}(\tau,1)  &  us_1 & ux & ur\\
\tau/1 & \tau \rightarrow \Delta & \Delta \rightarrow \alpha+1 & \alpha+1 \rightarrow 1
\end{bmatrix}.
\]
The new coloring is $F$-stable, but $\alpha+1$ is missing at both $r$ and $s_\alpha$, giving a contradiction to Lemma~\ref{thm:vizing-fan1}~\eqref{thm:vizing-fan1a}.


{\it If $P_z(\tau,1)$ meets $s_1$ before $u$},  we do the following operations:
\[
\begin{bmatrix}
P_{[z,s_1]}(\tau,1) & s_{h_1}:s_{h_t}  &  us_1 & ux & ur\\
\tau/1 & \text{shift} & \tau \rightarrow \Delta & \Delta \rightarrow \alpha+1 & \alpha+1 \rightarrow \tau
\end{bmatrix}.
\]
The new coloring is $F$-stable,  but $1$ is missing at both $r$ and $s_1$, giving a contradiction to Lemma~\ref{thm:vizing-fan1}~\eqref{thm:vizing-fan1a}.

%

\emph{Assume now  that $s_{h_t}= x$}. 
We first consider the case when $t\ge 2$. Note that $P_r(1,\alpha+1)=P_{s_\alpha}(1,\alpha+1)$ and so $r\notin P_x(1,\alpha+1)$.  Let $\varphi_1=\varphi/P_x(1,\alpha+1)$. Then $P_r(1,\tau_{t},\varphi_1)=rx$. We next let  $\varphi_2=\varphi_1/P_{s_{h_{t-1}}}(\tau_t,1,\varphi_1)$. Then $\varphi_2$ is $F$-stable with respect to $\varphi$ and $\pbar_2(x)=\pbar_2(s_{h_{t-1}})=1$. Now do $(\alpha+1,1)$-swap at both $x$ and $s_{h_{t-1}}$. 
This gives back to the previous case when $\pbar(s_{h_t})=\alpha+1$ and $s_{h_t}\ne x$
with $s_{h_{t-1}}$ in place of $s_{h_t}$.

Thus we assume that $t=1$. Let $\varphi_1=\varphi/P_x(1,\alpha+1)$. Then $P_r(1,\tau_{1},\varphi_1)=rx$. We next let  $\varphi_2=\varphi_1/Q$,  where $Q$ is the $(\tau,1)$-chain containing $us_1$ under $\varphi_1$. Then $\varphi_2$ is $F$-stable with respect to $\varphi$, but  $P_{s_1}(1,\Delta,\varphi_2)$ ends at $x$, giving a contradiction to Lemma~\ref{thm:vizing-fan1}~\eqref{thm:vizing-fan1b}.
%
	\medskip 

\emph{\noindent Case 2: $u\sim s_\alpha$ and $\varphi(rs_\alpha)$ is not a $\Delta$-inducing color}.  

Let $\varphi(us_\alpha)=\tau$. 	Note that $\tau\ne \alpha, \alpha+1, \Delta$. By Lemma~\ref{thm:vizing-fan2}~\eqref{thm:vizing-fan2-a}, $P_{s_1}(\Delta,\alpha+1)=P_{s_\alpha}(\Delta,\alpha+1)$. Since $r\in P_x(\Delta,\alpha+1)$, we have $r\notin P_{s_1}(\Delta,\alpha+1)$. Now let $\varphi'=\varphi/P_{s_1}(\Delta,\alpha+1)$ and let $\varphi^*$ be obtained from $\varphi'$ by uncoloring $rs_\alpha$, shifting from $s_2$ to $s_{\alpha-1}$ and coloring $rs_1$ by 2. Then $F^*=(r,rs_{\alpha},s_{\alpha},rs_{\alpha-1},s_{\alpha-1},\ldots,s_1,$ $rs_{\alpha+1},s_{\alpha+1},\ldots,s_\beta)$ is a typical multifan centered at $r$ with respect to $rs_\alpha$ and $\varphi^*$, where $\pbar^*(s_\alpha)=\{\alpha,\Delta\}$, $\varphi^*(ru)=\pbar^*(x)=\alpha+1$ is the last $\alpha$-inducing color, $\varphi^*(ux)=\Delta$, and $u\sim s_\alpha$ and $\tau$ is not $\Delta$-inducing. This gives back to \emph{Case 1},  finishing  the proof of Lemma~\ref{Lem:2-non-adj1*}.  \end{proof}

\begin{LEM}\label{Lem:2-non-adj2}
Let  $G$ be an HZ-graph with maximum degree $\Delta\ge 3$, $r\in V_\Delta$,  $N_{\Delta-1}(r)=\{s_1, s_2,\ldots, s_{\Delta-2}\}$, and $\varphi\in \CC^\Delta(G-rs_1)$, and let 
$F:=F_\varphi(r,s_1:s_\alpha)$ be a typical 2-inducing  multifan and  $L:=(F,ru,u,ux,x)$ be a lollipop centered at $r$.  If $\varphi(ru)=\alpha+1$, $\pbar(x)=\alpha+1$, 
 and $\varphi(ux)=\mu\in \pbar(V(F))$ is a 2-inducing color, 
then  $u\not\sim s_{\mu-1}$ and $u\not\sim s_\mu$.  
\end{LEM}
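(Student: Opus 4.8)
The plan is to argue by contradiction: assume the lemma fails, so that $u\sim s_{\mu-1}$ or $u\sim s_\mu$, and produce in either case a proper $\Delta$-edge-colouring of $G$ (impossible since $G$ is Class~2) or a colouring under which some sub-multifan of $F$ has a colour missing at two of its vertices (impossible by Lemma~\ref{thm:vizing-fan1}\eqref{thm:vizing-fan1a}). The engine is Lemma~\ref{Lemma:extended multifan} applied with $\tau=\mu$: since $\pbar(x)=\varphi(ru)=\alpha+1$ and $\mu$ is $2$-inducing, it gives $ux\in P_r(\mu,1)$ and that the branch $P_x(\mu,1)$ off $ux$ ends at $r$, so — using the $(\mu,1)$-linkage of $r$ and $s_{\mu-1}$ from Lemma~\ref{thm:vizing-fan1}\eqref{thm:vizing-fan1b} — the $(\mu,1)$-chain through $r$ runs $r,s_\mu,\dots,x,u,\dots,s_{\mu-1}$. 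It also supplies the ``$r\in P_{s_{\delta-1}}(\cdot,\cdot)$'' statements \eqref{Evizingfan-c} and \eqref{Evizingfan-e} for the $2$-inducing colours $\delta$ with $\mu\prec\delta$ and $\delta\prec\mu$, \eqref{Evizingfan-d} for $\delta=\Delta$, and the $L$-stable auxiliary colouring $\varphi^\ast$ (with $\pbar^\ast(r)=\alpha+1$, $\varphi^\ast(ru)=\mu$, $\varphi^\ast(ux)=\alpha+1$ and $(r,rs_1,s_1,\dots,s_{\mu-1})$ still a multifan), which is available whenever a recolouring has to rearrange the fan below level $\mu$.

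First I would dispose of the case $u\sim s_\mu$, either by reducing it to $u\sim s_{\mu-1}$ through a re-rooting of the $2$-inducing part of $F$ — uncolour a suitable $rs_i$, shift, recolour $rs_1$, relabel colours and vertices, after a preliminary Kempe change chosen with Lemma~\ref{Lemma:extended multifan}, in the spirit of the Case~2 to Case~1 step of Lemma~\ref{Lem:2-non-adj1*} — or by the entirely parallel argument. So assume $u\sim s_{\mu-1}$ and set $\rho:=\varphi(us_{\mu-1})$, noting $\rho\notin\{\mu,\alpha+1\}$, and $\rho\ne\mu-1$ if $\mu\ge3$. The second step is the analogue of Claim~\ref{not1}: for every $L$-stable colouring reachable from $\varphi$ by Kempe $(1,\ast)$-changes, the colour of $us_{\mu-1}$ is not $1$, and it equals $\rho$ only if $us_{\mu-1}$ lies on the current $(\rho,1)$-chain through $r$. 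This is the same two-stage Kempe argument as for Claim~\ref{not1}: performing the $(\alpha+1,1)$-swap at $x$ (legitimate because $r\notin P_x(\alpha+1,1)$ by Lemma~\ref{thm:vizing-fan1}\eqref{thm:vizing-fan1b}) would, if $us_{\mu-1}$ carried colour $1$, make $s_{\mu-1}\,u\,x$ a $(\mu,1)$-chain avoiding $r$, contradicting the $(\mu,1)$-linkage of $r$ and $s_{\mu-1}$.

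With this available I would run the case split on $\rho$, mirroring the two subcases of the proof of Lemma~\ref{Lem:2-non-adj1*}. If $\rho\in\pbar(V(F))$ then $\rho$ is $2$-inducing or $\rho=\Delta$; when $\rho$ is $2$-inducing, the branch $P_u(\rho,1)$ of $P_r(\rho,1)$ off $us_{\mu-1}$ ends at $r$ or $s_{\rho-1}$, and after shifting the fan between levels $\rho$ and $\mu-1$ — using \eqref{Evizingfan-c} when $\mu\prec\rho$ and \eqref{Evizingfan-e} when $\rho\prec\mu$ — a short sequence of shifts and Kempe changes around $r$, finished by recolouring $us_{\mu-1}$, $ux$ and $ur$, produces either $\pbar(r)\cap\pbar(s_1)\ne\emptyset$ (then colour $rs_1$) or a colour missing at both endpoints of the multifan $(r,rs_1,s_1,\dots,s_{\mu-1})$; the subcase $\rho=\Delta$ is handled the same way using \eqref{Evizingfan-d}, or by passing to a $\Delta$-sticked lollipop and quoting Lemma~\ref{Lem:2-non-adj1*}. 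If $\rho\in\{\alpha+2,\dots,\Delta-1\}$, I would feed $L$ and $u$ (as the vertex $w$) into Lemma~\ref{Lemma:pseudo-fan0}\eqref{fan-and-x} — whose $L$-stability hypothesis is exactly what Lemma~\ref{Lemma:extended multifan} guarantees, namely that the colour on $ux$ survives Kempe $(1,\ast)$-changes off $V(F)\cup\{x\}$ — to obtain a rotation $s_{h_1},\dots,s_{h_t}$ with $\pbar(s_{h_t})\in\{\rho,\alpha+1\}$, and then imitate Subcase~1.2 of Lemma~\ref{Lem:2-non-adj1*}: split according to whether the $(\rho,1)$-chain through $us_{\mu-1}$ meets $u$ before $s_{\mu-1}$ and whether $s_{h_t}=x$, shift along the $s_{h_\bullet}$'s, recolour around $r$, and contradict Lemma~\ref{thm:vizing-fan1}.

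The hardest part, as in Lemma~\ref{Lem:2-non-adj1*}, will be this last case: one has to juggle the long $(\mu,1)$-chain $P_r(\mu,1)$ — which here genuinely threads through both $x$ and $u$ before terminating at $s_{\mu-1}$ — together with the rotation produced by Lemma~\ref{Lemma:pseudo-fan0}, certifying at each step that the altered colouring is still $F$-stable (or still leaves $(r,rs_1,s_1,\dots,s_{\mu-1})$ a multifan). A further subtlety, absent from Lemma~\ref{Lem:2-non-adj1*} where the stick colour $\Delta$ lies at the bottom of the fan, is that $\mu$ is interior to the $2$-inducing sequence, so colours both below and above $\mu$ in the order $\prec$ are in play, and \eqref{Evizingfan-c}, \eqref{Evizingfan-d} and \eqref{Evizingfan-e} of Lemma~\ref{Lemma:extended multifan} must each be invoked depending on the subcase rather than just one of them.
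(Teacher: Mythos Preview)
Your skeleton is right --- split on $u\sim s_{\mu-1}$ versus $u\sim s_\mu$, prove the analogue of Claim~\ref{not1} for $us_{\mu-1}$, then split on whether $\rho:=\varphi(us_{\mu-1})$ lies in $\pbar(V(F))$ and invoke Lemma~\ref{Lemma:pseudo-fan0}\eqref{fan-and-x} when it does not --- and that is exactly the paper's architecture. But two of your reductions are underspecified in ways that matter.

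\textbf{Case $u\sim s_\mu$ is not a reduction to Case $u\sim s_{\mu-1}$, nor ``entirely parallel''.} The paper does re-root the fan at $s_\mu$, but in order to apply Lemma~\ref{Lem:2-non-adj1*} (not this lemma): in the re-rooted multifan $\mu$ plays the role of $\Delta$, so Lemma~\ref{Lem:2-non-adj1*}(1) forces $\tau:=\varphi(us_\mu)$ to be $\Delta$ or $2$-inducing with $\tau\prec\mu$ (this is the paper's Claim~3.3). A direct reduction to the $u\sim s_{\mu-1}$ case of \emph{this} lemma fails because any re-rooting that keeps $\varphi(ux)=\mu$ interior to a single $2$-inducing sequence would have to put $s_\mu$ in position $\mu'-1$ for the new stick colour $\mu'$, and there is no such re-rooting that leaves the fan typical $2$-inducing. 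With $\tau$ constrained, the paper's Subcases 2.1 and 2.2 then run a genuinely different endgame: after aligning chains via Lemma~\ref{Lemma:extended multifan}\eqref{Evizingfan-d} or \eqref{Evizingfan-e}, they \emph{uncolor $ux$} and exhibit a multifan $(u,ux,x,us_\mu,s_\mu)$ centered at $u$ with a repeated missing colour --- a device that never appears in Case~1.

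\textbf{Subcase $\rho\notin\pbar(V(F))$ with $\pbar(s_{h_t})=\tau_1$ is harder here than in Lemma~\ref{Lem:2-non-adj1*}.} Imitating Subcase~1.2 of Lemma~\ref{Lem:2-non-adj1*} handles $\pbar(s_{h_t})=\alpha+1$, but when $\pbar(s_{h_t})=\tau_1$ the paper needs an extra layer: first split on whether $r\in P_{s_\alpha}(\tau_1,\alpha+1)$ (if not, one lands back in Subcase~1.1.3 or~1.2.1), and if so, three further sub-subcases on the $(\tau_1,\alpha+1)$-linkage pattern among $s_\alpha$, $s_{h_t}$ and $x$. This is where the interior position of $\mu$ bites, and your sketch does not anticipate it.

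A minor point: for $\mu\prec\rho$ (the paper's Subcase~1.1.3) the argument does not go through Lemma~\ref{Lemma:extended multifan}\eqref{Evizingfan-c}; it uses the Claim directly to locate $us_{\mu-1}$ on $P_r(\rho,1)$, shifts $s_\rho{:}s_\alpha$, swaps along the branch $P_u(\rho,1)$, and recolours $us_{\mu-1},ux,ur$.
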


\begin{proof}
	Assume to the contrary that $u\sim s_{\mu-1}$ or $u\sim s_\mu$. We consider two cases below. 
	
	\medskip 
	
	\emph{\noindent Case 1: $u\sim s_{\mu-1}$}.  
		Let $\varphi(us_{\mu-1})=\tau$. 	Note that $\tau\ne \mu-1, \mu,  \alpha+1$.

	\begin{CLA}\label{not1_3}
		For every  $L$-stable $\varphi^*\in \CC^\Delta(G-rs_1)$, it holds that	$\varphi^*(us_{\mu-1})\ne 1$.  Furthermore, if  
		  $\varphi^*(us_{\mu-1})=\varphi(us_{\mu-1})=\tau$,  then  $us_{\mu-1}\in P_r(\tau,1, \varphi^*)$.  
	\end{CLA}
	\proof[Proof of Claim~\ref{not1_3}] 
Suppose instead  that $\varphi^*(us_{\mu-1})=1$ for the first part,  
	and $us_{\mu-1}\notin P_r(\tau,1, \varphi^*)$  for the second part. 
Let $\varphi'=\varphi^*$ in the former case and let $\varphi'=\varphi^*/Q$ in the latter case, where $Q$ is the $(\tau,1)$-chain containing $us_{\mu-1}$. Clearly $\varphi'$ is $F$-stable with respect to $\varphi$. Since $P_r(\alpha+1,1,\varphi')= P_{s_\alpha}(\alpha+1,1,\varphi')$ by Lemma~\ref{thm:vizing-fan1}~\eqref{thm:vizing-fan1b}, $P_x(\alpha+1,1,\varphi')$ does not contain $r$. Thus $\varphi''=\varphi'/P_x(\alpha+1,1,\varphi')$ is $F$-stable with respect to $\varphi^*$ and $\varphi''(us_{\mu-1})=\varphi'(us_{\mu-1})=1$.  However  $P_{s_{\mu-1}}(\mu,1,\varphi'')=s_{\mu-1}ux$, contradicting Lemma~\ref{thm:vizing-fan1}~\eqref{thm:vizing-fan1b} that $s_{\mu-1}$ and $r$ are $(\mu,1)$-linked.
	\qed 
	
\smallskip 
	
	\emph{\noindent Subcase 1.1: $\tau\in \pbar(V(F))$}.  
A precoloring of $L$ in  this case is depicted in Figure~\ref{f7}. 
	\begin{figure}[!htb]
		\begin{center}
			\begin{tikzpicture}[scale=1]
			
			{\tikzstyle{every node}=[draw ,circle,fill=white, minimum size=0.8cm,
				inner sep=0pt]
				\draw[blue,thick](0,-3) node (r)  {$r$};
				\draw [blue,thick](-4, -1.5) node (s1)  {$s_1$};
				\draw [blue,thick](-3.3, -0.4) node (s2)  {$s_2$};
				\draw[blue,thick] (-2, 0.4) node (sa)  {$s_{\mu-1}$};
				\draw [blue,thick](-1, 0.8) node (sa2)  {$s_{\mu}$};
				\draw [blue,thick](1, 0.8) node (sb)  {$s_{\alpha}$};
				\draw [blue,thick](2, 0.4) node (sb2)  {$s_{\alpha+1}$};
				\draw[blue,thick] (3.3, -0.4) node (sd1)  {$s_{\Delta-3}$};
				\draw [blue,thick](4, -1.5) node (sd2)  {$s_{\Delta-2}$};
				\draw [blue,thick](0, -5) node (u)  {$u$};
				\draw [blue,thick](0, -7) node (x)  {$x$};
			}
			\path[draw,thick,black!60!green]
			(r) edge node[name=la,above,pos=0.5] {\color{blue}$2$} (s2)
			(r) edge node[name=la,pos=0.9] {\color{blue}\quad\quad \,\,\,$\mu-1$} (sa)
			(r) edge node[name=la,pos=0.7] {\color{blue}\quad$\mu$} (sa2)
			(r) edge node[name=la,pos=0.7] {\color{blue}\quad$\alpha$} (sb)
			(r) edge node[name=la,pos=0.6] {\color{blue}\qquad\,\,\,$\alpha+2$} (sb2)
			(r) edge node[name=la,pos=0.45] {\color{blue}\qquad\quad$\alpha+1$} (u)
			(r) edge node[name=la,pos=0.5] {\color{blue}\quad \qquad$\Delta-2$} (sd1)
			(r) edge node[name=la,pos=0.4] {\color{blue}\qquad\quad\,\,\,\,\,$\Delta-1$} (sd2)
			(r) edge node[name=la,pos=0.4] {\color{blue}\qquad\quad\,\,\,} (u)
			(u) edge node[name=la,pos=0.4] {\color{blue}\qquad$\mu$} (x);
			\path[draw,thick,magenta]
			(u) edge [bend left=50] node [left,name=la,pos=0.5] {\color{blue}\qquad$\tau$} (sa);

			\draw[orange, thick] (r) --(sb2); 
			\draw [orange, thick](r) --(sd1); 
			\draw [orange, thick](r) --(sd2); 
			
			\draw[dashed, red, line width=0.5mm] (r)--++(170:1cm); 
			\draw[dashed, red, line width=0.5mm] (s1)--++(150:1cm); 
			\draw[dashed, red, line width=0.5mm] (s1)--++(100:1cm); 
			\draw[dashed, red, line width=0.5mm] (s2)--++(70:1cm); 
			\draw[dashed, red, line width=0.5mm] (sa)--++(70:1cm); 
			\draw[dashed, red, line width=0.5mm] (sa2)--++(50:1cm); 
			\draw[dashed, red, line width=0.5mm] (sb)--++(50:1cm); 
			\draw[dashed, red, line width=0.5mm] (sb2)--++(40:1cm); 
			\draw[dashed, red, line width=0.5mm] (sd1)--++(20:1cm); 
			\draw[dashed, red, line width=0.5mm] (sd2)--++(10:1cm); 
			\draw[dashed, red, line width=0.5mm] (x)--++(350:1cm); 
			\draw[blue] (-0.6, -2.7) node {$1$};  
			\draw[blue] (-4.8, -1.3) node {$2$};  
			\draw[blue] (-3.8, -0.9) node {$\Delta$};  
			\draw[blue] (-3.3, 0.3) node {$3$};  
			\draw[blue] (-2.1, 1.1) node {$\mu$};  
			\draw[blue] (-1.2, 1.4) node {$\mu+1$};  
			\draw[blue] (0.8, 1.4) node {$\alpha+1$}; 
			\draw[blue] (0.9, -6.8) node {$\alpha+1$}; 
			{\tikzstyle{every node}=[draw ,circle,fill=black, minimum size=0.05cm,
				inner sep=0pt]
				\draw(-2.4,0.06) node (f1)  {};
				\draw(-2.6,-0.1) node (f1)  {};
				\draw(-2.8,-0.3) node (f1)  {};
				\draw(-0.3,0.8) node (f1)  {};
				\draw(0,0.8) node (f1)  {};
				\draw(0.3,0.8) node (f1)  {};
				\draw(2.5,0.2) node (f1)  {};
				\draw(2.65,0.05) node (f1)  {};
				\draw(2.8
				,-0.1) node (f1)  {};
			} 
			
			\end{tikzpicture}
		\end{center}
		\caption{Precoloring of $L$ in Subcase 1.1 of Lemma~\ref{Lem:2-non-adj2}}
		\label{f7}
	\end{figure}


	\medskip 
	
	\emph{\noindent Subcase 1.1.1: $\tau \prec \mu$}.  
	
	\smallskip 
	
	By Lemma~\ref{Lemma:extended multifan} \eqref{Evizingfan-e},  $r\in P_{s_\alpha}(\alpha+1,\tau)=P_{s_{\tau-1}}(\alpha+1,\tau)$.  
	Let $\varphi'=\varphi/P_x(\alpha+1,\tau)$. 
	Then $P_x(\tau,\mu,\varphi')=xus_{\mu-1}$. Swapping colors 
	along $P_x(\tau,\mu,\varphi')=xus_{\mu-1}$ to get a new coloring $\varphi''$. Then both $s_{\tau-1}$
	and $s_{\mu-1}$ miss $\tau$ with respect to $\varphi''$, giving a contradiction
	to Lemma~\ref{thm:vizing-fan1}~\eqref{thm:vizing-fan1a} that $V(F_{\varphi''}(r,s_1: s_{\mu-1}))$ is $\varphi''$-elementary.
	
	\medskip 
	
	\emph{\noindent Subcase 1.1.2: $\tau=\Delta$}.  
	
	\smallskip 
	
	By Lemma~\ref{Lemma:extended multifan} \eqref{Evizingfan-d},  $r\in P_{s_\alpha}(\alpha+1,\Delta)=P_{s_1}(\alpha+1,\Delta)$.  
	Let $\varphi'=\varphi/P_x(\alpha+1,\Delta)$. 
	Then $P_x(\Delta,\mu,\varphi')=xus_{\mu-1}$. Swapping colors 
	along $P_x(\Delta,\mu,\varphi')=xus_{\mu-1}$ to get a new coloring $\varphi''$. Then  both $s_{\Delta-1}=s_1$
	and $s_{\mu-1}$ miss $\Delta$ with respect to $\varphi''$, giving a contradiction
	to Lemma~\ref{thm:vizing-fan1}~\eqref{thm:vizing-fan1a} that $V(F_{\varphi''}(r,s_1: s_{\mu-1}))$ is $\varphi''$-elementary. 
	
	\medskip 
	
	\emph{\noindent Subcase 1.1.3: $\mu \prec \tau \prec \alpha+1$}.  
		By Claim~\ref{not1_3}, $us_{\mu-1}\in P_r(\tau,1)=P_{s_{\tau-1}}(\tau,1)$. Let $P_u(\tau,1)$ be the subpath of $P_r(\tau,1)$ starting at $u$ not containing $us_{\mu-1}$. Then $P_u(\tau,1)$ ends at  either $r$ or $s_{\tau-1}$. Consequently if we shift from $s_\tau$ to $s_\alpha$, then $P_u(\tau,1)$ will end at either $s_\tau$ or $s_{\tau-1}$. Thus we can do the following operations:
\[
\begin{bmatrix}
s_{\tau}:s_{\alpha} & P_{u}(\tau,1)  & us_{\mu} & ux & ur\\
\text{shift} & \tau/1 & \tau \rightarrow \mu & \mu \rightarrow \alpha+1 & \alpha+1 \rightarrow 1
\end{bmatrix}.
\]

Denote the new coloring by $\varphi'$. Now $(r,rs_1,s_1,\ldots,s_{\mu-1})$ is a multifan, but $\pbar'(s_{\mu-1})=\pbar'(r)=\{\tau\}$, giving a contradiction to Lemma~\ref{thm:vizing-fan1}~\eqref{thm:vizing-fan1a}.
	
	\medskip

	\emph{\noindent Subcase 1.2: $\tau\notin \pbar(V(F))$}.  Thus, $\tau\in \{\alpha+2,\ldots, \Delta-1\}$.  A precoloring for $L$ in this case is depicted in Figure~\ref{f8}. 
	
	\smallskip


	\begin{figure}[!htb]
		\begin{center}
			\begin{tikzpicture}[scale=1]
			
			{\tikzstyle{every node}=[draw ,circle,fill=white, minimum size=0.8cm,
				inner sep=0pt]
				\draw[blue,thick](0,-3) node (r)  {$r$};
				\draw [blue,thick](-4, -1.7) node (s1)  {$s_1$};
				\draw [blue,thick](-3.3, -0.4) node (s2)  {$s_2$};
				\draw[blue,thick] (-2, 0.4) node (sa)  {$s_{\mu-1}$};
				\draw [blue,thick](-1, 0.8) node (sa2)  {$s_{\mu}$};
				\draw [blue,thick](1, 0.8) node (sb)  {$s_{\alpha}$};
				\draw [blue,thick](2, 0.4) node (sb2)  {$s_{h_1}$};
				\draw[blue,thick] (3.3, -0.4) node (sd1)  {$s_{h_t}$};
				\draw [blue,thick](4, -1.7) node (sd2)  {$s_{\Delta-2}$};
				\draw [blue,thick](0, -5) node (u)  {$u$};
				\draw [blue,thick](0, -7) node (x)  {$x$};
			}
			\path[draw,thick,black!60!green]
			(r) edge node[name=la,above,pos=0.5] {\color{blue}$2$} (s2)
			(r) edge node[name=la,pos=0.85] {\color{blue}\quad\quad$\mu-1$} (sa)
			(r) edge node[name=la,pos=0.7] {\color{blue}\quad\,\,\,$\mu$} (sa2)
			(r) edge node[name=la,pos=0.7] {\color{blue}\quad\,\,\,$\alpha$} (sb)
			(r) edge node[name=la,pos=0.6] {\color{blue}\qquad\,$\tau_1$} (sb2)
			(r) edge node[name=la,pos=0.6] {\color{blue}\qquad\quad$\alpha+1$} (u)
			(r) edge node[name=la,pos=0.5] {\color{blue}\quad \qquad$\tau_t$} (sd1)
			(r) edge node[name=la,pos=0.4] {\color{blue}\qquad\quad\,\,\,\,\,$\Delta-1$} (sd2)
			(r) edge node[name=la,pos=0.4] {\color{blue}\qquad\quad\,\,\,} (u)
			(u) edge node[name=la,pos=0.4] {\color{blue}\qquad$\mu$} (x);
			\path[draw,thick,magenta]
			(u) edge [bend left=50] node [left,name=la,pos=0.5] {\color{blue}\qquad$\tau_1$} (sa);
			
			%
			
			\draw[orange, thick] (r) --(sb2); 
			\draw [orange, thick](r) --(sd1); 
			\draw [orange, thick](r) --(sd2); 
			
			\draw[dashed, red, line width=0.5mm] (r)--++(170:1cm); 
			\draw[dashed, red, line width=0.5mm] (s1)--++(150:1cm); 
			\draw[dashed, red, line width=0.5mm] (s1)--++(100:1cm); 
			\draw[dashed, red, line width=0.5mm] (s2)--++(70:1cm); 
			\draw[dashed, red, line width=0.5mm] (sa)--++(70:1cm); 
			\draw[dashed, red, line width=0.5mm] (sa2)--++(50:1cm); 
			\draw[dashed, red, line width=0.5mm] (sb)--++(50:1cm); 
			\draw[dashed, red, line width=0.5mm] (sb2)--++(40:1cm); 
			\draw[dashed, red, line width=0.5mm] (sd1)--++(20:1cm); 
			\draw[dashed, red, line width=0.5mm] (sd2)--++(10:1cm); 
			\draw[dashed, red, line width=0.5mm] (x)--++(350:1cm); 
			\draw[blue] (-0.6, -2.7) node {$1$};  
			\draw[blue] (-4.8, -1.5) node {$2$};  
			\draw[blue] (-3.8, -0.9) node {$\Delta$};  
			\draw[blue] (-3.3, 0.3) node {$3$};  
			\draw[blue] (-2.2, 1) node {$\mu$};  
			\draw[blue] (-1.1, 1.4) node {$\mu+1$};  
			\draw[blue] (0.9, 1.4) node {$\alpha+1$}; 
			\draw[blue] (0.9, -6.8) node {$\alpha+1$}; 
			\draw[blue] (2.3, 1.0) node {$\tau_2$}; 
			\draw[blue] (4.3, 0.1) node {$\tau_1/\alpha+1$}; 
			

			
			{\tikzstyle{every node}=[draw ,circle,fill=black, minimum size=0.05cm,
				inner sep=0pt]
				\draw(-2.4,0.06) node (f1)  {};
				\draw(-2.6,-0.1) node (f1)  {};
				\draw(-2.8,-0.3) node (f1)  {};
				\draw(-0.3,0.8) node (f1)  {};
				\draw(0,0.8) node (f1)  {};
				\draw(0.3,0.8) node (f1)  {};
				\draw(2.45,0.2) node (f1)  {};
				\draw(2.65,0) node (f1)  {};
				\draw(2.85,-0.2) node (f1)  {};
				\draw(3.55,-0.8) node (f1)  {};
				\draw(3.7,-1) node (f1)  {};
				\draw(3.85,-1.2) node (f1)  {};
			} 
			\end{tikzpicture}
		\end{center}
		\caption{Precoloring of $L$ in Subcase 1.2 of Lemma~\ref{Lem:2-non-adj2}}
		\label{f8}
	\end{figure}

	
	Let $\tau_1=\tau$ and $s_{h_1}\in N_{\Delta-1}(r)$ such that $\varphi(rs_{h_1})=\tau_1$. 
	Since $us_{\mu-1}\in P_r(\tau,1, \varphi) $  and  the color on $us_{\mu-1}$ 
	is not 1 under  every $L$-stable coloring by Claim~\ref{not1_3}, 
	for every  $L$-stable $\varphi' \in \CC^{\Delta}(G-rs_1)$  obtained from $\varphi$ 
	through a sequence of  Kempe $(1,*)$-changes, it holds that 
	$\varphi'(us_{\mu-1})=\varphi(us_{\mu-1})$.   Therefore, 
	$u\in P_r(\tau,1, \varphi')$  by  Claim~\ref{not1_3} again. 
	Applying  Lemma~\ref{Lemma:pseudo-fan0} on $L$ with $u$ playing the role of $w$, 
		there exists a sequence of distinct vertices $s_{h_1}, \ldots, s_{h_t}\in  \{s_{\alpha+1}, \ldots, s_{\Delta-2} \}$ satisfying the following conditions: 
\begin{enumerate}[(a)]
	\item  $\varphi(rs_{h_{i+1}})=\pbar(s_{h_i})=\tau_{i+1}\in  \{\alpha+2, \cdots, \Delta-1\} $ for each $i\in [1,t-1]$; 
	\item $s_{h_i}$ and $r$ are $(\tau_{i+1}, 1)$-linked with respect to $\varphi$ for each $i\in [1,t-1]$;
	\item $\pbar(s_{h_t})=\tau_1$ or $\alpha+1$, and if $\pbar(s_{h_t})=\tau_1$, then  $s_{h_t}$ and $r$ are $(\tau_{1}, 1)$-linked with respect to $\varphi$. 
\end{enumerate} 
	\medskip 
	
		\emph{\noindent \setword{Subcase 1.2.1}{Subcase 1.2.1}: $\pbar(s_{h_t})=\alpha+1$.}
	
	\smallskip
	In this case, $t\ge 1$. By Claim~\ref{not1_3},  $us_{\mu-1}\in P_r(\tau,1)=P_{z}(\tau,1)$,   for some vertex
	$z\in V(G)\setminus (V(F)\cup \{x,s_{h_1}, \ldots, s_{h_t}\})$. \emph{Assume first that $s_{h_t}\ne x$}. 
	
	{\it If  $P_z(\tau,1)$ meets $u$ before $s_{\mu-1}$}, we do the following operations:
\[
\begin{bmatrix}
P_{[z,u]}(\tau,1)  & us_{\mu} & ux & ur\\
\tau/1 & \tau \rightarrow \mu & \mu \rightarrow \alpha+1 & \alpha+1 \rightarrow 1
\end{bmatrix}.
\]
Denote the new coloring by $\varphi'$. Now $(r,rs_1,s_1,\ldots,s_{\mu-1},rs_{h_1},s_{h_1},\ldots,s_{h_t})$ is a multifan, but $\pbar'(s_{h_t})=\pbar'(r)=\{\alpha+1\}$, giving a contradiction to Lemma~\ref{thm:vizing-fan1}~\eqref{thm:vizing-fan1a}.
	
%
	
		{\it If $P_z(\tau,1)$ meets $s_{\mu-1}$ before $u$}, we do the following operations:
\[
\begin{bmatrix}
P_{[z,s_{\mu-1}]}(\tau,1) & s_{h_1}:s_{h_t}  & us_{\mu} & ux & ur\\
\tau/1 & \text{shift} & \tau \rightarrow \mu & \mu \rightarrow \alpha+1 & \alpha+1 \rightarrow \tau
\end{bmatrix}.
\]
Denote the new coloring by $\varphi'$. Now $(r,rs_1,s_1,\ldots,s_{\mu-1})$ is a multifan, but $\pbar'(s_{\mu-1})=\pbar'(r)=\{1\}$, giving a contradiction to  Lemma~\ref{thm:vizing-fan1}~\eqref{thm:vizing-fan1a}.
	
%
	
	\emph{Assume now that $s_{h_t}= x$}. We first consider the case when $t\ge 2$. Note that $P_r(1,\alpha+1)=P_{s_\alpha}(1,\alpha+1)$ and so $r\notin P_x(1,\alpha+1)$. Let  $\varphi_1=\varphi/P_x(1,\alpha+1)$. Then $P_r(1,\tau_{t},\varphi_1)=rx$. We next let  $\varphi_2=\varphi_1/P_{s_{h_{t-1}}}(\tau_t,1,\varphi_1)$. Then $\varphi_2$ is $F$-stable with respect to $\varphi$ and $\pbar_2(x)=\pbar_2(s_{h_{t-1}})=1$. Now do $(\alpha+1,1)$-swap at both $x$ and $s_{h_{t-1}}$. 
This gives back to the previous case when $\pbar(s_{h_t})=\alpha+1$ and $s_{h_t}\ne x$
with $s_{h_{t-1}}$ in place of $s_{h_t}$.

Thus we assume that $t=1$. Let $\varphi_1=\varphi/P_x(1,\alpha+1)$. Then $P_r(1,\tau_{1},\varphi_1)=rx$. We next let  $\varphi_2=\varphi_1/Q$, where $Q$ is the $(\tau,1)$-chain containing $us_{\mu-1}$ under $\varphi_1$. Then $\varphi_2$ is $F$-stable with respect to $\varphi$, but  $P_{s_{\mu-1}}(1,\Delta,\varphi_2)$ ends at $x$, giving a contradiction to Lemma~\ref{thm:vizing-fan1}~\eqref{thm:vizing-fan1b}.

\medskip

	\emph{\noindent Subcase 1.2.2: $\pbar(s_{h_t})=\tau_1$.}
		In this case,  $t\ge 2$.  By Claim~\ref{not1_3},  $us_{\mu-1}\in P_r(\tau,1)=P_{s_{h_t}}(\tau,1)$. 
		
	\emph{Suppose first that $r\notin P_{s_\alpha}(\tau_1, \alpha+1)$.}	
		Let $\varphi'=\varphi/P_{s_\alpha}(\tau_1,\alpha+1)$. Note that $F'=(r,rs_1,s_1,\ldots,s_\alpha,rs_{h_1},s_{h_1},\ldots,s_{h_t})$ is a multifan under $\varphi'$. If the other end of $P_{s_\alpha}(\tau_1,\alpha+1,\varphi)$ is not $s_{h_t}$, then $\pbar'(s_\alpha)=\pbar'(s_{h_t})=\tau_1$, giving a contradiction Lemma~\ref{thm:vizing-fan1} \eqref{thm:vizing-fan1a} . If the other end of $P_{s_\alpha}(\tau_1,\alpha+1,\varphi)$ is $s_{h_t}$, then $\pbar'(s_{h_t})=\varphi'(ru)=\pbar'(x)=\alpha+1$. Note that $\tau_1$ is in $\pbar'(V(F'))$ now, we are back to subcase 1.1.3.
		
	\smallskip
	\emph{Assume  now that $r\in P_{s_\alpha}(\tau_1, \alpha+1)$}. We consider the following three  cases. 
	
	\emph{\noindent Subcase A: $s_\alpha$ and $s_{h_t}$ are $(\tau_1,\alpha+1)$-linked.} Let $\varphi'=\varphi/P_x(\tau_1,\alpha+1)$. Then in the new coloring, $P_{s_{\mu-1}}(\tau_1,\mu,\varphi')=s_{\mu-1}ux$. Let $\varphi''=\varphi'/P_{s_{\mu-1}}(\tau_1,\mu,\varphi')$. Then $F^*=(r,rs_1, s_1,\dots, rs_{\mu-1}, s_{\mu-1}, rs_{h_1}, s_{h_1}, \ldots, rs_{h_t}, s_{h_t})$ is a multifan with respect to $\varphi''$. 
		However, $\pbar''(s_{\mu-1})=\pbar''(s_{h_t})=\tau_1$, showing a contradiction to Lemma~\ref{thm:vizing-fan1} \eqref{thm:vizing-fan1a} that  $V(F^*)$
		is $\varphi''$-elementary.
		
	\emph{\noindent \setword{Subcase B}{Subcase 1.2.2.3} : $s_\alpha$ and $s_{h_t}$ are $(\tau_1,\alpha+1)$-unlinked, but 
		$s_\alpha$ and $x$ are $(\tau_1,\alpha+1)$-linked.} 
	Recall  $r\in P_{s_\alpha}(\tau_1, \alpha+1)$. 
		Let $\varphi'=\varphi/P_{s_{h_t}}(\alpha+1,\tau_1)$. This reduces the problem to \ref{Subcase 1.2.1}. 
	\medskip

\emph{\noindent Subcase C: $s_\alpha$ is   $(\tau_1,\alpha+1)$-unlinked with both $s_{h_t}$ and $x$.}
We may assume that $x$ and $s_{h_t}$ are $(\alpha+1, \tau_1)$-linked. For otherwise, 
let $\varphi'=\varphi/P_{s_{h_t}}(\alpha+1,\tau_1)$, we are back to \ref{Subcase 1.2.1} again.

Recall that $r\in P_{s_\alpha}(\alpha+1,\tau_1)$. If $P_{s_\alpha}(\alpha+1, \tau_1)$ meets $s_{h_1}$ before $s_{\mu-1}$, we shift  from $s_{h_1}$ to $s_{h_t}$. This yields a new coloring $\varphi'$ such that 
$r\not\in P_{s_\alpha}(\alpha+1,\tau_1,\varphi') $, and so we are back to the first subcase of Subcase 1.2.2. If $P_{s_\alpha}(\alpha+1, \tau_1)$ meets $s_{\mu-1}$ before $s_{h_1}$, then shifting  from $s_{h_1}$ to $s_{h_t}$ yields a new coloring $\varphi'$ such that
$s_\alpha$ and $x$ are $(\alpha+1,\tau)$-linked with respect to $\varphi'$, which reduces the problem to 
\ref{Subcase 1.2.2.3}.  

	\medskip 
	
	\emph{\noindent Case 2: $u\sim s_\mu$}.  
		Let $\varphi(us_\mu)=\tau$. 	Note that $\tau\ne \mu, \mu+1, \alpha+1$. A precoloring of $L$ in  this case is depicted in Figure~\ref{f9}.

\begin{CLA}\label{cla3.3}
    Either $\tau=\Delta$ or $\tau$ is a 2-inducing color with $\tau\prec \mu$.
\end{CLA}
	\proof[Proof of Claim~\ref{cla3.3}] 
	Let $\varphi'$ be the coloring obtained from $\varphi$ by uncoloring $rs_{\mu}$, shifting from $s_2$ to $s_{\mu-1}$ and coloring $rs_1$ by 2. Then $F'=(r,rs_{\mu},s_{\mu},rs_{\mu+1},s_{\mu+1},\ldots,s_{\alpha},rs_{\mu-1},s_{\mu-1},\ldots,s_1)$ is a multifan under $\varphi'$,  where  $\pbar'(s_{\mu})=\{\mu,\mu+1\}$, $\varphi'(ru)=\pbar'(ux)=\alpha+1$ is the last $(\mu+1)$-inducing color, and $\varphi'(ux)=\mu$ and $u \sim s_{\mu}$. Thus by Lemma~\ref{Lem:2-non-adj1*}, $\tau$ is a $\mu$-inducing color with respect to $\varphi'$ and $F'$. So with respect to the original coloring $\varphi$ and $F$, we have either $\tau=\Delta$ or $\tau$ is a 2-inducing color with $\tau\prec \mu$. \qed

\emph{\noindent Subcase 2.1: $\tau$ is a 2-inducing color with $\tau\prec \mu$}.  

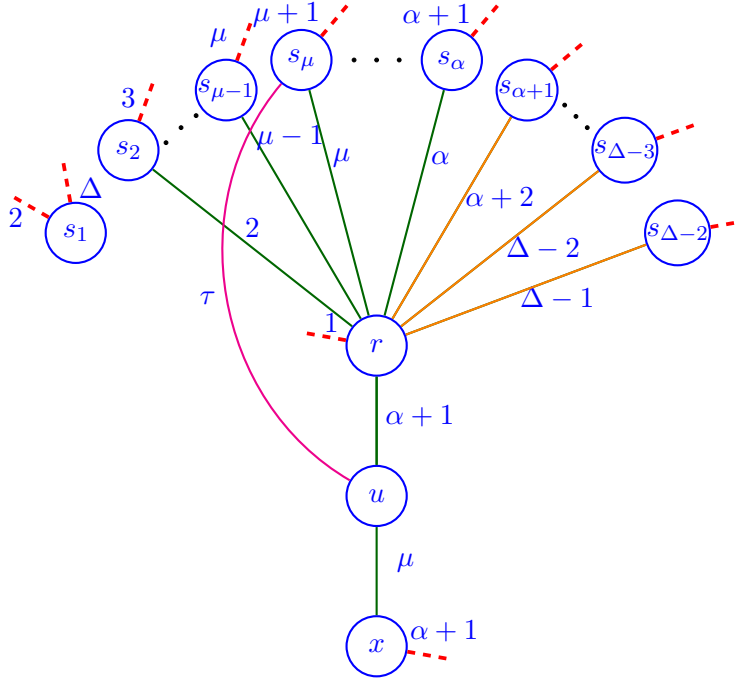
\begin{figure}[!htb]
	\begin{center}
		\begin{tikzpicture}[scale=1]
		
		{\tikzstyle{every node}=[draw ,circle,fill=white, minimum size=0.8cm,
			inner sep=0pt]
			\draw[blue,thick](0,-3) node (r)  {$r$};
			\draw [blue,thick](-4, -1.5) node (s1)  {$s_1$};
			\draw [blue,thick](-3.3, -0.4) node (s2)  {$s_2$};
			\draw[blue,thick] (-2, 0.4) node (sa)  {$s_{\mu-1}$};
			\draw [blue,thick](-1, 0.8) node (sa2)  {$s_{\mu}$};
			\draw [blue,thick](1, 0.8) node (sb)  {$s_{\alpha}$};
			\draw [blue,thick](2, 0.4) node (sb2)  {$s_{\alpha+1}$};
			\draw[blue,thick] (3.3, -0.4) node (sd1)  {$s_{\Delta-3}$};
			\draw [blue,thick](4, -1.5) node (sd2)  {$s_{\Delta-2}$};
			\draw [blue,thick](0, -5) node (u)  {$u$};
			\draw [blue,thick](0, -7) node (x)  {$x$};
		}
		\path[draw,thick,black!60!green]
		(r) edge node[name=la,above,pos=0.5] {\color{blue}$2$} (s2)
		(r) edge node[name=la,pos=0.9] {\color{blue}\quad\quad \,\,\,$\mu-1$} (sa)
		(r) edge node[name=la,pos=0.7] {\color{blue}\quad$\mu$} (sa2)
		(r) edge node[name=la,pos=0.7] {\color{blue}\quad$\alpha$} (sb)
		(r) edge node[name=la,pos=0.6] {\color{blue}\qquad\,\,\,$\alpha+2$} (sb2)
		(r) edge node[name=la,pos=0.45] {\color{blue}\qquad\quad$\alpha+1$} (u)
		(r) edge node[name=la,pos=0.5] {\color{blue}\quad \qquad$\Delta-2$} (sd1)
		(r) edge node[name=la,pos=0.4] {\color{blue}\qquad\quad\,\,\,\,\,$\Delta-1$} (sd2)
		(r) edge node[name=la,pos=0.4] {\color{blue}\qquad\quad\,\,\,} (u)
		(u) edge node[name=la,pos=0.4] {\color{blue}\qquad$\mu$} (x);
		\path[draw,thick,magenta]
		(u) edge [bend left=50] node [left,name=la,pos=0.5] {\color{blue}\qquad$\tau$} (sa2);

		\draw[orange, thick] (r) --(sb2); 
		\draw [orange, thick](r) --(sd1); 
		\draw [orange, thick](r) --(sd2); 
		
		\draw[dashed, red, line width=0.5mm] (r)--++(170:1cm); 
		\draw[dashed, red, line width=0.5mm] (s1)--++(150:1cm); 
		\draw[dashed, red, line width=0.5mm] (s1)--++(100:1cm); 
		\draw[dashed, red, line width=0.5mm] (s2)--++(70:1cm); 
		\draw[dashed, red, line width=0.5mm] (sa)--++(70:1cm); 
		\draw[dashed, red, line width=0.5mm] (sa2)--++(50:1cm); 
		\draw[dashed, red, line width=0.5mm] (sb)--++(50:1cm); 
		\draw[dashed, red, line width=0.5mm] (sb2)--++(40:1cm); 
		\draw[dashed, red, line width=0.5mm] (sd1)--++(20:1cm); 
		\draw[dashed, red, line width=0.5mm] (sd2)--++(10:1cm); 
		\draw[dashed, red, line width=0.5mm] (x)--++(350:1cm); 
		\draw[blue] (-0.6, -2.7) node {$1$};  
		\draw[blue] (-4.8, -1.3) node {$2$};  
		\draw[blue] (-3.8, -0.9) node {$\Delta$};  
		\draw[blue] (-3.3, 0.3) node {$3$};  
		\draw[blue] (-2.1, 1.1) node {$\mu$};  
		\draw[blue] (-1.2, 1.4) node {$\mu+1$};  
		\draw[blue] (0.8, 1.4) node {$\alpha+1$}; 
		\draw[blue] (0.9, -6.8) node {$\alpha+1$}; 
		{\tikzstyle{every node}=[draw ,circle,fill=black, minimum size=0.05cm,
			inner sep=0pt]
			\draw(-2.4,0.06) node (f1)  {};
			\draw(-2.6,-0.1) node (f1)  {};
			\draw(-2.8,-0.3) node (f1)  {};
			\draw(-0.3,0.8) node (f1)  {};
			\draw(0,0.8) node (f1)  {};
			\draw(0.3,0.8) node (f1)  {};
			\draw(2.5,0.2) node (f1)  {};
			\draw(2.65,0.05) node (f1)  {};
			\draw(2.8
			,-0.1) node (f1)  {};
		} 
		
		\end{tikzpicture}
	\end{center}
	\caption{Precoloring of $L$ in Case 2.1 of Lemma~\ref{Lem:2-non-adj2}}
	\label{f9}
\end{figure}


\smallskip 

By Lemma~\ref{Lemma:extended multifan} \eqref{Evizingfan-e},  $r\in P_{s_\alpha}(\alpha+1,\tau)=P_{s_{\tau-1}}(\alpha+1,\tau)$.  
Let $\varphi'=\varphi/P_x(\alpha+1,\tau)$. 
Then $\pbar'(x)=\tau$. 
It must  be still the case that $u\in P_r(\tau,1, \varphi')=P_{s_{\tau-1}}(\tau,1, \varphi')$. 
For otherwise, swapping colors along $P_x(\tau,1,\varphi')$ and the $(\tau,1)$-chain containing $u$ (only swap once if the two chains are the same)
gives a coloring $\varphi''$ such that $P_r(\mu, 1, \varphi'')=rs_\mu ux$, 
showing a contradiction to Lemma~\ref{thm:vizing-fan1}~\eqref{thm:vizing-fan1b} 
that $r$ and $s_{\mu-1}$ are $(\mu,1)$-linked with respect to $\varphi''$. 
Let $\varphi^*=\varphi'/P_x(\tau,1,\varphi')$. Now $\pbar^*(x)=1$ and $u\in P_r(\tau,1,\varphi^*)$. We consider the following two cases. 

\medskip 

\emph{\noindent Subcase 2.1.1: $P_{s_{\tau-1}}(\tau, 1,\varphi^*)$ meets $u$ before $s_{\mu}$}.  We do the following operations:

\[
\begin{bmatrix}
s_{\tau}: s_{\mu-1}  & us_\mu r & ux & P_{[s_{\tau-1}, u]}(\tau,1,\varphi^*) \\
\text{shift} & \tau/\mu & \mu\rightarrow 1 & 1/\tau
\end{bmatrix}.
\]
Denote the new coloring by $\varphi''$. Now $(r,rs_1,s_1,\ldots,s_{\tau-1})$ is a multifan, but $\pbar''(s_{\tau-1})=\pbar''(r)=\{1\}$, giving a contradiction to  Lemma~\ref{thm:vizing-fan1}~\eqref{thm:vizing-fan1a}.


\medskip 

\emph{\noindent Subcase 2.1.2: $P_{s_{\tau-1}}(\tau, 1,\varphi^*)$ meets $s_\mu$ before $u$. }  
We find a contradiction in this case through the following steps. 
\begin{enumerate}[(1)]
	\item Swap colors along $P_{[s_{\tau-1}, s_\mu]}(\tau,1,\varphi^*)$ (now $1$ is missing at $s_{\tau-1}$ and $s_\mu$, the coloring is conflicting at $s_\mu$ with respect to $\tau$); \vspace{-2mm}
	\item $us_{\mu}: \tau\rightarrow \mu$, $ur: \alpha+1\rightarrow \tau$
	(now both 1 and $\alpha+1$ are missing at $r$, the coloring is conflicting at $r$ with respect to $\tau$ and is conflicting at $u$ with respect to $\mu$, but the conflict from step (1) is resolved); \vspace{-2mm}
	\item Do $rs_\mu: \mu \rightarrow \mu+1$. Note that the missing color at $s_{\tau-1}$ is 1. Shift from $s_2$ to $s_{\mu-1}$,
	and from $s_{\mu+1}$ to $s_\alpha$ (now 2 is missing at $r$, and the conflict from step (2) at $r$ is resolved); \vspace{-2mm}
	\item Color $rs_1$ by 2 if $\tau\ne 2$, and color $rs_1$ by 1 if $\tau=2$; \vspace{-2mm}
	\item  The coloring is now only conflicting at $u$ with respect to $\mu$.
	Uncolor $ux$. Now $F=(u,ux, x, us_\mu, s_\mu )$ 
	is a multifan with respect to $ux$ and the current coloring.
	However, the color 1 is missing at both $x$ and $s_\mu$,
	showing a contradiction to Lemma~\ref{thm:vizing-fan1} \eqref{thm:vizing-fan1a}
	that $V(F)$ is elementary with respect to $ux$ and the current coloring. 
\end{enumerate}

\medskip 

\emph{\noindent Subcase 2.2: $\tau=\Delta$}.  

By Lemma~\ref{Lemma:extended multifan} \eqref{Evizingfan-d}, we know that $r\in P_{s_\alpha}(\alpha+1,\Delta)=P_{s_1}(\alpha+1,\Delta)$.  
Let $\varphi'=\varphi/P_x(\alpha+1,\Delta)$. 
Then $\pbar'(x)=\Delta$. 
It must  be still the case that $u\in P_r(\Delta,1, \varphi')=P_{s_{1}}(\Delta,1, \varphi')$. 
For otherwise, swapping colors along $P_x(\Delta,1,\varphi')$ and the $(\Delta,1)$-chain containing $u$ (only swap once if the two chains are the same) gives a coloring $\varphi''$ such that $P_r(\mu, 1, \varphi'')=rs_\mu ux$, showing a contradiction to Lemma~\ref{thm:vizing-fan1}~\eqref{thm:vizing-fan1b} 
that $r$ and $s_{\mu-1}$ are $(\mu,1)$-linked with respect to $\varphi''$. 
\medskip 

\emph{\noindent Subcase 2.2.1: $P_{s_1}(\Delta, 1,\varphi')$ meets  $s_{\mu}$ before $u$}.  We do the following operations:
\[
\begin{bmatrix}
P_{[s_1, s_\mu]}(\Delta,1,\varphi') & rs_{\mu} & s_{\mu}ux\\
1/\Delta & \mu\rightarrow 1 & \Delta/\mu
\end{bmatrix}.
\]
Denote the new coloring by $\varphi''$. Now $(r,rs_1,s_1,\ldots,s_{\mu-1})$ is a multifan, but $\pbar''(s_{\mu-1})=\pbar''(r)=\{\mu\}$, giving a contradiction to Lemma~\ref{thm:vizing-fan1} \eqref{thm:vizing-fan1a}.

%
%

\medskip 

\emph{\noindent Subcase 2.2.2: $P_{s_1}(\Delta, 1,\varphi')$  meets $u$ before $s_\mu$}.  
We find a contradiction as below: 
\smallskip

\begin{enumerate}[(1)]
	\item Swap colors along $P_{[s_1, u]}(\Delta,1,\varphi')$ (now $1$ is missing at $s_{1}$ and $u$, the coloring is conflicting at $u$ with respect to $\Delta$); \vspace{-2mm}
	\item $us_{\mu}: \Delta\rightarrow \mu$, $ur: \alpha+1\rightarrow 1$ (now $\alpha+1$ is missing at $r$, the coloring is conflicting at $u$ and $s_{\mu}$ with respect to $\mu$, but the conflict from step (1) is resolved); \vspace{-2mm}
	\item  Do $rs_\mu: \mu \rightarrow \mu+1$.  Shift from $s_2$ to  $s_{\mu-1}$ and from $s_{\mu+1}$ to  $s_\alpha$ (now 2 is missing at $r$, and the conflict at $s_\mu$ from step (2) is resolved);  \vspace{-2mm}
	\item Color $rs_1$ by 2;  \vspace{-2mm}
	\item The coloring is now only conflicting at $u$ with respect to $\mu$.
	Uncolor $ux$. Now $F=(u, ux, x, us_\mu, s_\mu )$ 
	is a multifan with respect to $ux$ and the current coloring.
	However, the color $\Delta$ is missing at both $x$ and $s_\mu$,
	showing a contradiction to Lemma~\ref{thm:vizing-fan1} \eqref{thm:vizing-fan1a}
	that $V(F)$ is elementary with respect to $ux$ and the current coloring. 
\end{enumerate}
\medskip 
	This finishes the proof of Lemma~\ref{Lem:2-non-adj2}.
\end{proof}

\section{Proof of Theorem~\ref{Thm:vizing-fan}}\label{thm2.1proof}
We prove the following version of Theorem \ref{Thm:vizing-fan}.

\begin{THM}\label{Thm:vizing-fan2b}
	If $G$ is an HZ-graph with maximum degree $\Delta\ge 4$, then for every vertex $r\in V_{\Delta}$, the following two statements hold. 
	\begin{enumerate}[(i)]
		\item For every $u\in N_{\Delta}(r)$, 
		$N_{\Delta-1}(r)=N_{\Delta-1}(u)$.  \label{common2}
		\item There exist $s_1\in N_{\Delta-1}(r)$ 
		and  a coloring 
		$\varphi\in \CC^\Delta(G-rs_1)$ such that $N_{\Delta-1}[r]$ is the vertex set of either a typical 2-inducing multifan or a typical 2-inducing pseudo-multifan with respect to $rs_1$ and $\varphi$.   Consequently $N_{\Delta-1}[r]$ 
		is $\varphi$-elementary.  
		\label{ele2}
	\end{enumerate}
\end{THM}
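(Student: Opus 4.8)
The plan is to work with a largest multifan at $r$, first use the lollipop non-adjacency results to prove~(\ref{common2}), and then bootstrap this into the pseudo-multifan structure of~(\ref{ele2}). For the setup, Lemma~\ref{biregular} gives that $G$ is $\Delta$-critical, $G_\Delta$ is $2$-regular, and $\delta(G)=\Delta-1$, so $|N_\Delta(r)|=2$ and $N_{\Delta-1}(r)=\{s_1,\dots,s_{\Delta-2}\}$. Fix $\varphi\in\CC^\Delta(G-rs_1)$ (possible since $rs_1$ is critical), and among all $s\in N_{\Delta-1}(r)$ and all $\psi\in\CC^\Delta(G-rs)$ pick one for which the largest multifan at $r$ has the most vertices; call it $F$. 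By Lemma~\ref{2-inducing } we may relabel so that $F=F_\varphi(r,s_1{:}s_\alpha)$ is a typical $2$-inducing multifan with $\pbar(r)=\{1\}$, $\pbar(s_1)=\{2,\Delta\}$, $\varphi(rs_i)=i$ and $\pbar(s_i)=\{i+1\}$ for $i\in[2,\alpha]$; counting the colours at $r$ gives $\{\varphi(rs_i):i>\alpha\}\cup\{\varphi(rw):w\in N_\Delta(r)\}=\{\alpha+1,\dots,\Delta\}$, and $\alpha+1$ colours no $rs_i$ with $i>\alpha$ (else $s_i$ could be appended to $F$), so $\alpha+1$ colours an edge from $r$ to one of its two $\Delta$-neighbours.

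To prove~(\ref{common2}), fix $u\in N_\Delta(r)$. Since $|N_{\Delta-1}(u)|=|N_{\Delta-1}(r)|=\Delta-2$, it is enough to show $u\sim s_i$ for all $i$. Suppose not; then $u$ has a neighbour $x\in N_{\Delta-1}(u)\setminus N_{\Delta-1}(r)$, so $x\notin V(F)$. Using a sequence of Kempe $(1,*)$-changes and fan shiftings that keep $F$ a maximum multifan, I would pass to a coloring with $\varphi(ru)=\alpha+1$ (the last $2$-inducing colour) and then recolour the colour missing at $x$ onto $\alpha+1$; by Lemma~\ref{Lemma:extended multifan} the colour on $ux$ is then different from $1$ and is undisturbed by further $(1,*)$-changes, and if it is not already $\Delta$ or a $2$-inducing colour one more reduction brings it there. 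Then $L=(F,ru,u,ux,x)$ is a lollipop meeting the hypotheses of Lemma~\ref{Lem:2-non-adj1} or of Lemma~\ref{Lem:2-non-adj2}, so $u$ fails to be adjacent to two prescribed vertices of $\{s_1,\dots,s_\alpha\}$. An inductive argument that re-runs this, each time re-basing $F$ and exploiting the extra $V_{\Delta-1}$-neighbour of $u$ produced outside $N_{\Delta-1}(r)$, forces $u$ to be non-adjacent to more than $\Delta-2$ vertices of $N_{\Delta-1}(r)$ --- absurd; propagating along the cycle of $G_\Delta$ through $r$ then gives~(\ref{common2}) for both $\Delta$-neighbours of $r$.

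For~(\ref{ele2}), read $N_{\Delta-1}[r]$ as $F$ followed by $s_{\alpha+1},\dots,s_{\Delta-2}$ in any order. Condition (P1) is immediate from the maximality of $F$, and if $\alpha=\Delta-2$ then $N_{\Delta-1}[r]=V(F)$ is already a typical $2$-inducing multifan; otherwise it remains to check (P2), that $N_{\Delta-1}[r]$ is $\psi$-elementary for every $F$-stable $\psi\in\CC^\Delta(G-rs_1)$, which also yields the final $\varphi$-elementarity. If (P2) fails, pick such a $\psi$ with a colour $\gamma$ missing at two vertices of $N_{\Delta-1}[r]$; since $F$ is still a multifan under $\psi$, Lemma~\ref{thm:vizing-fan1}(a) forces one witness to be some $s_i$ with $i>\alpha$. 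I would finish by cases: if $\gamma=1$, the $(1,\psi(rs_i))$-chain through $s_i$ is the single edge $rs_i$, and the colour count either enlarges $F$ or yields a recolouring beating its maximality; if $\gamma\in\pbar(V(F)\setminus\{r\})$, then by Lemma~\ref{pseudo-fan-ele}(a) the vertices $s_{\alpha+1},\dots,s_{\Delta-2}$ split into rotations, and shifting the rotation through $s_i$ together with a Kempe change on the relevant $(\gamma,*)$-chain enlarges $F$ or contradicts Lemma~\ref{thm:vizing-fan1}(a); in the remaining case a Kempe change reduces the situation to a lollipop centred at a $\Delta$-neighbour of $r$ with stick $s_i$, whose conclusion via Lemma~\ref{Lem:2-non-adj1} or~\ref{Lem:2-non-adj2} contradicts~(\ref{common2}) proved above.

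The main obstacle is the construction inside~(\ref{common2}): starting from the arbitrary colour of $ru$, reaching $\varphi(ru)=\alpha+1$, $\pbar(x)=\alpha+1$ and $\varphi(ux)\in\{\Delta\}\cup\{2\text{-inducing colours}\}$ while preserving the maximality of $F$ requires a careful analysis of how the relevant $(1,*)$- and $(\alpha+1,*)$-chains meet $F$, $u$ and $x$, with a genuine case split on the colour of $ux$ --- the case $\varphi(ux)\in\{\alpha+2,\dots,\Delta-1\}$ is not covered by Lemmas~\ref{Lem:2-non-adj1}--\ref{Lem:2-non-adj2} and needs a separate argument --- and on whether $\alpha$ is small, together with checking that the iteration genuinely produces new non-neighbours each step. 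Once~(\ref{common2}) and the rotation decomposition of Lemma~\ref{pseudo-fan-ele} are in hand, the $F$-stable elementarity needed for~(\ref{ele2}) is comparatively routine.
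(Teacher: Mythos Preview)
Your overall strategy is right --- prove (i) via the lollipop non-adjacency lemmas, then use (i) to deduce (ii) --- and this matches the paper. But two steps need substantial repair.

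\textbf{The iteration in (i) does not work.} Lemmas~\ref{Lem:2-non-adj1}--\ref{Lem:2-non-adj2} only tell you $u\not\sim s_{\mu-1},s_\mu$ (or $s_1,s_\alpha$) for the specific $\mu=\varphi(ux)$ in the current coloring; if you ``re-base $F$'' the colour on $ux$ moves with the relabelling, and nothing guarantees a \emph{new} pair of non-neighbours at the next pass. The paper does not iterate this way. Instead it first rules out $\varphi(ux)\in\{\alpha+2,\dots,\Delta-1\}$ directly (Claim~\ref{u-sharecolor}, using Lemma~\ref{Lemma:pseudo-fan0} --- exactly the separate argument you flagged as missing), and then proves a structural constraint on \emph{pairs} of out-neighbours: if $x,y\in N_{\Delta-1}(u)\setminus N_{\Delta-1}(r)$ and $\pbar(x)\in\pbar(V(F))\setminus\{1\}$, then $\pbar(y)\notin\pbar(V(F))$ and $y$ is $(\pbar(y),1)$-linked with $r$ (Claim~\ref{u-2-out-neighbor}). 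This is the engine that drives $|N_{\Delta-1}(u)\setminus N_{\Delta-1}(r)|$ from $1$ to $2$ to $3$ (Claims~\ref{u-outneighbor-x},~\ref{u-3-neighbor}), with the $|V(F)|=2$ case needing separate treatment; the final contradiction comes from three out-neighbours together with Lemma~\ref{Lemma:pseudo-fan0}, not from accumulating non-adjacencies inside $N_{\Delta-1}(r)$.

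\textbf{The case analysis for (ii) is partly circular.} You invoke Lemma~\ref{pseudo-fan-ele}(\ref{pseudo-a}) to get the rotation decomposition when $\gamma\in\pbar(V(F)\setminus\{r\})$, but that lemma assumes $S$ is already a pseudo-multifan, i.e.\ assumes (P2), which is precisely what you are trying to verify. The paper's argument (Claim~\ref{pseudo-fan}) avoids any case split on $\gamma$: given an $F$-stable $\psi$ for which $N_{\Delta-1}[r]$ is not elementary, one finds $z\in N_{\Delta-1}(r)\setminus V(F)$ that, after a couple of Kempe $(1,*)$-swaps, satisfies $\pbar(z)=\alpha+1$ while staying $F$-stable. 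Now part~(i) says the $\Delta$-neighbour $u$ with $\varphi(ru)=\alpha+1$ has $z\in N_{\Delta-1}(u)$, so $(F,ru,u,uz,z)$ is a lollipop, and Claim~\ref{u-nonadj} forces $u$ to miss a vertex of $N_{\Delta-1}(r)$ --- contradicting~(i). Your ``remaining case'' is essentially this, but it already covers everything; drop the other two cases.
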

\begin{proof}
	Let  $N_{\Delta-1}(r)=\{s_1,\ldots, s_{\Delta-2}\}$. 
	We choose a vertex in $N_{\Delta-1}(r)$, say $s_1$, a coloring $\varphi\in\CC^\Delta(G-rs_1)$ and a multifan $F$  with respect to $rs_1$
	and $\varphi$ such that $F$ is maximum at $r$. 
	That is, $|V(F)|$ is maximum among all multifans with respect to $rs_i$  and $\varphi'\in \CC^k(G-rs_i)$ for some $s_i\in N_{\Delta-1}(r)$.
	Assume that $\pbar(r)=1$ and $\pbar(s_1)=\{2,\Delta\}$, and $F=F_\varphi(r,s_1:s_p)$ 
	is such a multifan. By Lemma~\ref{2-inducing }, we may assume that 
	$F_\varphi(r,s_1:s_p)=F_\varphi(r,s_1:s_\alpha)$ is a typical  2-inducing 
	multifan, where $\alpha=p$.  
	
		Let $u\in N_{\Delta}(r)$. Roughly speaking,  the main proof idea is the following. 
	By assuming $\varphi(ru)=\alpha+1$ and $\pbar(x)=\alpha+1$
	for $x\in N_{\Delta-1}(u)\setminus N_{\Delta-1}(r)$, we will apply Lemmas~\ref{Lem:2-non-adj1} and \ref{Lem:2-non-adj2} to show that $u$ has at least two $(\Delta-1)$-neighbors outside
	of $N_{\Delta-1}(r)$. By further applying Lemmas~\ref{Lem:2-non-adj1} and \ref{Lem:2-non-adj2}, 
	we can even find three $(\Delta-1)$-neighbors of $u$ outside
	of $N_{\Delta-1}(r)	$.  A contradiction is then deduced at that point.


	\begin{CLA}\label{ux-color}
		Let $u\in N_{\Delta}(r)$. 	We may additionally assume that $\varphi(ru)=\alpha+1$, which is the last $2$-inducing color of 
		$F_\varphi(r,s_1:s_\alpha)$. 
	\end{CLA}
	\proof[Proof of Claim~\ref{ux-color}]
	Since $F_\varphi(r,s_1:s_\alpha)$ is a maximum typical 2-inducing   multifan, 
	 $\varphi(ru)\in \{\alpha+1, \Delta\}$. 
	Assume instead that $\varphi(ru)=\Delta$. 
	If $\alpha=1$, then we are done by exchanging the 
	role of $2$ and $\Delta$. Thus we assume that $\alpha\ge 2$. 
	Shift from $s_2$ to $s_{\alpha-1}$, 
	color $rs_1$ by 2 and uncolor $rs_\alpha$. 
	Then $F^*=(r,rs_\alpha,s_\alpha, rs_{\alpha-1},s_{\alpha-1}, \ldots, rs_1, s_1)$ is an $\alpha$-inducing multifan such that $\Delta$
	is the last $\alpha$-inducing color. 
	Now,  permuting  the name of some colors and the label of some  vertices 
	in $F^*$  yields the desired assumption.
	\qed  
	

	\begin{CLA}\label{u-sharecolor}
		For any $z\in N_{\Delta-1}(u)\setminus V(F)$ and any $F$-stable $\varphi'\in \CC^\Delta(G-rs_1)$,  if $\varphi'(ru)=\alpha+1$ and $\pbar'(z)=\alpha+1$, 
		then $\varphi'(uz)\in \pbar'(V(F))\setminus\{1\}$. 
	\end{CLA}
	\proof[Proof of Claim~\ref{u-sharecolor}]
	Assume to the contrary that $\varphi'(uz)\in \{1, \alpha+2,\dots, \Delta-1\}$. 
	We first claim that $\varphi'(uz)\ne 1$. 
	As otherwise,  $P_r(\alpha+1,1, \varphi')=ruz$, 
	contradicting Lemma~\ref{thm:vizing-fan1} \eqref{thm:vizing-fan1b} that $r$
	and $s_\alpha$ are $(\alpha+1,1)$-linked with respect to $\varphi'$. Let 
$\varphi'(uz)=\tau=\tau_1\in  \{\alpha+2,\dots, \Delta-1\}$, and $s_{h_1}\in N_{\Delta-1}(r)$ such that $\varphi'(rs_{h_1})=\tau_1$.   By Lemma~\ref{Lemma:extended multifan} \eqref{Evizingfan-a},
	$uz\in P_r(\tau,1,\varphi')$, and  $uz\in P_r(\tau_1,1,\varphi'')$
	for every  $L$-stable $\varphi'' \in \CC^{\Delta}(G-rs_1)$, 
	 where $L=(F,ru,u,uz,z)$. 
	Applying Lemma~\ref{Lemma:pseudo-fan0} \eqref{fan-and-x} on $L$ with $u$ playing the role $w$,  
	there exists a sequence of distinct vertices $s_{h_1}, \ldots, s_{h_t}\in  \{s_{\alpha+1}, \ldots, s_{\Delta-2} \}$ satisfying the following conditions: 
	\begin{enumerate}[(a)]
		\item  $\varphi'(rs_{h_{i+1}})=\pbar'(s_{h_i})=\tau_{i+1}\in  \{\alpha+2, \cdots, \Delta-1\} $ for each $i\in [1,t-1]$;  
		\item $s_{h_i}$ and $r$ are $(\tau_{i+1}, 1)$-linked with respect to $\varphi'$ for each $i\in [1,t-1]$; 
		\item $\pbar'(s_{h_t})=\tau_1$ or $\alpha+1$, and if $\pbar'(s_{h_t})=\tau_1$, then  $s_{h_t}$ and $r$ are $(\tau_{1}, 1)$-linked with respect to $\varphi'$. 
	\end{enumerate} 
	
	\medskip 
	
	\medskip 
	
	\emph{\noindent Case $\pbar'(s_{h_t})=\tau_1$.}
	In this case,  $t\ge 2$. By Lemma~\ref{Lemma:extended multifan} \eqref{Evizingfan-a}, $uz\in P_r(\tau,1)=P_{s_{h_t}}(\tau,1)$. 
	
	\emph{Suppose first that  $P_{r}(\tau,1)$ meets $z$ before $u$}. 
	Equivalently, $P_{s_{h_t}}$  meets $u$ before $z$. 
	We do the following operations:
		\[
	\begin{bmatrix}
	P_{[r,z]}(\tau,1) & ruz\\
	1/\tau & (\alpha+1)/\tau
	\end{bmatrix}.
	\]
	Denote the new coloring by $\varphi''$. Now $(r,rs_1,s_1,\ldots,s_{\alpha})$ is a multifan, but $\pbar''(s_{\alpha})=\pbar''(r)=\{\alpha+1\}$, giving a contradiction to Lemma~\ref{thm:vizing-fan1} \eqref{thm:vizing-fan1a}.

	%
	
	\emph{Thus $P_{r}(\tau,1)$ meets $u$ before $z$}.  Equivalently, $P_{s_{h_t}}$  meets $z$ before $u$. 
Shifting  from $s_{h_1}$
	to $s_{h_t}$ implies that $P_{r}(\tau,1)$ meets $z$ before $u$
	with respect to the current coloring. This gives back to the precious case.

	\medskip

	\emph{\noindent Case $\pbar'(s_{h_t})=\alpha+1$.}
	In this case, $t\ge 1$. 
	\emph{If $z\ne s_{h_t}$}, then we 
	shift  from $s_{h_1}$ to $s_{h_t}$, 
	and do $ru: \alpha+1\rightarrow \tau_1$, $uz: \tau_1\rightarrow \alpha+1$.
	Denote the new coloring by $\varphi''$. As $\varphi''$ is $F$-stable with respect to $\varphi'$, and so is $F$-stable  with respect to $\varphi$, 
	we see that  $F^*=(F, rs_{h_t}, s_{h_t}, rs_{h_{t-1}}, s_{h_{t-1}}, \ldots, rs_{h_1}, s_{h_1})$
	is a multifan that contains more vertices than
	$F$ does,  
	showing a 
	contradiction to the choice of $\varphi$.  
	\emph{Thus we assume that $z=s_{h_t}$}. This gives that $\varphi'(rz)=\tau_t$.  So $t\ge 2$. 
Note that  $uz\in P_r(\tau,1)=P_{w}(\tau,1)$, for some vertex $w\in V(G)\setminus(V(F)\cup \{s_{h_1}, \ldots, s_{h_t}\})$.
	We consider two cases to reach contradictions. 
	
	{\it If  $P_{w}(\tau,1)$ meets $u$ before $z$}, 
	We do the following operations:
	\[
	\begin{bmatrix}
P_{[w,u]}(\tau,1) & ru & uz\\
	1/\tau & \alpha+1\rightarrow 1& \tau \rightarrow \alpha+1
	\end{bmatrix}.
	\]
	Denote the new coloring by $\varphi''$. Now $(r,rs_1,s_1,\ldots,s_{\alpha})$ is a multifan, but $\pbar''(s_{\alpha})=\pbar''(r)=\{\alpha+1\}$, giving a contradiction to Lemma~\ref{thm:vizing-fan1} \eqref{thm:vizing-fan1a}.
	

	
	{\it If  $P_{w}(\tau,1)$ meets $z$ before $u$},
	We do the following operations:
	\[
	\begin{bmatrix}
	P_{[w,z]}(\tau,1) &  s_{h_1}:s_{h_{t-1}} & rs_{h_t}=rz & ruz\\
	1/\tau &  \text{shift} & \tau_t\rightarrow 1 & (\alpha+1)/\tau 
	\end{bmatrix}.
	\]
	Denote the new coloring by $\varphi''$. Now $(r,rs_1,s_1,\ldots,s_{\alpha})$ is a multifan, but $\pbar''(s_{\alpha})=\pbar''(r)=\{\alpha+1\}$, giving a contradiction to Lemma~\ref{thm:vizing-fan1} \eqref{thm:vizing-fan1a}.
	\qed

	\begin{CLA}\label{u-nonadj}
		Let  $z\in N_{\Delta-1}(u)\setminus V(F)$  and any $F$-stable $\varphi'\in \CC^\Delta(G-rs_1)$ such that $\varphi'(ru)=\alpha+1$ and $\pbar'(z)=\alpha+1$, and let  $\varphi'(uz)=\tau$.
		Then 	$\tau\in \pbar'(V(F))\setminus \{1\}$, and 
		$u\not\sim s_1,s_\alpha$ 	if $\tau =\Delta$; and $u\not\sim s_{\tau-1}, s_\tau$ if $\tau \in \{2,\ldots, \alpha+1\}$.
	\end{CLA}
	\proof[Proof of Claim~\ref{u-nonadj}]
	By Claim~\ref{u-sharecolor},   $\tau\in \pbar'(V(F))\setminus \{1\}$. 
	Thus, $\tau \in \{2, \ldots, \alpha+1, \Delta\}$. 
	Applying Lemmas~\ref{Lem:2-non-adj1} and \ref{Lem:2-non-adj2} yields 
	the conclusion. 
	\qed 
	
	\begin{CLA}\label{pseudo-fan}
		Suppose that $N_{\Delta-1}(r)=N_{\Delta-1}(u)$ for every $u\in N_\Delta(r)$.
		Then for every  $F$-stable coloring $\varphi'\in \CC^\Delta(G-rs_1)$,  $N_{\Delta-1}[r]$ is $\varphi'$-elementary. In particular, $N_{\Delta-1}[r]$ is the vertex set of 
		either a  typical  2-inducing multifan or a typical 2-inducing pseudo-multifan  
		with respect to $rs^*$ and  $\varphi^*\in \CC^\Delta(G-rs^*)$  for some $s^*\in N_{\Delta-1}(r)$. 
	\end{CLA}
	
	\proof[Proof of Claim~\ref{pseudo-fan}] 
	Assume to the contrary that there exists an $F$-stable coloring $\varphi'\in \CC^\Delta(G-rs_1)$ such that $N_{\Delta-1}[r]$ is not  $\varphi'$-elementary.  
	Since $V(F)$ is $\varphi'$-elementary, there exists $z\in N_{\Delta-1}[r]\setminus V(F)$
	such that $\pbar'(z)\in \pbar'(F)$ or there exists $z^*\ne z$ with $z^*\in N_{\Delta-1}[r]\setminus V(F)$
	such that $\pbar'(z)=\pbar'(z^*)$.  Let $\pbar'(z)=\delta$. 
	If $\delta\in \pbar'(F)$, then $z$ and $r$
	are $(\delta,1)$-unlinked,   so we do $(\delta,1)-(1,\alpha+1)$-swaps at $z$; 
	if  $\pbar'(z)=\pbar'(z^*)$, we may assume, without loss of generality, that $z$ and $r$
	are $(\delta,1)$-unlinked, we again do  $(\delta,1)-(1,\alpha+1)$-swaps at $z$. 
	In either case, we find an $F$-stable coloring $\varphi''\in \CC^\Delta(G-rs_1)$ such that 
	$\pbar''(z)=\alpha+1$.  Since for any $u\in N_\Delta(r)$,  it holds that  $N_{\Delta-1}(r)=N_{\Delta-1}(u)$,
	we can choose $u\in N_\Delta(r)$  such that $\varphi''(ur)=\alpha+1$, where $\alpha+1$ is the last 2-inducing color of $F_{\varphi''}(r,s_1:s_\alpha)$.  Since $N_{\Delta-1}(r)=N_{\Delta-1}(u)$, 
	$L=(F_{\varphi''}(r,s_1:s_\alpha), ru, u, uz, z)$ is a lollipop with respect to $\varphi''$. 
	By Claim~\ref{u-nonadj},  $u$ is not adjacent to at least one vertex in $N_{\Delta-1}(r)$, 
	which in turn shows $N_{\Delta-1}(r)\ne N_{\Delta-1}(u)$, giving a contradiction. 
	
	Therefore, for every $F$-stable  coloring $\varphi'\in \CC^\Delta(G-rs_1)$, it holds that  $N_{\Delta-1}[r]$ is $\varphi'$-elementary.  Consequently, there 
	is a   multifan or a  pseudo-multifan  with vertex set $N_{\Delta-1}[r]$. 
	By permuting the name of the colors and the label of vertices in 
	$N_{\Delta-1}(r)$, we can assume that the multifan or  pseudo-multifan with vertex set $N_{\Delta-1}[r]$ is typical. If $N_{\Delta-1}(r)$ is the vertex set of a multifan,  by  Lemma~\ref{2-inducing },  we can further assume that the multifan is typical 2-inducing. 
	 If $N_{\Delta-1}(r)$ is the vertex set of a pseudo-multifan,  by Lemma~\ref{pseudo-fan-ele:e} and  Lemma~\ref{2-inducing }, we can further assume that the pseudo-multifan is typical 2-inducing. 
	\qed 
	
	{\bf By Claim~\ref{pseudo-fan}, it suffices to only show Theorem~\ref{Thm:vizing-fan2b} \eqref{common2}}.
	Assume to the contrary that there exist $r\in N_\Delta$ and $u\in N_{\Delta-1}(r)$ such that 
	$N_{\Delta-1}(u)\setminus N_{\Delta-1}(r)\ne \emptyset$. 
	
		\begin{CLA}\label{x-missing-color}
			For every $z\in N_{\Delta-1}(u)\setminus N_{\Delta-1}(r)$, there is  an $F$-stable coloring 
			$\varphi'\in \CC^\Delta(G-rs_1)$ with respect to $\varphi$ such that
			$\varphi'(ru)=\alpha+1$ and  $\pbar'(z)=\alpha+1$. 
		\end{CLA}
		\proof[Proof of Claim~\ref{x-missing-color}]
		%
		Let $z\in N_{\Delta-1}(u)\setminus N_{\Delta-1}(r)$. 
		By Claim~\ref{ux-color}, assume that $\varphi(ru)=\alpha+1$. 
		Let $\pbar(z)=\delta$.  If $\delta=\alpha+1$, we simply let $\varphi'=\varphi$.  So    $\delta\ne \alpha+1$. 
		If $\delta\in \pbar(V(F))$, 
		we let  $\varphi'$ be obtained from $\varphi$ by doing $(\delta,1)-(1,\alpha+1)$-swaps
		at $z$.   This gives that $\pbar'(z)=\alpha+1$. 
		By Lemma~\ref{thm:vizing-fan1} \eqref{thm:vizing-fan1b}, $\varphi'$
		is $F$-stable and  $\varphi'(ru)=\varphi(ru)=\alpha+1$. Thus $\varphi'$  is a desired coloring. 
		
		Assume now that $\delta\in \{\alpha+2, \ldots, \Delta-1\}$. 
		If there is an $F$-stable $\varphi'' \in \CC^{\Delta}(G-rs_1)$ 
		such that $z\not\in P_r(\delta,1, \varphi'')$, 
		let $\varphi'$ be obtained from $\varphi''$
		by doing $(\delta,1)-(1,\alpha+1)$-swaps
		at $z$. Since 
		$\varphi(ru)=\alpha+1$ and $r$ and $s_\alpha$
		are $(\alpha+1,1)$-linked with respect to $\varphi$
		by Lemma~\ref{thm:vizing-fan1} \eqref{thm:vizing-fan1b},  
		it holds that  $\varphi'$ is $F$-stable  and 
		$\varphi'(ru)=\varphi''(ru)=\varphi(ru)=\alpha+1$. Thus, $\varphi'$ is a desired coloring and 
		we are done.  
		Therefore every  $F$-stable $\varphi'' \in \CC^{\Delta}(G-rs_1)$ satisfies  $z\in P_r(\delta,1, \varphi'')$. 
		 	Applying Lemma~\ref{Lemma:pseudo-fan0} \eqref{only-fan} with  $z$ playing the role of $w$,  there exists 
	$s_{h_t}\in N_{\Delta-1}(r)\setminus V(F)$ such that $\pbar(s_{h_t})=\delta$
	and $s_{h_t}$ and $r$ are $(\delta,1)$-linked with respect to $\varphi$. 
	Now let  $\varphi'$ be obtained from $\varphi$ by doing $(\delta,1)-(1,\alpha+1)$-swaps
	at $z$.  Then  $\varphi'$ is a desired coloring.   
				\qed 
		
		\begin{CLA}\label{u-outneighbor-x}
			We may assume that $|N_{\Delta-1}(u)\setminus N_{\Delta-1}(r) |\ge 2$. 
		\end{CLA}
		\proof[Proof of Claim~\ref{u-outneighbor-x}]
		Let $x\in N_{\Delta-1}(u)\setminus N_{\Delta-1}(r)$. 
		By Claim~\ref{x-missing-color},  we choose an $F$-stable coloring from 
		$\CC^\Delta(G-rs_1)$ and call it still $\varphi$ such that $\varphi(ru)=\alpha+1$ and $\pbar(x)=\alpha+1$. 
		By Claim~\ref{u-nonadj},  $\varphi(ux)\in  \{2, \ldots, \alpha+1, \Delta\}$. 
		If $|V(F)|\ge 3$, then 
		Claim~\ref{u-nonadj} gives that 
		$|N_{\Delta-1}(u)\setminus N_{\Delta-1}(r) |\ge 2$. 
		Thus we have 	$V(F)=\{r,s_1\}$. Consequently, $\alpha+1=2$, and $\varphi(ux)=\Delta$ by 
		the fact that $\varphi(ux)\in  \{2, \ldots, \alpha+1, \Delta\}$. 
		We assume further that 
		$$
		 N_{\Delta-1}(u)\setminus N_{\Delta-1}(r) =\{x\}.
		$$
	By Claim~\ref{u-nonadj}, $u\not\sim s_1$. 
		We consider two cases.
		Assume first that there exists an $F$-stable $\varphi'\in \CC^\Delta(G-rs_1)$ 
		such that $N_{\Delta-1}[r]$ is not $\varphi'$-elementary.  
		By exchanging the role of $2$ and $\Delta$ if necessary, we may assume 
		$\varphi'(ru)=2$.
		Since $V(F)$ is $\varphi'$-elementary, there exists $z\in N_{\Delta-1}(r)\setminus V(F)$
		such that $\pbar'(z)\in \pbar'(F)$ or there exists $z^*\ne z$ with $z^*\in N_{\Delta-1}(r)\setminus V(F)$
		such that $\pbar'(z)=\pbar'(z^*)$. Let $\pbar'(z)=\delta$. 
		If $\delta\in \pbar'(F)$,   then as $r$ and $z$ are $(\delta,1)$-unlinked, we do $(\delta,1)-(1,2)$-swaps at $z$; 
		if  $\pbar'(z)=\pbar'(z^*)$, we may assume, without loss of generality, that $z$ and $r$
		are $(\delta,1)$-unlinked, we again do  $(\delta,1)-(1,2)$-swaps at $z$. 
		In either case, we find an $F$-stable coloring $\varphi''\in \CC^\Delta(G-rs_1)$ with  
		$\varphi''(ru)=\varphi'(ru)=2$
		and $\pbar''(z)=2$. Note that $z\in N_{\Delta-1}(u)$ since $z\ne x$ and $N_{\Delta-1}(u)\setminus  N_{\Delta-1}(r)=\{x\}$. 
	By	Claim~\ref{u-sharecolor}, 
	 $\varphi''(uz)\in \{2, \Delta\}$, which implies $\varphi''(uz)=\Delta$ by noting $\varphi''(ru)=2$.
Furthermore, we assume $uz\in P_{s_1}(\Delta,1,\varphi'')=P_r(\Delta,1,\varphi'')$. 
		Since $\varphi''(ru)=2$ and $\varphi''(uz)=\Delta$, $\varphi''(ux)\ne 2, \Delta$. 
		Thus $\varphi''(ux)\in \{1,3, 4, \ldots, \Delta-1\}$, which 
		implies $\pbar''(x) \ne 2$ by 
		 Claim~\ref{u-sharecolor}.  
		Let $\pbar''(x)=\tau$ and $\varphi''(ux)=\lambda$. 
		Note that if $\tau=\Delta$ then $\lambda\ne 1$,  as $u\in P_{s_1}(\Delta,1,\varphi'')=P_r(\Delta,1,\varphi'')$. 
		Thus if $\tau=\Delta$ or $1$, we do  $(\tau,1)-(1,2)$-swaps at $x$. 
		As the color of $ux$ is not $\Delta$ after these swaps, we get  a contradiction to Claim~\ref{u-sharecolor}. 
		Thus, we assume that $\tau\in \{3, 4, \ldots, \Delta-1\}$, and that $P_x(\tau, 1, \varphi''')=P_r(\tau, 1, \varphi''')$
		for any $L$-stable  coloring $\varphi'''$,  where $L=(F,ru, u, uz,z)$ is a lollipop.  Let $\tau_1=\tau$ and $s_{h_1}\in N_{\Delta-1}(r)$ such that $\varphi''(rs_{h_1})=\tau_1$. 
		Applying  Lemma~\ref{Lemma:pseudo-fan0} \eqref{fan-and-x} on $L$ with $x$ playing the role of $w$, 
				there exists a sequence of distinct vertices $s_{h_1}, \ldots, s_{h_t}\in  \{s_{\alpha+1}, \ldots, s_{\Delta-2} \}$ satisfying the following conditions: 
		\begin{enumerate}[(a)]
			\item   $\varphi''(rs_{h_{i+1}})=\pbar''(s_{h_i})=\tau_{i+1}\in  \{\alpha+2, \cdots, \Delta-1\} $ for each $i\in [1,t-1]$;  
					\item $s_{h_i}$ and $r$ are $(\tau_{i+1}, 1)$-linked with respect to $\varphi''$ for each $i\in [1,t-1]$;
			\item $\pbar''(s_{h_t})=\tau_1$ or $2$, and if $\pbar''(s_{h_t})=\tau_1$, then  $s_{h_t}$ and $r$ are $(\tau_{1}, 1)$-linked with respect to $\varphi''$. 
		\end{enumerate} 
		As $x$ and $r$ are $(\tau_1,1)$-linked, we conclude that $\pbar''(s_{h_t})=2$. 
		As $\varphi''(uz)=\Delta$, $\varphi''(ur)=2$, if $s_{h_t}\ne z$, then  
		$\varphi''(us_{h_t})\in \{1, 3, 4, \ldots, \Delta-1 \}$. 
	This gives a contradiction to Claim~\ref{u-sharecolor}. 
		Thus we assume that $s_{h_t}=z$. 
		Notice that  $r\in P_{s_1}(\tau,2,\varphi'')$ by the maximality of $|V(F)|$.
		Therefore
		$r\in P_x(\tau,2,\varphi'')=P_{s_1}(\tau,2,\varphi'')$. 
		So $s_{h_t}$ is $(2,\tau)$-unlinked with $s_1, x$ and $r$ with respect to $\varphi''$. 
		We do a $(2,\tau)$-swap at $s_{h_t}$ and then shifting  from $s_{h_1}$
		to $s_{h_t}$. This gives a coloring such   that $s_1$ and $x$ are $(\tau,2)$-unlinked with respect to the coloring. 
		Again, with respect to the current coloring,  $r\in P_{s_1}(\tau,2)$ by the maximality of $|V(F)|$.
		We do a $(\tau,2)$-swap at $x$ to get a coloring $\varphi'''$. 
		Note that $\varphi'''(ru)=\varphi''(ru)=2$, $\varphi'''(ux)=\varphi''(ux)=\lambda$,
		$\pbar'''(x)=2$, and $\varphi'''(uz)=\varphi''(uz)=\Delta$.
		Therefore, $\varphi'''(ux)=\lambda\in \{1,3,4,\ldots, \Delta-1\}$, 
		showing  a contradiction to Claim~\ref{u-sharecolor}.

		Thus  we assume that   $N_{\Delta-1}[r]$  is $\varphi'$-elementary for every $F$-stable $\varphi'\in \CC^\Delta(G-rs_1)$ with respect to $\varphi$. 
		In particular, $N_{\Delta-1}[r]$  is $\varphi$-elementary, and  as $|V(F)|=2$ and $F$  is maximum at $r$, 
		we know that $N_{\Delta-1}[r]$ 
		is contained in a pseudo-multifan $S=S_{\varphi}(r,s_1:s_1:s_{\Delta-2})$. Let $\delta\in \pbar(V(S))\setminus \pbar(V(F))$.
		By Lemma~\ref{pseudo-fan-ele}~\eqref{pseudo-b}, $\pbar^{-1}_S(\delta)$
		is $(\delta,2)$- and $(\delta,\Delta)$-linked with $s_1$ 
		and the corresponding chains contain the vertex $r$ with respect to $\varphi$.  
		Recall that $\varphi(ru)=2, \varphi(ux)=\Delta$, and $\pbar(x)=2$.
		Let $\varphi'$ be obtained from $\varphi$ by doing 
	 the following swaps of colors at $x$:
		$$(2,\delta)-(\delta, \Delta)-(\Delta,1)-(1,2).$$
		Since $\varphi'$ is $F$-stable, $\varphi'(ru)=2$, $\varphi'(ux)=\delta$, and $\pbar'(x)=2$, we get 
		  a contradiction to Claim~\ref{u-sharecolor}. 
		\qed 
		
		\begin{CLA}\label{u-2-out-neighbor}
			Let $x,y\in N_{\Delta-1}(u)\setminus N_{\Delta-1}(r)$ be distinct, and $\varphi'\in \CC^\Delta(G-rs_1)$
			be any $F$-stable coloring with $\varphi'(ru)=\alpha+1$.  
		If $\pbar'(x)\in \pbar'(V(F))$ and $\pbar'(x)\ne 1$, then $\pbar'(y)\not\in \pbar'(V(F))$ and $y$ and $r$
			are $(\pbar'(y), 1)$-linked with respect to $\varphi'$. 
		\end{CLA}
		\proof[Proof of Claim~\ref{u-2-out-neighbor}]
		
		The second  part of the claim  follows easily from the first part. Since otherwise, a $(\pbar'(y),1)$-swap at $y$
		implies that $1$ is missing at $y$, contradicting the first part. 

		Assume to the contrary that $\pbar'(x)\in \pbar'(V(F))$ and $\pbar'(y)\in \pbar'(V(F))$.  
		We claim that $\pbar'(x)=\pbar'(y)=\alpha+1$ or 
		$\pbar'(x)=\alpha+1$ and $\pbar'(y)=1$.  
		By doing  $(\pbar'(x),1)-(1,\alpha+1)$-swaps at $x$, 
		we  assume that $\pbar'(x)=\alpha+1$. 
		Since $1,\alpha+1\in \pbar'(V(F))$, we still 
		have $\pbar'(y)\in \pbar'(V(F))$. 
		  If $\pbar'(y)=\alpha+1$, then we are done. 
		Otherwise, doing a $(\pbar'(y),1)$-swap at $y$ gives a desired coloring.
		$$\text{Let $\varphi'(ux)=\tau$ and $\varphi'(uy)=\lambda$.}$$
		We consider now two cases to finish the proof of Claim~\ref{u-2-out-neighbor}. 
		
		\medskip 
		\emph{\noindent \setword{Case A}{Case A}: $\pbar'(x)=\pbar'(y)=\alpha+1$.}
		
		\medskip

		By Claim~\ref{u-sharecolor}, $\tau, \lambda\in \pbar'(V(F))\setminus\{1\}$. 
		Assume, without loss of generality, that $\tau\ne \Delta$. 
		Then $\tau\in \{2,\ldots, \alpha+1\}$ is a 2-inducing color of $F$. 
		By Lemma~\ref{Lemma:extended multifan}~\eqref{Evizingfan-d} 
		that $r\in P_{s_\alpha}(\Delta,\alpha+1)=P_{s_1}(\Delta,\alpha+1)$, 
		we know $\lambda\ne \Delta$. 
		Thus $\lambda\in \{2,\ldots, \alpha+1\}$ is also a 2-inducing color. 
		By symmetry between $x$ and $y$, we assume  $\lambda \prec \tau$. 
		Shift from $s_2$ to $s_{\lambda-1}$, uncolor $rs_\lambda$, 
		then color $rs_1$ by 2.  Denote the resulting coloring by $\varphi''$. 
		Now  $F^*=(r, rs_\lambda, s_\lambda, rs_{\lambda+1}, s_{\lambda+1}, \ldots, rs_\alpha, s_\alpha, rs_{\lambda-1}, s_{\lambda-1}, \ldots, rs_1,s_1)$ is a new  multifan with respect to $\varphi''$ that has the same vertex set 
		as $F_{\varphi'}(r,s_1:s_\alpha)$. In this new multifan $F^*$, 
		$\lambda$ is itself a $\lambda$-inducing color, yet $\tau$ is a $(\lambda+1)$-inducing color. 
		However, $r\in P_y(\alpha+1, \lambda,\varphi'')$, showing a contradiction to 
		Lemma~\ref{Lemma:extended multifan}~\eqref{Evizingfan-d}  that $r\in P_{s_\alpha}(\alpha+1,\lambda, \varphi'')=P_{s_{\lambda}}(\alpha+1,\lambda,\varphi'')$.  
		
		\medskip 
		\emph{\noindent Case B: $\pbar'(x)=\alpha+1$ and $\pbar'(y)=1$.}
		
		\medskip 
		
		We  assume that $x$ and $y$ are $(\alpha+1,1)$-linked with respect to $\varphi'$. For otherwise, a $(1,\alpha+1)$-swap at 
		$y$ reduces the problem to \ref{Case A}.

		We show that $\tau, \lambda \ne \Delta$.  If this is not the case, then  by swapping colors along $P_{[x,y]}(\alpha+1,1)$ and exchanging the role of $x$ and $y$ if necessary,  we assume that 
		$\tau \ne \Delta$ and $\lambda= \Delta$. Let $\varphi''$ be obtained 
		from $\varphi'$ by a $(1,\Delta)$-swap at $y$. 
		By Lemma~\ref{Lemma:extended multifan}~\eqref{Evizingfan-d}, $r\in P_{s_1}(\Delta, \alpha+1,\varphi'')=P_{s_\alpha}(\Delta,\alpha+1,\varphi'')$. 
		Thus, we can do a $(\Delta, \alpha+1)$-swap at $y$ without affecting the coloring of $F_{\varphi''}(r,s_1:s_\alpha)$ and $\varphi''(ru)$. 
		Thus, let $\varphi^*=\varphi''/P_y(\Delta, \alpha+1,\varphi'')$. 
		We see that $P_r(\alpha+1,1,\varphi^*)=ruy$, showing a contradiction to Lemma~\ref{thm:vizing-fan1}~\eqref{thm:vizing-fan1b} that $r$
		and $s_\alpha$ are $(\alpha+1,1)$-linked with respect to $\varphi^*$. 
		
		Since  $\tau, \lambda \ne \Delta$, 
		both $\tau$ and $\lambda$ are 2-inducing colors of $F$ 
		by Claim~\ref{u-sharecolor}.  
		By swapping colors along $P_{[x,y]}(\alpha+1,1)$ and exchanging the role of $x$
		and $y$ if necessary, we assume 
		$  \lambda \prec \tau$.  
		Note that $r\in P_{s_1}(\Delta, \lambda)=P_{s_{\lambda-1}}(\Delta, \lambda)$  and 
		$r\in P_{s_1}(\Delta,\alpha+1)=P_{s_\alpha}(\Delta,\alpha+1)$ by Lemma~\ref{Lemma:extended multifan}~\eqref{Evizingfan-e}  and ~\eqref{Evizingfan-d}, respectively. 
		Let $\varphi''$ be obtained from $\varphi'$ by doing the following swaps at $y$: 
		$$ (1,\Delta)-(\Delta, \lambda )-(\lambda,1)-(1,\Delta)-(\Delta,\alpha+1).$$
		Note that $\varphi''$ is $F$-stable, and 
		 that $P_r(\alpha+1,1,\varphi'')=ruy$, showing a contradiction to Lemma~\ref{thm:vizing-fan1}~\eqref{thm:vizing-fan1b} that $r$
		and $s_\alpha$ are $(\alpha+1,1)$-linked with respect to $\varphi''$. 
		\qed 
		
		By  Claim~\ref{x-missing-color} and Claim~\ref{u-outneighbor-x},  we let 
		$x,y\in N_{\Delta-1}(u)\setminus N_{\Delta-1}(r)$  with $x\ne y$, 
		and assume that 
		$\varphi(ru)=\alpha+1$  and $\pbar(x)=\alpha+1$. 
		By Claim~\ref{u-2-out-neighbor}, 
		we also assume that $\pbar(y)=\delta \in \{\alpha+2,\ldots, \Delta-1\}$
		and $y$ and $r$ are $(\delta,1)$-linked with respect to such a coloring $\varphi$. 
		Let $\delta_1=\delta$, $s_{h_1}\in N_{\Delta-1}(r)$ such that 
		$\varphi(rs_{h_1})=\delta_1$,  and $L=(F,ru,u, ux,x)$. 
		By Claim~\ref{u-2-out-neighbor}, 
		for any $L$-stable $\varphi' \in \CC^{\Delta}(G-rs_1)$,   it holds that 
		$y\in P_r(\delta,1,\varphi')$. 
		Applying Lemma~\ref{Lemma:pseudo-fan0}~\eqref{fan-and-x} 
		on $L$ with $y$ playing the role of $w$,
		there exists a sequence of distinct vertices $s_{h_1}, s_{h_2}, \ldots, s_{h_t}\in  \{s_{\alpha+1}, \ldots, s_{\Delta-2} \}$ satisfying the following conditions: 
		\begin{enumerate}[(a)]
			\item  $\varphi(rs_{h_{i+1}})=\pbar(s_{h_i})=\delta_{i+1}\in  \{\alpha+2, \cdots, \Delta-1\}$ for each $ i\in [1,t-1]$; 
						\item $s_{h_i}$ and $r$ are $(\delta_{i+1}, 1)$-linked with respect to $\varphi$ for each $i\in [1,t-1]$;
			\item $\pbar(s_{h_t})=\delta_1$ or $\alpha+1$, and if $\pbar(s_{h_t})=\delta_1$, then  $s_{h_t}$ and $r$ are $(\delta_1, 1)$-linked with respect to $\varphi$. 
		\end{enumerate} 
		If $\pbar(s_{h_t})=\delta_1$, then since $s_{h_t}$ and $r$ are $(\delta,1)$-linked, a $(\delta,1)$-swapping at $y$ implies that the color 1 is missing at $y$, showing a contradiction to Claim~\ref{u-2-out-neighbor}. 
		Therefore, we assume that $\pbar(s_{h_t})=\alpha+1$. 
		Let $\varphi(ux)=\tau$ and $\varphi(uy)=\lambda$.  Since 
		$\varphi(ru)=\alpha+1$, $\alpha+1\notin \{\tau,\lambda,\delta_1\}$. 
		By Claim~\ref{u-nonadj}, $\tau\in \pbar(V(F))\setminus\{1\}$, 
		 and if $\varphi(ux)=\tau=\Delta$, 
		\begin{equation}\label{2-small-neighborsa}
		s_{1}, s_{\alpha} \not\in N_{\Delta-1}(u),
		\end{equation}
		if $\varphi(ux)=\tau\ne \Delta$, 
		\begin{equation}\label{2-small-neighborsb}
		s_{\tau-1}, s_{\tau} \not\in N_{\Delta-1}(u). 
		\end{equation}

		\begin{CLA}\label{u-3-neighbor}
			$|N_{\Delta-1}(u)\setminus N_{\Delta-1}(r) |\ge 3$. 
		\end{CLA}
		\proof[Proof of Claim~\ref{u-3-neighbor}]
		We  first show that if $\lambda=\Delta$,  
		\begin{equation}\label{2-small-neighbors2a}
		s_{1}, s_{\alpha} \not\in N_{\Delta-1}(u);  
		\end{equation}
	and if $\lambda\ne \Delta$,  
		\begin{equation}\label{2-small-neighbors2b}
		s_{\lambda-1}, s_{\lambda} \not\in N_{\Delta-1}(u). 
		\end{equation} 
		
	To see this, let $\varphi'$ be obtained from $\varphi$
		by first doing an $(\alpha+1,1)$-swap at both $x$ and $s_{h_t}$, 
		and then the shifting from $s_{h_1}$ to $s_{h_t}$. 
		Now, $\pbar'(r)=\delta_1$ and $\varphi'(ux)=\varphi(ux)=\tau$. 
		Let $\varphi''=\varphi'/P_y(\delta_1, \alpha+1, \varphi')$. 
		Note that $\varphi''(ux)=\varphi(ux)=\tau$ and $\varphi''(uy)=\varphi(uy)=\lambda$.  Applying Claim~\ref{u-sharecolor} to the coloring $\varphi''$ gives 
		 $\varphi''(uy)=\lambda\in \pbar''(V(F))\setminus\{\delta_1\}$.  
		As $\tau,\lambda,\delta_1,\alpha+1 \in \pbar''(V(F))$ and they are all distinct, 
		$|V(F)|\ge |\{\delta_1,\tau,\lambda,\alpha+1\}|-1 = 3$.  Then ~\eqref{2-small-neighbors2a} and ~\eqref{2-small-neighbors2b} follow   from  Claim~\ref{u-nonadj}. 
		 		These two facts, 
		together with \eqref{2-small-neighborsa} and  \eqref{2-small-neighborsb},
		imply 
		$$
		\text{either}\quad s_1,s_\alpha, s_{\lambda-1}, s_{\lambda} \not\in N_{\Delta-1}(u), \quad \text{or} \quad s_1,s_\alpha, s_{\tau-1}, s_{\tau} \not\in N_{\Delta-1}(u).
		$$
		Note that $s_1\ne s_\alpha$ by $|V(F)|\ge 3$. 
		We obtain $
		|N_{\Delta-1}(u)\setminus N_{\Delta-1}(r)|\ge  3 
		$	from the above  unless 
		\[\text{either} \quad {\lambda}={\alpha}=2 \quad \text{or} \quad {\tau}={\alpha}=2.
		\]
		Therefore we assume  $\alpha=2$ and $\{\lambda,\tau\}=\{2,\Delta\}$. By symmetry, we may assume $\alpha=2$, $\tau=2$ and $\lambda=\Delta$. 
		Furthermore, we have $|N_{\Delta-1}(u)\setminus N_{\Delta-1}(r)|=2$ as the facts from \eqref{2-small-neighborsa} to \eqref{2-small-neighbors2b}
		imply that  $s_1, s_2\not\in N_{\Delta-1}(u)$. Therefore 
		$N_{\Delta-1}(r)\setminus\{s_1,s_2\}\subseteq N_{\Delta-1}(u)$. 
		In particular, $s_{h_t}\in N_{\Delta-1}(u)$. Since $r$
		and $s_\alpha$ are $(1,\alpha+1)$-linked with respect to $\varphi$
		and $\pbar(s_{h_t})=\alpha+1$, it follows that $\varphi(us_{h_t})\ne 1$. This, together with the facts that  $\pbar(V(F))=\{1,2,3,\Delta\}$, $\varphi(ru)=3$, $\varphi(ux)=2$, and $\varphi(uy)=\Delta$,  implies  that $\varphi(us_{h_t})\in \{4,\ldots, \Delta-1\}$, 
		showing a contradiction to Claim~\ref{u-sharecolor}. 
		\qed 
		
		By Claim~\ref{u-3-neighbor}, 
		let $x,y,z\in N_{\Delta-1}(u)\setminus N_{\Delta-1}(r)$ be distinct.  
		We assume  $\varphi(ru)=\alpha+1$ and $\pbar(x)=\alpha+1$ by 
		Claim~\ref{x-missing-color}. 
		By Claim~\ref{u-2-out-neighbor},  we assume  $\pbar(y)=\delta$
		and $\pbar(z)=\lambda$ with  
		$\delta, \lambda\in \{\alpha+2,\ldots, \Delta-1\}$, 
		$y$ and $r$ are $(\delta,1)$-linked with respect to $\varphi$,  and $z$ and $r$ are $(\lambda, 1)$-linked with respect to $\varphi$. 
		Consequently,  $\lambda\ne \delta$.

		Let $L=(F,ru, u,ux, x)$. Clearly, for any $L$-stable coloring 
		$\varphi'\in \CC^\Delta(G-rs_1)$, $y\in P_r(\delta,1,\varphi')$. 
		Applying Lemma~\ref{Lemma:pseudo-fan0}~\eqref{fan-and-x} at $L$ and $y$,
		there exists a sequence of distinct vertices $s_{h_1}, s_{h_2}, \ldots, s_{h_t}\in  \{s_{\alpha+1}, \ldots, s_{\Delta-2} \}$ satisfying the following conditions: 
		\begin{enumerate}[(a)]
			\item  $\varphi(rs_{h_1})=\delta_1$, $\varphi(rs_{i+1})=\pbar(s_{h_i})=\delta_{i+1}\in  \{\alpha+2, \cdots, \Delta-1\}$ for each $ i\in [1,t-1]$; 
			
			\item $s_{h_i}$ and $r$ are $(\delta_{i+1}, 1)$-linked with respect to $\varphi$ for each $i\in [1,t-1]$;
			\item $\pbar(s_{h_t})=\delta_1$ or $\alpha+1$, and if $\pbar(s_{h_t})=\delta_1$, then  $s_{h_t}$ and $r$ are $(\delta_1, 1)$-linked. 
		\end{enumerate} 
		If $\pbar(s_{h_t})=\delta_1$, then since $s_{h_t}$ and $r$ are $(\delta,1)$-linked with respect to $\varphi$, a $(\delta,1)$-swapping at $y$ gives a contradiction to Claim~\ref{u-2-out-neighbor}. 
		Therefore, we assume  $\pbar(s_{h_t})=\alpha+1$. 
		Furthermore, we assume that $s_{h_t}$
		and $x$ are $(\alpha+1,1)$-linked with respect to $\varphi$. 
		For otherwise, first doing an $(\alpha+1,1)$-swap at $s_{h_t}$, then shifting from $s_{h_1}$
		to $s_{h_t}$ give a coloring $\varphi'$ such that $\varphi'(ru)=\varphi(ru)=\alpha+1$, $\pbar'(y)=\pbar'(r)=\delta_1$, while $\pbar'(x)=\alpha+1$. Since $\varphi'$
		is $F$-stable up to exchanging the role of 1 and $\delta_1$, 
		we obtain a contradiction to Claim~\ref{u-2-out-neighbor}. 
		As $z$ and $r$ are $(\lambda,1)$-linked with respect to $\varphi$ and $s_{h_i}$ and $r$ are $(\delta_{i+1}, 1)$-linked for each $i\in [1,t-1]$, $\lambda \ne \delta_i$ for each $i\in [2,t]$. 
				Let $\lambda_1=\lambda$ and $s_{p_1}$ be the neighbor of $r$
		such that $\varphi(rs_{p_1})=\lambda_1$. 
		For any $L$-stable coloring 
		$\varphi'\in \CC^\Delta(G-rs_1)$, $z\in P_r(\lambda,1,\varphi')$. 
		Applying Lemma~\ref{Lemma:pseudo-fan0}~\eqref{fan-and-x} on $L=(F,ru, u,ux,x)$ and $z$,
		there exists a sequence of distinct vertices $s_{p_1}, s_{p_2}, \ldots, s_{p_k}\in  \{s_{\alpha+1}, \ldots, s_{\Delta-2} \}$ satisfying the following conditions: 
		\begin{enumerate}[(a)]
			\item  $\varphi(rs_{p_{i+1}})=\pbar(s_{p_i})=\lambda_{i+1}\in  \{\alpha+2, \cdots, \Delta-1\}$ for each $i\in [1,k-1]$; 
						\item $s_{p_i}$ and $r$ are $(\lambda_{i+1}, 1)$-linked with respect to $\varphi$ for each $i\in [1,k-1]$;
			\item $\pbar(s_{p_k})=\lambda_1$ or $\alpha+1$, and if $\pbar(s_{p_k})=\lambda_1$, then  $s_{p_k}$ and $r$ are $(\lambda_1, 1)$-linked with respect to $\varphi$. 
		\end{enumerate} 
		
		Recall that  $s_{p_1}\ne s_{h_i}$ for each $i\in[1,t]$. 
		Furthermore, as  $s_{h_i}$ and $r$ are $(\delta_{i+1}, 1)$-linked for each $i\in [1,t-1]$ and for each $j\in [1,k-1]$, 
		$s_{p_j}$ and $r$ are $(\lambda_{j+1}, 1)$-linked, $s_{p_1}\ne s_{h_i}$
		for each $i\in [1,t]$ implies that $\lambda_2 \not\in\{\delta_1,\ldots, \delta_t\}$.
		Consequently, $s_{p_2}\ne s_{h_i}$ for each $i\in[1,t]$. Repeating the same process,
		we get  $s_{p_j}\ne s_{h_i}$ for each $j\in [1,k]$ and each $i\in[1,t]$.  
		
		We may still assume that $s_{h_t}$ and $x$ are $(\alpha+1,1)$-linked with respect to $\varphi$. 
		For otherwise, first doing an $(\alpha+1,1)$-swap at $s_{h_t}$, then shifting from $s_{h_1}$
		to $s_{h_t}$ give a coloring $\varphi'$  such that $\varphi'(ru)=\varphi(ru)=\alpha+1$, $\pbar'(y)=\pbar'(r)=\delta_1$, while $\pbar'(x)=\alpha+1$.
		As $\varphi'$ is $F$-stable up to exchanging the role of 1 and $\delta_1$, we obtain  
		a contradiction to Claim~\ref{u-2-out-neighbor}. 
		
		If $\pbar(s_{p_k})=\lambda_1$, then since $s_{p_k}$ and $r$ are $(\lambda_1,1)$-linked, 
		a $(\lambda_1,1)$-swap at $z$ gives a contradiction to Claim~\ref{u-2-out-neighbor}. 
		Thus  $\pbar(s_{p_k})=\alpha+1$. 
	We do a sequence of Kempe changes around $r$ from $s_{p_{k}}$ to $s_{p_1}$ as below:
		\begin{enumerate}[(1)]
			\item Swap colors along $P_{s_{p_{k}}}(\alpha+1,1)$ (after (1),  $P_r(1, \lambda_{k})=rs_{{p_{k}}}$);
			\item Swap colors along $P_{s_{p_{k-1}}}(\lambda_{k-1},1)$ (after (2), $P_r(1, \lambda_{k-1})=rs_{{p_{k-1}}}$);
			\item Continue the same kind of Kemple change from $s_{p_{k-2}}$ to $s_{p_3}$; 
			\item Swap colors along $P_{s_{p_{2}}}(\lambda_{3},1)$ (after (4), $P_r(1, \lambda_{2})=rs_{{p_{2}}}$);
			\item Swap colors along $P_{s_{p_{1}}}(\lambda_{2},1)$ (after (5), $P_r(1, \lambda_{1})=rs_{{p_{1}}}$). 
		\end{enumerate}
		Let the current  coloring  be $\varphi'$. 
		Note that $\varphi'$ is $F$-stable, $\varphi'(ru)=\varphi(ru)$,
		$\varphi'(ux)=\varphi(ux)$,  and $\pbar'(x)=\pbar(x)$, 
		but $z$ and $r$ are $(\lambda,1)$-unlinked with respect to $\varphi'$. 
		Now doing a  $(\lambda,1)$-swap at $z$ gives a contradiction to Claim~\ref{u-2-out-neighbor}.
		
		This finishes the proof of Theorem~\ref{Thm:vizing-fan}. 
\end{proof}

\section{Proof of Theorem~\ref{Thm:adj_small_vertex}}

\begin{THM2}
If $G$ is an HZ-graph with maximum degree $\Delta\ge 4$,  then for every two adjacent vertices $x, y\in V_{\Delta-1}$, $N_\Delta(x)=N_\Delta(y)$.
\end{THM2}
\begin{proof}
Assume to the contrary that  $N_\Delta(x)\ne N_\Delta(y)$. Then there exists a vertex $r\in N_\Delta(x)\setminus N_\Delta(y)$. 
	Equivalently,  $x\in N_{\Delta-1}(r)$ and  $y\not\in N_{\Delta-1}(r)$. 
			By Theorem~\ref{Thm:vizing-fan2b} \eqref{ele2},  let $s_1\in N_{\Delta-1}(r)$
	and $\varphi\in \CC^\Delta(G-rs_1)$, and  $F=F_\varphi(r,s_1: s_\alpha)$ be the typical 2-inducing multifan 
		such that either $V(F)=N_{\Delta-1}[r]$ or  $F$ is contained in
a pseudo-multifan with vertex set  $N_{\Delta-1}[r]$. 
		Let $ N_{\Delta-1}(r)=\{s_1,\ldots, s_{\Delta-2}\}$.
	We consider two cases according to  if $x\in V(F)$
	to finish the proof. 
	
	\medskip 
	
	Assume first that  $x\notin V(F)$. This implies that $V(F)\ne N_{\Delta-1}[r]$. 
	Applying  Theorem~\ref{Thm:vizing-fan2b} \eqref{ele2}, it then follows that  
	$N_{\Delta-1}[r]$ is  the vertex set of a typical 2-inducing pseudo-multifan.  Let $\pbar(x)=\delta$ and $\pbar(y)=\lambda$. 
	By Lemma~\ref{thm:vizing-fan1} \eqref{thm:vizing-fan1b}
	or Lemma~\ref{pseudo-fan-ele} \eqref{pseudo-a1}, we know
	that $\pbar^{-1}_S(\lambda)$ and $r$ are $(\lambda,1)$-linked, and $x$ and $r$ are $(\delta,1)$-linked. 
	By doing  $(\lambda,1)-(1,\delta)$-swaps at $y$ if necessary, we can assume  $\pbar(y)=\delta$. 
	Let $\varphi(xy)=\tau$.  Then $P_x(\delta,\tau)=xy$, showing a contradiction to 
	Lemma~\ref{pseudo-fan-ele}~\eqref{pseudo-b} or \eqref{pseudo-c}. 
	
	\medskip 
	
	Assume then that $x\in V(F)$.
%
	Let $x=s_i$ for some $i\in[1,\alpha]$, and $\varphi'$ be 
	obtained from $\varphi$  by shifting from $s_2$ to $s_{i-1}$, uncoloring $rs_i$, 
	and coloring $rs_1$ by 2.   
	The sequence 
	  $$F^*=(r,rs_i, s_i,rs_{i+1}, s_{i+1}, \ldots, rs_\alpha, s_\alpha, rs_{i-1}, s_{i-1}, \ldots, rs_1,s_1)$$ is  a multifan with respect to $\varphi'$.   It is clear that if $F$ is a multifan on $N_{\Delta-1}[r]$, then $F^*$ is 
		still a multifan on $N_{\Delta-1}[r]$.
		If  $F$ is contained in a pseudo-multifan on  $N_{\Delta-1}[r]$,  by Lemma~\ref{pseudo-fan-ele:e},
	after the operations from $F$ to get $F^*$, the resulting  
	sequence of the original pseudo-multifan is still a pseudo-multifan
	that contains $F^*$. 
	


By permuting the name of 
	colors and the label of the vertices in $N_{\Delta-1}(r)$, 
	we may assume that  $x=s_1$, and $F$ is a  typical multifan on  $N_{\Delta-1}[r]$  or is contained in a  pseudo-multifan  $S$  with   $V(S)=N_{\Delta-1}[r]$. 
Still denote the current coloring by $\varphi$.

	Let $\pbar(y)=\delta$. By doing a $(\delta,1)$-swap at $y$ if necessary, 
	we assume  $\pbar(y)=1$. 
	Let 	$\varphi(s_1y)=\tau$.  By exchangeing the role of the color 2 and $\Delta$ 
	if necessary, we may assume that 
	$\varphi(s_1y)$ is a 2-inducing color of $F$ or is a pseudo-missing color of the pseudo-multifan $S$. Let  $\varphi'=\varphi/P_y(1,\Delta,\varphi)$. 
	Now $P_{s_1}(\Delta,\tau, \varphi')=s_1y$.
		This gives a contradiction to Lemma~\ref{thm:vizing-fan2}~\eqref{thm:vizing-fan2-b}
	that $s_1$ and $\pbar'^{-1}_F(\tau)$  are  $(\tau, \Delta)$-linked if  $\tau$ is 2-inducing, 
	and gives a contradiction to Lemma~\ref{pseudo-fan-ele}~\eqref{pseudo-b} that $s_1$ and $\pbar'^{-1}_S(\tau)$
	are $(\tau,\Delta)$-linked if 
	$\tau$ is a pseudo-missing color of $S$. 
\end{proof}

\section{Proof of Theorem~\ref{Thm:nonadj_Delta_vertex}}

\begin{THM3}
	Let $G$ be an HZ-graph with maximum degree $\Delta\ge 7$ and $u, r\in V_{\Delta}$.
	If $N_{\Delta-1}(u)\ne N_{\Delta-1}(r)$ and $N_{\Delta-1}(u)\cap  N_{\Delta-1}(r)\ne \emptyset$,
	then $|N_{\Delta-1}(u)\cap  N_{\Delta-1}(r)|=\Delta-3$, i.e. $|N_{\Delta-1}(u)\setminus N_{\Delta-1}(r)|=|N_{\Delta-1}(r)\setminus  N_{\Delta-1}(u)|=1$. 
\end{THM3}	

\begin{proof}
Assume to the contrary that there exist $u,r\in N_{\Delta}$
such that $1\le |N_{\Delta-1}(r)\cap N_{\Delta-1}(u)|\le \Delta-4$. 
By Theorem~\ref{Thm:vizing-fan2b} \eqref{ele2}, there exist $s_1\in N_{\Delta-1}(r)$
and $\varphi\in \CC^\Delta(G-rs_1)$ such that $N_{\Delta-1}[r]$
is  the vertex set of a  typical 2-inducing multifan or a typical  2-inducing pseudo-multifan. 
Let $ N_{\Delta-1}(r)=\{s_1,\ldots, s_{\Delta-2}\}$ and 
 $x,y\in N_{\Delta-1}(u)\setminus N_{\Delta-1}(r)$ be two distinct vertices. 
 We consider two cases. 

\medskip 
\noindent{\bf Case 1}: $N_{\Delta-1}[r]$ is the vertex set of a typical 2-inducing pseudo-multifan. 

Let $S=S_\varphi(r, s_1:s_\alpha: s_{\Delta-2})$ be this pseudo-multifan with $F_\varphi(r,s_1:s_\alpha)$ being the typical 2-inducing multifan  contained in $S$.
We consider two subcases that each leads to a contradiction. 

{\it \noindent Subcase 1.1: There exists $s_i\in N_{\Delta-1}(u)\cap N_{\Delta-1}(r)$ for some $i\in [1,\alpha]$.}

By shifting from $s_2$
to $s_{i-1}$, uncoloring  $rs_i$, and coloring  $rs_1$ by 2, 
we obtain a new multifan $F^*=(r,rs_i,s_i, rs_{i+1}, s_{i+1}, \ldots, rs_\alpha, s_\alpha, rs_{i-1}, s_{i-1}, \ldots, rs_1,s_1)$. 
By  permuting the name of 
colors and the label of the vertices in $N_{\Delta-1}(r)$ such that $i+1$ is permuted to $2$
and $s_i$ is renamed as $s_1$, 
we  assume that  $s_1\in N_{\Delta-1}(u)\cap N_{\Delta-1}(r)$ and $F^*$ is a  typical multifan. 

Recall that $x\in N_{\Delta-1}(u)\setminus N_{\Delta-1}(r)$.
Let $\pbar(x)=\lambda$.  By Lemma~\ref{thm:vizing-fan1} \eqref{thm:vizing-fan1b}
or Lemma~\ref{pseudo-fan-ele} \eqref{pseudo-a1}, we know
that $\pbar^{-1}_S(\lambda)$ and $r$ are $(\lambda,1)$-linked. 
By doing a $(\lambda,1)$-swap at $x$ if necessary, we assume 
$\pbar(x)=1$.  By exchanging the role of  the colors 2 and $\Delta$ if necessary, we 
assume that $\varphi(s_1u)$  equals 1, or is a 2-inducing color of $F$, or is a pseudo-missing color 
of $S$. Note that by Lemma~\ref{pseudo-fan-ele}~\eqref{pseudo-b}, 
for a pseudo-missing color $\delta$
of $S$, and for any color $\tau\in \pbar(V(F))$, 
$\pbar_S^{-1}(\delta)$ and $\pbar_S^{-1}(\tau)$
are $(\delta, \tau)$-linked and $r\in P_{\pbar_S^{-1}(\delta)}(\delta,\tau)$.

Let $\varphi(ux)=\tau$. If $\tau$ is a 2-inducing color of $F$ or  a pseudo-missing color of $S$, we  do  $(1,\Delta)-(\Delta,\tau)-(\tau,1)$-swaps at $x$. If $\tau$ is a $\Delta$-inducing color of $F$, let $\delta\in \pbar(S)$ be a pseudo-missing color,   we  do  $(1,\delta)-(\delta,\tau)-(\tau,1)-(1,\Delta)-(\Delta,\delta)-(\delta,1)$-swaps at $x$. In both cases, 
we let $\varphi'$ be the resulting  coloring. Clearly, $\varphi'(ux)=\Delta$ 
and $\pbar'(x)=1$. 
Since $\varphi(s_1u) \ne \Delta, \tau$,  still  $\varphi'(s_1u)$ 
equals 1, or is a 2-inducing color of $F$,  or is a pseudo-missing color 
of $S$.   

Let $\varphi'(s_1u)=\gamma$. 
 Since $s_1$ and $r$ are $(\Delta,1)$-linked with respect to $\varphi'$, 
 $\gamma\ne 1$.  Thus,   $\gamma$  is a 2-inducing color of $F$,  or is a pseudo-missing color 
of $S$.  By Lemma~\ref{thm:vizing-fan2} \eqref{thm:vizing-fan2-a} or Lemma~\ref{pseudo-fan-ele}~\eqref{pseudo-b},  $u\in P_x(1,\gamma,\varphi')$. 
We then do a $(1,\gamma)$-swap at $x$. Now $K=(r, rs_1,s_1, s_1u,  u, ux, x)$ is a Kierstead path with respect to $rs_1$ and the current coloring. Let $\delta\in \pbar(S)$ be a pseudo-missing color.  If $\gamma$ is  a pseudo-missing color, we do nothing. Otherwise, 
we do a $(\gamma,\delta)$-swap at $x$ (by Lemma~\ref{pseudo-fan-ele}~\eqref{pseudo-b}, this swap does not change the coloring of $S$).  Denote by $\varphi''$ the current coloring. 
Since $d_G(s_1)=\Delta-1$,  in both cases, 
by Lemma~\ref{Lemma:kierstead path1}, $x$ and $s_1$ are  $(\pbar''(x),2)$-linked.
Since $\pbar''(x)$ is a pseudo-missing color of $S$, 
we achieve a contradiction to Lemma~\ref{pseudo-fan-ele}~\eqref{pseudo-b}. 

{\it \noindent Subcase 1.2: For each $i\in [1,\alpha]$,  $s_i\notin N_{\Delta-1}(u)\cap N_{\Delta-1}(r)$.}

In this case, there exists $s_{h_1}\in \{s_{\alpha+1}, \ldots, s_{\Delta-2}\}$
such that $s_{h_1}\in N_{\Delta-1}(u)\cap N_{\Delta-1}(r)$. 
Let $\varphi(rs_{h_1})=\delta_1, \pbar(s_{h_1})=\delta_2$. 
We claim $\varphi(ux)=\delta_2$. Otherwise, let $\varphi(ux)=\delta^*\ne \delta_2$. By Lemma~\ref{pseudo-fan-ele}~\eqref{pseudo-b} and \eqref{pseudo-c}, 
we  assume  $\pbar(x)=\delta_2$. 
Then a $(\delta_2, \delta^*)$-swap at $x$ gives  $\varphi(ux)=\delta_2$. 
Let $\varphi(s_{h_1}u)=\tau$. 
Again by Lemma~\ref{pseudo-fan-ele}~\eqref{pseudo-b} and \eqref{pseudo-c},
we may first assume  $\pbar(x)=\delta_1$, and then by doing a $(\delta_1,\tau)$-swap at $x$, we assume $\pbar(x)=\tau$.
However, with respect to the current coloring,  this implies  $P_{s_{h_1}}(\delta_2, \tau)=s_{h_1}ux=P_x(\delta_2, \tau)$, showing a contradiction to 
Lemma~\ref{pseudo-fan-ele}~\eqref{pseudo-a1}, \eqref{pseudo-b} or \eqref{pseudo-c}
that $s_{h_1}$ and $\pbar^{-1}_S(\tau)$ are $(\delta_2,\tau)$-linked. 
This finishes the proof for Subcase 1.2.

\medskip 
\noindent{\bf Case 2}: $N_{\Delta-1}[r]$  is the vertex set of a  typical 2-inducing multifan. 

We may assume that  $N_{\Delta-1}[r]$  is the vertex set of a  typical  multifan with respect to 
$\varphi$ and $rs_1$.  
 If $s_1\in N_{\Delta-1}(u)\cap N_{\Delta-1}(r)$, 
then we are done. Otherwise, let $s_i\in N_{\Delta-1}(u)\cap N_{\Delta-1}(r)$. 
Thus, by shifting from $s_2$
to $s_{i-1}$,  uncoloring  $rs_i$, coloring  $rs_1$ by 2,   and permuting the name of 
colors and the label of the vertices in $N_{\Delta-1}(r)$, 
we  assume that  $s_1\in N_{\Delta-1}(u)\cap N_{\Delta-1}(r)$, and the resulting multifan $$F^*=(r, rs_i, s_i, rs_{i+1}, s_{i+1}, \ldots, rs_\alpha, s_\alpha, rs_{i-1}, s_{i-1}, \ldots, rs_1, s_1)$$ is a  typical multifan.  
We  let $F_\varphi(r,s_1: s_\alpha:s_{\Delta-2})$ be such a typical multifan.

\begin{CLA}\label{x-y-missing-colors}
We may assume that $\pbar(x)=2$ and $\pbar(y)=\Delta$ 
or $\pbar(x)=\pbar(y)=\Delta$. 
\end{CLA}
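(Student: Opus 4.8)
The plan is to bring the missing colours of $x$ and $y$ onto the pair $\pbar(s_1)=\{2,\Delta\}$ by a short sequence of $F$-stable Kempe $(1,*)$-changes, performed first at $x$ and then at $y$. Since $V(F)=N_{\Delta-1}[r]$ has $\Delta-1$ vertices, with $r$ missing only $1$, $s_1$ missing $\{2,\Delta\}$, and each $s_i$ ($2\le i\le\Delta-2$) missing exactly one colour of $\{3,\dots,\Delta-1\}$, Lemma~\ref{thm:vizing-fan1} \eqref{thm:vizing-fan1a} forces $\pbar(V(F))=[1,\Delta]$; in particular $\pbar(x),\pbar(y)\in[1,\Delta]$, and each colour of $\{3,\dots,\Delta-1\}$ equals $\pbar(s_j)$ for a unique $s_j\in V(F)$.

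To process $x$: if $\pbar(x)\in\{2,\Delta\}$ there is nothing to do. Otherwise first suppose $\pbar(x)=\gamma\ne 1$, so $\gamma\in\{3,\dots,\Delta-1\}$ and $\gamma=\pbar(s_j)$ for a unique $s_j\in V(F)$. By Lemma~\ref{thm:vizing-fan1} \eqref{thm:vizing-fan1b}, $r$ and $s_j$ lie on a common $(1,\gamma)$-chain $Q$; since $x\notin V(F)$ misses $\gamma$, $x$ is not the $\gamma$-endpoint of $Q$, hence $x\notin Q$ and the $(\gamma,1)$-chain through $x$ is edge-disjoint from $Q$. Because every edge of $F$ is incident with $r\in Q$, and because the only vertices of $V(F)$ missing $1$ or $\gamma$ are $r,s_j\in Q$, swapping on the chain through $x$ alters no edge of $E(F)$ and no missing colour of a vertex of $V(F)$; thus this $(\gamma,1)$-swap is $F$-stable and afterwards $\pbar(x)=1$. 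Now (in the case $\gamma=1$, or after the step just performed) $r$ and $s_1$ lie on a common $(1,\Delta)$-chain by Lemma~\ref{thm:vizing-fan1} \eqref{thm:vizing-fan1b}, $x$ is not its $1$-endpoint (that being $r$), and the same disjointness argument shows the $(1,\Delta)$-swap at $x$ is $F$-stable; afterwards $\pbar(x)=\Delta$. Either way we have arranged $\pbar(x)\in\{2,\Delta\}$, and since no edge was deleted, the facts $x,y\in N_{\Delta-1}(u)\setminus N_{\Delta-1}(r)$, $s_1\in N_{\Delta-1}(u)\cap N_{\Delta-1}(r)$, and ``$F$ is a typical multifan on $N_{\Delta-1}[r]$'' are all preserved.

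Next I would process $y$ by the identical recipe, but choosing the final target colour to be the element $a$ of $\{2,\Delta\}$ with $a\ne\pbar(x)$ (and doing nothing if $\pbar(y)$ already lies in $\{2,\Delta\}$). Each chain used is a $(\pbar(y),1)$-chain through $y$ or a $(1,a)$-chain through $y$, whose endpoints miss a colour in $\{1,\pbar(y),a\}$; as $\pbar(x)\notin\{1,\pbar(y),a\}$, the vertex $x$ is never an endpoint of such a chain, so $\pbar(x)$ is untouched, and the $F$-stability check is exactly as before. This leaves $\pbar(x),\pbar(y)\in\{2,\Delta\}$, and after possibly exchanging the names of the colours $2$ and $\Delta$ (reordering the two sequences of $F$ accordingly, which again yields a typical multifan on $N_{\Delta-1}[r]$) and/or interchanging the labels $x$ and $y$, the configuration is either $\pbar(x)=2,\pbar(y)=\Delta$ or $\pbar(x)=\pbar(y)=\Delta$. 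The only point requiring real care is the repeated assertion that each $(1,*)$-swap is $F$-stable; this is where the multifan machinery enters, and it rests entirely on Lemma~\ref{thm:vizing-fan1} \eqref{thm:vizing-fan1b} together with the observation that every edge of $F$ meets $r$, so that $r$ sits on the ``fan'' $(1,*)$-chain while the chain being switched is edge-disjoint from $F$.
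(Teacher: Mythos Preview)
Your argument is correct and follows essentially the same approach as the paper: both use Kempe $(1,*)$-changes (justified via Lemma~\ref{thm:vizing-fan1}\eqref{thm:vizing-fan1b}) to push $\pbar(x)$ and $\pbar(y)$ onto $\{2,\Delta\}$, with only cosmetic differences in organization. The paper targets $\pbar(x)=2$ directly and then splits on $\pbar(y)$, whereas you land both in $\{2,\Delta\}$ and invoke the $2\leftrightarrow\Delta$ and $x\leftrightarrow y$ symmetries at the end; your more explicit $F$-stability verification is welcome but not a different idea.
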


\proof[Proof of Claim~\ref{x-y-missing-colors}]

Let $\pbar(x)=\delta$. By doing  $(\delta,1)-(1,2)$-swaps at $x$ if necessary, 
we can assume  $\pbar(x)=2$. 
Now, let $\pbar(y)=\lambda$. If $\lambda=2$, then doing $(2,1)-(1,\Delta)$-swaps at both $x$
and $y$ gives  $\pbar(x)=\pbar(y)=\Delta$. Thus, we assume  $\lambda\ne 2$.
Then doing $(\lambda,1)-(1,\Delta)$-swaps at $y$ gives  $\pbar(y)=\Delta$. 
\qed 

\medskip 
By Claim~\ref{x-y-missing-colors}, we naturally consider two subcases below. 

\medskip 
\emph{\noindent \setword{Subcase 2.1}{Case 1}: $\pbar(x)=2$ and $\pbar(y)=\Delta$.}

\medskip 
Note that by doing first a $(2,1)$-swap at $x$, then a $(1,\Delta)$-swap at both $x$ and $y$, and finally a 
$(1,2)$-swap at $y$, we can always identify this current case with the case that $\pbar(x)=\Delta$ and $\pbar(y)=2$.
Let $\varphi(ux)=\tau$ and $\varphi(uy)=\lambda$.  By exchanging the role of 
the two colors 2 and $\Delta$,  we consider two cases below:
\begin{enumerate}[(A)]
	\item  $\varphi(uy)=\lambda=1$. 
	\item $\varphi(uy)=\lambda$ is 2-inducing. 
\end{enumerate}
(When $\varphi(uy)$ is $\Delta$-inducing, by  assuming  $\pbar(x)=\Delta$ and $\pbar(y)=2$,   the argument will be symmetric to the argument for the above cases.)

In both cases of (A) and (B), we do  $(\Delta,\lambda)-(\lambda,1)$-swaps at $y$ and still call the resulting coloring $\varphi$. 
Let $\varphi(s_1u)=\delta$.   Denote by $S(u;s_1, x,y)$ the star subgraph of $G$
that is centered at $u$ consisting of edges $us_1, ux, uy$. 
The current coloring on $S(u; s_1, x, y)$ is as shown in $J_1$  of  Figure~\ref{J1}.  
We  modify the current coloring so that the color on 
 $S(u; s_1, x, y)$  is  as shown in $J_2$ of Figure~\ref{J1}. 

\begin{figure}[!htb]
	\begin{center}
		\begin{tikzpicture}[scale=1]
		
		{\tikzstyle{every node}=[draw ,circle,fill=white, minimum size=0.5cm,
			inner sep=0pt]
			\draw[blue,thick](0,-3) node (s1)  {$s_1$};
			\draw [blue,thick](0, -5) node (u)  {$u$};
			\draw [blue,thick](-1, -7) node (x)  {$x$};
			\draw [blue,thick](1, -7) node (y)  {$y$};
		}
		\path[draw,thick,black!60!green]
		
		(s1) edge node[name=la,pos=0.4] {\color{blue}\,\,\, $\delta$} (u)
		(u) edge node[name=la,pos=0.4] {\color{blue}$\tau$\quad\quad} (x)
		(u) edge node[name=la,pos=0.4] {\color{blue}\,\,\,\,\,\,\, $\Delta$} (y);
		
		
		\draw[dashed, red, line width=0.5mm] (s1)--++(140:1cm);
		\draw[dashed, red, line width=0.5mm] (s1)--++(40:1cm); 
				\draw[dashed, red, line width=0.5mm] (x)--++(200:1cm); 
				\draw[dashed, red, line width=0.5mm] (y)--++(340:1cm);
		\draw[blue] (-0.6, -2.9) node {$2$};  
		\draw[blue] (0.6, -2.9) node {$\Delta$};  
		\draw[blue] (-1.5, -7.4) node {$2$}; 
		\draw[blue] (1.5, -7.4) node {$1$};
		
		\draw[blue] (0, -8.5) node {$J_1$}; 
		
		\begin{scope}[shift={(6,0)}]

		{\tikzstyle{every node}=[draw ,circle,fill=white, minimum size=0.5cm,
			inner sep=0pt]
			\draw[blue,thick](0,-3) node (s1)  {$s_1$};
			\draw [blue,thick](0, -5) node (u)  {$u$};
			\draw [blue,thick](-1, -7) node (x)  {$x$};
			\draw [blue,thick](1, -7) node (y)  {$y$};
		}
		\path[draw,thick,black!60!green]
		
		(s1) edge node[name=la,pos=0.4] {\color{blue}\,\,\, $1$} (u)
		(u) edge node[name=la,pos=0.4] {\color{blue}$\tau$\quad\quad} (x)
		(u) edge node[name=la,pos=0.4] {\color{blue}\,\,\,\,\,\,\, $\Delta$} (y);
		
		
		\draw[dashed, red, line width=0.5mm] (s1)--++(140:1cm);
		\draw[dashed, red, line width=0.5mm] (s1)--++(40:1cm); 
		\draw[dashed, red, line width=0.5mm] (x)--++(200:1cm); 
		\draw[dashed, red, line width=0.5mm] (y)--++(340:1cm);
		\draw[blue] (-0.6, -2.9) node {$2$};  
		\draw[blue] (0.6, -2.9) node {$\Delta$};  
		\draw[blue] (-1.5, -7.4) node {$2$}; 
		\draw[blue] (1.5, -7.4) node {$\delta$};
			\draw[blue] (0, -8.5) node {$J_2$}; 
		\end{scope}
		\end{tikzpicture}
	\end{center}
	\caption{Coloring of $S(u; s_1, x, y)$}
	\label{J1}
\end{figure}
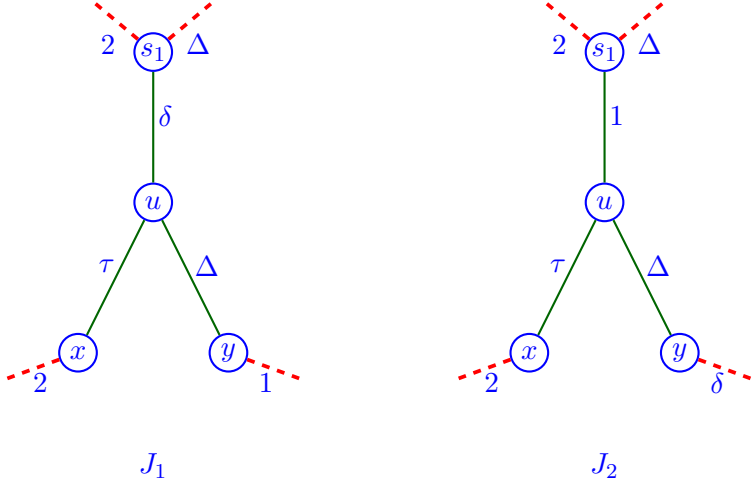


Since $s_1$ and $r$ are $(\Delta,1)$-linked by Lemma~\ref{thm:vizing-fan1}~\eqref{thm:vizing-fan1b}, 
we know  $\varphi(s_1u)=\delta\ne 1$. 
If $u\in P_y(1,\delta)$, then a $(1,\delta)$-swap at $y$ gives $J_2$. Thus, we assume 
$u\not\in P_y(1,\delta)$. This implies that  $\delta$ is $\Delta$-inducing. (For otherwise, a $(1,\delta)$-swap at $y$ implies that $s_1$ and $\pbar^{-1}_F(\delta)$ are $(\Delta, \delta)$-unlinked, showing a contradiction to Lemma~\ref{thm:vizing-fan2} \eqref{thm:vizing-fan2-a}.)
We first do a $(1,\delta)$-swap at $y$, and call the resulting coloring $\varphi$. 

If $\varphi(ux)=\tau$   is  1 or is $\Delta$-inducing, we do  $(2,\tau)-(\tau,1)$-swaps at $x$.
Again, as $P_{s_1}(\Delta,\delta)=s_1uy$, $\delta$ is still a $\Delta$-inducing color by Lemma~\ref{thm:vizing-fan2} \eqref{thm:vizing-fan2-a}. 
Since $\varphi(ux)=2$, we know $u\in P_x(1,\delta)$. 
Since otherwise, a $(1,\delta)$-swap at $x$ implies  $P_{s_1}(\delta,2)=s_1ux$,
contradicting Lemma~\ref{thm:vizing-fan2}~\eqref{thm:vizing-fan2-a} that $s_1$
and $\pbar^{-1}_F(\delta)$ are $(2, \delta)$-linked with respect to the current coloring. 
Now, let $\varphi'$ be obtained from  $\varphi$ by doing a $(1,\delta)$-swap at both $x$
and $y$. We get  $P_{s_1}(\Delta,1,\varphi')=s_1uy$, showing a contradiction to 
Lemma~\ref{thm:vizing-fan1}~\eqref{thm:vizing-fan1b} that $s_1$ and $r$
are $(\Delta,1)$-linked with respect to $\varphi'$. 
Thus   $\varphi(ux)=\tau$  and $\tau$ is 2-inducing.  Based on the coloring shown in  
$J_1$ of Figure~\ref{J1} after a $(1,\delta)$-swap at $y$, 
we do the following swaps of colors to get the coloring shown in $J_2$ of  Figure~\ref{J1}:
\begin{enumerate}[(1)]
	\item $(2,1)$-swap at $x$. (Note that $u\not\in P_x(1,\delta)$ and $u\not\in P_y(\delta,1)$ after (1). 
	Since otherwise, a $(1,\delta)$-swap at both $x$ and $y$ gives  $P_{s_1}(\Delta,1)=s_1uy$,  showing a contradiction to 
	Lemma~\ref{thm:vizing-fan1}~\eqref{thm:vizing-fan1b} that $s_1$ and $r$
	are $(\Delta,1)$-linked.)
	\item $(1,\delta)$-swap at both $x$ and $y$. (Since $\delta$ is $\Delta$-inducing and $\tau$
	is 2-inducing,  $\pbar^{-1}_F(\delta)$ and $\pbar^{-1}_F(\tau)$
	are $(\delta,\tau)$-linked  by Lemma~\ref{thm:vizing-fan2} \eqref{thm:vizing-fan2-a}.)
	\item $(\tau,\delta)$-swap at both $\pbar^{-1}_F(\delta)$ and $\pbar^{-1}_F(\tau)$. (Now, $\delta$
	is a 2-inducing color. As a consequence, $u\in P_y(1,\delta)$.  Since otherwise, a $(1,\delta)$-swap at $y$ implies  $P_{s_1}(\Delta,\delta)=s_1uy$,
	contradicting Lemma~\ref{thm:vizing-fan2}~\eqref{thm:vizing-fan2-a} that $s_1$
	and $\pbar^{-1}_F(\delta)$ are $(\Delta, \delta)$-linked. )
	\item $(1,\delta)$-swap at $x$, $y$ (and $u$).
	\item $(1,2)$-swap at $x$. 
\end{enumerate}

Now $K=(r,rs_1,s_1,s_u, u,uy,y)$ is a Kierstead path with respect to $rs_1$ 
and the current coloring. 
Since $d_G(s_1)=\Delta-1$, by Lemma~\ref{Lemma:kierstead path1},  $y$ and $s_1$ are $(\delta,2)$-linked. 
This implies that 
$\pbar(y)=\delta$ is a 2-inducing color of $F$, as otherwise, 
$s_1$ and $\pbar_F^{-1}(\delta)$ should be $(\delta,2)$-linked. 
If $\varphi(ux)=\tau$ is $\Delta$-inducing, then as $\pbar^{-1}_F(\delta)$ and $\pbar^{-1}_F(\tau)$
are $(\delta,\tau)$-linked by Lemma~\ref{thm:vizing-fan2} \eqref{thm:vizing-fan2-a}, 
we can do a $(\delta,\tau)$-swap at $y$. 
We will then reach a contradiction as $\pbar^{-1}_F(\tau)$ and $s_1$
are $(\tau,2)$-linked. 
Therefore, $\varphi(ux)=\tau$ is $2$-inducing. We first do $(2,1)-(1,\Delta)$
swaps at $x$.  At this step, $\varphi(s_1u)=1$, $\varphi(uy)=\Delta$, 
 $\pbar(y)=\delta$, and we still have the fact that 
$y$ and $s_1$ are $(\delta,2)$-linked by Lemma~\ref{Lemma:kierstead path1}.  
Call this fact $(*)$.

Now, we do a $(\Delta,\tau)$-swap at $x$. 
Note that  $s_1$ and $r$
are $(\Delta,1)$-linked by Lemma~\ref{thm:vizing-fan1}~\eqref{thm:vizing-fan1b}. 
Thus, $u\in P_x(1,\tau)$.  We do a $(\tau,1)$-swap at $x$ (and $(u)$). 
The coloring of $S(u;s_1,x,y)$
is now shown in Figure~\ref{J3} $J_3$.
Since $2,\delta \not\in \{\Delta,\tau,1\}$, $y$ and $s_1$ are still $(\delta,2)$-linked with respect to 
the current coloring by fact $(*)$. We consider two cases to finish the remaining part of the proof. 

\begin{figure}[!htb]
	\begin{center}
		\begin{tikzpicture}[scale=1]
		
		{\tikzstyle{every node}=[draw ,circle,fill=white, minimum size=0.5cm,
			inner sep=0pt]
			\draw[blue,thick](0,-3) node (s1)  {$s_1$};
			\draw [blue,thick](0, -5) node (u)  {$u$};
			\draw [blue,thick](-1, -7) node (x)  {$x$};
			\draw [blue,thick](1, -7) node (y)  {$y$};
		}
		\path[draw,thick,black!60!green]
		
		(s1) edge node[name=la,pos=0.4] {\color{blue}\,\,\, $\tau$} (u)
		(u) edge node[name=la,pos=0.4] {\color{blue}$\Delta$\quad\quad\,} (x)
		(u) edge node[name=la,pos=0.4] {\color{blue}\,\,\,\,\,\,\, $1$} (y);
		
		
		\draw[dashed, red, line width=0.5mm] (s1)--++(140:1cm);
		\draw[dashed, red, line width=0.5mm] (s1)--++(40:1cm); 
		\draw[dashed, red, line width=0.5mm] (x)--++(200:1cm); 
		\draw[dashed, red, line width=0.5mm] (y)--++(340:1cm);
		\draw[blue] (-0.6, -2.9) node {$2$};  
		\draw[blue] (0.6, -2.9) node {$\Delta$};  
		\draw[blue] (-1.5, -7.4) node {$1$}; 
		\draw[blue] (1.5, -7.4) node {$\delta$};
		
		\draw[blue] (0, -8.5) node {$J_3$}; 
		
		\begin{scope}[shift={(6,0)}]

		{\tikzstyle{every node}=[draw ,circle,fill=white, minimum size=0.5cm,
			inner sep=0pt]
			\draw[blue,thick](0,-3) node (s1)  {$s_1$};
			\draw [blue,thick](0, -5) node (u)  {$u$};
			\draw [blue,thick](-1, -7) node (x)  {$x$};
			\draw [blue,thick](1, -7) node (y)  {$y$};
		}
		\path[draw,thick,black!60!green]
		
		(s1) edge node[name=la,pos=0.4] {\color{blue}\,\,\, $\delta$} (u)
		(u) edge node[name=la,pos=0.4] {\color{blue}$\Delta$\quad\quad\,} (x)
		(u) edge node[name=la,pos=0.4] {\color{blue}\,\,\,\,\,\,\, $1$} (y);
		
		
		\draw[dashed, red, line width=0.5mm] (s1)--++(140:1cm);
		\draw[dashed, red, line width=0.5mm] (s1)--++(40:1cm); 
		\draw[dashed, red, line width=0.5mm] (x)--++(200:1cm); 
		\draw[dashed, red, line width=0.5mm] (y)--++(340:1cm);
		\draw[blue] (-0.6, -2.9) node {$2$};  
		\draw[blue] (0.6, -2.9) node {$\Delta$};  
		\draw[blue] (-1.5, -7.4) node {$1$}; 
		\draw[blue] (1.5, -7.4) node {$\tau$};
		\draw[blue] (0, -8.5) node {$J_4$}; 
		\end{scope}
		\end{tikzpicture}
	\end{center}
	\caption{Coloring of $S(u; s_1, x, y)$}
	\label{J3}
\end{figure}
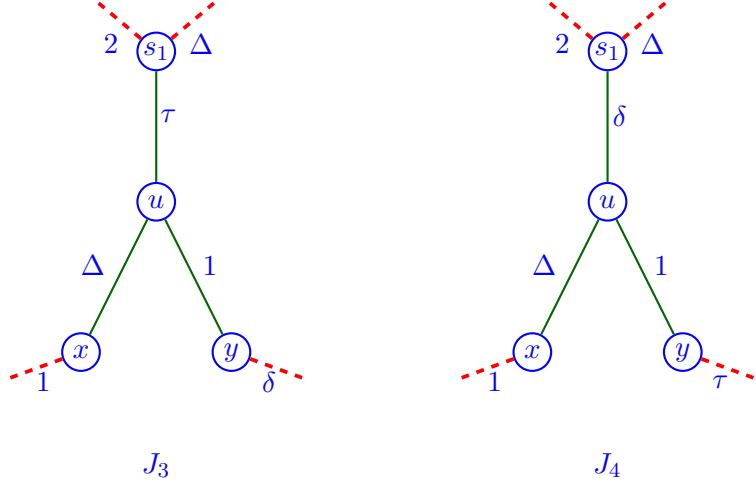

\medskip 
\emph{\noindent \setword{Subcase 2.1.1}{Case 1.1} : $\tau\prec \delta$.}
\medskip 

Let $s_{\delta-1}\in N_{\Delta-1}(r)$ such that $\pbar(s_{\delta-1})=\delta$.
Since $y$ and $s_1$ are still $(\delta,2)$-linked with respect to 
the current coloring and $\delta$ is 2-inducing, 
by Lemma~\ref{thm:vizing-fan2} \eqref{thm:vizing-fan2-b}, 
$r\in P_{s_{\delta-1}}(\delta,2)$.  We reach a contradiction  through the following Kempe changes: 
\begin{enumerate}[(1)]
	\item $(\delta,2)$-swap at $s_{\delta-1}$ (and $r$). 
	\item $(2,1)$-swap at $x$ and $s_{\delta-1}$. 
	\item Shift from $s_\tau$ to $s_{\delta-1}$, where $\varphi(rs_\tau)=\tau$. 
\end{enumerate}
Denote the new coloring by $\varphi'$. 
Now, $\pbar'(r)=\tau$, $\varphi'(s_1u)=\tau$, $\varphi'(ux)=\Delta$, and $\pbar'(x)=2$,
and $K=(r,rs_1,s_1,s_1u, u, ux,x)$ is a Kierstead path with respect to $rs_1$
and $\varphi'$. Since $d_G(s_1)=\Delta-1$, 
we get  a contradiction  to Lemma~\ref{Lemma:kierstead path1} that $\{r,s_1,u,x\}$ is $\varphi'$-elementary.

\medskip 
\emph{\noindent Subcase 2.1.2: $\delta\prec \tau$.}
\medskip 

We only show that by performing Kempe changes, we can find a coloring  such that the color on $S(u;s_1,x,y)$ with respect to the coloring 
is as given in Figure~\ref{J3} $J_4$.  The remaining part of the proof will be symmetric  to 
~\ref{Case 1.1}. 
Based on the coloring in Figure~\ref{J3} $J_3$,  do a $(1,\delta)$-swap at both $x$
and $y$. 

If  $u\in P_y(1,\tau)$,  do a $(1,\tau)$-swap at $y$ (and $u$), and still denote the resulting coloring by $\varphi$. 
Note that $u\in P_x(\delta,1)$ (as otherwise, a $(\delta,1)$-swap at $x$ implies that $P_{s_1}(\Delta,1)=s_1ux$,  showing a contradiction to Lemma~\ref{thm:vizing-fan1}~\eqref{thm:vizing-fan1b} that $s_1$
and $r$ are $(\Delta,1)$-linked). Then a $(\delta,1)$-swap at $x$ (and $u$) gives Figure~\ref{J3} $J_4$. 

 Thus $u\not\in P_y(1,\tau)$.
Under this assumption, it must be the case that  
  $u\in P_r(1,\tau)$ (otherwise, performing a $(\delta,\Delta)$-swap at $x$ and a $(\tau,1)$-swap at $u$ gives  $P_{s_1}(1,\Delta)=s_1uy$, showing a contradiction to Lemma~\ref{thm:vizing-fan1}~\eqref{thm:vizing-fan1b} that $s_1$
and $r$ are $(\Delta,1)$-linked).  
Since $u\in P_r(1,\tau)$, 
doing a $(1,\tau)$-swap at $y$ and $(\Delta,\delta)$-swap at $x$  gives  $P_{s_1}(\Delta,\tau)=s_1uy$, 
implying that $\tau$ is a $\Delta$-inducing color of the current multifan by Lemma~\ref{thm:vizing-fan2} \eqref{thm:vizing-fan2-a}. 
Note that  $\delta$ is still a 2-inducing color of the current multifan. 
Thus, $\pbar^{-1}_F(\delta)$ and $\pbar^{-1}_F(\tau)$
are $(\delta,\tau)$-linked by Lemma~\ref{thm:vizing-fan2} \eqref{thm:vizing-fan2-a}. 

Also, since $\tau$ is $\Delta$-inducing and $\delta$ is 2-inducing, 
we know  $u\not\in P_y(\tau,\delta)$. 
Since otherwise, a $(\tau,\delta)$-swap at $y$ gives $P_{s_1}(\Delta,\delta)=s_1uy$,
showing a contradiction to Lemma~\ref{thm:vizing-fan2} \eqref{thm:vizing-fan2-a}
that $s_1$ and $\pbar^{-1}_F(\delta)$ are $(\Delta,\delta)$-linked. 
We now reach a contradiction by performing the following operations: 
\begin{enumerate}[(1)]
	\item $(\tau, \delta)$-swap at $y$ (now $P_y(\delta, \Delta)=yux$). (Note that $u\in P_{\pbar^{-1}_F(\delta)}(\delta,\tau)=P_{\pbar^{-1}_F(\tau)}(\delta,\tau)$. For otherwise, a $(\tau,\delta)$-swap at $u$ gives $P_{s_1}(\Delta,\delta)=s_1uy$, 
	showing a contradiction to the fact that $\delta$ is still a 2-inducing color.)
	\item $(\Delta, \delta)$-swap along $xuy$.  
\end{enumerate}
After Step (2) above, we have that  $\pbar^{-1}_F(\delta)$ and $\pbar^{-1}_F(\tau)$
are $(\delta,\tau)$-unlinked. However, $\tau$ is still a $\Delta$-inducing color and $\delta$
is 2-inducing, showing a contradiction to Lemma~\ref{thm:vizing-fan2} \eqref{thm:vizing-fan2-a}.

\medskip 

\emph{\noindent Subcase 2.2: $\pbar(x)=\Delta$ and $\pbar(y)=\Delta$.}

\medskip

\begin{CLA}\label{only2-out-neighbors}
	We may assume that $|N_{\Delta-1}(u)\cap N_{\Delta-1}(r)|=\Delta-4$. 
\end{CLA}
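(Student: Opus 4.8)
\textbf{Proof proposal for Claim~\ref{only2-out-neighbors}.}

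The goal is to reduce the analysis to the extremal situation $|N_{\Delta-1}(u)\cap N_{\Delta-1}(r)|=\Delta-4$, which is the largest value consistent with the standing assumption $1\le|N_{\Delta-1}(u)\cap N_{\Delta-1}(r)|\le\Delta-4$ in the contrary hypothesis. The plan is to show that if $|N_{\Delta-1}(u)\cap N_{\Delta-1}(r)|\le\Delta-5$, then $u$ has at least three distinct neighbors outside $N_{\Delta-1}[r]$ lying in $V_{\Delta-1}$, and then to run the same type of counting/linkage argument that already appeared in the proof of Theorem~\ref{Thm:vizing-fan2b} (Claims~\ref{u-2-out-neighbor}--\ref{u-3-neighbor}) to derive a contradiction from three outside-neighbors. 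Concretely, pick three distinct vertices $x,y,z\in N_{\Delta-1}(u)\setminus N_{\Delta-1}(r)$; this is possible precisely because $d_G(u)=\Delta$, $u\sim r$ is impossible (so $r\notin N_{\Delta-1}(u)$) — wait, here $u,r\in V_\Delta$ may or may not be adjacent, but in either case $|N_{\Delta-1}(u)\setminus N_{\Delta-1}(r)|=d_{\Delta-1}(u)-|N_{\Delta-1}(u)\cap N_{\Delta-1}(r)|$, and using Lemma~\ref{biregular}(c) together with the structure of the core one controls $d_{\Delta-1}(u)$. So first I would pin down $|N_{\Delta-1}(u)|$ in terms of $\Delta$ using Lemma~\ref{biregular} and the fact that $G_\Delta$ is $2$-regular (each $\Delta$-vertex has exactly two $\Delta$-neighbors, hence $\Delta-2$ neighbors of degree $\Delta-1$), so $|N_{\Delta-1}(u)|=\Delta-2$ and thus $|N_{\Delta-1}(u)\setminus N_{\Delta-1}(r)|=(\Delta-2)-|N_{\Delta-1}(u)\cap N_{\Delta-1}(r)|\ge 3$ whenever the intersection has size $\le\Delta-5$.

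With $x,y,z$ in hand, the next step is to normalize the coloring just as in Subcase~\ref{Case 1} and Claim~\ref{x-y-missing-colors}: by doing $(\cdot,1)$- and $(1,2)$- or $(1,\Delta)$-swaps at $x,y,z$ (using that $\{r,s_1\}$, and more generally $V(F)$, is $\varphi$-elementary, and that the chains through $r$ and $s_1$ are controlled by Lemma~\ref{thm:vizing-fan1}\eqref{thm:vizing-fan1b} and Lemma~\ref{thm:vizing-fan2}), I would arrange the missing colors at $x,y,z$ among $\{2,\Delta\}$, or force one of them to miss $1$. Then I would apply Lemma~\ref{Lemma:extended multifan} and Lemma~\ref{Lemma:pseudo-fan0}\eqref{fan-and-x} to the lollipops $L=(F,ru,u,ux,x)$, $(F,ru,u,uy,y)$, $(F,ru,u,uz,z)$ in turn — exactly the mechanism used after Claim~\ref{u-3-neighbor} in the proof of Theorem~\ref{Thm:vizing-fan2b} — to produce, for each of $y$ and $z$, a rotation-type sequence $s_{h_1},\dots,s_{h_t}$ (resp.\ $s_{p_1},\dots,s_{p_k}$) of distinct vertices of $N_{\Delta-1}(r)\setminus V(F)$ with the linkage properties (a)--(c) listed there. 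The disjointness of the two sequences follows verbatim from the argument already given ($\lambda_2\notin\{\delta_1,\dots,\delta_t\}$, etc.), and then the sequence of Kempe $(1,*)$-changes around $r$ from $s_{p_k}$ down to $s_{p_1}$ unlinks $z$ from $r$ in the $(\lambda,1)$-chain while keeping the coloring $L$-stable, contradicting the property that $z$ and $r$ must stay $(\lambda,1)$-linked (the analogue of Claim~\ref{u-2-out-neighbor} in the present pseudo-multifan / multifan context, which I would restate and reprove here, or simply cite once it is available).

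I expect the main obstacle to be the bookkeeping when $N_{\Delta-1}[r]$ is a pseudo-multifan rather than a multifan: in that case the colors missed at $x,y,z$ may be pseudo-missing colors of $S$, and the replacement for Claim~\ref{u-sharecolor} and Claim~\ref{u-2-out-neighbor} must be phrased using Lemma~\ref{pseudo-fan-ele}\eqref{pseudo-a1}, \eqref{pseudo-b}, \eqref{pseudo-c} instead of Lemma~\ref{thm:vizing-fan1}\eqref{thm:vizing-fan1b} and Lemma~\ref{thm:vizing-fan2}. The delicate point is that after the $(1,*)$-changes around $r$, one must verify that the pseudo-multifan structure (in particular property (P2)) is preserved, so that the linkage conclusions of Lemma~\ref{pseudo-fan-ele} still apply; this is where the maximality of $F$ (property (P1)) and Lemma~\ref{pseudo-fan-ele:e} are used. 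Once the three-outside-neighbor situation is ruled out in both cases, the claim follows: the contrary hypothesis $|N_{\Delta-1}(u)\cap N_{\Delta-1}(r)|\le\Delta-4$ together with the just-proved impossibility of $\le\Delta-5$ leaves exactly $|N_{\Delta-1}(u)\cap N_{\Delta-1}(r)|=\Delta-4$, as asserted. I would then record the two remaining outside-neighbors $x,y$ (so $N_{\Delta-1}(u)\setminus N_{\Delta-1}(r)=\{x,y\}$) for use in the rest of Subcase~\ref{Case 1}'s argument.
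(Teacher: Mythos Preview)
Your proposal has a fundamental gap: the lollipop machinery from Section~\ref{thm2.1proof} that you want to invoke (Lemmas~\ref{Lemma:extended multifan} and~\ref{Lemma:pseudo-fan0}, Claims~\ref{u-2-out-neighbor}--\ref{u-3-neighbor}) is built around a lollipop $L=(F,ru,u,ux,x)$, which by definition requires $ru\in E(G)$ with $u\in N_\Delta(r)$. In the setting of Theorem~\ref{Thm:nonadj_Delta_vertex}, however, $u$ and $r$ are \emph{non-adjacent} $\Delta$-vertices (indeed $N_{\Delta-1}(u)\ne N_{\Delta-1}(r)$ forces $u$ and $r$ onto different cycles of the $2$-regular core $G_\Delta$). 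So there is no edge $ru$, no lollipop $(F,ru,u,ux,x)$, and the rotation-sequence arguments you outline cannot be launched at all. Your worry about the pseudo-multifan case is also misplaced: Claim~\ref{only2-out-neighbors} sits inside Case~2, where $N_{\Delta-1}[r]$ is already assumed to be the vertex set of a genuine multifan; the pseudo-multifan situation was disposed of in Case~1.

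The paper's argument is far shorter and uses none of that machinery. Since the claim lives inside Subcase~2.2 (where $\pbar(x)=\pbar(y)=\Delta$) and Subcase~2.1 (where $\pbar(x)=2$, $\pbar(y)=\Delta$) has already been handled, one only needs a \emph{reduction} to Subcase~2.1. First one may assume $x$ and $y$ are $(\Delta,1)$-linked, else a $(\Delta,1)$--$(1,2)$-swap at $x$ already lands in Subcase~2.1. If $|N_{\Delta-1}(u)\cap N_{\Delta-1}(r)|\le\Delta-5$, pick a third vertex $z\in N_{\Delta-1}(u)\setminus N_{\Delta-1}(r)$; after $(\pbar(z),1)$--$(1,2)$-swaps at $z$ (which keep $\pbar(x)=\pbar(y)=\Delta$ because $x,y$ sit on the $(\Delta,1)$-chain through $r$), one has $\pbar(z)=2$ while $\pbar(y)=\Delta$. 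Replacing $x$ by $z$ puts us squarely in Subcase~2.1, a contradiction. That is the entire proof.
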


	\proof[Proof of Claim~\ref{only2-out-neighbors}] 
	We may assume that $x$ and $y$ are $(\Delta,1)$-linked. For otherwise, performing $(\Delta,1)-(1,2)$-swaps at $x$ reduces the problem to \ref{Case 1}. 
	Assume to the contrary that $|N_{\Delta-1}(u)\cap N_{\Delta-1}(r)|\le \Delta-5$.
Then there exists $z\in N_{\Delta-1}(u)\setminus  N_{\Delta-1}(r)$ such that $z\ne x,y$. 
Let $\pbar(z)=\lambda$. If $\lambda=2$, 
by exchanging  the role of $x$ and $z$,  we reduce the problem to \ref{Case 1}. 
Thus, $\lambda \ne 2$.  Doing  $(\lambda,1)-(1,2)$-swaps at $z$. 
By exchanging  the role of $x$ and $z$,  we reduce the problem to \ref{Case 1}. 
\qed 

\begin{CLA}\label{2-sequence-fan}
	We  assume that $F(r,s_1:s_\alpha:s_{\Delta-2})$ is a typical multifan with  two sequences.  
	That is, $F$ contains both $2$-inducing sequence and $\Delta$-inducing sequence. 
\end{CLA}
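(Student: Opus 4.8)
The plan is to derive the claim from the relabelling operation displayed immediately before its statement together with Claim~\ref{only2-out-neighbors}. If $F$ already has two sequences there is nothing to prove, so suppose it has only one. A one-sequence typical multifan on $N_{\Delta-1}[r]$ is either $2$-inducing or $\Delta$-inducing, and after swapping the names of the colours $2$ and $\Delta$ in the latter case I may assume $F=F_\varphi(r,s_1:s_{\Delta-2})$ with $\alpha=\Delta-2$. Then $\pbar(r)=\{1\}$, $\pbar(s_1)=\{2,\Delta\}$, $\varphi(rs_j)=j$ and $\pbar(s_j)=j+1$ for every $j\in[2,\Delta-2]$, so $\pbar(V(F))=[1,\Delta]$ and in particular the two edges joining $r$ to its $\Delta$-neighbours receive the colours $\Delta-1$ and $\Delta$.

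By Claim~\ref{only2-out-neighbors} I may assume $|N_{\Delta-1}(u)\cap N_{\Delta-1}(r)|=\Delta-4$, which is at least $3$ since $\Delta\ge 7$. Hence at least three of the indices in $[1,\Delta-2]$ are indices $i$ with $s_i\in N_{\Delta-1}(u)\cap N_{\Delta-1}(r)$, and as only the two indices $1$ and $\Delta-2$ lie outside $[2,\Delta-3]$, I can fix a common neighbour $s_i$ with $2\le i\le \Delta-3$. Applying to this $s_i$ the operation that opens Case~2 — shift from $s_2$ to $s_{i-1}$, uncolour $rs_i$, recolour $rs_1$ by $2$ — gives a colouring in $\CC^\Delta(G-rs_i)$ under which
\[
F^*=(r,rs_i,s_i,rs_{i+1},s_{i+1},\dots,rs_{\Delta-2},s_{\Delta-2},rs_{i-1},s_{i-1},\dots,rs_1,s_1)
\]
is a multifan on $N_{\Delta-1}[r]$ playing the role of $F$, with new centre $r$ and new first vertex $s_i$. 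Tracking the missing colours shows $\pbar(s_i)=\{i,i+1\}$, that $s_{i+1},\dots,s_{\Delta-2}$ is an $(i+1)$-inducing sequence of $F^*$ (nonempty because $i\le\Delta-3$), and that $s_{i-1},\dots,s_1$ is an $i$-inducing sequence of $F^*$ (nonempty because $i\ge 2$); thus $F^*$ has two sequences. Moreover its new first vertex $s_i$ still lies in $N_{\Delta-1}(u)\cap N_{\Delta-1}(r)$, so after permuting colour names and vertex labels to make $F^*$ typical and renaming the data accordingly we obtain the asserted typical two-sequence multifan.

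The one point needing care — and where the argument is least automatic — is that relabelling $F$ undoes the normalization $\pbar(x)=\pbar(y)=\Delta$ fixed at the start of Subcase~2.2. I would restore it by repeating the Kempe changes of Claim~\ref{x-y-missing-colors}: these act at $x$ and $y$ on colours missing at those vertices, and since $x,y\notin N_{\Delta-1}(r)=V(F^*)\setminus\{r\}$ and have degree $\Delta-1$ they cannot lie on the same $(\gamma,1)$-chain as the elementary set $V(F^*)$ for any $\gamma\in\pbar(V(F^*))$, so these swaps are $F^*$-stable and leave $F^*$ intact. Since Claim~\ref{only2-out-neighbors} is a statement about $|N_{\Delta-1}(u)\cap N_{\Delta-1}(r)|$, it too survives the relabelling, so every hypothesis in force when the claim is invoked remains available afterwards.
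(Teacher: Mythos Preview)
Your proof is correct and essentially identical to the paper's: both observe that $|N_{\Delta-1}(u)\cap N_{\Delta-1}(r)|\ge 3$ forces a common neighbour $s_i$ with $2\le i\le\Delta-3$ (the paper phrases this as $s_i\ne s_1$ and $\pbar(s_i)$ not the last $2$-inducing colour), then shift, uncolour $rs_i$, recolour $rs_1$, and relabel to obtain a two-sequence typical multifan on $N_{\Delta-1}[r]$. Your final paragraph on restoring the normalisation $\pbar(x)=\pbar(y)=\Delta$ is extra care the paper glosses over; the reasoning is sound, though the key point is simply that when $\pbar(x)\in\pbar(V(F^*))$ the chain $P_x(\pbar(x),1)$ avoids $r$ by Lemma~\ref{thm:vizing-fan1}~\eqref{thm:vizing-fan1b}, so the swaps of Claim~\ref{x-y-missing-colors} are $F^*$-stable.
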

\proof[Proof of Claim~\ref{2-sequence-fan}]
Recall that $F_\varphi(r,s_1: s_\alpha:s_{\Delta-2})$ is a typical multifan. 
As $\Delta\ge 7$, $|N_{\Delta-1}(u)\cap N_{\Delta-1}(r)|=\Delta-4\ge 3$ 
by Claim~\ref{only2-out-neighbors}. If $F$
is a typical 2-inducing  multifan, then let $s_i\in N_{\Delta-1}(u)\cap N_{\Delta-1}(r)$
such that $s_i\ne s_1$ and that  $\pbar(s_i)$ is not the last 2-inducing color of $F$. 
Then we shift from $s_2$ to $s_{i-1}$, uncolor $rs_i$, and color $rs_1$ by 2. 
Now $F^*=(r, rs_i, s_i, rs_{i+1}, s_{i+1}, \ldots, rs_{\Delta-2}, s_{\Delta-2}, rs_{i-1}, s_{i-1}, \ldots, rs_1, s_1)$
is a multifan with  two sequences.  By permuting the name of colors 
and the label of vertices in $\{s_1,\ldots, s_{\Delta-2}\}$, we can assume that 
$F=F^*$ is a typical multifan with  two sequences.  
\qed 

Let $\varphi(s_1u)=\delta$, $\varphi(ux)=\tau$,  and $\varphi(uy)=\lambda$.  By exchanging the role of 
the two colors 2 and $\Delta$,  we have two possibilities for $\varphi(uy)$:
\begin{enumerate}[(A)]
	\item  $\varphi(uy)=\lambda=1$. 
	\item $\varphi(uy)=\lambda$ is 2-inducing. 
\end{enumerate}
(When $\varphi(uy)$ is $\Delta$-inducing, we will first assume that $\pbar(x)=2$ and $\pbar(y)=2$ (by performing $(\Delta,1)-(1,2)$-swaps at both $x$ and $y$). Then all the argument will be symmetric to the argument for the above cases.) 
We now consider two cases to finish the proof. 

\medskip 
\emph{\noindent \setword{Subcase 2.2.1}{Case 2.1}: $\varphi(uy)=\lambda$ is not the last 2-inducing color of $F$.}
\medskip 

We first perform $(\Delta,\lambda)-(\lambda,1)$-swaps at both $x$ and $y$. 
Since $\lambda$ is not the last 2-inducing color, the resulting multifan still has  two sequences. 
The current coloring of $S(u;s_1,x,y)$ is given in Figure~\ref{L1} $L_1$. 
Since $s_1$ and $r$
are $(\Delta,1)$-linked by Lemma~\ref{thm:vizing-fan1}~\eqref{thm:vizing-fan1b}, $\delta\ne 1$. 
We next show $u\in P_y(1,\delta)$ that will lead to the coloring in Figure~\ref{L1} $L_2$. 

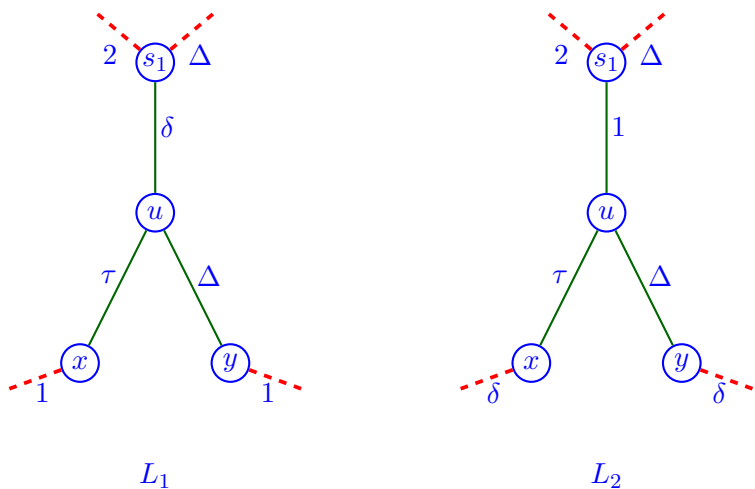
\begin{figure}[!htb]
	\begin{center}
		\begin{tikzpicture}[scale=1]
		
		{\tikzstyle{every node}=[draw ,circle,fill=white, minimum size=0.5cm,
			inner sep=0pt]
			\draw[blue,thick](0,-3) node (s1)  {$s_1$};
			\draw [blue,thick](0, -5) node (u)  {$u$};
			\draw [blue,thick](-1, -7) node (x)  {$x$};
			\draw [blue,thick](1, -7) node (y)  {$y$};
		}
		\path[draw,thick,black!60!green]
		
		(s1) edge node[name=la,pos=0.4] {\color{blue}\,\,\, $\delta$} (u)
		(u) edge node[name=la,pos=0.4] {\color{blue}$\tau$\quad\quad} (x)
		(u) edge node[name=la,pos=0.4] {\color{blue}\,\,\,\,\,\,\, $\Delta$} (y);
		
		
		\draw[dashed, red, line width=0.5mm] (s1)--++(140:1cm);
		\draw[dashed, red, line width=0.5mm] (s1)--++(40:1cm); 
		\draw[dashed, red, line width=0.5mm] (x)--++(200:1cm); 
		\draw[dashed, red, line width=0.5mm] (y)--++(340:1cm);
		\draw[blue] (-0.6, -2.9) node {$2$};  
		\draw[blue] (0.6, -2.9) node {$\Delta$};  
		\draw[blue] (-1.5, -7.4) node {$1$}; 
		\draw[blue] (1.5, -7.4) node {$1$};
		
		\draw[blue] (0, -8.5) node {$L_1$}; 
		
		\begin{scope}[shift={(6,0)}]

		{\tikzstyle{every node}=[draw ,circle,fill=white, minimum size=0.5cm,
			inner sep=0pt]
			\draw[blue,thick](0,-3) node (s1)  {$s_1$};
			\draw [blue,thick](0, -5) node (u)  {$u$};
			\draw [blue,thick](-1, -7) node (x)  {$x$};
			\draw [blue,thick](1, -7) node (y)  {$y$};
		}
		\path[draw,thick,black!60!green]
		
		(s1) edge node[name=la,pos=0.4] {\color{blue}\,\,\, $1$} (u)
		(u) edge node[name=la,pos=0.4] {\color{blue}$\tau$\quad\quad} (x)
		(u) edge node[name=la,pos=0.4] {\color{blue}\,\,\,\,\,\,\, $\Delta$} (y);
		
		
		\draw[dashed, red, line width=0.5mm] (s1)--++(140:1cm);
		\draw[dashed, red, line width=0.5mm] (s1)--++(40:1cm); 
		\draw[dashed, red, line width=0.5mm] (x)--++(200:1cm); 
		\draw[dashed, red, line width=0.5mm] (y)--++(340:1cm);
		\draw[blue] (-0.6, -2.9) node {$2$};  
		\draw[blue] (0.6, -2.9) node {$\Delta$};  
		\draw[blue] (-1.5, -7.4) node {$\delta$}; 
		\draw[blue] (1.5, -7.4) node {$\delta$};
		\draw[blue] (0, -8.5) node {$L_2$}; 
		\end{scope}
		\end{tikzpicture}
	\end{center}
	\caption{Coloring of $S(u; s_1, x, y)$}
	\label{L1}
\end{figure}

\begin{CLA}\label{u-in-p-y}
	$u\in P_y(1,\delta)$.
\end{CLA}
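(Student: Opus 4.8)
The plan is to argue by contradiction: assume $u\notin P_y(1,\delta)$. A few preliminary remarks set things up. We already know $\delta=\varphi(s_1u)\neq 1$; since $s_1$ misses exactly $\{2,\Delta\}$, also $\delta\notin\{2,\Delta\}$. Because $s_1u$ is the unique edge at $s_1$ coloured $\delta$ and $s_1$ misses neither $1$ nor $\delta$, the vertex $s_1$ is interior to its $(1,\delta)$-chain and that chain contains the edge $s_1u$; thus $P_{s_1}(1,\delta)=P_u(1,\delta)$, so the assumption $u\notin P_y(1,\delta)$ is equivalent to $s_1\notin P_y(1,\delta)$, and in particular the edge $s_1u$ (being on $P_{s_1}(1,\delta)\neq P_y(1,\delta)$) is not on $P_y(1,\delta)$; the edge $uy$, coloured $\Delta\neq 1,\delta$, is not on it either. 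Set $\varphi':=\varphi/P_y(1,\delta)$.

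The first task is to check that $\varphi'$ is $F$-stable. The chain $P_y(1,\delta)$ cannot contain $r$: by Lemma~\ref{thm:vizing-fan1}\eqref{thm:vizing-fan1b}, $r$ and $\pbar^{-1}_F(\delta)$ are $(1,\delta)$-linked, so their common $(1,\delta)$-chain is a path with endpoints $r$ (missing $1$) and $\pbar^{-1}_F(\delta)$ (missing $\delta$), and $y$—which misses $1$ and hence has no $1$-edge—could neither be an endpoint of that path nor an interior vertex of it. Since $r$ is the only vertex of $V(F)$ missing $1$, and the only coloured edge of $F$ that could lie on a $(1,\delta)$-chain is the one coloured $\delta$ (which would meet $r$), $P_y(1,\delta)$ contains no edge of $F$; and the only vertices of $V(F)$ whose missing sets could change are $r$ and $\pbar^{-1}_F(\delta)$, both of which are off the chain, so $\varphi'$ is $F$-stable. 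Consequently $F$ is still a typical two-sequence multifan under $\varphi'$ (Claim~\ref{2-sequence-fan}), $\pbar'(s_1)=\{2,\Delta\}$, $\varphi'(s_1u)=\delta$, $\varphi'(uy)=\Delta$, and $\pbar'(y)=\delta$; hence $P_{s_1}(\Delta,\delta,\varphi')=s_1uy$, a path with both its endpoints and only $u$ as interior vertex, none of which is $\pbar'^{-1}_F(\delta)\in V(F)$. So $s_1$ and $\pbar'^{-1}_F(\delta)$ are $(\Delta,\delta)$-unlinked under $\varphi'$. If $\delta$ is a $2$-inducing colour of $F$, then $\Delta\in\pbar'(s_1)$ and $\delta$ are induced by the two distinct root colours $\Delta$ and $2$, so Lemma~\ref{thm:vizing-fan2}\eqref{thm:vizing-fan2-a} forces $s_1$ and $\pbar'^{-1}_F(\delta)$ to be $(\Delta,\delta)$-linked — a contradiction.

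The remaining subcase, where $\delta$ is a $\Delta$-inducing colour, is the main obstacle: now $\Delta\prec\delta$ within the $\Delta$-sequence, so Lemma~\ref{thm:vizing-fan2}\eqref{thm:vizing-fan2-b} only yields $r\in P_{\pbar'^{-1}_F(\delta)}(\delta,\Delta,\varphi')$, which is not yet a contradiction. I would dispose of it by reducing to the previous subcase: since $F$ has two sequences, apply the shifting-and-recolouring operation of Lemma~\ref{2-inducing } along the $\Delta$-sequence of $F$. This changes only the colours of the edges $rs_i$ with $s_i$ in the $\Delta$-sequence together with the colour of $rs_1$, hence leaves the edges $s_1u$ and $uy$ and the missing colour at $y$ untouched, and it produces a new typical multifan (on the same vertex set) in which $\delta$ has become a main-sequence (i.e. "$2$-inducing") colour while the chain $P_{s_1}(\Delta,\delta,\varphi')=s_1uy$ is preserved up to the relabelling. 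After renaming colours and vertices so that this new multifan is typical, the configuration of Figure~\ref{L1}\,$L_1$ and the above chain structure are reproduced with $\delta$ now $2$-inducing, and the argument of the previous paragraph applies verbatim. The point requiring care is precisely this: verifying that the Lemma~\ref{2-inducing } move here neither disturbs $s_1u$, $uy$ nor breaks the $(\Delta,\delta)$-chain, so that the reduction to the $2$-inducing case is legitimate.
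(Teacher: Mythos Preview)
Your handling of the easy half is fine: if $u\notin P_y(1,\delta)$ then the swap $\varphi':=\varphi/P_y(1,\delta)$ is $F$-stable and yields $P_{s_1}(\Delta,\delta,\varphi')=s_1uy$, which is indeed impossible when $\delta$ is $2$-inducing by Lemma~\ref{thm:vizing-fan2}\eqref{thm:vizing-fan2-a}. This matches the paper's first observation.

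The gap is in your treatment of the $\Delta$-inducing case. The move you propose---applying the operation of Lemma~\ref{2-inducing } along the $\Delta$-sequence---does not reduce to the previous paragraph. That operation uncolours $rs_\beta$, shifts the $\Delta$-sequence, and colours $rs_1$ by~$\Delta$. After this, two things break simultaneously. First, the resulting multifan $F^*$ has \emph{one} sequence, so in $F^*$ both the old colour $\Delta$ (now $\pbar'(s_{\alpha+1})$) and the old colour $\delta$ lie in the \emph{same} induced sequence; Lemma~\ref{thm:vizing-fan2}\eqref{thm:vizing-fan2-a} therefore no longer applies to the pair $(\Delta,\delta)$, and your ``previous paragraph'' argument relied precisely on their being induced by different root colours. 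Second, the uncoloured edge is now $rs_\beta$, not $rs_1$: the vertex $s_1$ no longer misses $\Delta$ (it misses only $2$), so the chain equality $P_{s_1}(\Delta,\delta)=s_1uy$ is no longer meaningful in the form you need; after the relabelling, the new ``$s_1$'' is the old $s_\beta$, and you have no guarantee that $u$ is adjacent to $s_\beta$ with the required edge-colour pattern. So the reduction is not legitimate.

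The paper's argument here is genuinely different and substantially heavier: it keeps the multifan fixed and instead exploits the \emph{second} outside vertex $x$. It splits according to whether $\tau=\varphi(ux)$ is $\Delta$-inducing or $2$-inducing, performs a carefully chosen sequence of swaps at both $x$ and $y$ (e.g.\ $(1,2)$--$(2,\tau)$--$(\tau,1)$ in one subcase, $(1,\tau)$--$(\tau,\delta)$ in the other), and then builds Kierstead paths $K=(r,rs_1,s_1,s_1u,u,uy,y)$ (and the analogue through $x$) so that Lemma~\ref{Lemma:kierstead path1} forces $s_1$ to be simultaneously $(\delta,2)$-linked and $(\delta,\Delta)$-linked with two different vertices, contradicting Lemma~\ref{thm:vizing-fan2}. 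The presence of $x$ is essential; you cannot dispose of this case using $y$ alone.
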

\proof[Proof of Claim~\ref{u-in-p-y}]
Assume to the contrary that $u\not\in P_y(1,\delta)$. 
This implies that  $\delta$ is a $\Delta$-inducing color (since doing a $(1,\delta)$-swap at $y$ gives   $P_{s_1}(\Delta,\delta)=s_1uy$,
 implying that $s_1$ is $(\Delta,\delta)$-unlined with vertices in $F$). 
 If $\varphi(ux)=\tau$ is $\Delta$-inducing, then we perform $(1,2)-(2,\tau)-(\tau,1)$-swaps 
at both $x$ and $y$ based on the coloring of $L_1$ in Figure~\ref{L1}. 
Now, we must have that $u\in P_x(1,\delta)$ or $u\in P_y(1,\delta)$ since $\delta$
is either $2$-inducing or $\Delta$-inducing.  Let $\varphi'$ be obtained from the 
current coloring by 
performing a $(1,\delta)$-swap at both $x$ and $y$. 
Then both $K_1=(r,rs_1,s_1,s_u,u,ux,x)$ and $K_2=(r,rs_1,s_1,s_u,u,uy,y)$
are Kierstead paths with respect to $rs_1$ and $\varphi'$. 
Since $d_G(s_1)=\Delta-1$, applying Lemma~\ref{Lemma:kierstead path1}, 
$x$ and $s_1$ are $(\delta, \Delta)$-linked and $y$ and $s_1$ are $(\delta, 2)$-linked. 
However, by Lemma~\ref{thm:vizing-fan2}, 
$s_1$ and $\pbar^{-1}_F(\delta)$  are either $(\delta,2)$ or $(\delta, \Delta)$-linked, showing a contradiction. 

Thus we assume  that $\varphi(ux)=\tau$ and $\tau$ is $2$-inducing. 
Based on the coloring of $S(u;s_1,x,y)$ as given in Figure~\ref{L1} $L_1$, 
we perform $(1,\tau)-(\tau,\delta)$-swaps at both $x$ and $y$.
Let the current coloring be $\varphi'$. Note that either $\varphi'(s_1u)=\delta$ or 
$\varphi'(s_1u)=\tau$. 
If $\varphi'(s_1u)=\delta$, then  doing  a $(\delta,1)$-swap at both $x$
and $y$ gives  $P_{s_1}(\Delta,1)=s_1uy$, which gives  a contradiction to 
Lemma~\ref{thm:vizing-fan1}~\eqref{thm:vizing-fan1b} that $s_1$ and $r$
are $(\Delta,1)$-linked. Thus $\varphi'(s_1u)=\tau$. 
We first do a $(\delta,1)$-swap at both $x$ and $y$. 
Then since $\tau$ is 2-inducing, $u\in P_y(1,\tau)$ (since otherwise, doing a $(1,\tau)$-swap at $y$ implies that $P_{s_1}(\tau, \Delta)=s_1uy$, showing a contradiction to  Lemma~\ref{thm:vizing-fan2} \eqref{thm:vizing-fan2-a}).
Thus we do a $(1,\tau)$-swap at both $x$ and $y$. 
Note that $\delta$ is still $\Delta$-inducing and $\tau$ 
is 2-inducing.  Thus  $\pbar^{-1}_F(\delta)$ and $\pbar^{-1}_F(\tau)$
are $(\delta,\tau)$-linked by Lemma~\ref{thm:vizing-fan2} \eqref{thm:vizing-fan2-a}. 
Let $\varphi'$ be obtained from the current coloring by 
doing  a $(\delta,\tau)$-swap at $y$. 
Then $K=(r,rs_1,s_1,s_1u, u, uy, y)$ is a Kierstead path with respect to 
$rs_1$ and $\varphi'$.  Since $d_G(s_1)=\Delta-1$, applying Lemma~\ref{Lemma:kierstead path1}, 
 $y$ and $s_1$ are $(\delta,2)$-linked. 
Since $\delta$ is still $\Delta$-inducing and $\tau$ 
is 2-inducing, we achieve a contradiction to the fact that $s_1$
and $\pbar^{-1}_F(\delta)$ are $(\delta, 2)$-linked by Lemma~\ref{thm:vizing-fan2} \eqref{thm:vizing-fan2-a}. 
Therefore it must be the case  $u\in P_y(1,\delta)$.
\qed 

Since $u\in P_y(1,\delta)$, we perform a $(1,\delta)$-swap at both $x$ and $y$
gives $L_2$ in Figure~\ref{L1}.  It deduces that $\delta$ must be a 2-inducing color, as $y$
and $s_1$ are $(\delta,2)$-linked. 
Recall that $F$ still has two sequences.  Let $\gamma$ be a $\Delta$-inducing color 
of $F$. Since $\pbar^{-1}_F(\delta)$ and $\pbar^{-1}_F(\gamma)$
are $(\delta,\gamma)$-linked by Lemma~\ref{thm:vizing-fan2}\eqref{thm:vizing-fan2-a}, 
we do a $(\delta,\gamma)$-swap at $y$. 
This implies that $s_1$ and $y$ are $(\gamma,2)$-linked, showing a 
contradiction to the fact that $s_1$ and $\pbar^{-1}_F(\gamma)$ are $(\gamma,2)$-linked. 

\medskip 
\emph{\noindent \setword{Subcase 2.2.2}{Case 2.2}: $\varphi(uy)=\lambda$ is the last 2-inducing color of $F$.}
\medskip 

If $\varphi(ux)=\tau$ is $2$-inducing, then $\tau \prec \lambda$. 
This gives back to the previous case. 
If $\varphi(ux)=\tau$ is $\Delta$-inducing and $\tau$ 
is not the last $\Delta$-inducing color, then 
 by doing $(\Delta,1)-(1,2)$-swaps at $x$ and $y$, a similar 
 proof follows as in the previous case by exchanging the role of 2 and $\Delta$. 
 Thus $\tau$ is the last $\Delta$-inducing color of $F$. 

Let $C_u$ be the cycle in $G_\Delta$ that contains $u$.  By Theorem~\ref{Thm:vizing-fan2b} \eqref{common2},  for every vertex on $C_u$, its $(\Delta-1)$-neighborhood is $N_{\Delta-1}(u)$. 
As $|V(C_u)|\ge 3$, there exist $u^*, u'\in V(C_u)\setminus\{u\}$
such that one of $\varphi(u^*y)$ and $ \varphi(u'y)$ is neither $\tau$ nor $\lambda$. 
Assume that $\varphi(u^*y)\not\in \{\tau, \lambda\}$. 
Now let $u^*$ play the role of $u$, we reduce the problem to the previous case.

%



This finishes the proof of Theorem~\ref{Thm:nonadj_Delta_vertex}. 
\end{proof}


\begin{thebibliography}{10}
	
	\bibitem{CariolaroC2003}
	David Cariolaro and Gianfranco Cariolaro.
	\newblock Colouring the petals of a graph.
	\newblock {\em Electron. J. Combin.}, 10:Research Paper 6, 11, 2003.
	
	\bibitem{1901.10316}
	Guantao Chen, Guangming Jing, and Wenan Zang.
	\newblock Proof of the goldberg-seymour conjecture on edge-colorings of
	multigraphs.
	\newblock{\em arXiv:1901.10316}, 2019. 
	
	
	\bibitem{MR848854}
	A.~G. Chetwynd and A.~J.~W. Hilton.
	\newblock Star multigraphs with three vertices of maximum degree.
	\newblock {\em Math. Proc. Cambridge Philos. Soc.}, 100(2):303--317, 1986.
	
	\bibitem{MR975994}
	A.~G. Chetwynd and A.~J.~W. Hilton.
	\newblock The edge-chromatic class of graphs with maximum degree at least
	{$|V|-3$}.
	\newblock In {\em Graph theory in memory of {G}. {A}. {D}irac ({S}andbjerg,
		1985)}, volume~41 of {\em Ann. Discrete Math.}, pages 91--110. North-Holland,
	Amsterdam, 1989.
	
	\bibitem{CranstonR2018hilton}
	Daniel~W. Cranston and Landon Rabern.
	\newblock The {H}ilton--{Z}hao {C}onjecture is {T}rue for {G}raphs with
	{M}aximum {D}egree 4.
	\newblock {\em SIAM J. Discrete Math.}, 33(3):1228--1241, 2019.
	
	\bibitem{MR0183532}
	Jack Edmonds.
	\newblock Maximum matching and a polyhedron with {$0,1$}-vertices.
	\newblock {\em J. Res. Nat. Bur. Standards Sect. B}, 69B:125--130, 1965.
	
	\bibitem{MR0349458}
	Jean-Claude Fournier.
	\newblock Colorations des ar\^{e}tes d'un graphe.
	\newblock {\em Cahiers Centre \'{E}tudes Recherche Op\'{e}r.}, 15:311--314,
	1973.
	\newblock Colloque sur la Th\'{e}orie des Graphes (Brussels, 1973).
	
	\bibitem{MR0354429}
	M.~K. Goldberg.
	\newblock Multigraphs with a chromatic index that is nearly maximal.
	\newblock {\em Diskret. Analiz}, (23):3--7, 72, 1973.
	\newblock A collection of articles dedicated to the memory of Vitali\u{\i}
	Konstantinovi\v{c} Korobkov.
	
	\bibitem{MR1172373}
	A.~J.~W. Hilton and Zhao Cheng.
	\newblock The chromatic index of a graph whose core has maximum degree two.
	\newblock volume 101, pages 135--147. 1992.
	\newblock Special volume to mark the centennial of Julius Petersen's ``Die
	Theorie der regul\"{a}ren Graphs'', Part II.
	
	\bibitem{MR1395947}
	A.~J.~W. Hilton and Cheng Zhao.
	\newblock On the edge-colouring of graphs whose core has maximum degree two.
	\newblock {\em J. Combin. Math. Combin. Comput.}, 21:97--108, 1996.
	
	\bibitem{Holyer}
	Ian Holyer.
	\newblock The {NP}-completeness of edge-coloring.
	\newblock {\em SIAM J. Comput.}, 10(4):718--720, 1981.
	
	\bibitem{MR532981}
	P.~D. Seymour.
	\newblock On multicolourings of cubic graphs, and conjectures of {F}ulkerson
	and {T}utte.
	\newblock {\em Proc. London Math. Soc. (3)}, 38(3):423--460, 1979.
	
	\bibitem{MR629483}
	P.~D. Seymour.
	\newblock On {T}utte's extension of the four-colour problem.
	\newblock {\em J. Combin. Theory Ser. B}, 31(1):82--94, 1981.
	
	\bibitem{StiebSTF-Book}
	Michael Stiebitz, Diego Scheide, Bjarne Toft, and Lene~M. Favrholdt.
	\newblock {\em Graph edge coloring}.
	\newblock Wiley Series in Discrete Mathematics and Optimization. John Wiley \&
	Sons, Inc., Hoboken, NJ, 2012.
	\newblock Vizing's theorem and Goldberg's conjecture, With a preface by
	Stiebitz and Toft.
	
	
	\bibitem{Vizing-2-classes}
	V.~G. Vizing.
	\newblock Critical graphs with given chromatic class.
	\newblock {\em Diskret. Analiz No.}, 5:9--17, 1965.
	
\end{thebibliography}
\end{document}